\documentclass{article}
\usepackage[utf8]{inputenc}
\usepackage[T1]{fontenc}
\usepackage[a4paper, margin=2.7cm]{geometry}

\usepackage{bbm}

\usepackage{amsmath, amsthm, amsfonts, amssymb}
\usepackage{mathrsfs}
\usepackage{mathtools}
\usepackage{dsfont}        
\usepackage{bbm}           
\usepackage{esint}         
\usepackage{cancel}
\usepackage{tensor}

\usepackage{tikz}
\usepackage{xcolor}

\usepackage{enumerate}
\usepackage{caption}
\usepackage{booktabs}
\usepackage{thmtools}

\usepackage[colorlinks=true, linkcolor=blue, citecolor=blue,
            urlcolor=blue, breaklinks]{hyperref}
\usepackage[nameinlink]{cleveref}

\usepackage{natbib}
\usepackage{url}

\theoremstyle{plain}
\newtheorem{teorema}{Theorem}[section]
\newtheorem*{teorema*}{Theorem}
\newtheorem{lemma}[teorema]{Lemma}
\newtheorem{proposizione}[teorema]{Proposition}

\newtheorem{corollario}[teorema]{Corollary}

\newtheorem{assumption}[teorema]{Assumption}

\theoremstyle{definition}
\newtheorem{definizione}[teorema]{Definition}

\theoremstyle{remark}

\newtheorem{osservazione}[teorema]{Remark}

\renewcommand{\div}{\operatorname{div}}   

\newcommand{\abs}[1]{\left\vert#1\right\vert}

\newcommand{\cl}[1]{\left\lceil#1\right\rceil}
\newcommand{\norm}[1]{\left\Vert#1\right\Vert}

\def\d{\,\mathrm{d}}

\def\p{\partial}

\def\N{\mathbb{N}}
\def\R{\mathbb{R}}

\def\E{\mathbb{E}}
\def\P{\mathbb{P}}

\def\M{\mathcal{M}}
\def\L{\mathcal{L}}

\def\Var{{\textrm{Var}}\,}
\def\hess{{\textrm{Hess}}\,}
\def\trace{{\textrm{Tr}}\,}

\def\Dir{{\textrm{Dir}}\,}
\def\Cov{{\textrm{Cov}}\,}

\def\pois{{\textrm{Pois}}\,}
\def\Id{{\textrm{Id}}\,}

\def\ird{\int_{\R^d}}
\def\:{\colon}
\def\e{\varepsilon}
\def\f{\varphi}
\def\1{\mathbbm{1}}

\def\F{\mathcal{F}}
\def\NN{\mathcal{N}}
\def\diffC{\tilde{\kappa}}

\def\variable{u}
\def\fun{f}

\def\Jnorm{\mathfrak{f}_n}

\newcommand{\starconv}[2]{\tensor[_{#1}]{\star}{_{#2}}}

\title{Self-similarity and diffusive limits for linear kinetic equations: a Wild sum approach}
\author{
  José A.~Cañizo$^{*}$ \and
  Stéphane Mischler$^{\dagger}$ \and
  Niccolò Tassi$^{*}$
}

\date{}
\hypersetup{
  pdftitle  = {Self-similarity and diffusive limits for linear kinetic equations: a Wild sum approach},
  pdfauthor = {José A. Cañizo, Stéphane Mischler, Niccolò Tassi},
}

\AtEndDocument{\vspace{2cm}{\footnotesize
		\noindent
		\textsc{José A. Cañizo. Departamento de Matemática Aplicada \& IMAG,
			Universidad de Granada. Avda. Fuentenueva S/N, 18071 Granada, Spain.}
		\textit{E-mail address}: \texttt{\href{mailto:canizo@ugr.es}{canizo@ugr.es}}
        \par
		\addvspace{\medskipamount}
		\noindent
		\textsc{Stéphane Mischler. CEREMADE, UMR CNRS 753 \& Université Paris-Dauphine -- PSL Research University \& IUF, Place du Mar\'echal de Lattre de Tassigny, 75775 Paris Cedex 16, France} }
		\textit{E-mail address}: \texttt{\href{mailto:mischler@ceremade.dauphine.fr}{tassi@ugr.es}}
		\par
		\addvspace{\medskipamount}
		\noindent
		\textsc{Niccolò Tassi. Departamento de Matemática Aplicada \& IMAG,
			Universidad de Granada. Avda. Fuentenueva S/N, 18071 Granada, Spain.}
		\textit{E-mail address}: \texttt{\href{mailto:tassi@ugr.es}{tassi@ugr.es}}
}

\begin{document}
\maketitle

\maketitle
\footnotetext[1]{$^*$Departamento de Matemática Aplicada \& IMAG,
			Universidad de Granada. Avda. Fuentenueva S/N, 18071 Granada, Spain. E-mail: \texttt{canizo@ugr.es} (J.A.C.), \texttt{tassi@ugr.es} (N.T.)}
\footnotetext[2]{$^\dagger$CEREMADE, UMR CNRS 7534 \& Université Paris-Dauphine -- PSL Research University \& IUF, Place du Mar\'echal de Lattre de Tassigny, 75775 Paris Cedex 16, France. E-mail: \texttt{mischler@ceremade.dauphine.fr}}

\begin{abstract}
 We prove that linear collisional kinetic equations in the whole space without confinement mechanism display a long-time self-similar behaviour.
 This drastically improves the recently known  results (decay estimates) about the solutions in such a context, providing the first result regarding this self-similar behaviour. As a consequence, we also establish 
 a uniform-in-time convergence of the suitably rescaled solutions to their diffusion limit, which is also new. 
 The class of equations considered includes some BGK type equations, some kinetic nonlocal Fokker–Planck-type equations and 
 some kinetic (possibly fractional) Fokker--Planck equations, for which we are able to write explicitly solutions through a Wild sum (or Dyson series)
 or we can manage some accurate computations on the Fourier side. 
\end{abstract}
\setcounter{tocdepth}{2}
\tableofcontents

\section{Introduction}
We  study the long-time asymptotic behaviour of the  evolution linear kinetic equation
\begin{equation}\label{eq:general}
  \partial_t f + v\cdot \nabla_x f = \L f,
  \qquad f(0,\cdot )= f_0, 
\end{equation}
where $f = f(t,x,v)$ is a density function that depends on time
$t \geq 0$, space $x \in \R^d$ and velocity $v \in \R^d$.
\smallskip
We assume that the operator $\mathcal{L}$ only acts on the $v$ variable, it is 
 linear, and its associated semigroup preserves mass and positivity, and admits a unique
probability equilibrium $F$ (that is,
$F= F(v)$ is a probability density satisfying $\L F = 0$). In the sequel, we will consider  
\begin{enumerate}[(I)]
\item the {\it BGK} operator
\begin{equation}
\label{eq:defL-BGK}\tag{I}
 \mathcal{L} f  = F  \Big(\ird f(w) \d w \Big)-f;
\end{equation}
   
\item the {\it nonlocal Fokker--Planck} operator
 \begin{equation}
      \label{eq:defL-nonlocalFP}\tag{II}
  \mathcal{L}f= G *_vf - f + \div_v(vf);
\end{equation}
\item the {\it possibly fractional Fokker--Planck} operator
    \begin{equation}
          \label{eq:defL-fracFP}\tag{III}
        \mathcal{L}f = - (-\Delta_v)^s f + \hbox{\rm div}_v (v f). 
 \end{equation}
\end{enumerate}

Let us make precise some notations and assumptions.

\begin{itemize}

\item In \eqref{eq:defL-BGK}, $F = F(v)$ is a given probability distribution on $\R^d$, and thus the unique
probability equilibrium, and we  assume   that either 
  \begin{equation}\label{eq:F-factionalcase}
    F(v) \sim \frac{1}{|v|^{d + 2s}}
    \qquad \text{as $|v| \to +\infty$,}
  \end{equation}
for some $s  \in (0,1)$, or 
 \begin{equation}\label{eq:F-diffusecase}
    \ird F(v) |v|^{2 + \delta} \d v < +\infty, 
  \end{equation}
for some $\delta > 0$, and we set $s := 1$ in that case. 

\item In \eqref{eq:defL-nonlocalFP}, $*_v$ denotes the convolution in the $v$ variable and $G$ is a Gaussian distribution in $\R^d$ with covariance matrix $2\Id$. By definition, we set $s := 1$ in this case.In particular, one can show that  $\mathcal{L}$ admits a unique probability equilibrium  $F$ also satisfying \eqref{eq:F-diffusecase}; see for instance \cite{mischler_uniform_2017, canizotassi2024}.

\item In \eqref{eq:defL-fracFP}, we assume $s \in (0,1]$. This (possibly fractional) Fokker--Planck operator
has a unique probability  equilibrium and more precisely  $F = \mathcal{M}^s(v)$ is the stable law of parameter $2s$
    defined by its Fourier transform
    \begin{equation}
      \label{eq:stable_law}
      \widehat{\mathcal{M}}^s(\eta) = e^{-\frac{|\eta|^{2s}}{2s}},
      \qquad \eta \in \R^d, 
    \end{equation}
    see for instance
    \cite{gentil_levy-fokker-planck_2006}.
\end{itemize}
    
In this paper we
      always use the Fourier transform defined by
  \begin{equation*}
    \widehat{f}(\xi) := \ird f(x) e^{-ix \xi} \d x,
    \qquad \xi \in \R^d.
  \end{equation*}
  We distinguish the notation  $G$ (and $G^s)$ from  $\M$ (and $ \mathcal{M}^s$) when referring respectively to  convolution kernels and  equilibrium distributions.

\smallskip
Such kinetic equations appear in several contexts: in the
interaction of particles with a background gas, $\L$ should be the
linear Boltzmann operator with a suitable collision kernel; it may be
a linear BGK-type operator in models for scattering. Similar PDEs 
find applications in biology, such as run-and-tumble models \citep{Alt1980,Evans2023},
medical imaging \cite{QinLiPhonon2026}, in the social sciences, including opinion formation models \citep{Toscani2006_opinion} and wealth distribution \citep{Cordier2005},
and models for vehicular traffic flow \citep{Heraty2020_traffic}. The same structure
appears in some relativistic \citep{marle1965modele} and quantum
models \citep{markowich1990semiconductor, carlen2026Zeno}. We also
mention that numerical methods for nonlocal kinetic equations of this
type have also been studied in \cite{ayi_structure-preserving_2022}.

\smallskip
The evolution kinetic equation \eqref{eq:general} does not enjoy any \emph{confining mechanism} in the variable $x$, and solutions
are expected to converge to $0$ for large times. 
A precise description of this long-time behaviour is not easy to give and only
recently there have been proved some quantitative results which provide possible  rates at
which any solution $f$ goes to $0$ in several norms, see
\citet{Bouin_etAl2020HypoWoConf,BouinDolbeaultLafleche_2022FractHypo}. 
More precisely, the solution $f$ to \eqref{eq:general} satisfies
\begin{equation}\label{eq:Rate-kineticL2}
  \| f(t, \cdot, \cdot) \| \leq C t^{-q}
  \quad \text{for any $t > 0$,} \quad q = q(s,2) := \frac{d}{4s}, 
\end{equation}
for a weighted $L^2$ norm $\|\cdot\|$ in the variables $x$, $v$ and for some constant $C = C(f_0)$. 
This exponent $q$ is also relevant for the macroscopic limit of the
kinetic equation \eqref{eq:general} that we introduce now. 

\smallskip
If the time scale at which
the ``scattering'' $\L$ acts is much shorter than the transport scale,
it is appropriate to consider a scaled version of equation (\ref{eq:general}) where we add a scaling parameter $\e > 0$:
we define
\begin{equation}\label{eq:scaledf}
  f^\e(t,x,v) := \frac{1}{\e^{d}} f\Big(\frac{t}{\e^{2s}}, \frac{x}{\e}, v\Big).
\end{equation}
Then, 
if $f$ is a
solution to (\ref{eq:general}) with initial condition $f_0$,  $f^\e$
is a solution to the scaled equation \begin{equation}\label{eq:general-scaled}
  \e^{2s}
  \partial_t f^{\e} + \e v\cdot \nabla_x f^{\e}
  =  \mathcal{L}f^{\e}.
\end{equation}
associated to $f^\e_0$, with $f_0^\e(x,v) := \e^{-d} f_0 (\e^{-1} x, v)$.
\smallskip

In the (fractional) diffusion asymptotic $\e \to 0$, it is  known that the solution $f^\e$ converges to $F(v) u(t,x)$ in compact time intervals, where $u$ is a solution to
the (fractional) heat equation
 \begin{equation}\label{eq: fracheat}
  \partial_t u=-\kappa(-\Delta^s) u, 
\qquad u(0,\cdot )= u_0, 
  \end{equation}
where $\kappa >0$ is a suitable constant, see for instance \cite{Degond_Goudon_Poupad2000DiffusionLimit}, \cite{Mellet_Mischler_Mouhot2010Fractional} for the BGK operator, and 
\cite{puel2011fractional,lebeau2019diffusion, dechicha2024fractional} for the (possibly fractional) diffusion operator. From the probabilistic perspective, \cite{Jara_Komorowski_Olla2009LimThms}  obtained similar results  as \cite{Mellet_Mischler_Mouhot2010Fractional}. In the same spirit, \cite{basile2010convergence, basile2014kinetic} considered related kinetic equations with anomalous scaling.
Later, \cite{Fournier_Tardif2018criticalkinetic, Fournier_Tardif2020Fmultid} studied the kinetic Fokker–Planck equation with heavy-tailed local equilibria for a wider range of tail exponents. All of the previous results prove convergence but give no rates.
Numerical methods for linear kinetic equations of type \eqref{eq:defL-BGK} and \eqref{eq:defL-fracFP} for  $s=1$ which are
asymptotic-preserving (in the sense that they behave well under the
diffusive rescaling) have been studied in \cite{bessemoulin2020hypocoercivity}.

Concerning case \eqref{eq:defL-nonlocalFP}, we believe the proof of the diffusive limit is not strictly contained in the previous literature, but similar techniques may apply to this case also.

Let us comment on existing quantitative results on rates of convergence. For the above (fractional) diffusion asymptotic \eqref{eq:general-scaled}, some rates have
been given in
\cite*{Bouin_Mouhot2022}, namely 
\begin{equation}\label{eq:quantitative-diffusion-limit}
\sup_{[0,T]} \|f^\e - u F \| \le C_T \varepsilon^{\alpha}, 
  \end{equation}
in a weighted $H^{-\zeta}_xL^2_v$ norm for any $T > 0$, with an explicit  -- but hard to write -- exponent $\alpha>0$. The constant $C_T$, however, has a bad time-dependence which  does not allow one to extend the result asymptotically in time.


\smallskip

The fractional heat equation \eqref{eq: fracheat} can be solved explicitly on the Fourier side and thus its solutions have 
an explicit representation formula through a convolution product with an appropriate time depending kernel, whose natural $L^p$ decay in time is given by the following exponent $q$, depending on $p \in [1,\infty]$ and $s \in (0,1]$:
\begin{equation}\label{def:q-gal}
  q  := q(s,p)= \frac{d}{2s}\left(1-\frac{1}{p}\right),
\end{equation}
which is consistent with the previous definition of $q(s,2)$. 
One can show that, 
when starting from a  probability measure $u_0$, the associated  solution $u$ to the 
(fractional) heat equation \eqref{eq: fracheat} satisfies 
\begin{equation}\label{eq:FracHeateq-Longtime} 
\| u (t,\cdot) - (1+t)^{-d/(2s)} \mathcal{M}^s((1+t)^{-1/(2s)} \cdot)  \|_{L^p} \le \frac{C}{(1+t)^{q + \gamma } }, \quad \hbox{for any} \ t > 0, 
  \end{equation}
and thus in particular 
\begin{equation}\label{eq:FracHeateq-Rate} 
\| u (t,\cdot) \|_{L^p} \sim \frac{c_s}{t^q}, \quad \hbox{as} \ t \to \infty,  \quad c_s > 0, 
  \end{equation}
 see \cite{vazquez_asymptotic_2018, vazquez2017asymptoticbehaviourfractionalheat}.
 With this in mind, our results will also show  that the rate provided in \citet{Bouin_etAl2020HypoWoConf,BouinDolbeaultLafleche_2022FractHypo}
 is  optimal at least in cases \eqref{eq:defL-BGK} and \eqref{eq:defL-fracFP}.

\smallskip
In our main result below and its corollary, we are able to improve the previous results by obtaining a self-similar estimate analogue to \eqref{eq:FracHeateq-Longtime} instead of the sole rate estimate \eqref{eq:Rate-kineticL2} and by obtaining an uniform in time version of \eqref{eq:quantitative-diffusion-limit}. 
In order to make a precise statement, we introduce some functions spaces. 
For $p,r \in [1,+\infty]$ we define the mixed $(p,r)$ Lebesgue space $L^p_xL^r_v(\R^d\times\R^d)$ of measure functions on $\R^d \times \R^d$ associated to the norm
\begin{equation}
  \label{eq:LpxLrv}
\| f \|_{L^p_xL^r_v} := \left(\ird\left(\ird |f(x,v)|^r\d
    v\right)^{\frac{p}{r}}\d x\right)^{\frac1p},
    \end{equation}
for $p,q \in [1,+\infty)$, with usual  generalisations to the cases 
$p$ or $r =+\infty$. 
 
\begin{teorema}[Long-time behaviour -- summary]

  \label{thm: main}
   Let $\mathcal{L}$ be one of the operators in \eqref{eq:defL-BGK}, \eqref{eq:defL-nonlocalFP} or \eqref{eq:defL-fracFP}, 
let $F = F(v)$ be the unique probability equilibrium of $\L$ and let $s \in (0,1]$ be the associated exponent.  
   There exists a  constructive constant $\gamma > 0$, such that for any  initial data $f_0$ which is a  probability
  distribution on $\R^{2d}$ satisfying
  \begin{equation*}
    M_\nu := \ird \ird f_0(x,v) |v|^\nu \d v \d x < +\infty, 
  \end{equation*}
 with $\nu=1$ if $s\in(1/2,1]$ and $\nu\in(s,2s)$ if $s\in(0,1/2]$, the associated solution $f$ to the kinetic equation \eqref{eq:general} 
 satisfies
\begin{equation}
  \label{eq:main-thm}
  \norm{f(t,\cdot,\cdot) - F(\cdot)u(t,\cdot)}_{L^p_x L^r_v}
  \le C \underbrace{(1+t)^{-q}}_{\text{Self-similar decay}} \ \underbrace{(1+t)^{-\gamma}}_{\text{Second order decay}}, 
  \end{equation}
  where $u$ is the solution to the (fractional) heat equation \eqref{eq: fracheat} with  diffusivity constant $\kappa:=\Gamma(2s)$ ($\kappa := 1$ if   $s=1$), and initial
  macroscopic density $$u_0(x):=\int f_0(x,w)\d w, $$ 
  and where we recall the definition \eqref{def:q-gal} of $q$. 
  The constant $C$
  depends (increasingly) only on $\|f_0\|_{L^p_x L^r_v}$, $\|f_0\|_{L^r_v L^p_x }$, and
  $M_\nu$. The precise hypotheses, the allowed values of $\nu$, $p$,
  $r$ and the values of $\gamma$ are given in Table
  \ref{tab:placeholder} and explicitly presented in Theorems  \ref{thm: mainBGKStable} and \ref{thm:
    mainBGKgeneral} (case \ref{eq:defL-BGK}), \ref{thm: mainNLFP} (case \ref{eq:defL-nonlocalFP}) and
  \ref{thm: mainKFP}, \ref{thm:mainFKFP} (case \ref{eq:defL-fracFP}).
\end{teorema}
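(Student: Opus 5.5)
We would prove the statement case by case, as announced, but always by the same strategy: represent the solution through a Wild sum (Dyson series) in which the collision part acts as a jump mechanism between free-transport or Ornstein--Uhlenbeck-type flows, and compare it term by term on the Fourier side with the heat kernel. In case \eqref{eq:defL-BGK} we write $\mathcal{L} = \mathcal{L}^+ - \Id$ with $\mathcal{L}^+ f = F\int f$. Duhamel iteration then gives
\begin{equation*}
f(t) = e^{-t} S_t f_0 + \sum_{n\ge1} \int_{0<t_1<\dots<t_n<t} e^{-t} S_{t-t_n}\mathcal{L}^+ S_{t_n-t_{n-1}}\cdots \mathcal{L}^+ S_{t_1} f_0 \,\d t_1\cdots \d t_n ,
\end{equation*}
where $S_\tau$ is free transport. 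Each term with $n\ge1$ equals $F(v)$ times the $x$-law of a sum of $n$ independent displacements $\tau_k V_k$ with $V_k\sim F$, convolved with the transported initial data. Since the times are Poisson points, after Fourier transform in $x$ one gets an explicit renewal formula: $\hat u$-like quantities equal $\int e^{-\tau}\hat F(\tau\xi)\,\d\tau = m(\xi)$ raised to powers. Thus the macroscopic part is $\sum_n e^{-t}\frac{t^n}{n!}$-type averages of $m(\xi)^n$, which is a compound Poisson process. For \eqref{eq:defL-nonlocalFP} we do the same with $\mathcal{L}^+ = G*_v$, and the free flow $S_\tau$ is transport plus the dilation $\div_v(vf)$, which is explicitly solvable by characteristics. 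For \eqref{eq:defL-fracFP} there is no gain term to split off, but the equation is solved explicitly in Fourier in $(x,v)$ along characteristics $\eta \mapsto e^{\tau}(\eta+\ldots)$, giving a Gaussian/stable exponent that we can expand directly.

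The core estimate is a small-frequency expansion. Under \eqref{eq:F-factionalcase} or \eqref{eq:F-diffusecase} we have
\begin{equation*}
1-m(\xi) = \kappa|\xi|^{2s} + O(|\xi|^{2s+\gamma_0}),
\end{equation*}
and in the heavy-tail case $\kappa=\Gamma(2s)$ comes from $\int_0^\infty e^{-\tau}\tau^{2s}\,\d\tau$. This expansion lets us show that the Fourier transform of the $x$-profile at time $t$ is $e^{-\kappa t|\xi|^{2s}}(1+O(t|\xi|^{2s+\gamma_0}))$ on the region $|\xi|\le \delta$, and is exponentially small, $e^{-ct}$, for $|\xi|\ge\delta$. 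The latter holds because $|m(\xi)|<1$ uniformly away from $0$ (non-lattice property) and because the $e^{-t}S_tf_0$ term decays exponentially. The moment $M_\nu$ controls the correction coming from the initial velocity and from the difference between the transported $f_0$ and $u_0$ itself. This is why $\nu=1$ suffices for $s>1/2$, while $\nu\in(s,2s)$ is needed for $s\le 1/2$: in Fourier, $|\hat f_0(\xi,\cdot)-\hat u_0(\xi)|\lesssim |\xi|^{\nu'}$ with $\nu'$ exceeding the critical order $s$ relative to $|\xi|^{2s}$.

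We then convert Fourier bounds into $L^p_xL^r_v$ bounds. For $p=\infty$ (or $p\ge2$) we use the Hausdorff--Young inequality and the scaling $|\xi|\sim t^{-1/(2s)}$. The error factor $t|\xi|^{2s+\gamma_0}$ becomes $t^{-\gamma_0/(2s)}$, and the remaining integral produces exactly $t^{-q}$. For $p=1$ and intermediate $p$, we instead work in physical space. There we use a local-limit-theorem type comparison for the compound random walk with the stable density, with tails handled by the moments. We then interpolate, using the $\|f_0\|_{L^p_xL^r_v}$ and $\|f_0\|_{L^r_vL^p_x}$ norms to bound the finitely many first Wild terms, which behave like $e^{-t}t^n$ and so are harmless. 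The $v$-dependence is handled by observing that all terms with at least one jump are exactly $F(v)$ times something, so their $L^r_v$ norm factors out. In case \eqref{eq:defL-fracFP} the factorisation is only asymptotic, and it is controlled by the exponential decay $e^{-\tau}$ of the $v$-frequency along the characteristics.

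The main obstacle is the uniform, $p$-independent error bound for the sum over $n$ of $m(\xi)^n$ (or its analogue) in physical space for small $p$. Hausdorff--Young is unavailable there, and heavy tails ($s<1$) make the moments weak. We would first try to obtain $L^1$ control by bounding weighted derivatives $\nabla_\xi^k$ of the Fourier error, using a Carlson--Beurling type inequality and accepting a loss in $\gamma$. If that fails in the fractional case, we would fall back on the results for $p$ large, recovering smaller $p$ by interpolation with uniform mass bounds. This yields some smaller but still constructive $\gamma>0$, consistent with Table \ref{tab:placeholder}.
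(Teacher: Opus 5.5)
There is a genuine gap at the centre of your argument for case (I). You claim that averaging the Wild sum over the jump times turns the $x$-Fourier transform into $\sum_n e^{-t}\frac{t^n}{n!}m(\xi)^n=e^{-t(1-m(\xi))}$, with $m(\xi)=\int_0^\infty e^{-\tau}\widehat F(\tau\xi)\,\d\tau$, so that the macroscopic part is a compound Poisson law. At fixed $t$ this is false. Conditionally on $N_t=n$, the gaps $(t_1,t_2-t_1,\dots,t-t_n)$ are $t$ times a flat Dirichlet vector, constrained to sum to $t$; they are not independent exponentials. The factor $e^{-t}$ is global, not a weight $e^{-\tau}$ on each gap. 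What you actually obtain is a renewal sum $\sum_n k^{*n}(t)$ of time convolutions with $k(\tau)=e^{-\tau}\widehat F(\tau\xi)$. Only its Laplace transform in $t$ is the geometric series $\sum_n \widehat k(\lambda)^n$. A concrete check: for $F$ the standard Gaussian and $f_0=u_0\otimes F$, the exact displacement variance is $2d(t-1+e^{-t})$, whereas your formula gives $2dt$.

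Hence the expansion $e^{-\kappa t|\xi|^{2s}}(1+O(t|\xi|^{2s+\gamma_0}))$, on which everything else rests, is not derived. You would need one of two things:
\begin{enumerate}
\item a genuine Laplace-inversion or spectral argument: locate the zero $\lambda(\xi)$ of $1-\widehat k(\lambda)$, prove $\lambda(\xi)=-\kappa|\xi|^{2s}+O(|\xi|^{2s+\gamma_0})$, and control the remainder uniformly;
\item or the paper's route, which stays at fixed $t$.
\end{enumerate}
In the paper's route, for stable $F$ the $n$-th term is $\M^s(v)\,(\M^s_{\sigma_n}*\rho[T_{t_1}f_0])(x-(t-t_n)v)$ with $\sigma_n^{2s}=\sum_{i=2}^n(t_i-t_{i-1})^{2s}$. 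One then proves concentration of $\sigma_n^{2s}$ around $\Gamma(2s+1)(t_n-t_1)^{2s}n^{1-2s}$ (Dirichlet moments and Chebyshev), of $t_1$ and $t-t_n$ below $tn^{-\alpha}$, and of $n$ around $t$ (Poisson tails). Balancing the probable-region error $t^{-\beta}$ against the improbable-region mass $t^{-(1-2\beta)}$ is exactly where $\gamma=1/3$ comes from.

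There are two further gaps.

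\textbf{Fourier-to-physical conversion.} The step from Fourier bounds to $L^p_xL^r_v$ does not work as described. The flows in (I) and (II) do not regularise: the Wild sum contains $e^{-t}T_tf_0$ and histories with arbitrarily small $\sigma_n$ (respectively, covariance). For data merely in $L^p$ the transform is therefore not in $L^{p'}_\xi$, and Hausdorff--Young cannot be applied to the full solution for $p>2$. This also holds for (II). Unlike (I), case (II) is in fact explicitly solvable in $(\xi,\eta)$ along the same characteristics as (III), with $|\eta|^{2s}$ replaced by $1-\widehat G$. But since $1-\widehat G\le 1$, there is still no smoothing. At the other end, your fallback of interpolating with $L^1$ boundedness gives a second-order exponent $2(1-1/p)\gamma$, which vanishes at $p=1$. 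So the $p=1$ entries of Table~\ref{tab:placeholder} for (I) and (II) are not reached. The paper avoids Fourier inversion for these cases and works in physical space. On the improbable region it integrates $\|\M^s_{\sigma_n}\|_{L^p}\lesssim\sigma_n^{-2sq}$ over the simplex. On the probable region it uses $L^p$ perturbation bounds for stable laws in the scale parameter and for Gaussians in the covariance matrix, valid for every $p\in[1,\infty]$.

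\textbf{Factorisation claim.} Your claim that every jump term is $F(v)$ times a function of $x$ is inaccurate. In (I) the $n$-th term is $F(v)\,g_n(x-(t-t_n)v)$, not $F(v)\,g_n(x)$. The shift is a genuine error term, and for $s\le 1/2$ it can only be paid with moments $|v|^\theta$, $\theta<2s$, of $\M^s$. In (II) the claim fails outright, because $G*_v$ adds to the velocity rather than resetting it. The jump terms are Gaussians in $(x,v)$ with random covariance: the $(v,v)$ entry $S_2=\sum_i e^{-2(t-t_i)}$ does not concentrate, and the cross entry is $S_1-S_2$. The paper handles this in three steps:
\begin{enumerate}
\item it compares with $u(t,x)\varphi(t,v)$, where $\varphi$ solves the homogeneous nonlocal Fokker--Planck equation with the same random $S_2$;
\item it controls the cross term through $\mathcal{R}=S_1/\sqrt{S_2}$;
\item it uses the exponential convergence $\varphi\to F$ from earlier work.
\end{enumerate}
Your treatment of (III), via the explicit Fourier solution along characteristics, is in line with the paper's.
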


  The decay $t^{-q}$ is exactly the time rate
of decay of the $L^p_x L^r_v$ norm of  $f(t,\cdot)$ already mentioned for $p=2$ in \eqref{eq:Rate-kineticL2} (and we refer to \cite{bouin2025halfspacedecaylinearkinetic}  for a possible extension of this result  in any Lebesgue space) and 
of $u(t,\cdot) F$ from \eqref{eq:FracHeateq-Rate}.  
Because of the additional $t^{-\gamma}$ term,  \eqref{eq:main-thm} drastically improves the  decay rate \eqref{eq:FracHeateq-Rate} and   shows that $u(t,\cdot) F $ is a first-order approximation to the long-time behaviour of the solution $f(t,\cdot)$ to \eqref{eq:general}.  We
believe that, at least for the case $s=1$, the optimal $\gamma$ should be $\frac{1}{2}$ in {all} cases, but we were not be able to reach this value for every cases --- see  Table \ref{tab:placeholder}. 

\smallskip
Alternatively, combining \eqref{eq:main-thm} with \eqref{eq:FracHeateq-Longtime}, we deduce have 
\begin{equation}\label{eq:SELFSIMILAR}
 \norm{f(t,\cdot,\cdot) -  (1+t)^{-d/(2s)} \mathcal{M}^s((1+t)^{-1/(2s)} \cdot) F(\cdot) }_{L^p_x L^r_v} \le \frac{C}{ (1+t)^{q + \gamma } },
\end{equation}
for any  solution $f$ to the kinetic equation \eqref{eq:general} associated to a probability  distribution $f_0$ and any $t \ge 0$.

Since the result \eqref{eq:FracHeateq-Longtime} depends on the moment in $x$, the constant in \eqref{eq:SELFSIMILAR} depends on moments in both $x$ and $v$, making the previous result less general than \eqref{eq:main-thm}.
However, the previous formulation  makes even clearer the long-time asymptotic self-similar behaviour of $f$. 

\smallskip

Recalling the scaling 
\eqref{eq:scaledf},  by a change of variables, Theorem \ref{thm: main} immediately gives the
following seemingly more general result:

\begin{corollario}[Long-time behaviour and diffusive limit]
  \label{thm:main-epsilon}
  Let us make the same assumptions  as in Theorem
  \ref{thm: main} on the operator  $\mathcal{L}$, the equilibrium $F$, the exponent $s \in (0,1]$
  and the initial datum $f_0$. 
%
%
Any solution  $f^\e$ to the rescaled kinetic equation 
  \eqref{eq:general-scaled} associated to the initial condition $f^\e_0$ satisfies
    $$
  \norm{f^\e(t,\cdot,\cdot) - F(\cdot)u(t,\cdot)}_{L^p_x L^r_v}
  \le C t^{-q}\left( \frac{\e^{2s}}{t} \right)^{\gamma}, 
  $$
  for any $t > 0$, 
  where $u$ is again the solution to the  (fractional) heat equation \eqref{eq: fracheat} 
  associated to the   
   initial
  macroscopic density $u_0(x):=\int f_0(x,w)\d w$. 
  The constant $C$
  depends (increasingly) only on $\|f_0^\e\|_{L^p_x L^r_v}$ and
  $M_\nu$. The precise hypotheses and values of $\nu$, $p$,
  $r$, $\gamma$ are exactly as in Theorem \ref{thm: main}.
\end{corollario}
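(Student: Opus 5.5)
The corollary follows from Theorem \ref{thm: main} by the scaling \eqref{eq:scaledf}; the work is to follow how each quantity transforms. Let $f$ be the solution of \eqref{eq:general} with initial datum $f_0$. Then $f^\e(t,x,v)=\e^{-d}f(t/\e^{2s},x/\e,v)$ solves \eqref{eq:general-scaled} with datum $f_0^\e$. This holds because $\L$ acts only on $v$ and commutes with the $x$-dilation and the multiplicative factor. The transport term works out since $\e^{2s}\partial_t f^\e=\e^{-d}(\partial_t f)(\cdot)$ and $\e v\cdot\nabla_x f^\e=\e^{-d}(v\cdot\nabla_x f)(\cdot)$. Uniqueness of solutions to \eqref{eq:general-scaled} then identifies this $f^\e$ with the solution in the statement.

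\textbf{Macroscopic part.} Let $\bar u$ be the solution of \eqref{eq: fracheat} with datum $\bar u_0(y)=\int f_0(y,w)\d w$, as in Theorem \ref{thm: main}. The macroscopic density of $f_0^\e$ is $u_0(x)=\e^{-d}\bar u_0(x/\e)$. The (fractional) heat equation is invariant under the parabolic scaling $u(t,x)=\e^{-d}\bar u(t/\e^{2s},x/\e)$, because $(-\Delta)^s$ is homogeneous of degree $2s$; on the Fourier side the symbol $\kappa|\xi|^{2s}$ gives $e^{-\kappa t|\xi|^{2s}}\widehat{\bar u_0}(\e\xi)$ on both sides. Hence $u$ is the solution with datum $u_0$, and
$$f^\e(t)-Fu(t)=\e^{-d}\big[f-F\bar u\big](t/\e^{2s},x/\e,v).$$

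\textbf{Norm scaling.} The $L^r_v$ norm is untouched, and the $L^p_x$ norm of $\e^{-d}g(x/\e)$ equals $\e^{-d(1-1/p)}\|g\|_{L^p}=\e^{-2sq}\|g\|_{L^p}$ by \eqref{def:q-gal}. Applying \eqref{eq:main-thm} at time $\tau=t/\e^{2s}$ gives
$$\|f^\e(t)-Fu(t)\|_{L^p_xL^r_v}\le C\e^{-2sq}(1+\tau)^{-q-\gamma}\le C\e^{-2sq}\tau^{-q-\gamma}=C\,t^{-q}\Big(\frac{\e^{2s}}{t}\Big)^{\gamma}.$$
The last equality uses $\e^{-2sq}\e^{2s(q+\gamma)}t^{-q-\gamma}=t^{-q}(\e^{2s}/t)^\gamma$.

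\textbf{Constant.} Theorem \ref{thm: main} is applied to $f_0$, not $f_0^\e$, so $C$ depends on $\|f_0\|_{L^p_xL^r_v}$, $\|f_0\|_{L^r_vL^p_x}$ and $M_\nu$. The velocity moment $M_\nu$ is invariant under the $x$-scaling, so that dependence is already correct. The mixed norms are related by $\|f_0\|=\e^{2sq}\|f_0^\e\|$. The statement reads the constant through $\|f_0^\e\|$, and I expect this bookkeeping to be the only delicate point. The plan is to observe that the rescaled family is just a reparametrisation of the single solution $f$ with fixed datum $f_0$, so $C$ is simply the constant of Theorem \ref{thm: main} for $f_0$, re-expressed through the $\e$-rescaled norms. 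Since $C$ depends increasingly on these norms, the statement follows. The inequality is then valid for all $t>0$ uniformly in $\e$, which gives both the long-time behaviour and the diffusive limit.
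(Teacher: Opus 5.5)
Your proposal is correct and follows the paper's own argument. The paper likewise uses the parabolic change of variables \eqref{eq:scaledf} and applies Theorem \ref{thm: main} to $f_0(x,v)=\e^d f_0^\e(\e x,v)$, relying on the scale invariance of the heat equation and of $M_\nu$, the factor $\e^{-2sq}$ from the $L^p_x$ norm, and the increasing dependence of $C$ on the data norms.

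The one step you leave vague should be made explicit. The bound $\|f_0\|_{L^p_xL^r_v}=\e^{2sq}\|f_0^\e\|_{L^p_xL^r_v}\le\|f_0^\e\|_{L^p_xL^r_v}$ (and the same for the $L^r_vL^p_x$ norm) requires $\e\le 1$. This is the regime $\e\to0$ that the paper also assumes implicitly, and only then does monotonicity give that $C$ can be taken as a function of the norms of $f_0^\e$.
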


It is essential to notice that the scaling of the constant $C$ in
Theorem \ref{thm: main} is enough to allow us to deduce this result:
in order to obtain Corollary \ref{thm:main-epsilon} one must apply
Theorem \ref{thm: main} to an initial data
$f_0(x,v) = \e^d f_0^\e(\e x, v)$. The constant $C$ depends only on
$M_\nu$, which stays fixed under this scaling, and depends
increasingly on $\|f_0\|_{L^p_x L^r_v}$, which does not increase as
$\e \to 0$ if $f_0^\e$ stays fixed.
Also, notice that we do not require this initial data to be
well-prepared in the sense of \cite{Bouin_Mouhot2022}. If we add this hypothesis
then the result can clearly be improved for small times, 
obtaining convergence in $L^\infty\left([0,\infty); L^p_xL^r_v\right)$.
 
\begin{table}[ht]
    \centering
    \begin{tabular}{cccccc}
\toprule
\textbf{Equation} & \textbf{Operator $\mathcal{L}$} &
                                                      \textbf{Order $s$} & 
\textbf{$L^p$ Range} & \textbf{Exponent $\gamma$} & \textbf{Theorem} \\
\midrule
         Std. BGK& $\M\rho[f]-f$& $1$ &$[1,\infty]$& $1/3$ &\ref{thm: mainBGKStable} $(i)$\\
         Frac. BGK & $\M^s\rho[f]-f$& $[1/2,1)$ &$[1,\infty]$& $1/3$ &\ref{thm: mainBGKStable} $(i)$\\
         L.O. Frac. BGK & $\M^s\rho[f]-f$& $(0,1/2)$ &$[1,p_{\max})$& $\bar\gamma$ &\ref{thm: mainBGKStable} $(ii)$\\
         Gen. BGK& $F\rho[f]-f$& $[1/2,1]$ &$[1,\infty]$& $\min\left\{{\bar{\delta}}/{2s},{1}/{3}\right\}$ &\ref{thm: mainBGKgeneral}\\
          Gen. L.O. BGK& $F\rho[f]-f$& $(0,1/2)$ &$[1,p_{\max})$& $\min\left\{{\bar{\delta}}/{2s},\bar\gamma\right\}$ &\ref{thm: mainBGKgeneral}\\
         Nonlocal  KFP & $G*_vf-f+\div(vf)$& $1$& $[1,\infty]$ & $1/2^-$ &\ref{thm: mainNLFP}\\
          KFP & $\Delta_v+\div(vf)$& $1$& $[1,\infty]$ & $1/2$ &\ref{thm: mainKFP}\\
        Frac.  KFP & $-(-\Delta_v)^s+\div(vf)$& $(0,1)$& $[2,\infty]$ & $\nu/2s$ &\ref{thm:mainFKFP}\\        
        Frac.  KFP & $-(-\Delta_v)^s+\div(vf)$& $(0,1)$& $(1,2)$ & $\nu/s\left(1-1/p\right)$ &Rmk. \ref{rmk:interpolationPsmall}\\
        \bottomrule
    \end{tabular}
  \caption{
Summary of the results.  L.O.\ =low order; KFP = Kinetic Fokker--Planck. 
The rate denotes the exponent in the second--order decay estimate. 
Here
$\bar\gamma = \frac{1}{3}\!\left(1 -q(1-2s)_+\right)$, 
$p_{\max} = \frac{(1-2s)d}{(1-2s)d -2s}$, 
and $\bar\delta$ denotes the convergence rate in the Central Limit Theorem (see Assumption \ref{ass: BE}).
}
   \label{tab:placeholder}
\end{table}

\paragraph{Additional comments}

It is worth underlining that self-similar  behaviour for such the kinetic equation \eqref{eq:general} is very natural and comes from 
the fact that a combination of the free transport mechanism and the scattering mechanism produce a long-time diffusion
effect in the $x$ variable. 
The fact that a kinetic description is
appropriate for many physical processes, and the fact that spatial
diffusion is widely present suggest that this effect should be indeed
quite general.
We indeed expect that such a self-similar  behaviour result is much more  general. It is however an open problem to show this only under structural
assumptions on $\L$, without assuming a specific form as we do here.

In the present paper, our method of proof is based on a (more or less intricate) explicit representation of the solutions. For that reason, our results are
restricted to simple cases of the collisional operator $\mathcal L$.   Anyway, as far as we know, the present paper is the first one which presents a proof of 
self-similarity behaviour for solutions to such  kinetic equations even in these simple cases.

Let us emphasise that the results for case \eqref{eq:defL-nonlocalFP} can be extended to a general centred probability distribution $J$ with finite  moment of order $2+\delta$, for some $\delta>0$, although we do not consider this extension in this paper. For any such $J$, indeed, the operator $J*_v f-f$ is  a nonlocal analogue of the Laplacian (see \cite{AndreuVaillo2010}), so that the operator $\mathcal{L}$ is truly a nonlocal version of the Fokker--Planck  operator.

In the special (and perhaps simplest) case  of the standard Fokker--Planck operator $\L f := \Delta_v f + {\rm div}_v(v f)$, the solution to \eqref{eq:general} can
be  written fully explicitly. 
In this case the self-similarity result seems to be expected and it may have  been derived before, but we have not been able to find a reference for this. For this reason, we include this case  in Section \ref{sec: KFPandFKFP}.


\paragraph{Strategy of the proof}
The technique for cases \eqref{eq:defL-BGK} and \eqref{eq:defL-nonlocalFP} is very similar and is at the heart of this work. 
Our method of proof is based  on a Wild sum or Dyson series representation of the solution together with a more or less subtle 
exchange Lemma (see Lemma~\ref{lem:exchange_BGK} and Lemma~\ref{lemma: exchangestar}).

For the original use of Wild sums in the context of the Boltzmann
equation, see \cite{wild_boltzmanns_1951, McKean1967}; more recently,
\cite{Carlen_Carvalho_Gabetta_2000} used a similar representation to
give information on the nonlinear, homogeneous Boltzmann equation for
Maxwell molecules.
In our case we write the
solution in Wild sum form and, using a connection between the latter
and the distribution of the order statistics of uniform random
variables, we show that the events that make $f(t,x,v)$ and
$u(t,x) F(v)$ differ significantly are improbable when $t\to \infty$.  This is made rigorous through the proof of suitable concentration inequalities. In Section \ref{sec: wild} we provide a more detailed description of the strategy employed.

As already remarked, even if case \eqref{eq:defL-fracFP} has an
explicit solution (for $s=1$, in standard variables; for $s<1$, in
Fourier variables), we have not been able to find a statement of the
above results in the existing literature. We derive our results for
this case using the explicit representation in Section \ref{sec:
  KFPandFKFP}. We believe it is also possible to derive the result
\eqref{eq:defL-fracFP} for $s=1$ from Theorem \ref{thm: main} for case 
\eqref{eq:defL-nonlocalFP}, by
adding an additional scaling parameter $\eta$:
$$\frac{1}{\eta^{2}}\big(G_\eta*f-f\big)\to \Delta f.$$
 For such diffusion operators we believe the statement of
Theorem \ref{thm: main} can be made uniform in $\eta$---see
\cite{canizotassi2024} for a similar result in the homogeneous
case---and, thus, taking the limit as $\eta\to 0$ it should be
possible to obtain our main result also for case
\eqref{eq:defL-fracFP}. In a similar way, following
\cite{tassi2026_fractional}, we expect that replacing $G$ in
\eqref{eq:defL-nonlocalFP} by a heavy tailed kernel (e.g., stable laws
$G^s$), would allow one to derive \eqref{eq:defL-fracFP} from
\eqref{eq:defL-nonlocalFP} as a uniform-in-time nonlocal-to-local
limit also for the fractional case $s\in(0,1)$.

\paragraph{Perspectives}
We believe that the sharpness of the rates, especially for case \eqref{eq:defL-BGK}, can certainly be improved. Indeed, a  tighter concentration inequality beyond Chebyshev would lead to better rates. Moreover, the dependence of the exponent on the fractional index remains to be addressed. We are also interested in the analysis of the limiting case where the tails of the equilibrium behave like  $F(v)\sim |v|^{-d-2}$.
For case \eqref{eq:defL-nonlocalFP}, the natural extension is to consider a general  convolution kernel  $J$,  including heavy-tailed examples.
For case \eqref{eq:defL-fracFP}, we would like to reach a second-order estimate in the full range $p\in[1,\infty]$, possibly by exploiting the multivariate stable law structure  and  the associated spectral measure analysis.
Finally, we would like to extend the method to other operators, the most natural candidate being the linear Boltzmann equation under the Maxwell molecules assumption.

 \paragraph{Plan of the paper}  Section \ref{sec: wild} is concerned with the stochastic interpretation of cases \eqref{eq:defL-BGK} and \eqref{eq:defL-nonlocalFP} and the associated Wild sum formulation derived from a perturbative semigroup argument. 
 Section \ref{sec:BGK} is devoted to the asymptotic behaviour of the kinetic linear BGK equation in both the standard and fractional regimes: we first collect preliminary results regarding suitable concentration inequalities and perturbation results for stable and Gaussian distributions. We then present the proof of the main theorem for case \eqref{eq:defL-BGK} for stable equilibria and a sketch of the general equilibrium case. In Section \ref{sec: NLKFP}, we establish the main result  for the nonlocal Fokker--Planck operator \eqref{eq:defL-nonlocalFP} following a similar structure, with additional preliminary estimates on a ``twisted'' convolution arising from the interaction between the transport and the nonlocal term and different concentration inequalities.  Finally, Section \ref{sec: KFPandFKFP} addresses the Kinetic Fokker--Planck and Fractional Kinetic Fokker--Planck operators \eqref{eq:defL-fracFP}, via explicit calculations in Fourier variables, from which we recover the (believed) optimal rates for $s=1$ in the full range $p\in[1,\infty]$,  and for $p\in[2,\infty]$ in the fractional case $s\in(0,1).$

\section{Wild sum, stochastic interpretation, and general strategy}
\label{sec: wild}
We consider equations of the following two forms: either
\begin{equation}
  \label{eq:wild-form1}
  \p_t f +  v \nabla_x f = c (\L^+ f - f),
\end{equation}
or
\begin{equation}
  \label{eq:wild-form2}
  \p_t f +v \nabla_x f = c (\L^+ f - f + \div_v(v f)),
\end{equation}
both subject to an initial datum $f(0)=f_0$, and we discuss a representation formula for the solution $f$ to each of these equations.
More precisely, we write  the solution to a
kinetic equation of the form (\ref{eq:wild-form1}) or
(\ref{eq:wild-form2}) as a \emph{Wild sum}, also known as a \emph{Dyson
  series} in the context of semigroup theory \cite[III Thm. 1.10]{EngelNagel2001}
For this, we define our transport semigroup $T_t$ by
\begin{equation*}
 T_t  g(x,v) := g\left(x-tv,v\right)
\end{equation*}
in the case of an equation of the form (\ref{eq:wild-form1}), or
\begin{equation*}
 T_t  g(x,v) := e^{c d t}
 g\left(x- (e^{c t} - 1) v, e^{c t} v\right)
\end{equation*}
in the case of an equation of the form (\ref{eq:wild-form2}). These
are, respectively, the semigroups associated with the evolution PDEs
$\p_t f +  v \nabla_x f = 0$ and
$\p_t f +  v \nabla_x f = c \div_v(v f)$.  The mild
solution of equation (\ref{eq:wild-form1}) or (\ref{eq:wild-form2}),
which is nothing but Duhamel's formula, is
\begin{equation}\label{eq: mild sol}
  \fun(t)=e^{-t}T_t\fun_0+\int_0^t e^{-({t-\tau})}T_{t-\tau}(\mathcal{L}^+\fun(\tau)\d \tau
\end{equation}
Iterating ``infinitely'' the formula, it gives the following representation of the solution:
\begin{teorema}[Wild sum representation]
  The solution to either (\ref{eq:wild-form1}) or
  (\ref{eq:wild-form2}) can be written as
  \begin{equation}
    \label{eq:wild2}
      \fun(t)=
      e^{-c t} \sum_{n\ge
        0}c^n\int_0^t\int_0^{t_n} \cdots \int_0^{t_2}
      T_{t-t_n} \, \L^+ \, T_{t_n-t_{n-1}}
      \cdots
      T_{t_{2}-t_1} \, \L^+ \, T_{t_1} f_0
      \ \d t_1 \cdots \d t_n.
    \end{equation}
    In this expression it is understood that the term for $n = 0$ does
    not contain any integrals and is equal to $T_t  f_0$.
\end{teorema}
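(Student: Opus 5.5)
Our plan is the standard Dyson-series argument for a bounded perturbation of a generator. We view the equation as $\p_t f = (A - c)f + c\L^+ f$. Here $A$ generates the transport semigroup $T_t$; in case (\ref{eq:wild-form2}), $A$ also contains the drift $c\,\div_v(vf)$.

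\textbf{Properties of the ingredients.} First we check that $T_t$ is a positive, mass-preserving contraction semigroup on $L^1(\R^{2d})$. For (\ref{eq:wild-form2}) the prefactor $e^{cdt}$ is exactly the Jacobian of the change of variables $(x,v)\mapsto (x-(e^{ct}-1)v, e^{ct}v)$. We also check that $T_t$ solves the corresponding free equation, which is a direct differentiation along characteristics. Next, $\L^+$ is bounded on $L^1$ with $\|\L^+ g\|_{L^1}\le \|g\|_{L^1}$. Indeed, it is either $g\mapsto F\int g\,\d w$ or $g\mapsto G*_v g$; both are positive and mass preserving. We also check that $e^{-ct}T_t$ is the semigroup generated by $A-c$. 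Then the mild formulation reads
\[
f(t)=e^{-ct}T_tf_0+c\int_0^t e^{-c(t-\tau)}T_{t-\tau}\L^+ f(\tau)\d\tau .
\]
This is (\ref{eq: mild sol}) with the constant $c$ restored. It has a unique solution in $C([0,T];L^1)$ by a Banach fixed-point argument, since $\L^+$ is bounded.

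\textbf{Iteration.} We iterate this formula $N$ times. The partial sum gives the terms $n=0,\dots,N-1$ of (\ref{eq:wild2}) plus a remainder $R_N(t)$. To obtain exactly the integrand $T_{t-t_n}\L^+\cdots\L^+T_{t_1}f_0$ with a single factor $e^{-ct}$, we use the semigroup property $T_aT_b=T_{a+b}$. The exponentials $e^{-c(t-t_n)}e^{-c(t_n-t_{n-1})}\cdots e^{-ct_1}$ telescope to $e^{-ct}$. The remainder is an $N$-fold time integral over the simplex, ending with $f(t_1)$ in place of $f_0$.

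\textbf{Convergence.} Using the contraction bounds and $\sup_{[0,t]}\|f(\tau)\|_{L^1}\le \|f_0\|_{L^1}$, we get
\[
\|R_N(t)\|_{L^1}\le c^N\frac{t^N}{N!}\,\|f_0\|_{L^1}.
\]
The last ingredient follows from mass preservation and positivity, or simply from Gronwall. Hence $R_N\to0$. The same bound shows that the $n$-th term has norm at most $e^{-ct}(ct)^n/n!\,\|f_0\|_{L^1}$, so the series converges absolutely in $L^1$, uniformly on compact time intervals. Its sum therefore equals $f(t)$. Alternatively, one can verify directly that the series satisfies the mild equation, by reindexing $n\mapsto n+1$, and then invoke uniqueness.

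\textbf{Main obstacle.} The only delicate point is functional-analytic rather than computational. One must make sure the mild solution coincides with the solution considered in the paper, and that the $n$-fold integrals make sense as Bochner integrals. The latter requires strong continuity of $T_t$ on $L^1$ (true by continuity of translations and dilations in $L^1$), so that the integrands are measurable in $(t_1,\dots,t_n)$. We would handle this by working in $L^1$ throughout. For measure-valued initial data we would argue by duality or density, since all operators are positive and of norm at most one. The result is then an instance of \cite[III Thm.~1.10]{EngelNagel2001}.
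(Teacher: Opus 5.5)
Your proposal is correct and is essentially the paper's argument. The paper obtains \eqref{eq:wild2} by iterating the Duhamel formula \eqref{eq: mild sol} ``infinitely'' and citing \cite[III Thm.~1.10]{EngelNagel2001}, and your remainder bound $\|R_N(t)\|_{L^1}\le \frac{(ct)^N}{N!}\|f_0\|_{L^1}$ just makes explicit the convergence step the paper leaves implicit. One harmless remark: when you check that $T_t$ solves the free part of \eqref{eq:wild-form2} for $c\neq 1$, the spatial shift has to be $\frac{e^{ct}-1}{c}\,v$ rather than $(e^{ct}-1)v$. This does not change the Jacobian $e^{cdt}$ or anything else in your argument, and the paper only uses $c=1$.
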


\begin{osservazione}    
In both cases \eqref{eq:wild-form1} and \eqref{eq:wild-form2}, $\L^+$ is a positive operator, and they are always posed
for $(x,v) \in \R^d \times \R^d$. The main restrictions here, apart
from the operators being linear, are that the collision part needs to
be split into a positive and a a negative part, with the negative part
being $-f$ (this is the analogue to Maxwellian molecules in the
Boltzmann case); and that if a drift is present it must be $\div(v
f)$, a standard quadratic confinement.
\end{osservazione}

This representation can be obtained as a purely analytic expression by
iterating Duhamel's formula; see \cite{EngelNagel2001}. But it also has an
illuminating probabilistic interpretation, which is relatively
well-known and can be understood from the above Wild sum
expression. Consider the following stochastic process $(X_t,V_t)$ on $\R^d
\times \R^d$: a particle randomly starts at $(X_0, V_0)$ with
probability distribution $f_0$, and carries an exponential clock of a
given intensity $c$. The particle moves under free transport
deterministic way until the clock rings; when the clock rings, the
particle jumps randomly according to the kernel of the operator
$\L^+$, and the clock is restarted. That is, between two rings of the
clock, the random variables $X_t, V_t$ satisfy either
\begin{equation*}
  d X_t = V_t \d t,
  \qquad
  d V_t = 0
\end{equation*}
in the case of equation~(\ref{eq:wild-form1}), or
\begin{equation*}
  d X_t =V_t \d t,
  \qquad
  d V_t = - c V_t \d t
\end{equation*}
in the case of equation~(\ref{eq:wild-form2}). \emph{The solution $g = g(t)$
to equation (\ref{eq:wild-form1}) or (\ref{eq:wild-form2}) at time
$t \geq 0$ is then the law of the stochastic process $(X_t, V_t)$ at
time $t$.}

Denote by $\Phi_n$ the $n$-th term in the sum (\ref{eq:wild2}). That is, 
\begin{equation*}
  \Phi_n = \Phi_n(t,x,v; t_1,\dots,t_n) :=
  T_{t-t_n} \, \L^+ \, T_{t_n-t_{n-1}}
  \cdots
  T_{t_{2}-t_1} \, \L^+ \, T_{t_1} f_0.
\end{equation*}
Then $\Phi_n$ represents the probability distribution of the process
$(X,V)$ at time $t$, provided exactly $n$ random jumps (clock rings) happened
at the times $0 \leq t_1 \leq \dots \leq t_n \leq t$. 
Formally, the number of random jumps before time
$t$, is a Poisson distribution of parameter $c$.
Given $n$, the distribution of the jump times
$t_1, \dots, t_n$ can be obtained by drawing $n$ random points from
the uniform distribution on $[0,t]$ and ordering them. We thus want to write $f(t)$ as an expectation of a suitable random variable with respect of some measure $\mu$. The transition from the deterministic integral expansion \eqref{eq:wild2} to the expectation that we show in  \eqref{eq:wild2-expectation} can be made rigorous.

Let us take $c=1$ to ease the notation.  We denote by  $\mathbf{x}=(x_1,\dots, x_{n+1})\in \R^{n+1}$. The following definition comes to our help:
\begin{definizione}\label{def:simplex}
\begin{enumerate}[(i)]
\item The ordered simplex of size $z\in \R$ is defined as  $\mathcal{S}_n(z) := \{ (t_1, \dots, t_n) \in \mathbb{R}^n : 0 < t_1 < \dots < t_n < z \}$.
\item We define the gap simplex (or just simplex) of size $z$ as the $n$ dimensional surface embedded in   $\R^{n+1}$ 
    $$\Delta_{n}(z):=\Big\{\mathbf{x}\in \R^{n+1}:\sum_{i=1}^{n+1} x_i=z\Big\}$$
\end{enumerate}
\end{definizione}
The $x_i$ can be considered the gaps between $n$ points in the interval $[0,z]$.

 \begin{definizione}\label{def: orderANDdirich}  
  \begin{itemize}
  \item 
  Let $U_i\sim\text{Unif}\,[0,1]$, $i=1,\dots, n$. Their order statistics, denoted by $U_{(1)} \le U_{(2)} \le \dots \le U_{(n)}$, are the random variables sorted from the smallest to the largest, e.g., $U_{(1)}=\min_i{U_i}$.
  The vector $\mathbf{U}=(U_{(1)},\dots U_{(n)})$ takes value in $\mathcal{S}(1)$
  \item  A random vector $\mathbf{X}=(X_1,\dots, X_n, X_{n+1})$ is said to follow a (symmetric flat) Dirichlet distribution of parameter $(1,\dots,1, 1)$ if it is uniformly distributed on the unit simplex $\Delta_{n}(1)$.

  Equivalently,  $\mathbf{X}$  can be described as the successive gaps between  the order statistics. Specifically, define
$ U_{(0)} = 0$ and $ U_{(n+1)}= 1$,
  and let $( U_{(i)})_{i=1,\dots, n}$ be the order statistics previously introduced. Then,
  $$X_i =  U_{(i)}- U_{(i-1)} \quad \text{for } i=1, \dots, n+1.$$ 

  $\mathbf{X}$ takes value in $\Delta_n(1)$.
\end{itemize}
\end{definizione}

We construct an explicit measure space that captures all possible collision histories up to time $t$. We define the space $\Omega_t$ as
\begin{equation*}
  \Omega_t := \{\omega=(n, t_1, \dots, t_n) \mid
  0 \leq n \in \N,
  \
  0 < t_1 < \dots < t_n < t \}
\end{equation*}

For any continuous and  bounded function $\f: \Omega_t \to \R$, we define the probability measure $\mu$ on $\Omega_t$ by
\begin{equation*}
  \int_{\Omega_t} \f \d \mu
  :=
  e^{- t} \sum_{n\ge
    0}\int_{\mathcal{S}_n(t)} \f(n,t_1,\dots, t_n)\d t_1 \cdots \d t_n
\end{equation*}
Since the volume of $\mathcal{S}_n(t)=\dfrac{t^n}{n!}$, it
is clear that $\mu(\Omega_t)=1$, making $(\Omega_t,\mu)$ a probability space.  For each $\omega\in\Omega_t$, the random variable $N_t$ is the projection map $$N_t:\Omega_t\to\N\qquad\text{such that}\qquad  N_t(\omega)= n$$ representing the number of collisions up to time $t$.
In particular, under the measure $\mu$,  $N_t$ follows a Poisson distribution of intensity $1$, i.e., the probability that in the time interval $[0,t]$ happened exactly $n$ jumps is 
$$
\P(N_t=n)=e^{-t}\frac{t^n}{n!}.
$$
For each $\omega\in\Omega_t$, the random variable $U_{(i)}$ is the projection map
$$
U_{(i)}:\Omega_t\to [0,t]\qquad \text{such that} \qquad U_{(i)}(\omega) := t_i
$$
representing the $i$-th collision time.
Given $N_t=n$, the random variables $U_{(1)}, \dots, U_{(n)}$ behave  as the order statistics sampled from $n$ independent uniform distributions on $[0,t]$ and the joint probability density function is 
$$\rho_{\mathbf{U} | N_t=n}(t_1,\dots, t_n) = \frac{n!}{t^n} \mathds{1}_{\mathcal{S}_n(t)}(t_1,\dots, t_n).$$

Similarly, assuming $t_0=0$ and $t_{N_t+1}=t$, the random variable $X_i$ represents the $i$-th waiting time between two jumps and 
$$X_i: {\Omega_t}\to[0,t]\qquad \text{such that}\qquad X_i(\omega)=x_i:= t_i -t_{i-1}.$$

Following Definition \ref{def: orderANDdirich}, conditional on $N_t=n$, the scaled variables $\frac{1}{t}X_1, \dots, \frac{1}{t}X_{n+1}$  follow a flat Dirichlet distribution, which implies
$$\rho_{\mathbf{X} | N_t=n}(x_1,\dots x_n,x_{n+1}) = \frac{n!}{t^n} \mathds{1}_{\Delta_n(t)}(x_1,\dots,x_{n+1}).$$

We rewrite the term in the Wild sum by normalising the integral: 
\begin{equation*}
    \int_{\mathcal{S}_n(t)} 
    \Phi_n  \d t_1 \dots \d t_n 
    = \frac{t^n}{n!} \int_{\mathcal{S}_n(t)}  \frac{n!}{t^n}\Phi_n  \d t_1 \dots \d t_n 
    = \frac{t^n}{n!} \, \mathbb{E}\big[ \Phi_n \mid N_t = n \big].
\end{equation*}
Thus recalling the factor $e^{-t}$, summing over $n$ possible number of collisions, by the total probability formula one has
\begin{equation}\label{eq:wild2-expectation}
    f(t) = \sum_{n\ge 0} \P(N_t=n) \E[\Phi_{N_t} \mid N_t=n] = \E[\Phi_{N_t}].
\end{equation}
where the expectation is taken according to $\mu$. This expression is the same as (\ref{eq:wild2}),
but it includes more cleanly both the sum and the integrals, and
clearly shows that $f(t)$ is an averaging of the values of $\Phi_n$ over all possible collision histories.

 In order to carry out some calculations we will also use the
expression
\begin{equation}
  \label{eq:wild3}
  f(t) = \int_{\Omega_t} \Phi_n (t, x, v; t_1, \dots, t_n) \d \mu(n, t_1,
  \dots, t_n)
  \equiv \int_{\Omega_t} \Phi_n  \d \mu.
\end{equation}

Our proofs rely on the following observations:
\begin{itemize}
    \item By introducing a time dependent operator $\tilde{\mathcal{L}}^+_t$, defined such that 
             $
             \mathcal{L}^+ T_t =T_t\tilde{\mathcal{L}}^+_t 
             $,
             we can ``push'' the transport operators to the left and combine them thanks to the semigroup property. This yields 
             $$
             f(t)=e^{-ct}T_t\Big[\sum_{n\ge
				0}c^n\int_0^t\int_0^{t_n} \cdots \int_0^{t_2}
			 \, \tilde{\mathcal{L}}_{t-t_n}^+ 
			\cdots
		\tilde{\mathcal{L}}_{t_2-t_1}^+\tilde{\mathcal{L}}_{t_1}^+ \,f_0
			\ \d t_1 \cdots \d t_n.\Big]
            $$
            allowing us to analyse  $\tilde{\mathcal{L}}_{t-t_n}\tilde{\mathcal{L}}_{t_n-t_{n-1}}\dots \tilde{\mathcal{L}}_{t_1}$
             \item 
             Since $f(t)$ is an average over all jump histories, we can neglect highly improbable paths, and estimate these probabilities thanks to suitable concentration inequalities (see Subsections \ref{sec: dirichlet} and \ref{sec: concIneq}).
             We emphasise a crucial feature: for kinetic equations it may happen (and actually it does happen e.g., for case II that mass may concentrate almost entirely along ``typical trajectories'' in one variable, while  the fluctuations in the other remain non-negligible.
         \end{itemize}
As already mentioned in the introduction, the idea of  Wild sums goes back to Wild and McKean, and was further developed e.g. by Carlen, Carvalho, and Toscani in the homogeneous setting \cite{Carlen_Carvalho_Gabetta_2000}. 
 The addition of the free transport and possibly a drift, creates a mixing effect between the variables --- particularly clear in the case \eqref{eq:defL-nonlocalFP}--- making the analysis significantly harder. The strategy discussed above addresses  this difficulty and, to the best of our knowledge, yields the only  available result for the inhomogeneous case in the full space.

\section{The kinetic linear BGK equation}
\label{sec:BGK}
In this section, we consider the linear relaxation BGK equation given by:
\begin{equation}
  \label{eq: eBGK}
  \partial_t f+v\cdot \nabla_x f=\mathcal{L^+}f-f
\end{equation}
where $f = f(t,x,v)$, the equation is posed for $t \geq 0$, $x,v \in
\R^d$, and the operator $\L^+$ is given by
$$
\mathcal{L^+}f (x,v) :=  F(v)\rho[f](x),
\qquad x,v \in \R^d.
$$
Here $F$ denotes the  velocity equilibrium, i.e. the probability distribution  which satisfies $\mathcal{L}F=0$, and $\rho[f](x)$ denotes the macroscopic density
$$
\rho[f](x):=\ird f(t,x,v)\d v.
$$
We provide the proofs of the main Theorem \ref{thm: main} in the case where the local equilibrium $F$ is either 
\begin{enumerate}[(i)]
    \item a Maxwellian $\mathcal{M}$, where $$\M(v) = (2\pi)^{-d/2}e^{-|v|^2/2},$$
   
    \item 
    a Stable Law $\mathcal{M}^s$, defined by the Fourier transform $$\widehat{\mathcal{M}}^s(\eta) = e^{-|\eta|^{2s}/2s},$$
    \end{enumerate}
    as well as giving a sketch of the proof in the following case:
    \begin{enumerate}[(i)]
     \setcounter{enumi}{2}
    \item $F$ is in the basin of attraction  for the generalised Central Limit Theorem of either a Maxwellian or a stable law in the sense of Assumption \ref{ass: BE}. A sketch of the proof in this case is provided in Section \ref{sec: FNOstable}. 
\end{enumerate}

The main Theorem  in this section, which is a particular case of Theorem \ref{thm: main}, is the following:
\begin{teorema}\label{thm: mainBGKStable}
Let $s\in(0,1]$ and $f_0:\R^d\times\R^d\to\R^+$ be a probability initial data, such that $$M_\nu:=\ird\ird f_0(x,w)|w|^\nu\d x \d w<\infty,$$
with $\nu=1$ if $s\in(1/2,1]$ and $\nu\in(s,2s)$ if $s\in(0,1/2]$.
Let $u$  be the solution of the (fractional) heat equation \eqref{eq: fracheat}  with  diffusivity constant $\kappa=\Gamma(2s)$ and initial data $u_0(x):= \ird f_0(x,w) \d w$. 
\begin{enumerate}[(a)]
    \item For $s\in(1/2,1]$, let  $p\in[1,\infty]$, $f_0\in L^p$, and $f$ be a solution of \eqref{eq: GBGK} with initial data $f_0$.
Then,  there exists a constant $C > 0$
  depending only on $\norm{f_0}_{L^p}$, $d$, $s$, $p$ and $M_1$  such that
    $$
 \norm{f(t)-\M^s(v)u(t,x)}_{L^p}\le Ct^{-q}t^{-\frac13},
$$
\item For $s\in(0,1/2]$: 
let
 \begin{itemize}
       \item $p \in [1, p_{\max})$ if $s \in \left(0, \frac{d}{2(d+1)}\right]$
            \item $p \in [1, \infty]$ otherwise,       
        \end{itemize}
let $f_0\in L^p$ and $f$
be a solution of \eqref{eq: GBGK} with initial data $f_0$.
Then, there exists a constant $C > 0$
  depending only on $d$,$s$, $\nu$ $p$ and $M_\nu$  such that
    $$
 \norm{f(t)-\M^s(v)u(t,x)}_{L^p}\le Ct^{-q}t^{-\gamma},
$$
where $\bar\gamma=\frac13(1-q(1-2s)_+)$.
Notice in particular that if $p=1$, then $\bar\gamma=\frac13$ for all $s\in(0,1]$.
\end{enumerate}
\end{teorema}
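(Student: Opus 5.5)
We would prove the theorem from the Wild sum representation \eqref{eq:wild2-expectation}, using the special structure of the BGK gain term. Since $\L^+ g = F\rho[g]$ and $\rho[T_\tau(F h)](x) = (h *_x F_\tau)(x)$ with $F_\tau(y):=\tau^{-d}F(y/\tau)$, every term with $n\ge1$ jumps factorises explicitly:
\[
\Phi_n = T_{t-t_n}\Big(F(v)\,\big(\rho[T_{t_1}f_0] * F_{x_2}*\cdots *F_{x_n}\big)(x)\Big),
\]
where $x_i=t_i-t_{i-1}$ are the gaps. For $F=\M^s$ we have $F_{x_2}*\cdots*F_{x_n}=\M^s_{\sigma_n}$ by stability, with $\sigma_n^{2s}=\sum_{i=2}^n x_i^{2s}$. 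We would isolate three bad pieces and show each is $O(t^{-q-\gamma})$ in $L^p$:
\begin{enumerate}[(a)]
\item the terms with few jumps, $N_t\le t/2$, whose probability is exponentially small;
\item the first gap $x_1$, which mixes the $v$-dependence of $f_0$; we replace $\rho[T_{x_1}f_0]$ by $u_0$, paying $x_1 M_\nu$ through a Wasserstein-type bound against a smoothed kernel;
\item the last free flight $T_{x_{n+1}}$, which shifts $x$ by $x_{n+1}v$ and which we compare with $F(v)\M^s_{\sigma_n}(x)$.
\end{enumerate}

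The second step is the concentration of $\sigma_n^{2s}$. Conditionally on $N_t=n$, the vector $t^{-1}\mathbf X$ is flat Dirichlet, so the gaps behave like independent exponentials with mean $1$ normalised by their sum. For $s\in(1/2,1]$ this gives $\E[\sigma_n^{2s}] \approx n\,\Gamma(2s+1)/\ldots\approx \Gamma(2s+1)\,t$ up to normalisation; this is where $\kappa=\Gamma(2s)$ comes from, after matching it with the Fourier symbol $|\eta|^{2s}/2s$. The variance is $O(t)$. Chebyshev's inequality then shows $\P(|\sigma_n^{2s}-\kappa t|>t^{1-\beta}) \lesssim t^{2\beta-1}$.

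On the good event we use the perturbation estimate for stable laws,
\[
\|\M^s_{a}-\M^s_{b}\|_{L^p}\lesssim a^{-d(1-1/p)}\,|a^{2s}-b^{2s}|/a^{2s},
\]
which contributes $t^{-q}t^{-\beta}$. The bad event contributes $t^{-q}t^{2\beta-1}$, after using the $L^p$ bound on $\Phi_n$ given by Young's inequality with the kernel at scale $\gtrsim t^{1/2s}$. Optimising, $\beta=2\beta-1$ fails, so instead we balance $\beta = 1-2\beta$, which gives $\beta=1/3$. This is exactly the exponent $1/3$ in the theorem. Young's inequality also handles the transport shift in (c), since $\|T_\tau g\|_{L^p}=\|g\|_{L^p}$.

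The main obstacle is the regime $s\le 1/2$. There $x_i^{2s}$ has fat relative fluctuations: the maximal gap is of order $\log n$, and the variance of $\sum_i x_i^{2s}$ involves moments that are fine but grow with $n$. More seriously, on the bad event where $\sigma_n$ is small, the $L^p$ norm of $\M^s_{\sigma_n}$ blows up like $\sigma_n^{-d(1-1/p)}$. This is integrable against the Dirichlet density only if $q(1-2s)<1$, which yields the restriction $p<p_{\max}$ and the loss $\bar\gamma=\frac13(1-q(1-2s)_+)$. We would handle it with a Hölder split: bound $\E[\sigma_n^{-2sq}\1_{\text{bad}}]$ by a negative moment of the Dirichlet sum times a Chebyshev probability. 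The same step forces $\nu\in(s,2s)$ in (b), since the comparison of $\rho[T_{x_1}f_0]$ with $u_0$ must be done with a fractional moment compatible with the stable tail.
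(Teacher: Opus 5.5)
Your overall scheme is the paper's: the factorised Wild sum with $F_{x_2}*\cdots*F_{x_n}=\mathcal{M}^s_{\sigma_n}$, Chebyshev for $\sigma_n^{2s}$ with variance $O(t)$, a good/bad split, and the balance $\beta=1-2\beta$. Two of your simplifications are legitimate. The first is taking the Chebyshev bound unconditionally around $\Gamma(2s+1)t$, so that the variance includes the fluctuation of $N_t$; note the centre is $\Gamma(2s+1)t=2s\kappa t$, not $\kappa t$. The second is averaging the first and last waiting times directly: they are truncated exponentials, so their fractional moments are $O(1)$. Together these replace the paper's $\mathcal{N}_1/\mathcal{N}_2$ split and its events $E_1,E_3$.

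\textbf{The gap is the bad event, which is where the exponent is decided.} The bound on $\Phi_n$ that carries the decay is $\|\Phi_n\|_{L^p}\le\|\mathcal{M}^s\|_{L^p}\|\mathcal{M}^s_{\sigma_n}\|_{L^p}\|\rho[T_{t_1}f_0]\|_{L^1}\simeq\sigma_n^{-2sq}$. It is not true that the kernel has scale $\gtrsim t^{1/2s}$ on $\{|\sigma_n^{2s}-\Gamma(2s+1)t|>t^{1-\beta}\}$. The lower deviations contain configurations with $\sigma_n$ far below $t^{1/2s}$, even $\sigma_n\to 0$: few jumps, $t_n-t_1$ small, or, for $s<1/2$, one long gap and many tiny ones. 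There $\sigma_n^{-2sq}$ is unbounded, and for $n=2$ it is not even integrable once $2sq\ge 1$. So neither ``$t^{-q}\times\mathbb{P}(\text{bad})$'' nor ``exponentially small probability'' in your step (a) is justified.

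The paper handles this with two ingredients:
\begin{itemize}
\item the deterministic bound $\sigma_n^{2s}\ge(t_n-t_1)^{2s}/(n-1)^{(2s-1)_+}$ (Jensen for $s\ge 1/2$, subadditivity for $s<1/2$);
\item a decoupling. $E_2$ is defined relative to $(t_n-t_1)^{2s}/n^{2s-1}$, so it involves only the normalised inner gaps. After the change of variables $u=t_1/t$, $w=(t-t_n)/t$, with inner gaps on $\Delta_{n-2}(1-u-w)$, the Dirichlet Chebyshev bound factors out of $\int\!\!\int(1-u-w)^{n-2-2sq}\,\mathrm{d}w\,\mathrm{d}u$, which is finite for $n>N_q=\lceil 2sq\rceil+1$.
\end{itemize}
This yields $\int_{E^c}\|\Phi_n\|\,\mathrm{d}\mu\lesssim t^{-2sq}t^{q(2s-1)_+}t^{-(1-2\beta)}$ with no loss in the exponent of the probability, i.e.\ exactly $t^{-q}t^{-(1-2\beta)}$ for $s>1/2$.

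Within your framework, an equally valid repair is to cut the lower deviation at $\sigma_n^{2s}=ct$ with $c$ small. Above the cut your scale argument is correct. The event $\{\sigma_n^{2s}<ct\}$ has exponentially small probability (Cram\'er for the i.i.d.\ waiting times). It can be paired by Cauchy--Schwarz with the polynomial bound on $\mathbb{E}[\sigma_n^{-4sq}\mathbf{1}_{n>N}]$ given by the same deterministic lower bound. The finitely many small-$n$ terms need a separate uniform bound, such as $\|\rho[T_{t_1}f_0]\|_{L^p}\le\|f_0\|_{L^1_vL^p_x}$.

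\textbf{For $s\le 1/2$ you also misidentify where $p_{\max}$ and $\bar\gamma$ come from.} Integrability against the Dirichlet density is never the obstruction: on the inner simplex $\sum x_i^{2s}\ge(t_n-t_1)^{2s}$, and the outer singularity is integrable once $n>N_q$. In the paper, the loss comes from using the concave lower bound $\sigma_n^{2s}\ge(t_n-t_1)^{2s}$ on all of $E_2^c$. That bound misses the typical size $\Gamma(2s+1)(t_n-t_1)^{2s}n^{1-2s}$ by a factor $n^{1-2s}\approx t^{1-2s}$. So the improbable region costs $t^{-q}\,t^{q(1-2s)}\,t^{-(1-2\beta)}$. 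Balancing $\beta=1-2\beta-q(1-2s)$ gives $\bar\gamma=\frac13(1-q(1-2s))$, which is positive exactly when $p<p_{\max}$.

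Your H\"older split $\mathbb{E}[\sigma_n^{-2sq}\mathbf{1}_{\text{bad}}]\le\mathbb{E}[\sigma_n^{-2sqr}]^{1/r}\,\mathbb{P}(\text{bad})^{1-1/r}$ does not produce this formula. It lowers the Chebyshev exponent to $(1-2\beta)(1-1/r)$. The negative moment it needs is infinite unless the small-$n$ terms are removed first, and you do not supply that estimate. So part (b) is not derived as written. Note that the cut at $ct$ described above never applies the crude bound on the moderate deviations, so it would not incur the $t^{q(1-2s)}$ loss at all.
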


\begin{osservazione}
    One can relax the hypothesis on the $\nu-$moment up to $\nu\ge\frac{2}{3}s(1-q(1-2s)_+)$, and, if we allow  a slower decay $\bar\gamma<\gamma$, up to $\nu>2s\bar\gamma$.
\end{osservazione}

Let us present now the third case $(iii)$, where we consider an equilibrium $F$ whose Fourier transform  has the following  expansion as $|\xi|\to 0$:
\begin{equation}\label{eq: assJ}
\widehat{F}(\xi)=1-|\xi|^{2s}+R(\xi)\qquad\text{with}\quad R(\xi)|\xi|^{-2s}\to 0 \text{ as }|\xi|\to 0
\end{equation}
This roughly means that $F$ has finite second moment if $s=1$ or $F(x)\sim |x|^{-d-2s}$ for large $|x|$ if $s\in(0,1)$.
Let us assume the following Berry-Esseen type assumption:
\begin{assumption}\label{ass: BE}
    Let $s\in(0,1]$. Let $F$ satisfies  \eqref{eq: assJ} plus some 
    \textit{additional assumptions} on the control on the reminder of the expansion.
    Define
     $$\mathfrak{f}_n(x):=(\bar{\sigma}^{2s}n)^{d/2s}f^{\sigma_1,\dots\sigma_n}(n^{1/2s}\bar \sigma x),$$
    with  $$f^{\sigma_1,\dots\sigma_n}(x):=f_{\sigma_1}*f_{\sigma_2}*\cdots*f_{\sigma_n}(x),$$   $\bar\sigma^{2s}=\frac{1}{n}\sum_{i=1}^n\sigma_i^{2s}$,  and 
    $$0<l\le \sigma_i \le L\quad i=1,\dots, n$$      
    for some $0<l<L$.
We say that $f$ satisfies an $L^p$- Berry-Esseen Theorem with speed of convergence $\bar\delta,$ if there exist an integer $N$ and a constant $C_{BE} $ such that 
\begin{equation*}
    \norm{\Jnorm-\M^s}_{L^p}\le \frac{C_{BE}}{n^{\bar\delta/{2s}}}\quad \text{ for every $n\ge N$}
\end{equation*}
\end{assumption}

Under the previous assumption, we show the following result:
 \begin{teorema}\label{thm: mainBGKgeneral}
        Let $f$ be a solution to \eqref{eq: genBGK}. Assume $F$ satisfies a Berry-Esseen Theorem as in Assumption \ref{ass: BE} and assume $f_0\in L^1_{x,v}\cap L^p_{x,v}$ such that 
        $M_\nu=\ird\ird |v|^\nu f_0(x,v)\d v<\infty$ with $\nu=1,$ if $\in(1/2,1]$ 
        or $\nu\in(s,2s)$ if $s\in(0,1/2]$. Let $u$ be the solution of the (fractional) heat equation \eqref{eq: fracheat}  with  diffusivity constant $\kappa=\Gamma(2s)$ and macroscopic initial density 
        $
        u_0(x)=\int f_0(x,w)\d w.$
        Then, the solution $f$ of equation \eqref{eq: genBGK} with initial data $f_0$ satisfies 
        $$
         \norm{f(t,\cdot,\cdot)-F(\cdot)\variable(t,\cdot)}_{L^p_{x,v}}\lesssim t^{-q}t^{-\eta}
        $$
        The Lebesgue exponent $p$ is such that 
$p\in[1,\infty]$ if $s\in\left(\frac{d}{2(d+1)},1\right]$; and 
  $p\in[1,p_{\max})$, with $$p_{\max} = \frac{(1-2s)d}{(1-2s)d -2s}$$ otherwise,
        and
          \begin{equation*}
            \eta=\begin{cases}\min\left\{\frac{\bar{\delta}}{2s},\frac{1}{3}\right\}&\text{ if }s\in(1/2,1]\\
                \min\left\{\frac{\bar{\delta}}{2s},\bar\gamma\right\}&\text{ if }s\in(0,1/2]
            \end{cases}
        \end{equation*} where $\bar\gamma$ is as in Theorem \ref{thm: mainBGKStable} 
        $$\bar\gamma = \frac{1}{3}\!\left(1 - q(1-2s)_+\right)$$
        and $\delta$ as in Assumption \ref{ass: BE}.    
    \end{teorema}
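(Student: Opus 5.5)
We plan to follow the Wild sum strategy of Section~\ref{sec: wild}, adapting the proof of Theorem~\ref{thm: mainBGKStable} and replacing the exact stability of $\M^s$ under convolution by the Berry--Esseen control of Assumption~\ref{ass: BE}. For the BGK operator, $\L^+ g = F\rho[g]$ forgets the velocity, so the Wild term with $n\ge1$ jumps at times $t_1<\dots<t_n$ is explicit:
$$\Phi_n(t,x,v) = F(v)\,\Big(\rho_0 * F_{x_{n}} * \cdots * F_{x_2} * \tilde F_{x_1}\Big)(x - x_{n+1} v),$$
with $x_i=t_i-t_{i-1}$ the gaps. Here $F_\sigma(y)=\sigma^{-d}F(y/\sigma)$ is the law of $\sigma V$, and $\tilde F_{x_1}$ is the $x$-marginal of the transport of $f_0$ over time $x_1$. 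Setting $\Psi_n := \rho_0*F_{x_n}*\cdots*F_{x_2}$, we get $f(t)=\E[\Phi_{N_t}]$, and the goal is to compare this with $F(v)u(t,x)$, where $u(t)=\rho_0*\M^s_{(\kappa t)^{1/2s}}$ up to the normalisation fixed by $\kappa=\Gamma(2s)$.

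\textbf{Step 1: typical histories.} We discard histories with $|N_t - t|> t^{1/2+\epsilon}$ (Poisson concentration), and histories where the Dirichlet gaps fail the bound $x_i\le L$ on all but a negligible fraction, or where the last gap $x_{n+1}$ or the first gap $x_1$ is large. These estimates are the concentration inequalities of Subsections~\ref{sec: dirichlet} and \ref{sec: concIneq}. Each discarded term is estimated in $L^p$ by the trivial bound $\|\Phi_n\|_{L^p}\lesssim\|f_0\|_{L^p}$ times its probability. This requires that probability to be $\lesssim t^{-q-\eta}$. Using Chebyshev-type bounds, this step produces the $1/3$ or $\bar\gamma$ loss exactly as in Theorem~\ref{thm: mainBGKStable}, and we plan to reuse that part verbatim.

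\textbf{Step 2: central limit step.} On typical histories, the convolution $F_{x_2}*\cdots*F_{x_n}$ is the density of a sum of independent scaled variables $x_iV_i$. Gaps below $l$ are handled by truncation: we either group them or drop the few tiny ones, which changes the scaling only negligibly. After the rescaling in Assumption~\ref{ass: BE}, we obtain
$$\Big\|F_{x_2}*\cdots*F_{x_n} - \M^s_{n^{1/2s}\bar\sigma}\Big\|_{L^p}\le C_{BE}\,(n^{1/2s}\bar\sigma)^{-d(1-1/p)}\,n^{-\bar\delta/2s}.$$
Since $n\sim t$, this is $t^{-q}t^{-\bar\delta/2s}$. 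Young's inequality with $\|\rho_0\|_{L^1}=1$ transfers the bound to $\Psi_n$. This step is the only genuinely new one compared with the stable case and explains the $\bar\delta/2s$ in $\eta$.

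\textbf{Step 3: back to the stable case.} We now have $\M^s_{n^{1/2s}\bar\sigma}$ in place of the exact stable convolution, which is precisely the object treated in the proof of Theorem~\ref{thm: mainBGKStable}. From there we reuse the comparison of $\bar\sigma^{2s} n$ with $\E[\sum x_i^{2s}]\approx \Gamma(2s+1)\,t$, whose Dirichlet moment calculation fixes $\kappa$, together with the perturbation estimates for stable laws. We also reuse the removal of the transport shift $x_{n+1}v$ and the first-gap shift, which is controlled by $M_\nu$ and exchanges $F(v)$ correctly. Together these give the remaining error $t^{-q}t^{-1/3}$, or $t^{-q}t^{-\bar\gamma}$ when $s\le 1/2$, with the same restriction $p<p_{\max}$. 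Summing the errors yields $t^{-q-\eta}$.

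The main obstacle is Step 2. Assumption~\ref{ass: BE} must apply uniformly over the random, inhomogeneous gap vectors, and it requires $\sigma_i\in[l,L]$, whereas Dirichlet gaps can be arbitrarily small. We would first try grouping consecutive small gaps. The risk is that the grouped variables $\sum_j x_jV_j$ are no longer scalings of $F$, which is exactly where the \emph{additional assumptions} on the remainder $R$ are needed. If grouping fails, we would instead condition on the set $I$ of indices with $x_i\ge l$ and apply Assumption~\ref{ass: BE} to that subfamily only. The small-gap convolutions would then be treated as an $L^1$-contraction, and the resulting change of scale $\sum_{i\notin I}x_i^{2s}$ would be absorbed through the stable perturbation lemma.
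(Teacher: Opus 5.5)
Your Steps 2--3 match the paper's sketch. It adds one term $\NN_0=(F_{x_2}*\cdots*F_{x_n}-\M^s_{\sigma_n})*\rho[T_{t_1}f_0]*\delta_{(t-t_n)v}$ to the stable-case splitting, bounds it on $E_2$ via Assumption \ref{ass: BE} to get $t^{-q}t^{-\bar\delta/2s}$, and treats $\NN_1,\dots,\NN_4$ as before.

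The genuine gap is Step 1. Bounding the discarded histories by a uniform bound on $\norm{\Phi_n}_{L^p}$ throws away the factor $t^{-q}$, so, as you say, their probability must be $\lesssim t^{-q-\eta}$. But you must also discard the histories on which $\sum_i x_i^{2s}$ is not concentrated, in order to run Step 3 and control $\NN_1$. These are controlled only by the Chebyshev bound of Proposition \ref{prop: tail dirichlet_fractional}, which gives probability $O(t^{-(1-2\beta)})$. Balancing this against the probable-region rate $t^{-q-\beta}$ yields at best $\eta=\frac{1-q}{3}$, and does not even recover $t^{-q}$ once $q\ge1$ (e.g.\ $p=\infty$, $d\ge2$).

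So the stable-case argument cannot be reused ``verbatim'' in the form you describe. Proposition \ref{prop:A_n-improbable} uses a uniform bound only for the finitely many $n\le N_q$, which carry exponentially small Poisson weight. For $n>N_q$ it keeps $\norm{\Phi_n}_{L^p}\lesssim\sigma_n^{-2sq}$, inserts the lower bound \eqref{eq: lowerboundSigma}, and integrates $(t_n-t_1)^{-2sq}$ over each $E_i^c$. That computation is also where $\bar\gamma$ and $p_{\max}$ come from, through the factor $t^{-2sq}t^{q(2s-1)_+}$; they do not come from the stable comparison in your Step 3.

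For general $F$, the ingredient you are missing is therefore a scale-dependent bound on the improbable histories. The paper gets it from the triangle inequality and Berry--Esseen: $\norm{F_{x_2}*\cdots*F_{x_n}}_{L^p}\le\norm{F_{x_2}*\cdots*F_{x_n}-\M^s_{\sigma_n}}_{L^p}+\norm{\M^s_{\sigma_n}}_{L^p}\lesssim(1+n^{-\bar\delta/2s})\sigma_n^{-2sq}$. Your uniform-bound route could only work with concentration estimates beating $t^{-q-\eta}$, for instance exponential ones for $\sum_i x_i^{2s}$, not with Chebyshev. (Incidentally, such a uniform bound needs $f_0\in L^1_vL^p_x$, not just $f_0\in L^p_{x,v}$, since $\norm{\rho[T_{t_1}f_0]}_{L^p_x}$ is controlled by $\norm{f_0}_{L^1_vL^p_x}$.)

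The uniformity problem you flag at the end is real, and the paper's sketch does not resolve it either. It applies Assumption \ref{ass: BE} directly to the Dirichlet gaps, on probable and improbable histories alike. Your fallback, however, does not work as stated, for two reasons.
\begin{itemize}
\item For fixed $0<l<L$, the gaps outside $[l,L]$ carry a fixed positive fraction of $\sigma_n^{2s}$, roughly $\Gamma(2s+1)^{-1}\int_{[0,l]\cup[L,\infty)}x^{2s}e^{-x}\d x$. Discarding them is therefore not a negligible change of scale.
\item $\M^s_{\sigma_I}*K_{I^c}$, with $K_{I^c}$ the convolution of the out-of-range factors $F_{x_i}$, is not a stable law. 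Lemma \ref{lemma: diststable}, which compares two stable laws, cannot absorb the discrepancy.
\end{itemize}
You would need one of two things. Either let $l\to0$ and $L\to\infty$ with $n$, with explicit dependence of $C_{BE}$ on $(l,L)$. Or replace each out-of-range $F_{x_i}$, one at a time in Lindeberg fashion, by the stable law of the same scale, with smoothing supplied by $\M^s_{\sigma_I}$ and a quantitative remainder bound such as $\abs{R(\xi)}\le C\abs{\xi}^{2s+\delta}$.
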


\begin{osservazione}[Discussion on Assumption \ref{ass: BE}]\label{rmk: addAss}
The precise additional assumptions required depend on the chosen $L^p$ space and consequently the specific version of the Berry-Esseen Central Limit theorem one wants to apply.  A unified presentation for these results is to our knowledge not available, and we believe it would be valuable, but it falls outside the scope of the present work,
The ``analytic'' of $L^2$ and $L^\infty$ strongly make use of the Fourier transform, and, in particular, assume that the reminder of \eqref{eq: assJ} satisfies$|R(\xi)|\le C|\xi|^{2s+\delta}$.

For the $L^\infty$ framework in the classical case $s=1$, we refer to  \cite[Thm 3.6]{canizotassi2024}. For the fractional case one could adapt the proof  \cite[Thm.2.16]{tassi2026_fractional} for $s\in(1/2,1)$ to the whole $s\in(0,1)$ case.
In general, results in the $H^k$ framework are available in \cite{goudon_junca_toscani_BE_2002}, although one should adapt the proof to the not i.i.d case.  In previous results the convergence rate is $\bar{\delta} \equiv \delta$ for both $L^2$ and $L^\infty$, which  extends to all $p \in [2, \infty]$ via standard interpolation.

Generally, establishing convergence in the $L^1$ framework ( or to bounding the total variation distance for the distributions)
requires stronger hypotheses due to impossibility of using Fourier transform.  For the classical case $s=1$,  we cite \cite{BallyCaramellino2016_CLTtotalvariation}
and for the stable case $s\in(0,1)$ \cite{Xiang_Xu_Yang_StableCLTTotalvariation}, although one should adapt the proof to the not i.i.d case. Finally, interpolation between  $L^1$ and $L^2$, one can prove the quantitative result in intermediate spaces for $p \in [1,2]$.
\end{osservazione}

We start by presenting some preliminary results and computations

\subsection{Preliminary results}

A prominent role in the proof of the main theorem will be played by Dirichlet random variables.
In particular we will use the following well-known lemma:

\begin{lemma}\label{lemma: dirichlet_equivalence}
Let $\Delta_n(z)$ be the $n$-dimensional simplex of size $z>0$ introduced in Definition \ref{def:simplex} $(ii)$. Let $\mathbf{x}\in \Delta_n(z)$ and take $\Psi: \Delta_n(z) \to \R$ a bounded measurable function. Then,
\begin{equation}
\int_{\Delta_n(z)} \Psi(\mathbf{x}) \d \mathbf{x}= \frac{z^n}{n!} \mathbb{E}\left[ \Psi(z\mathbf{X}) \right]\end{equation}
where $\mathbf{X} = (X_1, \dots, X_{n+1}) \sim \mathrm{Dir}(1, \dots, 1)$ is a random vector following the flat Dirichlet distribution on the unit simplex.
\end{lemma}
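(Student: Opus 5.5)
The plan is a change of variables onto the standard simplex, using the uniform density of the flat Dirichlet law. First I fix the meaning of $\d\mathbf{x}$ on $\Delta_n(z)$. The natural parametrisation is through the first $n$ coordinates $(x_1,\dots,x_n)$, which range over
\[
T_n(z):=\Big\{x_i>0,\ \sum_{i=1}^n x_i<z\Big\},
\]
with $x_{n+1}=z-\sum_{i\le n}x_i$. I take $\d\mathbf{x}$ to mean $\d x_1\cdots\d x_n$ in this chart, with positivity of the coordinates understood as in Definition~\ref{def: orderANDdirich}. This is the convention under which $\mathrm{vol}(\Delta_n(z))=z^n/n!$, consistent with the volume of $\mathcal{S}_n(t)$ used earlier. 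Using the surface measure instead would only introduce the constant factor $\sqrt{n+1}$.

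Next I compute the volume. The map $(t_1,\dots,t_n)\mapsto(t_1,t_2-t_1,\dots,t_n-t_{n-1})$ is linear with unit Jacobian and sends $\mathcal{S}_n(z)$ bijectively onto $T_n(z)$. Hence $|T_n(z)|=|\mathcal{S}_n(z)|=z^n/n!$, which can also be checked by induction on $n$.

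Then I identify the Dirichlet density. By Definition~\ref{def: orderANDdirich}, $\mathbf{X}$ is the vector of gaps of the order statistics of $n$ i.i.d.\ uniforms. The order statistics have density $n!\,\1_{\mathcal{S}_n(1)}$, as recalled in Section~\ref{sec: wild}. Pushing this forward through the unit-Jacobian gap map shows that $(X_1,\dots,X_n)$ has density $n!\,\1_{T_n(1)}$, and $X_{n+1}$ is determined by the others. Therefore
\[
\E[\Phi(\mathbf{X})]=n!\int_{T_n(1)}\Phi\Big(y_1,\dots,y_n,1-\textstyle\sum_{i\le n} y_i\Big)\d y
\]
for any bounded measurable $\Phi$.

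Finally I rescale. In the integral $\int_{\Delta_n(z)}\Psi(\mathbf{x})\d\mathbf{x}=\int_{T_n(z)}\Psi\big(x_1,\dots,x_n,z-\sum_{i\le n} x_i\big)\d x$ I substitute $x=zy$ with $y\in T_n(1)$. This produces the Jacobian $z^n$, and the last coordinate becomes $z\big(1-\sum_{i\le n} y_i\big)$, so the integrand is exactly $\Psi(z\mathbf{y})$. Applying the previous identity with $\Phi(\mathbf{y})=\Psi(z\mathbf{y})$ gives
\[
\int_{\Delta_n(z)}\Psi\,\d\mathbf{x}=z^n\,\frac{1}{n!}\,\E[\Psi(z\mathbf{X})],
\]
which is the claim. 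Boundedness of $\Psi$ guarantees that all integrals are finite.

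The only real obstacle is the measure convention on $\Delta_n(z)$. I resolve it by adopting the coordinate chart above, which matches how the lemma is used with the Wild sum integrals $\d t_1\cdots\d t_n$.
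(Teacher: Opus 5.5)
Your proof is correct and follows the same route as the paper. The paper simply uses $\mathrm{Vol}(\Delta_n(z))=z^n/n!$ and the uniformity of $\mathbf{X}$ on the simplex, so that the expectation is read off as a normalised integral; your version also makes explicit what the paper leaves implicit, namely the positivity constraint missing from the definition of $\Delta_n(z)$, the choice of the projected measure $\d x_1\cdots\d x_n$ (rather than surface measure) that makes the volume $z^n/n!$, and the rescaling showing that $z\mathbf{X}$ is uniform on $\Delta_n(z)$.
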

The proof is straightforward but we include it for completeness.
\begin{proof}
    The volume of $\Delta_n(z)=\frac{z^n}{n!}$.
    Since $\mathbf{X}$ is uniformly distributed on the simplex, by definition of expected value
    $$
    \E(\Psi(z\mathbf{X}))=\frac{1}{\textrm{Vol}(\Delta_n(z))}\int_{\Delta_n(z)}\Phi(\mathbf{x})\d \mathbf{x}.
    $$
\end{proof}
\subsubsection{Concentration inequalities}\label{sec: dirichlet}

We start by collecting properties of the symmetric Dirichlet distribution, which describes the statistical behaviour of the time gaps between collisions. 
Many of the properties of the Dirichlet distribution can be found in \cite[Ch. 49]{ContMultDistr}.

\begin{proposizione}\label{prop: tail dirichlet_fractional}
    Let $\mathbf{X}=(X_1,\dots, X_n, X_{n+1})\sim\Dir(1,\dots, 1,1)$ and define the random variable $S_{n,s}=\sum_{i=1}^{n+1}X_i^{2s}.$ 
    Let us define the mean of $S_{n,s}$ as
    $$\mathfrak{m}_{n,s}=\Gamma(2s+1)\frac{\Gamma((n+1)+1)}{\Gamma(n+2s+1)}.$$Then, for all $\beta \in (0, 1/2)$, there exists a constant $C_s > 0$ such that:
    \begin{equation*}
        \P\left( \left| S_{n,s} - \mathfrak{m}_{n,s} \right| \ge n^{1-2s-\beta} \right) \le \frac{C_s}{(n+2)^{1-2\beta}}.
    \end{equation*}
    In particular, if $s=1$,
    $$\P\left(\abs{S_{n,1}-\frac{2}{n+2}} \ge \frac{2}{n^{1+\beta}}\right) \le \frac{C_1}{(n+2)^{1-2\beta}}$$
\end{proposizione}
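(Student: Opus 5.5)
Since the statement is a tail bound with polynomial rate $(n+2)^{-(1-2\beta)}$, the natural route is Chebyshev's inequality:
\[
\P\big(|S_{n,s}-\mathfrak m_{n,s}|\ge n^{1-2s-\beta}\big)\le n^{-2(1-2s-\beta)}\,\Var(S_{n,s}).
\]
The whole task is therefore to compute, or sharply bound, $\Var(S_{n,s})$, and to show it is $O(n^{1-4s})$, i.e.\ of order $n^{2(1-2s)}\cdot n^{-1}$. This also fits the normalisation of the mean: $\mathfrak m_{n,s}\sim \Gamma(2s+1)n^{1-2s}$, since there are $n+1$ gaps each of size about $1/n$, each contributing about $n^{-2s}$. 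Given $\Var = O(n^{1-4s})$, Chebyshev yields $C_s n^{-1+2\beta}$. Adjusting $n$ to $n+2$ only changes constants for $n\ge1$; the case $n=0$ is trivial because $S_{0,s}=1$.

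For the moments I will use the standard Dirichlet moment formula from \cite[Ch.~49]{ContMultDistr}. For $\mathbf X\sim\Dir(1,\dots,1)$ with $n+1$ components,
\[
\E\Big[\prod_i X_i^{a_i}\Big]=\frac{\Gamma(n+1)\prod_i\Gamma(1+a_i)}{\Gamma(n+1+\sum_i a_i)}.
\]
\begin{itemize}
\item Since each marginal is $X_i\sim\mathrm{Beta}(1,n)$, we get $\E X_i^{2s}=\Gamma(2s+1)\Gamma(n+1)/\Gamma(n+2s+1)$. Summing over $i$ gives exactly $\mathfrak m_{n,s}$ (with $(n+1)\Gamma(n+1)=\Gamma(n+2)$).
\item Similarly, $\E X_i^{4s}=\Gamma(4s+1)\Gamma(n+1)/\Gamma(n+4s+1)$.
\item For $i\neq j$, $\E X_i^{2s}X_j^{2s}=\Gamma(2s+1)^2\Gamma(n+1)/\Gamma(n+4s+1)$.
\end{itemize}
Hence
\[
\Var S_{n,s}=(n+1)A_4+n(n+1)B-\mathfrak m_{n,s}^2,
\]
where $A_4=\E X_1^{4s}$ and $B=\E X_1^{2s}X_2^{2s}$.

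The main obstacle is the cancellation between the two leading terms, both of order $n^{2-4s}$: $n(n+1)B$ and $\mathfrak m_{n,s}^2=(n+1)^2\Gamma(2s+1)^2\Gamma(n+1)^2/\Gamma(n+2s+1)^2$. I need their difference to be $O(n^{1-4s})$, i.e.\ the leading coefficients must agree and the remainder must lose a full power of $n$. Writing $\Gamma(2s+1)^2$ as a common factor, this reduces to showing
\[
\frac{n(n+1)\Gamma(n+1)}{\Gamma(n+4s+1)}-\frac{(n+1)^2\Gamma(n+1)^2}{\Gamma(n+2s+1)^2}=O(n^{1-4s}).
\]
I would prove this with the asymptotic expansion $\Gamma(n+a)/\Gamma(n+b)=n^{a-b}\big(1+\tfrac{(a-b)(a+b-1)}{2n}+O(n^{-2})\big)$. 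Both terms equal $n^{2-4s}(1+c_i/n+O(n^{-2}))$, so their difference is $O(n^{1-4s})$ whatever the values of $c_1,c_2$. A more elementary alternative is log-convexity of $\Gamma$ (Gautschi/Wendel-type inequalities), which bounds $\Gamma(n+1)\Gamma(n+4s+1)$ against $\Gamma(n+2s+1)^2$ with an explicit $O(1/n)$ relative error. The remaining diagonal term $(n+1)A_4\sim\Gamma(4s+1)n^{1-4s}$ is already of the right order. This gives $\Var S_{n,s}\le C_s n^{1-4s}$, with $C_s$ uniform over $n\ge1$ once the finitely many small $n$ are absorbed.

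For the case $s=1$, everything is explicit:
\begin{itemize}
\item $\mathfrak m_{n,1}=2/(n+2)$;
\item $\E X_i^4=24/((n+1)(n+2)(n+3)(n+4))$;
\item $\E X_i^2X_j^2=4/((n+1)(n+2)(n+3)(n+4))$;
\item $\Var S_{n,1}=\dfrac{4(n+1)(n+6)}{(n+1)(n+2)(n+3)(n+4)}-\dfrac{4}{(n+2)^2}=\dfrac{4n}{(n+2)^2(n+3)(n+4)}=O(n^{-3})$.
\end{itemize}
Chebyshev with threshold $2n^{-1-\beta}$ then gives a bound of order $n^{2+2\beta}\cdot n^{-3}=n^{-1+2\beta}$, which matches the second displayed inequality.
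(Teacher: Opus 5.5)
Your proof is correct. It follows the same Chebyshev-plus-Dirichlet-moments scheme as the paper, but it handles the variance bound, the one step that matters, differently.

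The paper never faces the cancellation you identify. It invokes negative association of the Dirichlet components to get $\Cov(X_i^{2s},X_j^{2s})\le 0$ and discards all $n(n+1)$ covariance terms. That leaves $\Var(S_{n,s})\le (n+1)\Var(X_1^{2s})\le (n+1)\E X_1^{4s}\sim\Gamma(4s+1)\,n^{1-4s}$, which needs only leading-order Gamma asymptotics.

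You instead compute $\Var(S_{n,s})$ exactly. You then need the relative $O(1/n)$ term of the Gamma-ratio expansion to see that the two $n^{2-4s}$ contributions cancel. Your version of that expansion, with coefficient $(a-b)(a+b-1)/2$, is the correct one.

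Each route has its advantage. Yours is self-contained, with no appeal to negative association, and it gives exact constants. For example, your formula $\Var(S_{n,1})=4n/\bigl((n+2)^2(n+3)(n+4)\bigr)$ yields $C_1=1$, which the paper only mentions in passing. The paper's route avoids any delicate subtraction, at the price of an external structural fact.

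Your log-convexity remark actually bridges the two routes. The inequality $\Gamma(n+2s+1)^2\le\Gamma(n+1)\Gamma(n+4s+1)$ is exactly the statement $\E[X_i^{2s}X_j^{2s}]\le(\E X_i^{2s})^2$. So it proves the covariance sign the paper needs by elementary means, and after that the cancellation analysis is unnecessary.
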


\begin{proof}

The result is obtained by applying Chebyshev's inequality:
    \begin{equation}\label{eq:cheby} \P(\abs{S_{n,s}-\E(S_{n,s})}>\lambda)\le \frac{\Var(S_{n,s})}{\lambda^2} \end{equation}
    For a Dirichlet distribution with parameters $(\alpha_1,\dots, \alpha_{n+1})$ and $\alpha_0=\sum_{i=1}^{n+1}\alpha_i$ (see \cite[Ch. 49, Sec. 2]{ContMultDistr}), the moments are given by:
    $$ \E(X_i^r)=\frac{\Gamma(\alpha_i+r)\Gamma(\alpha_0)}{\Gamma(\alpha_i)\Gamma(\alpha_0+r)}\qquad \text{for } r>-\alpha_i,$$
and the cross-moments are given by 
$$
\E(X_i^rX_j^q)=\frac{\Gamma(\alpha_i+r)\Gamma(\alpha_j+q)\Gamma(\alpha_0)}{\Gamma(\alpha_i)\Gamma(\alpha_j)\Gamma(\alpha_0+r+q)}\qquad \text{for }r>-\alpha_i, \;q>-\alpha_j, \,i\ne j.
$$
    For the flat symmetric Dirichlet distribution ($\alpha_i=1, \alpha_0=n+1$), this yields:
    $$ \E(X_i^r)=n!\frac{\Gamma(r+1)}{\Gamma(n+r+1)} $$
    and    
$$
\E(X_i^rX_j^q)=n!\frac{\Gamma(r+1)\Gamma(q+1)}{\Gamma(n+r+q+1)}\qquad i\ne j
$$
    Hence,  we compute the mean:
    $$ \E(S_{n,s})=\sum_{i=1}^{n+1}\E(X_i^{2s})=\Gamma(2s+1)\frac{\Gamma(n+2)}{\Gamma(n+2s+1)} $$
     To bound the variance, we use the known property that the components of a Dirichlet distribution are negatively associated. Since $x \mapsto x^{2s}$ is an increasing function for $x \ge 0$, we have $\Cov(X_i^{2s}, X_j^{2s}) \le 0$ for $i \neq j$. Thus, we can  drop the covariance terms; while doing so yields a looser constant, using the cross-moments formula above, one can check that both $\Var(S_{n,s})$ and $\sum \Var(X_i^{2s})$ have the same asymptotic order. For the case $s=1,$ the computations are easier and one could include the covariance term to obtain the sharp constant $C_1=1$.

     For our analysis, however, we are satisfied with the  following upper bound:
    \begin{align*}		
        \Var(S_{n,s}) &= \sum_{i=1}^{n+1}\Var(X_i^{2s}) + \sum_{i=1}^{n+1}\sum_{j\ne i}\Cov(X_i^{2s},X_j^{2s}) \\
        &\le \sum_{i=1}^{n+1}\Var(X_i^{2s}) \\
        &= (n+1)\left[ n!\frac{\Gamma(4s+1)}{\Gamma(n+4s+1)} - \left(n!\frac{\Gamma(2s+1)}{\Gamma(n+2s+1)}\right)^2 \right]
    \end{align*}
We use the following asymptotic expansion 
        $$
        \frac{\Gamma(n+a)}{\Gamma(n+b)}\sim n^{a-b}\Big(1+ \frac{(a-b)(a+b+1)}{2n}+O(n^{-2})\Big).
        $$
        
        This implies that the order of the variance is  {$n^{1-4s}$}, that is there exists $c>1$ such that 
        $$
        \Var(S_{n,s})\le c^2(n+2)^{1-4s}
        $$
	We finally apply Chebyshev inequality \eqref{eq:cheby} with $\lambda$
	$$	\P\left(|S_{n,s}-\mathfrak{m}_{n,s}|>n^{(1-2s-\beta)}\right)\le \frac{\Var(S_{n,s})}{\left(n^{(1-2s-\beta)}\right)^2}\le \frac{c^2(n+2)^{1-4s}}{n^{2-4s-2\beta}}=C_s(n+2)^{-(1-2\beta)}.
	$$
\end{proof}

 Let $\beta\in(0,1/2)$ to be fixed later. We define $\mathscr{E}_n (z)$ as the subset of the simplex
  \begin{equation}\label{eq: defE2n}
\mathscr{E}_n (z):=\left\{\mathbf{x}\in \Delta_{n}(z) : \abs{|\mathbf{x}|^2-\frac{2z^2}{n+2}} \le \frac{1}{n^{\beta}} \frac{2z^2}{n} \right\}  
\end{equation} 
Note that for large $n$, this condition requires $|\mathbf{x}|^2$ to stay within a relative error of $n^{-\beta}$ from its expected value.
An analogous definition for the fractional case reads:
   \begin{equation}
        \label{eq:defE_stable}
        \mathscr{E}_{n,s}(z) := \left\{ \mathbf{x} \in \Delta_n(z) : \left| \sum_{i=1}^{n+1} x_i^{2s} - \dfrac{\Gamma(2s+1)z^{2s}}{(n+2)^{2s-1}} \right| \le \frac{1}{n^{\beta}}\dfrac{\Gamma(2s+1)z^{2s}}{(n+2)^{2s-1}} \right\},
    \end{equation}

\begin{corollario}\label{coroll: dirichlet_frac}
Since  $\mathfrak{m}_{n,s}\sim \Gamma(2s+1)n^{1-2s}$, with relative error of $n^{-1}$, one has that the volume of the set $\mathscr{E}_{n,s}(z)$ is bounded by
    \begin{equation*}
        \int_{\Delta_{n}(z)} \mathds{1}_{\mathscr{E}_{n,s}^c(z)}(\mathbf{x}) \, \d \mathbf{x} 
        \le \frac{z^n}{n!} \frac{C_s}{(n+2)^{1-2\beta}}.
        \end{equation*}
\end{corollario}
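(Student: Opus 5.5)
We reduce the volume bound to Proposition \ref{prop: tail dirichlet_fractional} through Lemma \ref{lemma: dirichlet_equivalence} and a scaling argument. Applying Lemma \ref{lemma: dirichlet_equivalence} with $\Psi=\mathds{1}_{\mathscr{E}_{n,s}^c(z)}$ gives
\[
\int_{\Delta_n(z)}\mathds{1}_{\mathscr{E}^c_{n,s}(z)}(\mathbf{x})\d\mathbf{x}=\frac{z^n}{n!}\,\P\big(z\mathbf{X}\in\mathscr{E}^c_{n,s}(z)\big),\qquad \mathbf{X}\sim\Dir(1,\dots,1).
\]
Because $\sum_i (zX_i)^{2s}=z^{2s}S_{n,s}$, the factor $z^{2s}$ cancels in the defining inequality of \eqref{eq:defE_stable}. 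Writing $a_n:=\Gamma(2s+1)(n+2)^{1-2s}$, we get
\[
\{z\mathbf{X}\in\mathscr{E}^c_{n,s}(z)\}=\{|S_{n,s}-a_n|>n^{-\beta}a_n\}.
\]
The claim therefore reduces to showing $\P(|S_{n,s}-a_n|>n^{-\beta}a_n)\le C_s(n+2)^{-(1-2\beta)}$, which is independent of $z$.

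Next, we replace the centring $a_n$ by the true mean $\mathfrak{m}_{n,s}$. The Gamma-ratio asymptotics used in the proof of Proposition \ref{prop: tail dirichlet_fractional} give
\[
\mathfrak{m}_{n,s}=\Gamma(2s+1)\frac{\Gamma(n+2)}{\Gamma(n+2s+1)}=\Gamma(2s+1)n^{1-2s}\big(1+O(n^{-1})\big),
\]
and the same expansion holds for $a_n$. Hence $|\mathfrak{m}_{n,s}-a_n|\le C a_n n^{-1}$. Since $\beta<1/2<1$, for $n\ge N_0$ we have $C a_n n^{-1}\le \tfrac12 a_n n^{-\beta}$. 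By the triangle inequality,
\[
\{|S_{n,s}-a_n|>n^{-\beta}a_n\}\subset\{|S_{n,s}-\mathfrak{m}_{n,s}|>\tfrac12 n^{-\beta}a_n\}.
\]
Also $a_n\ge c\,n^{1-2s}$, so the right-hand threshold is at least $c'\,n^{1-2s-\beta}$.

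We then bound the resulting probability by Chebyshev's inequality, as in the proof of Proposition \ref{prop: tail dirichlet_fractional}. Using $\Var(S_{n,s})\le c^2(n+2)^{1-4s}$, we obtain
\[
\P\big(|S_{n,s}-\mathfrak{m}_{n,s}|>c' n^{1-2s-\beta}\big)\le \frac{c^2(n+2)^{1-4s}}{c'^2n^{2-4s-2\beta}}\le C_s(n+2)^{-(1-2\beta)}.
\]
Proposition \ref{prop: tail dirichlet_fractional} itself only fixes the constant $1$ in front of $n^{1-2s-\beta}$, but the constant $c'$ only changes $C_s$. The finitely many $n<N_0$ are absorbed into $C_s$, since the probability is at most $1$ there.

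The main obstacle is the bookkeeping of the centring. The set is defined with $(n+2)^{2s-1}$ and a relative tolerance, whereas the proposition uses $\mathfrak{m}_{n,s}$ and an absolute tolerance. This is handled by the $O(n^{-1})$ relative error together with $\beta<1$.
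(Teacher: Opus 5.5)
Your proposal is correct and follows the paper's route. The paper justifies the corollary only by the one-line remark that $\mathfrak{m}_{n,s}$ agrees with $\Gamma(2s+1)(n+2)^{1-2s}$ up to a relative error $O(n^{-1})$, combined implicitly with Lemma \ref{lemma: dirichlet_equivalence} and the Chebyshev bound of Proposition \ref{prop: tail dirichlet_fractional}; you carry out exactly this: rescale by $z^{2s}$, absorb the centring shift using $\beta<1$, and re-run Chebyshev with a threshold constant $c'$. That last step is genuinely needed, since for $s<1/2$ one has $\Gamma(2s+1)<1$, so the proposition as stated, with constant $1$, does not apply verbatim.
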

In the following, we will often use $\mathfrak{m}_{n,s}$  where no confusion may arise to denote $\Gamma(2s+1)n^{1-2s}$.

We now gather some standard results regarding concentration for Poisson distributed random variables.
\begin{proposizione}\label{prop: boundtailPoiss}
Let $\lambda \ge \lambda_0 > 0$ and let $X$ be a random variable $X\sim \mathrm{Pois}(\lambda)$
For all $\gamma \in \R$, there exists a positive constant $C_\gamma$ (depending on $\gamma$ and $\lambda_0$) 
such that
\begin{equation}\label{eq:poisson2}
\mathbb{E}[X^\gamma \mathds{1}_{X \ge 1}] = e^{-\lambda}\sum_{n\ge 1}^\infty \frac{\lambda^n}{n!} n^\gamma \le C_\gamma\lambda^\gamma.\end{equation}\end{proposizione}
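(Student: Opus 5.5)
The plan is to split according to the sign of $\gamma$, and in each case reduce the Poisson moment $\E[X^\gamma \1_{X\ge 1}]$ to a bound on a simpler quantity.

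\textbf{Case $\gamma \ge 0$.} Here the function $x\mapsto x^\gamma$ is increasing, and the indicator only removes the term $n=0$, which contributes zero anyway when $\gamma>0$. So it suffices to bound $\E[X^\gamma]$.
\begin{itemize}
\item For $\gamma \le 1$, Jensen's inequality with the concave map $x\mapsto x^\gamma$ gives $\E[X^\gamma]\le (\E X)^\gamma = \lambda^\gamma$.
\item For $\gamma>1$, take an integer $k\ge\gamma$. Then $\E[X^\gamma]\le (\E[X^k])^{\gamma/k}$ by Jensen (or Hölder). The $k$-th moment of a Poisson variable is the Touchard polynomial $\sum_{j\le k} S(k,j)\lambda^j$, where $S(k,j)$ are the Stirling numbers of the second kind. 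Since $\lambda\ge\lambda_0$, this polynomial is at most $C_k(\lambda_0)\lambda^k$. Hence $\E[X^\gamma]\le C\lambda^\gamma$.
\end{itemize}

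\textbf{Case $\gamma<0$.} Write $\gamma=-a$ with $a>0$. On $\{n\ge 1\}$ we have $n+1\le 2n$, so $n^{-a}\le 2^{a}(n+1)^{-a}$. Next take an integer $k\ge a$. Then
\[
(n+1)^{-a}\le C_k\,\frac{n!}{(n+k)!}\,(n+k)^{k-a} \qquad\text{for } n\ge 1,
\]
because $(n+1)\cdots(n+k)\le (n+k)^k$ and $(n+k)\le k(n+1)$. It is simpler to handle the two pieces separately: bound $n^{-a}\le C_k\, n!/(n+k)!$ when $a=k$, and treat non-integer $a$ by Hölder (or Jensen with the convex map $x\mapsto x^{a/k}$) applied to the integer case. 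This gives
\[
\E[X^{-a}\1_{X\ge1}]\le \big(\E[X^{-k}\1_{X\ge1}]\big)^{a/k},
\]
valid since $a/k\le1$ and $x\mapsto x^{a/k}$ is concave.

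For integer $k$, the index shift $m=n+k$ gives
\[
e^{-\lambda}\sum_{n\ge1}\frac{\lambda^n}{n!}\frac{n!}{(n+k)!}
= \lambda^{-k}e^{-\lambda}\sum_{m\ge k+1}\frac{\lambda^m}{m!}\le\lambda^{-k}.
\]
Combining the two bounds yields $\E[X^{-a}\1_{X\ge1}]\le C\lambda^{-a}$.

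The only delicate point is the lower bound $\lambda\ge\lambda_0$. It enters only in the case $\gamma>1$, to absorb the lower-order terms $\lambda^j$ with $j<k$ into $\lambda^k$. The negative case needs no restriction on $\lambda$. The main (mild) obstacle is the negative non-integer case, which the concavity reduction above handles cleanly.
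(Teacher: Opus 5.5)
Your proof is correct. For $\gamma\ge 0$ it is essentially the paper's argument: Jensen for $\gamma\in[0,1]$, Touchard/Stirling polynomials for integer moments, and Lyapunov/H\"older for non-integer $\gamma>1$. You also correctly identify that this is the only place where $\lambda\ge\lambda_0$ is genuinely needed, since as $\lambda\to0$ one has $\mathbb{E}[X^\gamma]\sim\lambda$, which is not $O(\lambda^\gamma)$ when $\gamma>1$.

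For $\gamma<0$ you take a genuinely different route. The paper treats $\lambda\le 2$ separately and, for larger $\lambda$, splits the sum at $n=(1-\delta)\lambda$. On the bulk it uses $n^\gamma\lesssim\lambda^\gamma$, and it makes the lower tail exponentially small by a Chernoff bound built on the Poisson moment generating function. You instead argue as follows:
\begin{itemize}
\item you compare $n^{-k}\le (k+1)^k\,\frac{n!}{(n+k)!}$ for $n\ge1$;
\item you compute $e^{-\lambda}\sum_{n\ge1}\frac{\lambda^n}{(n+k)!}\le\lambda^{-k}$ exactly by an index shift;
\item you reduce non-integer $a$ to an integer $k\ge a$ by Jensen, applied to the concave map $y\mapsto y^{a/k}$ and the variable $Y=X^{-k}\mathds{1}_{X\ge1}$.
\end{itemize}
Your version is shorter and gives the explicit constant $(k+1)^{a}$. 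It also holds uniformly for every $\lambda>0$ with no case distinction. The paper's bulk/tail splitting, by contrast, does not depend on the algebraic form $n^\gamma$, and it is the same template reused in Proposition~\ref{prop: poiss2} for the less structured functions $h_p(n)$.

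In the write-up, drop the first display involving $(n+k)^{k-a}$ and the step $n^{-a}\le 2^a(n+1)^{-a}$. Both are true, but neither is used once you switch to the integer case plus concavity.
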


\begin{proof}
Let $X$ be a random random variable such that $X\sim \pois(\lambda)$.

\medskip \noindent
  \textit{Case 1: $\gamma\le 0$}.
When $\lambda\le 2$,  we can easily bound 
$n^{\gamma}\le \lambda^{\gamma}$ and obtain the claim. For larger $\lambda$ we proceed as follows.
Fix now a (small) positive number $\delta$ such that $\lambda\ge\frac{1}{1-\delta}$ and assume $\lambda(1-\delta)$ is an integer.
Then 
$$
 e^{-\lambda}\sum_{n\ge1}\frac{\lambda^n}{n!}n^{\gamma}=e^{-\lambda}\sum_{n=1}^{(1-\delta)\lambda}\frac{\lambda^n}{n!}n^{\gamma}+e^{-\lambda}\sum_{n\ge (1-\delta)\lambda+1}\frac{\lambda^n}{n!}n^{\gamma}:=I_1+I_2
$$
The second term can be easily bound: since $n\ge(1-\delta)\lambda+1$, then $n^{\gamma}\le C\lambda^{\gamma}$ and therefore
$$I_2\le C\lambda^{\gamma}\P(X\ge (1-\delta)\lambda)\le C\lambda^{\gamma}
$$
To bound the first term, we  use a Chernoff-type bound in the following way:
\begin{multline*}
    I_1=e^{-\lambda}\sum_{n=1}^{(1-\delta)\lambda}\frac{\lambda^n}{n!}n^{\gamma}\le e^{-\lambda}\sum_{n=1}^{(1-\delta)\lambda}\frac{\lambda^n}{n!}=\P(1\le X\le (1-\delta)\lambda)<\P(X\le (1-\delta)\lambda)
    \\
    \le \P(e^{zX}\ge e^{z(1-\delta)\lambda})\le \frac{\E(e^{zX})}{e^{z(1-\delta)}\lambda}=e^{\lambda[e^z-1-(1-\delta)z]}
\end{multline*}
for all $z\le 0$. We haveused the fact that for a random variable $X$, $\E(e^{zX})$ is the moment generating function which, in the case of the Poisson distribution, is $e^{\lambda(e^{z}-1)}$.

Now we choose $z=\log(1-\delta)$ in order to obtain
$$
I\le e^{-\lambda[\delta+(1-\delta)\log(1-\delta)]}:=e^{-a\lambda}
$$
since for $\delta\in(0,1)$, $\delta+(1-\delta)\log(1-\delta)>0$ (it is easy to see that is larger or equal than $\delta^2$.)

Considering both estimates, since the decay of $I$ is exponential, and thus much faster, we conclude the proof for this range of $\gamma$.

\medskip \noindent
  \textit{Case 2: $\gamma\in(0,1)$}
Since the function $x\to x^\gamma$ is concave, Jensen's inequality gives
$$
\E(X^\gamma\mathds{1}_{X \ge 1})\le (\E(X))^\gamma=\lambda^\gamma
$$
and the inequality it is satisfied with $C=1$.

  \medskip \noindent
  \textit{Case 3: $\gamma \ge 1$}. Assume without loss of generality that $\lambda_0=1$

 \begin{itemize}
  \item If $\gamma\in \N$, then the well-known representation via Stirling numbers of the second kind  $S(n,k)$--- see e.g. \cite{comtet_advanced_1974}
  $$
  \E(X^\gamma)=\sum_{k= 0}^\gamma S(\gamma, k)\lambda^k
  $$
  Since $\lambda \ge \lambda_0=1$, the sum is dominated by its highest degree term
$$
\E(X^\gamma)=\sum_{k= 0}^\gamma S(\gamma, k)\lambda^k\le\lambda^\gamma\sum_{k=0}^\gamma S(\gamma, k)=B_\gamma\lambda^\gamma$$
where $B_\gamma=\sum_{k=0}^\gamma S(\gamma,k)$ is the $\gamma$-th Bell number and is is known to be bounded by $\left(\frac{c\gamma}{\log( \gamma+1)}\right)^\gamma$, which is obviously a finite number.

\item If $\gamma\notin\N$, choose any integer $m>\gamma$. H\"older's inequality implies that for $1<\gamma<m<\infty$
$$
\E(|X|^\gamma)\le \E(|X|^{m})^{\gamma/m}\le 
(C_{m}\lambda^{m})^{\gamma/m}\le C_\gamma\lambda^\gamma
$$
\end{itemize}
 
\end{proof}

A second (similar) result is given by the following Proposition, whose proof is again based on the idea of splitting  in two two areas: a central bulk (and probable) region and an exponential decaying tail region 

\begin{proposizione}\label{prop: poiss2}
    Let $s\in(0,1],$ and $\M^s$ be a standard $2s$ stable law.
	For $\lambda > 0$, let $X \sim \pois(\lambda)$ and define for $x\ge 1$. Then there exists a constant $C > 0$ depending only on $p,s$, and the dimension $d$ such that
    $$
	\E\left(\norm{\M^s -\M^s_{\left(\frac{\lambda}{x}\right)^{1-\frac{1}{2s}}}}_{L^p}\right)\le C \left(
    \frac{1}{\sqrt{\lambda}} + \frac{1}{\lambda}
    \right).
	$$
\end{proposizione}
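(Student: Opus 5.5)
The plan is to reduce the statement to two ingredients: a quantitative dilation estimate for the stable law, and concentration of $X\sim\pois(\lambda)$ around $\lambda$. The latter comes from Chebyshev's inequality together with Proposition~\ref{prop: boundtailPoiss}. Throughout I read $\M^s_a(v):=a^{-d}\M^s(v/a)$. The random dilation is $a(X):=(\lambda/X)^{\theta}$ with $\theta:=1-\frac{1}{2s}$, and the expectation is taken on $\{X\ge1\}$. The event $\{X=0\}$ has probability $e^{-\lambda}\le C/\lambda$, and on it I bound the integrand by the constant $\|\M^s\|_{L^p}$ (or just discard it, depending on the convention).

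\emph{Step 1 (deterministic dilation bounds).} Scaling gives $\|\M^s_a\|_{L^p}=a^{-d(1-1/p)}\|\M^s\|_{L^p}$, so for every $a>0$
\[
\|\M^s-\M^s_a\|_{L^p}\le \|\M^s\|_{L^p}\bigl(1+a^{-d(1-1/p)}\bigr).
\]
For $a\in[1/2,2]$ I want the Lipschitz bound $\|\M^s-\M^s_a\|_{L^p}\le C|a-1|$. To get it, write $\frac{\d}{\d a}\M^s_a(v)=-a^{-d-1}\,(d\,\M^s+w\cdot\nabla\M^s)(w)\big|_{w=v/a}$ and integrate in $a$ using Minkowski's inequality. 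This requires $d\,\M^s+w\cdot\nabla \M^s\in L^p$ for all $p\in[1,\infty]$. That holds because $\M^s$ is smooth and bounded, and for $s<1$ one has $\M^s(w)\sim|w|^{-d-2s}$ and $|\nabla\M^s(w)|\lesssim|w|^{-d-2s-1}$, while for $s=1$ everything decays like a Gaussian.

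\emph{Step 2 (bulk region).} Let $B:=\{|X-\lambda|\le\lambda/2\}$. On $B$ we have $X/\lambda\in[1/2,3/2]$, so $a(X)$ lies in a fixed compact subinterval of $(0,\infty)$ determined by $\theta$. Shrinking the window if necessary, $a(X)$ even lies in $[1/2,2]$. By the mean value theorem $|a(X)-1|\le C_\theta|X/\lambda-1|$. Step~1 and Cauchy--Schwarz then give
\[
\E\bigl[\|\M^s-\M^s_{a(X)}\|_{L^p}\1_B\bigr]\le \frac{C}{\lambda}\E|X-\lambda|\le \frac{C}{\lambda}\sqrt{\Var X}=\frac{C}{\sqrt\lambda}.
\]

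\emph{Step 3 (tail region).} On $B^c\cap\{X\ge1\}$ I use the crude bound from Step~1. There $a^{-d(1-1/p)}=(X/\lambda)^{\theta d(1-1/p)}$, with exponent $\gamma:=\theta d(1-1/p)$ which is positive if $s>1/2$ and nonpositive if $s\le1/2$. By Cauchy--Schwarz,
\[
\E\bigl[(1+a^{-\gamma/\theta})\1_{B^c}\1_{X\ge1}\bigr]\le \P(B^c)^{1/2}\Bigl(1+\lambda^{-2\gamma}\,\E[X^{2\gamma}\1_{X\ge1}]\Bigr)^{1/2}.
\]
Proposition~\ref{prop: boundtailPoiss}, which covers every real exponent, bounds $\E[X^{2\gamma}\1_{X\ge1}]\le C\lambda^{2\gamma}$ for $\lambda\ge\lambda_0$. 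Chebyshev gives $\P(B^c)\le 4/\lambda$, so the tail contributes at most $C\lambda^{-1/2}$. If one prefers the sharper $C/\lambda$ here, the Chernoff bound from the proof of Proposition~\ref{prop: boundtailPoiss} makes $\P(B^c)$ exponentially small. For $\lambda<\lambda_0$ the whole expectation is bounded by a constant times $1+\E[(X/\lambda)^{\gamma}\1_{X\ge1}]$. For $\gamma\le0$ this is at most $2$, since $X\ge1$ gives $(X/\lambda)^\gamma\le\lambda^{-\gamma}$ and $\lambda<\lambda_0$. For $\gamma>0$ it is at most $1+\lambda^{-\gamma}\E[X^\gamma]\le C$, because every positive moment of $X$ is bounded by $C\lambda$ when $\lambda$ is small. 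Either way the bound is constant, hence at most $C(\lambda^{-1/2}+\lambda^{-1})$ in that range.

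\emph{Main obstacle.} The main difficulty is the tail bookkeeping in Step~3. The sign of $\theta$ changes at $s=1/2$, so the blow-up of $\|\M^s_a\|_{L^p}$ comes from large $X$ when $s>1/2$ and from small $X$ when $s<1/2$. For this reason it matters that Proposition~\ref{prop: boundtailPoiss} handles negative exponents on $\{X\ge1\}$. A secondary issue is checking $w\cdot\nabla\M^s\in L^1\cap L^\infty$ in the fractional case, which I would take from the standard gradient asymptotics of stable densities.
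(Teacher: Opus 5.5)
For $\lambda$ bounded below, your argument is correct and is essentially the paper's proof. The paper makes the same split into a bulk $\{|X-\lambda|\le\delta\lambda\}$ and a tail. In the bulk it uses the same Lipschitz dilation estimate (the content of Lemma~\ref{lemma: diststable}) together with $\E|X-\lambda|\le\sqrt{\lambda}$, and it uses a crude bound in the tail. Your variations are harmless. Applying the mean value theorem directly in $X/\lambda$ avoids the paper's Taylor expansion of $\lambda/n$. Your tail argument (Chebyshev, Cauchy--Schwarz and Proposition~\ref{prop: boundtailPoiss}) makes explicit what the paper only sketches with a Chernoff bound.

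The step that fails is the small-$\lambda$ paragraph. Take $s>1/2$ and $\gamma=\theta d(1-\frac1p)>1$. Then the claim $1+\lambda^{-\gamma}\E[X^\gamma]\le C$ is false: your bound $\E[X^\gamma]\le C\lambda$ only gives $C\lambda^{1-\gamma}$, which blows up as $\lambda\to0$. In general this cannot be repaired. The $n=1$ term alone is $\lambda e^{-\lambda}\|\M^s-\M^s_{\lambda^{\theta}}\|_{L^p}\ge \lambda e^{-\lambda}(\lambda^{-\gamma}-1)\|\M^s\|_{L^p}\gtrsim\lambda^{1-\gamma}$. So for $\gamma>2$ the asserted bound $C(\lambda^{-1/2}+\lambda^{-1})$ is false as $\lambda\to0$; an example is $s=1$, $d=5$, $p=\infty$, where $\gamma=5/2$. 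The proposition must therefore be read for $\lambda\ge\lambda_0$, with $C$ depending also on $\lambda_0$. This is all the paper's own proof delivers, since its tail bound $\tilde C_{p,d}e^{-c_\delta\lambda}$ is only meaningful for large $\lambda$. It is also all that is used later, with $\lambda=t\to\infty$. So drop the small-$\lambda$ paragraph. For $1<\gamma\le 2$ you could instead keep it by replacing ``constant'' with $C\lambda^{1-\gamma}\le C\lambda^{-1}$. Relatedly, on $\{X=0\}$ with $s<1/2$ the dilation is $a(0)=0$, so $\M^s_{a(0)}$ is a Dirac mass. The integrand is then not bounded by $\|\M^s\|_{L^p}$ for $p>1$, and that event must be discarded, as the paper does by summing over $n\ge1$.
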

\begin{proof}
    Set 
    $h_p(x):=\norm{\M^s -\M^s_{\left(\frac{\lambda}{x}\right)^{1-\frac{1}{2s}}}}_{L^p}.$ 
    
    We    have
    $$
	\E(h_p(X))=e^{-\lambda}\sum_{n\ge 1}\frac{\lambda^n}{n!}h_p(n)
    $$
	Let $\delta \in (0,1/2]$ (the proof actually works for $\delta = 1/2$). We  divide this sum in two regions: the "probable" one
    $$A_1 := \{ n \in \N \colon |n - \lambda| \leq \delta \lambda \}$$
    and the "improbable" one
    $$A_2 := \{ n \in \N \colon |n - \lambda| > \delta \lambda \}.$$

Using a Chernoff-type argument (similar to Proposition \ref{prop: boundtailPoiss}), we bound the tails, observing that the polynomial growth of the norm is dominated by the super-exponential decay of the Poisson tails.
 Thus, for some constants $\tilde{C}_{p,d}, c_\delta > 0$:
    \begin{equation}
        e^{-\lambda}\sum_{n \in A_2} \frac{\lambda^n}{n!} h_p(n) 
        \le \tilde{C}_{p,d} e^{-c_\delta \lambda}.
    \end{equation}    
	On the other hand, we apply Lemma \ref{lemma: diststable},  with $\lambda_1 := \left(\frac{\lambda}{x}\right)^{1-\frac{1}{2s}}$, $\lambda_2 := 1$ and then the mean-value theorem to obtain
	$$
	h_p(n)\le C_1\abs{1-\left(\frac{\lambda}{n}\right)^{2s-1}}\le C_s\abs{1-\frac{\lambda}{n}}.
	$$
	 Performing a Taylor expansion in $n$ close to $n=\lambda$ we have, for some $\xi \in [\lambda, n]$,
    \begin{equation*}
        \frac{\lambda}{n} - 1
        =
        -\frac{n-\lambda}{\lambda}
        + \frac{\lambda (n-\lambda)^2}{\xi^3}.
    \end{equation*}
    Now since in the probable region $n \geq (1-\delta) \lambda$, we also have $\xi \geq (1-\delta) \lambda$, so
    \begin{equation*}
        \left| \frac{\lambda}{n} - 1 \right|
        \leq
        \frac{|n-\lambda|}{\lambda}
        + \frac{(n-\lambda)^2}{(1-\delta)^3 \lambda^2}.
    \end{equation*}
    Using this,
	$$
	\sum_{n \in A_1} e^{-\lambda}\frac{\lambda^n}{n!}\Big|\frac{\lambda}{n}-1\Big|
    \le
    \sum_{n \in A_1} e^{-\lambda}\frac{\lambda^n}{n!}\Big(\frac{|n-\lambda|}{\lambda}
    +
    \frac{(n-\lambda)^2}{(1-\delta)^3\lambda^2}\Big)\le \frac{\E(|X-\lambda|)}{\lambda}+\frac{\Var(X)}{(1-\delta)^3\lambda^2}\le \frac{1}{\sqrt{\lambda}}+\frac{C}{\lambda}
	$$
	where in the last inequality we used the Cauchy-Schwarz inequality to bound $\E(|X-\lambda|)\le \sqrt{\Var(X)}=\sqrt{\lambda}$.
    We conclude by gathering the two estimates of the the probable and improbable regions.
\end{proof}
\begin{osservazione}\label{rmk:PoissonDependence}
 Let us notice that that neither the exponent of the factor 
 $(\lambda/n)$ nor the exact structure of the stable law plays a significant role here; what matters is only that the scaling is correct. This bound, indeed, comes from structural properties of the Poisson distribution.
\end{osservazione}
\subsubsection{Perturbations of Stable laws}
A first trivial result is the following:
\begin{lemma}
 \label{lem:Lp-norm-of-stable}
 For any $1 \leq p \leq +\infty$ and $\lambda > 0$,
 \begin{equation*}
   \| \M^s_\lambda \|_p = C \lambda^{-d (1 - 1/p)},
 \end{equation*}
 where $C = C(p,d) > 0$ is an (explicitly computable) constant.
\end{lemma}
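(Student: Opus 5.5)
The plan is a direct change of variables. I take the notation $\M^s_\lambda$ to mean the mass-preserving dilation
\begin{equation*}
\M^s_\lambda(v) := \lambda^{-d}\,\M^s\!\left(\frac{v}{\lambda}\right), \qquad \text{equivalently} \qquad \widehat{\M^s_\lambda}(\eta) = \widehat{\M}^s(\lambda\eta) = e^{-\lambda^{2s}|\eta|^{2s}/2s}.
\end{equation*}
This notation has not yet been formally defined at this point in the paper, and I am assuming this is the intended convention; it is the one consistent with the exponent in the statement. The claim then follows from a scaling computation, with the constant equal to $C(p,d) = \|\M^s\|_{L^p}$, the norm of the unscaled law. (The constant also depends on $s$, which the statement leaves implicit.)

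First I would check that $C(p,d)$ is finite and positive for every $p\in[1,\infty]$. Since $\M^s$ is a probability density, $\|\M^s\|_{L^1}=1$. Its Fourier transform $e^{-|\eta|^{2s}/2s}$ is integrable on $\R^d$, so Fourier inversion gives
\begin{equation*}
\|\M^s\|_{L^\infty} \le (2\pi)^{-d}\ird e^{-|\eta|^{2s}/2s}\d\eta < \infty.
\end{equation*}
The interpolation bound $\|g\|_{L^p}\le \|g\|_{L^1}^{1/p}\|g\|_{L^\infty}^{1-1/p}$ then shows $\M^s\in L^p$ for every $p$. Positivity of the norm is clear because $\M^s\not\equiv 0$. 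For $s=1$ this value is simply the Gaussian $L^p$ norm, and the constant is explicit in general through the inversion formula above.

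Next, for $p<\infty$ I would substitute $v=\lambda w$, so that $\d v = \lambda^d\,\d w$:
\begin{equation*}
\|\M^s_\lambda\|_{L^p}^p = \ird \lambda^{-dp}\,\M^s(v/\lambda)^p\d v = \lambda^{-dp+d}\,\|\M^s\|_{L^p}^p .
\end{equation*}
Taking $p$-th roots gives $\lambda^{-d(1-1/p)}\|\M^s\|_{L^p}$. For $p=\infty$ the bound is immediate: $\sup_v \lambda^{-d}\M^s(v/\lambda) = \lambda^{-d}\|\M^s\|_{L^\infty}$, which agrees with the formula at $1/p=0$.

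No step is a real obstacle. The only point needing care is that $\|\M^s\|_{L^p}<\infty$ in the heavy-tailed case $s<1$, and the Fourier-integrability argument above covers all $p$ uniformly.
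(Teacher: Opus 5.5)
Your proof is correct and is essentially the argument the paper has in mind. The paper states this lemma as trivial and gives no proof, relying on the same change of variables $v=\lambda w$ (the scaling identity $\|\f_\lambda\|_{L^p}=\lambda^{-d(1-1/p)}\|\f\|_{L^p}$, which it later quotes as easy to check), with $C=\|\M^s\|_{L^p}$; your check that this norm is finite (integrability of $e^{-|\eta|^{2s}/2s}$ plus $L^1$--$L^\infty$ interpolation) and your remark that $C$ also depends on $s$ are correct details the paper leaves implicit.
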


\begin{lemma}\label{lemma: diststable}
    Let $\M^s$ be the standard stable law of stability parameter $2s$, i.e $\widehat{\M}^s(\xi)=e^{-\frac{|\xi|^{2s}}{2s}}$, and as usual, denote $\M^s_\lambda(x)=\lambda^{-d}\M^s\big(\frac{x}{\lambda}\big)$.    
    Then, for $\lambda_1,\lambda_2> 0$.
   \begin{equation}\label{eq: diststable2}
  	\norm{\M^s_{\lambda_1}-\M^s_{\lambda_2}}_{L^p}\le C_{d,p} \abs{\lambda_2-\lambda_1}\max\{\lambda_1^{-2sq-1},\lambda_2^{-2sq-1}\} .
  \end{equation}
  Consequently,
  if additionally   $\lambda_1,\lambda_2$ are such that $\abs{1-\frac{\lambda_1^{2s}}{\lambda_2^{2s}}}\le \frac{1}{2}$, we have
     \begin{equation}\label{eq: diststable3}
  	\norm{\M^s_{\lambda_1}-\M^s_{\lambda_2}}_{L^p}\le C_{d,p}\lambda_2^{-2s(q+1)}\abs{\lambda_2^{2s} - \lambda_1^{2s}}.
  \end{equation}
\end{lemma}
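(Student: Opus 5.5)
The natural route is to view the rescaling as a one-parameter family and apply the fundamental theorem of calculus along the scale parameter, then estimate the $L^p$ norm of the $\lambda$-derivative. Writing $\M^s_\lambda(x)=\lambda^{-d}\M^s(x/\lambda)$, we compute
\begin{equation*}
\partial_\lambda \M^s_\lambda(x) = -\lambda^{-d-1}\Big( d\,\M^s(x/\lambda) + \tfrac{x}{\lambda}\cdot\nabla \M^s(x/\lambda)\Big) = -\lambda^{-1}\big(\div(y\,\M^s)\big)_\lambda(x),
\end{equation*}
that is, $\partial_\lambda\M^s_\lambda=-\lambda^{-1}\Psi_\lambda$ with $\Psi:=\div_y(y\M^s)$ and $\Psi_\lambda:=\lambda^{-d}\Psi(\cdot/\lambda)$. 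The same scaling computation behind Lemma \ref{lem:Lp-norm-of-stable} gives $\|\Psi_\lambda\|_{L^p}=\lambda^{-d(1-1/p)}\|\Psi\|_{L^p}=\lambda^{-2sq}\|\Psi\|_{L^p}$, since $d(1-1/p)=2sq$ by \eqref{def:q-gal}. Hence, by Minkowski's integral inequality,
\begin{equation*}
\norm{\M^s_{\lambda_1}-\M^s_{\lambda_2}}_{L^p}\le \|\Psi\|_{L^p}\Big|\int_{\lambda_1}^{\lambda_2}\lambda^{-2sq-1}\d\lambda\Big|\le \|\Psi\|_{L^p}\,|\lambda_2-\lambda_1|\max\{\lambda_1^{-2sq-1},\lambda_2^{-2sq-1}\},
\end{equation*}
because $\lambda\mapsto\lambda^{-2sq-1}$ is monotone, so its maximum on the interval is attained at an endpoint. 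This gives \eqref{eq: diststable2} with $C_{d,p}=\|\Psi\|_{L^p}$.

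The key, and only nontrivial, step is to check that $\Psi=d\,\M^s+y\cdot\nabla\M^s$ lies in every $L^p$, $1\le p\le\infty$. For $s=1$ this is immediate since $\M$ is Gaussian. For $s<1$ we use the standard properties of stable densities: $\M^s$ is smooth and bounded with bounded gradient, and $\M^s(y)\sim c|y|^{-d-2s}$ and $|\nabla\M^s(y)|\lesssim |y|^{-d-2s-1}$ as $|y|\to\infty$. These give $|\Psi(y)|\lesssim (1+|y|)^{-d-2s}$, which is integrable and bounded, so $\Psi\in L^1\cap L^\infty$. One may alternatively argue on the Fourier side: $\widehat\Psi(\xi)=-\xi\cdot\nabla_\xi \widehat{\M}^s(\xi)=|\xi|^{2s}e^{-|\xi|^{2s}/2s}$, which gives $L^2$ and $L^\infty$ directly, while $L^1$ uses the tail bound. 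I expect this integrability check to be the main (though mild) obstacle, since it relies on the gradient decay of stable laws, which is classical (e.g.\ via subordination of the Gaussian).

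For \eqref{eq: diststable3}, set $a=\lambda_1^{2s}/\lambda_2^{2s}$, so that $|1-a|\le\tfrac12$ and hence $\lambda_1\in[2^{-1/2s}\lambda_2,\,(3/2)^{1/2s}\lambda_2]$. The maximum in \eqref{eq: diststable2} is then at most $C_s\lambda_2^{-2sq-1}$. By the mean value theorem applied to $r\mapsto r^{1/2s}$ on $[\lambda_1^{2s},\lambda_2^{2s}]$ (both endpoints comparable to $\lambda_2^{2s}$),
\begin{equation*}
|\lambda_2-\lambda_1|\le C_s\,\lambda_2^{1-2s}\,|\lambda_2^{2s}-\lambda_1^{2s}|.
\end{equation*}
Multiplying the two bounds yields $C\lambda_2^{-2sq-2s}|\lambda_2^{2s}-\lambda_1^{2s}|=C\lambda_2^{-2s(q+1)}|\lambda_2^{2s}-\lambda_1^{2s}|$, which is \eqref{eq: diststable3}.
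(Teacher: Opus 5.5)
Your proposal is correct and follows essentially the same route as the paper: you differentiate $\lambda\mapsto\M^s_\lambda$, identify $\partial_\lambda\M^s_\lambda$ as $\lambda^{-1}$ times a rescaled copy of $\div_y(y\M^s)$ (the paper rewrites this via the equilibrium equation as a fractional Laplacian of $\M^s$), integrate the resulting $\lambda^{-2sq-1}$ bound, and then obtain \eqref{eq: diststable3} by a mean-value comparison of $|\lambda_2-\lambda_1|$ with $|\lambda_2^{2s}-\lambda_1^{2s}|$ when $\lambda_1\simeq\lambda_2$. Your explicit check that $\div_y(y\M^s)\in L^1\cap L^\infty$, via the tail and gradient asymptotics of stable densities (or its Fourier transform $|\xi|^{2s}e^{-|\xi|^{2s}/2s}$), just supplies the detail the paper compresses into ``$\M^s$ is $C^\infty$ and decay at $\infty$ is controlled.''
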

\begin{proof}
 Let us prove \eqref{eq: diststable2}.
Consider the map $\lambda\mapsto \M^s_\lambda\in C^1((0,\infty); L^p(\R^d))$. One can easily check that 
 \begin{align*}
     \partial_\lambda \M^s_\lambda&
     =\partial_\lambda(\lambda^{-d}\M^s(x/\lambda)=-d\lambda^{-d-1}\M^s-\lambda^{-d-1}y\cdot \nabla_y \M^s(y)\\
     &=\lambda^{-d-1}(-\div_y(y\M^s))=\lambda^{-d-1}\Delta_y^s\M^s(y)
 \end{align*}
 with  $y=x/\lambda$ and since $\M^s$ is the equilibrium for the (fractional) Fokker Planck equation 
 $$
 \partial_t \f=-(-\Delta)^s\f+\div(x\f).
 $$
 Taking the $L^p_x$ norm of the previous gives
 $$
 \lambda^{-d-1}\Big(\ird\abs{\Delta^s \M^s(x/\lambda)}^p \d x \Big)^{1/p}=\lambda^{-d-1+d/p}\Big(\ird \abs{\Delta^s \M^s(y)}^p\d y\Big)^{1/p}=\lambda^{-2sq-1}\norm{\Delta^s\M^s}_{L^p}
 $$
 Now since $\Delta^s\M^s\in L^p$ ($\M^s$ is $C^\infty$ and decay at $\infty$ is controlled), the previous estimate gives the final result: indeed,  applying the fundamental theorem of calculus and triangle inequality, gives
 \begin{equation*}
    \begin{split}    \norm{\M^s_{\lambda_2}-\M^s_{\lambda_1}}_{L^p}
    \le \int_{[\lambda_1,\lambda_2]}\norm{\partial_\lambda \M_\lambda^s}_{L^p}\d \lambda\lesssim \int_{[\lambda_1,\lambda_2]}\lambda^{-2sq-1}\d \lambda\lesssim |\lambda_2-\lambda_1| \max\{\lambda_1^{-2sq-1},\lambda_2^{-2sq-1}\},    \end{split}
\end{equation*}
which is nothing but \eqref{eq: diststable2}.

\noindent
We are left to prove prove that equation \eqref{eq: diststable3} is a consequence of \eqref{eq: diststable2} under the assumption $\abs{1-\frac{\lambda_1^{2s}}{\lambda_2^{2s}}}\le \frac{1}{2}$. We observe that $\max\{\lambda_1^{-2sq-1},\lambda_2^{-2sq-1}\}
\lesssim
\lambda_2^{-2sq-1}.$ If $s=1$ it is indeed a simple consequence of the identity $(a^2-b^2)=(a+b)(a-b)$
   \begin{equation*}  	 \lambda_2^{-2sq-1}\abs{\lambda_2-\lambda_1}=\lambda_2^{-2sq-1}\frac{\abs{\lambda_2^2-\lambda_1^2}}{\abs{\lambda_1+\lambda_2}}\le\lambda_2^{-2sq-2}\abs{\lambda_1^2-\lambda^2_2}.
  \end{equation*}
  If $s\in(0,1)$, notice that by the additional hypothesis $$ \frac{\lambda_1}{\lambda_2}\in \Big[\Big(\frac{1}{2}\Big)^{1/2s},\Big(\frac{3}{2}\Big)^{1/2s}  \Big]:=I.
  $$
  Then,  the function $h(\theta)=\theta^{2s}$ has derivative $h'(\theta)=2s\theta^{2s-1}$
  which is bounded away from zero when $\theta\in I$.
  Hence, by the mean-value theorem, for some $\xi\in [1,\lambda_1/\lambda_2]\subset I$,
$$
\abs{h(1)-h\Big(\frac{\lambda_1}{\lambda_2}\Big)}=\abs{1-\frac{\lambda_1^{2s}}{\lambda_2^{2s}}}=\abs{1-\frac{\lambda_1}{\lambda_2}}h'(\xi)\ge 2s\abs{1-\frac{\lambda_1}{\lambda_2}} \min_{\xi\in I}\xi^{2s-1}\gtrsim\abs{1-\frac{\lambda_1}{\lambda_2}}. 
$$
Therefore,
   \begin{equation*}  	
   \lambda_2^{-2sq-1}\abs{\lambda_2-\lambda_1}
   =\lambda_2^{-2sq}\abs{1-\frac{\lambda_1}{\lambda_2}}
   \lesssim\lambda_2^{-2sq}\abs{1-\frac{\lambda_1^{2s}}{\lambda_2^{2s}}}   \le\lambda_2^{-2s(q+1)}\abs{\lambda_1^{2s}-\lambda^{2s}_2}.
  \end{equation*}

\end{proof}

\begin{lemma}\label{dist: deltas}
Let $\rho\in L^1$ and let $\M^s$ be the $2s$ standard stable law, i.e. $\widehat{M}^s(\xi)=e^{-\frac{|\xi|^{2s}}{2s}}$
There exists a constant $C={C}(d,p,s)$ such that for all $h\in \R$
\begin{equation}
    \norm{\M_{{(2st)^{1/2s}}}*\rho(x-h) -\M_{{(2st)^{1/2s}}}*\rho(x)}_{L^p_{x}}\le Ct^{-q-\frac{1}{2s}}|h|.
\end{equation}

\end{lemma}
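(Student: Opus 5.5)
The estimate is a translation-invariance statement. We will transfer the shift from $\rho$ onto the kernel and then use the scaling of $\M^s$.

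Set $\lambda := (2st)^{1/2s}$ and write $\tau_h g(x) := g(x-h)$. Since convolution commutes with translations, we have $\tau_h(\M^s_\lambda * \rho) = (\tau_h \M^s_\lambda) * \rho$. Young's inequality with exponents $(p,1)$ then gives
\[
\norm{\M^s_\lambda*\rho(\cdot-h)-\M^s_\lambda*\rho}_{L^p}\le \norm{\tau_h\M^s_\lambda-\M^s_\lambda}_{L^p}\norm{\rho}_{L^1}.
\]
The constant will therefore also depend on $\norm{\rho}_{L^1}$; in the application $\rho$ is a probability density, so $\norm{\rho}_{L^1}=1$. It remains to control the translation modulus of the kernel alone.

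For the kernel we use the fundamental theorem of calculus along the segment from $x$ to $x-h$. This gives
\[
\tau_h\M^s_\lambda(x)-\M^s_\lambda(x) = -\int_0^1 h\cdot\nabla \M^s_\lambda(x-\theta h)\d\theta .
\]
Minkowski's integral inequality and translation invariance of the $L^p$ norm then yield $\norm{\tau_h\M^s_\lambda-\M^s_\lambda}_{L^p}\le |h|\,\norm{\nabla\M^s_\lambda}_{L^p}$. The case $p=\infty$ is the same argument with a supremum. By scaling, $\nabla\M^s_\lambda(x)=\lambda^{-d-1}(\nabla\M^s)(x/\lambda)$, so
\[
\norm{\nabla\M^s_\lambda}_{L^p}=\lambda^{-1-d(1-1/p)}\norm{\nabla\M^s}_{L^p},
\]
in the same way as Lemma \ref{lem:Lp-norm-of-stable}. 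Since $d(1-1/p)=2sq$, substituting $\lambda=(2st)^{1/2s}$ gives $\lambda^{-1-2sq}=(2s)^{-q-1/2s}\,t^{-q-\frac{1}{2s}}$, which is the claimed rate.

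The only non-routine point is checking that $\nabla\M^s\in L^p$ for every $p\in[1,\infty]$. Boundedness and smoothness follow from the Fourier side, since $\xi\, e^{-|\xi|^{2s}/2s}\in L^1$. Integrability at infinity comes from the known decay $|\nabla\M^s(y)|\lesssim (1+|y|)^{-d-2s-1}$ for $s<1$; the Gaussian case $s=1$ is trivial. Integrability for $p=1$ is the delicate case, and this is where I would cite the classical tail asymptotics of stable densities and their derivatives. Alternatively, I would argue by subordination: write $\M^s$ as a mixture of Gaussians $\int G_{\sigma}\,\d\mu(\sigma)$ and bound $\norm{\nabla \M^s}_{L^1}\le \int \norm{\nabla G_\sigma}_{L^1}\d\mu(\sigma)\lesssim \int \sigma^{-1}\d\mu(\sigma)<\infty$. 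Once the finiteness of $\norm{\nabla\M^s}_{L^p}$ is settled, the lemma follows with $C=C(d,p,s)$.
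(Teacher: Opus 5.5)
Your proof is correct and takes essentially the same route as the paper. Both arguments use a first-order expansion in $h$, translation invariance and Young's inequality to reduce to $|h|\,\norm{\nabla\M^s_\lambda}_{L^p}\norm{\rho}_{L^1}$, and then the scaling of $\nabla\M^s_\lambda$. Your version is slightly more careful than the paper's in two places. You use the integral form of the fundamental theorem of calculus, whereas the paper uses a pointwise mean value theorem whose $\theta$ depends on $x$. You also explicitly check that $\nabla\M^s\in L^p$.
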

\begin{proof}
By the mean value theorem, for some $\theta\in[0,1]$
\begin{equation*}
   \M^s_{{(2st)^{1/2s}}}*\rho(x-h)-\M^s_{{(2st)^{1/2s}}}*\rho(x)=h\cdot\nabla_x\big(\M_{{(2st)^{1/2s}}}*\rho(x-\theta h)\big).
\end{equation*}
For a function $\f$ it is easy to check that
\begin{equation}\label{eq: scaling_temporal}
\norm{\f_\lambda}_{L^p}= \lambda^{-d(1-1/p)}\norm{\f}_{L^p},\qquad \norm{\nabla_x\f_\lambda}_{L^p}= \lambda^{-d(1-1/p)-1}\norm{\nabla_x\f}_{L^p}.
\end{equation}
Collecting the the previous identities 
\begin{equation*}
    \begin{split}
        &\norm{\M_{{(2st)^{1/2s}}}*\rho(x-h)-\M_{{(2st)^{1/2s}}}*\rho(x)}_{L^p}\le |h|\norm{\abs{\nabla_x\big(\M_{{(2st)^{1/2s}}}*\rho(\cdot-\theta h ) }\big)}_{L^p}\\        &=|h|\norm{\abs{\nabla_x\M_{{(2st)^{1/2s}}}*\rho}\big)}_{L^p}\le|h|\norm{\nabla_x\M_{{(2st)^{1/2s}}}}_{L^p}\norm{\rho}_{L^1}\le C|h|t^{-\frac{d}{2s}(1-\frac{1}{p})-\frac{1}{2s}},
    \end{split}
\end{equation*}
where we used the invariance under translation for the $L^p$ norm in the second equation and Young's inequality in the third.\end{proof}

\begin{lemma}\label{lemma: distinitialdata}
Let $s\in(0,1]$ and let $f:\R^d\times \R^d \to \R^+$, be a probability density such that 
  $$\int\int | w |^\nu f(x,w)\d w \d x:=M_\nu<\infty$$
  where $\nu=1$ if $s\in(1/2,1]$ or $\nu\in(0,2s)$ if $s\in(0,1/2]$. Define
  $\rho_h(x) := \int f(x-hw,w)\d w$. Then,  for $p\in[1,\infty]$, there exists a constant $C=C(d,p,s,M_\nu)$  such that
    \begin{equation}\label{eq:64}
        \norm{\M_{{(2st)^{1/2s}}}*\rho_h-\M_{{(2st)^{1/2s}}}*\rho_0}_{L^p}\le \tilde{C}_{d,p}|h|^\nu t^{-q-\frac{\nu}{2s}}
    \end{equation}
\end{lemma}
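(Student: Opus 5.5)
We write the difference as an average over velocities of shifted densities. Setting $K_t:=\M_{(2st)^{1/2s}}$, we have
$$
K_t*\rho_h(x)-K_t*\rho_0(x)=\ird \Big(K_t*f(\cdot,w)(x-hw)-K_t*f(\cdot,w)(x)\Big)\d w .
$$
Minkowski's inequality in $L^p_x$ then bounds the left-hand side of \eqref{eq:64} by
$$
\ird \norm{K_t*f(\cdot,w)(\cdot-hw)-K_t*f(\cdot,w)}_{L^p_x}\d w .
$$
The plan is to bound each integrand by a quantity that is integrable against the $\nu$-moment, with constant $C\,t^{-q-\nu/2s}\,\norm{f(\cdot,w)}_{L^1_x}\,|hw|^\nu$. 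Integrating in $w$ and using $\ird\norm{f(\cdot,w)}_{L^1_x}|w|^\nu\d w=M_\nu$ then gives the claim.

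To get the pointwise-in-$w$ bound we interpolate between two elementary estimates. On one side, Lemma \ref{dist: deltas} applied to $\rho=f(\cdot,w)$ with shift $hw$ gives a bound of order $t^{-q-1/2s}|hw|\,\norm{f(\cdot,w)}_{L^1}$. On the other side, the triangle inequality together with Young's inequality and Lemma \ref{lem:Lp-norm-of-stable} gives the trivial bound $2\norm{K_t}_{L^p}\norm{f(\cdot,w)}_{L^1}\lesssim t^{-q}\norm{f(\cdot,w)}_{L^1}$. Taking the minimum of the two,
$$
\min\{1,\,a\}\le a^\nu,\qquad a:=t^{-1/2s}|h||w|,\quad \nu\in(0,1],
$$
so the integrand is at most $C\,t^{-q}\,(t^{-1/2s}|h||w|)^\nu\,\norm{f(\cdot,w)}_{L^1_x}$. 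This is exactly the desired form. In the case $s\in(1/2,1]$ with $\nu=1$ we can use Lemma \ref{dist: deltas} directly, with no interpolation needed. For $s\le 1/2$ we have $\nu<2s\le 1$, so the same interpolation applies.

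The main point to check is that no restriction $\nu\le 1$ is violated, and that is the only delicate step. It holds because $\nu\in(0,2s)$ with $2s\le 1$ in the low-order case. We must also confirm that the constant $\norm{\nabla K_1}_{L^p}$ from Lemma \ref{dist: deltas} is finite for all $p\in[1,\infty]$. This follows from the smoothness of $\M^s$ and the decay $|\nabla\M^s(y)|\lesssim |y|^{-d-2s-1}$ for $s<1$, which is integrable and bounded. The resulting constant is $C(d,p,s)M_\nu$, as claimed.
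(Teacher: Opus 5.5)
Your proof is correct and follows essentially the paper's route: you write the difference as a $w$-superposition of translated heat-kernel convolutions, pull the $L^p_x$ norm inside by Minkowski, interpolate between the trivial bound $2\norm{\M_{(2st)^{1/2s}}}_{L^p}\lesssim t^{-q}$ and the gradient bound $|hw|\,\norm{\nabla \M_{(2st)^{1/2s}}}_{L^p}\lesssim |hw|\,t^{-q-1/2s}$, and then integrate against the $\nu$-moment. The only cosmetic difference is that the paper moves both the $y$- and $w$-integrals outside the norm and uses translation invariance, where you keep the $y$-convolution inside and invoke Young's inequality through Lemma~\ref{dist: deltas}; your $\min\{1,a\}\le a^\nu$ form of the interpolation is equivalent.
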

\begin{proof} For readability  we denote $\M_{{(2st)^{1/2s}}}$ simply by $\M$ when there is no possibility of confusion.   Let $\tau_h$ be the translation operator such that $\tau_hf(x)=f(x-h)$.
We define 
    \begin{multline}D(x)=\M*\rho_h-\M*\rho_0=\ird \M(x-\tilde{y})\ird f(\tilde{y}-hw,w)\d w\d \tilde{y}-\ird \M(x-y)\ird f(y,w)\d w
    \\
    =\ird\ird\left[\tau_{hw}\M(x-y)-\M(x-y)\right] f(y,w)
    \end{multline}
   after performing the change of variable $f(\tilde y-hw,w)\to f(y,w)$ in the first integral. 

   Employing the invariance of the $L^p$ under translation, we have
   \begin{multline*}
       \norm{D}_{L^p_x}\le \ird\ird \norm{\tau_{h w}\M(\cdot-y)-\M(\cdot-y)}_{L^p_x}f(y,w)\d y \d w\\
       =\ird\ird \norm{\tau_{h w}\M-\M}_{L^p_x}f(y,w)\d y \d w
   \end{multline*}
     We bound $\norm{\tau_{h w}\M-\M}_{L^p_x}$ by the two following estimates
     \begin{itemize}
         \item The trivial bound
         $$\norm{\tau_{h w}\M-\M}_{L^p_x}\le 2\norm{\M}_{L^p_x}$$
         \item Applying the fundamental theorem of calculus and again the invariance of the $L^p$ under translation, one can show the following bound with the gradient 
         \begin{multline*}
             \norm{\tau_{h w}\M-\M}_{L^p_x}|=
             \norm{\int_0^1\frac{\d}{\d \theta}\Big(\M(\cdot-\theta hw\Big)\d \theta}_{L^p_x}\\
             =\norm{- h\int_0^1w\cdot\nabla_x\M(\cdot-\theta hw)\d \theta}_{L^p_x}
             \le |hw|\norm{\nabla\M}_{L^p_x}
         \end{multline*}     \end{itemize}
   For any $\nu\in(0,1]$ interpolating between the two bounds one has 
   $$
   \norm{\tau_{h w}\M-\M}_{L^p_x}\le \left(2\norm{\M}_{L^p_x}\right)^{1-\nu} |hw|\norm{\nabla\M}_{L^p_x},
   $$
   and notice that for the case $s\in(1/2,1]$, $\nu=1$, and therefore the bound collapses to the pure gradient bound.
   Substituting in the previous, this gives
    \begin{multline}
        \norm{D(\cdot)}_{L^p_x}\le 
        \ird\ird  \norm{\tau_{hw}\M-\M}_{L^p_x}f(y,w)\d y\d w
        \\
        \le\ird\ird\left(2\norm{\M}_{L^p}\right)^{1-\nu}\left( |h w|\norm{\nabla \mathcal{M}}_{L^p}\right)^\nu f(y,w)
        \\
        =2^{1-\nu}\norm{\M}_{L^p}^{1-\nu}\norm{\nabla\M}_{L^p}^\nu |h|^{\nu}\ird\ird f(y,w) |w|^{\nu}\d w \d y
    \end{multline}
    Thus, recalling the scaling \eqref{eq: scaling_temporal}, we get
    $$
  \norm{D(\cdot)}_{L^p_x}\lesssim |h|^{\nu}(t^{-q})^{1-\nu} (t^{-q-\frac{1}{2s}})^{\nu}M_{\nu}(f)=M_\nu|h|^{\nu}t^{-q}t^{-\frac{\nu}{2s}}
    $$
    which is nothing but \eqref{eq:64}.
    \end{proof}

\subsection{Main argument when the equilibrium \texorpdfstring{$F$}{F} is a stable law}\label{sec: stableBGK}

We first consider the solution of the BGK equation with equilibration towards  either a Maxwellian $\M$ or a stable law $\mathcal{M}^s$.
\begin{equation}
  \label{eq: GBGK}\tag{BGK}
  \partial_t f+v\cdot \nabla_x f=\M^s(v) \rho[f]-f
\end{equation}
where the re-equilibration is  towards  $\M^s$,  the  $2s$ stable law  defined in \eqref{eq:stable_law}. Before proceeding with the proof, let us first point out some basic facts.
The Fourier expression \eqref{eq:stable_law} means that  for all $s<1$, $\M^s$ has heavy tails i.e.,
$$
\M^s(x)\sim |x|^{-d-2s}\qquad\text{ for large } |x|.
$$
This means that for such indices, it does not possess finite variance and,  in case $(ii)$ of Theorem \ref{thm: mainBGKStable}, it does not posses a finite mean either, hence the looser hypothesis on the $\nu$- moment  for this second case.

Furthermore, except for some specific values of $s$- for example, $s=1/2$ which corresponds to the Cauchy distribution- stable laws do not have an explicit form.
Nonetheless, they share an important property: they can serve as attractors for sums of i.i.d. random variables with infinite variance, in contrast to the case of finite variance, when the sum  converges to a Gaussian, which corresponds to the $2s$ stable law (i.e. $s=1$).
The name stable comes from the fact that these distributions are (the only) stable under convolution, namely that the convolution of two $\alpha$-stable laws is again a $\alpha$ stable law. More explicitly
\begin{equation}\label{eq: propStable}
\M^s_a*\M^s_b=\M^s_{(a^{2s}+b^{2s})^1/2s}.
\end{equation}
Moreover, it is a well-known fact that the solution of \eqref{eq: fracheat} can be written as a convolution between the initial data and the fractional heat kernel $\M^s_{(2st)^{1/2s}}$, suitably scaled to match the diffusivity constant, i.e.
$$
u(t,\cdot)=\M^s_{(\Gamma(2s+1)t)^{1/2s}}*\rho_0,
$$
where we used the property of Gamma functions to have $2s\Gamma(2s)=\Gamma(2s+1)$. Let us use $\diffC$ as a shorthand for $\Gamma(2s+1)$.

It is well known that both Gaussian and stable laws are closed under the
convolution. This stability property ensures that the Wild sum formulation of the solution is
particularly simple.

The equation \eqref{eq: eBGK} can be viewed as a linear transport equation perturbed by a non-local jump term.
In this case, $T_t$ is the semigroup generated by the free transport
$
v\cdot\nabla_x,
$
namely
$$
T_t f (x,v) = f(x-tv,v).
$$
Following \eqref{eq:wild2} or, equivalently, \eqref{eq:wild2-expectation}, we write the Wild sums formulation
\begin{align*}
  f(t) &=
  e^{-t} \sum_{n\ge
    0}\int_0^t\int_0^{t_n} \cdots \int_0^{t_2}
  \Phi_n (t, x, v; t_1, \dots, t_n)
  \ \d t_1 \cdots \d t_n
  \\
  &=
  \E[ \Phi_{N_t} (t, x, v; t_1, \dots, t_n)],
\end{align*}
where $N_t$ is a Poisson process of parameter $1$ and
\begin{equation*}
  \Phi_n \equiv \Phi_n (t, x, v; t_1, \dots, t_n)
  = T_{t-t_n} \, \L^+ \, T_{t_n-t_{n-1}}
  \cdots
  T_{t_{2}-t_1} \, \L^+ \, T_{t_1} f_0.
\end{equation*}
We have the following lemma
\begin{lemma}[Exchange lemma]
  \label{lem:exchange_BGK}
  For any function $f = f(x,v)$ of the form
  \begin{equation*}
    f(x,v) = F(v) \rho(x)
  \end{equation*}
  we have
  \begin{equation}\label{eq:aux2}
    \L^+ T_t f (x,v) = F(v) \left(F_t * \rho\right) (x),
    \qquad (x,v) \in \R^d \times \R^d.
  \end{equation}
  where $F_t$ is, as usual, the rescaled $F$, given by 
  $F_t(x)=t^{-d}F\left(\frac{x}{t}\right)$.
  \end{lemma}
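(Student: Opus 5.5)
This is a direct computation from the definitions of $T_t$ and $\L^+$, with one change of variables. There is no genuine obstacle. I would compute $T_t f$, then integrate out the velocity to get $\rho[T_t f]$, and finally multiply by $F(v)$.

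\emph{Step 1.} By definition of the free transport semigroup,
\[
T_t f(x,v) = f(x-tv,v) = F(v)\,\rho(x-tv).
\]

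\emph{Step 2.} Applying $\L^+ g(x,v) = F(v)\rho[g](x)$, we need the macroscopic density
\[
\rho[T_t f](x) = \ird F(w)\,\rho(x-tw)\d w .
\]
For $t>0$, substitute $y = tw$, so $\d w = t^{-d}\d y$. This gives
\[
\rho[T_t f](x) = \ird t^{-d}F\!\left(\frac{y}{t}\right)\rho(x-y)\d y = (F_t*\rho)(x).
\]
This step is where the statement has content: free transport of a product state acts on the spatial density as convolution with the rescaled velocity law. It is also the step to check for care. The substitution uses $t>0$. For $t=0$ one interprets $F_0$ as the Dirac mass, and the identity is then trivial because $\int F = 1$.

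\emph{Step 3.} Multiplying by $F(v)$ gives $\L^+T_t f(x,v) = F(v)(F_t*\rho)(x)$, which is \eqref{eq:aux2}.

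Fubini, or simply the integrability of $F\otimes\rho$ for $F\in L^1$ and $\rho\in L^1$, justifies all the integrals. The convolution is then well defined in $L^1$, or in $L^p$ if $\rho\in L^p$, by Young's inequality.

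Finally, I would note why the lemma matters for the Wild sum. The output again has the product form $F(v)\tilde\rho(x)$, so the lemma can be iterated. After the first collision, each $\Phi_n$ reduces to $T_{t-t_n}$ applied to
\[
F(v)\,\bigl(F_{t_n-t_{n-1}}*\cdots*F_{t_2-t_1}*\rho_{t_1}\bigr)(x),
\]
where $\rho_{t_1}=\rho[T_{t_1}f_0]$. When $F=\M^s$, the stability property \eqref{eq: propStable} collapses this chain of convolutions into a single $\M^s$ whose width is $\bigl(\sum_i x_i^{2s}\bigr)^{1/2s}$. That width is exactly what the concentration bound in Proposition \ref{prop: tail dirichlet_fractional} controls.
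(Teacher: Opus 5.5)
Your proposal is correct and is essentially the paper's proof: you write $T_t f(x,v)=F(v)\rho(x-tv)$, integrate in velocity, and substitute $y=tw$ to obtain $F_t*\rho$. Your extra remarks on the case $t=0$ and on iterating the lemma to get $\Phi_n$ with width $\sigma_n$ are also accurate.
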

  \begin{proof}
Since the function is already factorised, then $T_t f(x,v)=\rho(x-tv)F(v)$, and applying $\mathcal{L}^+$ gives 
\begin{align*}
 \L^+ T_t f (x,v)&=F(v)\ird F(w)\rho(x-tw)\d w
 \\
 &=F(v)\ird t^{-d}F\left(\frac{w'}{t}\right)\rho(x-w')\d w '
 \\
 &=F(v) \left(F_t * \rho\right) (x),
\end{align*}
which is nothing but \eqref{eq:aux2}.
  \end{proof}
  The previous result clearly shows how a diffusion effect in the $x$
  variable, seen as the convolution $F_t * \rho$ in the variable
  $x$, can arise as the interplay between a deterministic transport
  operator $T_t$ and a random operator $\L^+$ acting only in $v$. As a
  consequence 
  of this, when $F\equiv \M^s$, one can find a more explicit expression for
  $\Phi_n$:
  \begin{teorema}
  The solution of \eqref{eq: eBGK} for case $(i)$ and $(ii)$ is given by 
  \begin{equation*}
      f(t)=
  e^{-t} \sum_{n\ge
    0}\int_0^t\int_0^{t_n} \cdots \int_0^{t_2}
  \Phi_n (t, x, v; t_1, \dots, t_n)
  \ \d t_1 \cdots \d t_n
  \end{equation*}
  where 
  \begin{equation}
    \label{eq:An-BGK}
    \Phi_n (t, x, v; t_1, \dots, t_n)
    =  \M^s(v)  \left( \M^s_{\sigma_n} *
      \rho[T_{t_1} f_0] \right)
    \left( x - (t-t_n)v \right),
  \end{equation}
  and \begin{equation}\label{eq: sigma}
  \sigma_n^{2s}=\sum_{i=2}^{n}(t_i-t_{i-1})^{2s}\qquad\text{and}\qquad \M^s_\lambda(x)=\lambda^{-d}\M^s\big(\frac{x}{\lambda}\big)\end{equation}
\end{teorema}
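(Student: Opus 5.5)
The plan is to verify \eqref{eq:An-BGK} by induction on $n$, starting from the Wild sum representation \eqref{eq:wild2} (with $c=1$) and using the exchange Lemma \ref{lem:exchange_BGK} together with the convolution stability \eqref{eq: propStable} of stable laws. For $n=0$ the term is simply $T_tf_0$, so there is nothing to prove; the formula is meant for $n\ge1$. For $n=1$ we have $\Phi_1 = T_{t-t_1}\L^+T_{t_1}f_0$. Applying $\L^+$ to $T_{t_1}f_0$ gives $\M^s(v)\rho[T_{t_1}f_0](x)$ directly from the definition of $\L^+$. Applying $T_{t-t_1}$ then gives $\M^s(v)\rho[T_{t_1}f_0](x-(t-t_1)v)$. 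This is \eqref{eq:An-BGK} with $\sigma_1=0$, where we interpret $\M^s_0$ as the Dirac mass, since the sum defining $\sigma_1^{2s}$ is empty.

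For the inductive step, set $\Psi_k := \L^+T_{t_k-t_{k-1}}\cdots\L^+T_{t_1}f_0$, so that $\Phi_n = T_{t-t_n}\Psi_n$. The inductive claim is that
\[
\Psi_k(x,v)=\M^s(v)\,\bigl(\M^s_{\sigma_k}*\rho[T_{t_1}f_0]\bigr)(x),
\qquad \sigma_k^{2s}=\sum_{i=2}^k (t_i-t_{i-1})^{2s}.
\]
Assume this holds for $k$. Then $\Psi_{k+1}=\L^+T_{t_{k+1}-t_k}\Psi_k$, and $\Psi_k$ has the factorised form $F(v)\rho(x)$ with $F=\M^s$ and $\rho=\M^s_{\sigma_k}*\rho[T_{t_1}f_0]$. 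Lemma \ref{lem:exchange_BGK} therefore gives
\[
\Psi_{k+1}=\M^s(v)\,\bigl(\M^s_{t_{k+1}-t_k}*\M^s_{\sigma_k}*\rho[T_{t_1}f_0]\bigr)(x).
\]
Here I use that the rescaling $F_t(x)=t^{-d}F(x/t)$ in the lemma coincides with the notation $\M^s_\lambda$ in \eqref{eq: sigma}. By \eqref{eq: propStable}, $\M^s_{a}*\M^s_{b}=\M^s_{(a^{2s}+b^{2s})^{1/2s}}$, so with $a=t_{k+1}-t_k$ and $b=\sigma_k$ the two kernels merge into $\M^s_{\sigma_{k+1}}$. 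This is well defined because $a>0$ on the open simplex. Convolution is associative, and all objects are in $L^1$ (the initial datum is a probability density and the kernels are probability densities), which justifies regrouping the convolutions. This closes the induction. Finally, applying $T_{t-t_n}$ replaces $x$ by $x-(t-t_n)v$ and leaves the velocity factor $\M^s(v)$ unchanged, which yields \eqref{eq:An-BGK}. Substituting into \eqref{eq:wild2} gives the stated representation of $f(t)$.

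I do not expect a real obstacle; the argument is purely algebraic. The only points needing care are three pieces of bookkeeping. First, the index convention: the first gap $t_1$ enters only through $\rho[T_{t_1}f_0]$, because $f_0$ is not factorised, and the last gap $t-t_n$ enters only as a shift. Second, the degenerate case $\sigma_1=0$. Third, checking that the rescaling in Lemma \ref{lem:exchange_BGK}, namely $\int F(w)\rho(x-tw)\d w$, uses exactly the same scaling as \eqref{eq: propStable}.
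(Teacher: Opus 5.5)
Your proposal is correct and follows the paper's own route. The paper obtains \eqref{eq:An-BGK} by iterating the exchange Lemma~\ref{lem:exchange_BGK} on the factorised iterates, merging the kernels through the stability property \eqref{eq: propStable}, and then applying the final transport $T_{t-t_n}$; your induction, with the convention $\M^s_0=\delta_0$ for $n=1$ and the separate $n=0$ term $T_tf_0$, simply makes explicit the bookkeeping the paper leaves implicit.
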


Thanks to Lemma \ref{lem:exchange_BGK},
 we can also write in the expected value sense (omitting the variables of $\Phi_{N_t}$) 
\begin{equation*}
  f(t) - \M(v) u(t,x)
  = \E\big[  \Phi_{N_t} - \M(v) u(t,x) \big] = \E \big[ D_{N_t} \big],
\end{equation*}
where $\Phi_n$ is 
\begin{equation*}
  \Phi_n (t, x, v; t_1, \dots, t_n)
  =  \M^s(v)  \left( \M_{\sigma_n} *
    \rho[T_{t_1} f_0] \right)
  \left( x - (t-t_n)v \right).
\end{equation*}
and $\sigma_n^{2s}=\sum_{i=2}^{n}(t_i-t_{i-1})^{2s}$. 
In order to shorten the expression we call the difference $D_n$:
\begin{equation*}
  D_{n} := \Phi_n(t, x, v; t_1, \dots, t_n) - \M(v) u(t,x).
\end{equation*}

To prove our result we need to estimate
$\| \E\big[ D_{N_t} \big] \|_{L^p}$, where
the $L^p$ norm is taken in the variables $(x,v)$. To do this we will
divide the expectation (or equivalently, the sum and integrals in the
expression (\ref{eq:wild2})) into two regions: a ``probable'' event
$E\subseteq {\Omega_t}$ and an ``improbable'' event $E^{\mathrm c}$, complement of $E$ in ${\Omega_t}$. The region $E$ is
given by
\begin{equation*}
  E :=  E_1 \cap E_2 \cap E_3.
\end{equation*}
The set $E\subseteq {\Omega_t}$ and the sets $E_i\subseteq {\Omega_t}$, $i=1, \dots, 3$ depend on the values
of $t$. It is understood that the elements of $E$,
$E_1$, $E_2$, $E_3$ are always lists of the form
$(n, t_1, \dots, t_n)$,
where $n \in \N$, $n \geq 1$, and
$0 \leq t_1 \leq \dots t_n \leq t$. Let $\alpha\in(0,1), \beta\in(0,1/2)$ to be chosen later. Define $\mathfrak{m}_{n,s}=\Gamma(2s+1)\dfrac{(t_n-t_1)^{2s}}{n^{2s-1}}$
\begin{gather}\label{eq: defEi}
E_2=\Big\{ \abs{\sigma_n^{2s}-\mathfrak{m}_{n,s}}\le \frac{1}{n^\beta}\frac{(t_n-t_1)^{2s}}{n^{2s-1}}\Big\},
  \\
  E_1:=\left \{t_1\le \frac{t}{n^\alpha}\right \},
  \qquad E_3:=\left \{t-t_n\le \frac{t}{n^\alpha}\right\}.
\end{gather}

$E_2$ is the region where $\sigma_n$ is close to its expected value;
and $E_1$, $E_3$ are the regions where $t_1$, $t_n$ are not very far
above or below, respectively, to their expected values.  With this
interpretation it is reasonable to expect the probability of their
complements to be small.

\begin{osservazione}
Our analysis also implicitly relies on a fourth condition: that the number of collisions $n$ is close to the mean of the Poisson distribution ($n \approx t$). We do not define an explicit set $E_0$ for this but, instead, we handle this ``hidden'' region directly using Poisson tail bounds (as in  Propositions \ref{prop: boundtailPoiss} and \ref{prop: poiss2}).

\end{osservazione}

We have
\begin{equation*}\label{eq: final estimate_frac}
  \E\big[ D_n \big]
  =
  \P(E)\, \E\big[ D_n \mid E
  \big]
  + \P(E^{\mathrm{c}})\,
  \E\big[ D_n \mid E^{\mathrm{c}}
  \big].
\end{equation*}
\subsubsection{The probable region}
The most interesting estimate is that in the region $E$. 
From the very definition, we may write
\begin{equation*}
  \Phi_n - \M(v) \M_{(\diffC t)^{1/2s}}*\rho_0
  =
  \M(v) \left(
    \M_{\sigma_n}*\rho[T_{t_1}\fun_0]*\delta_{(t-t_n)v}
    - \M_{(\diffC t)^{1/2s}}*\rho_0
  \right).
\end{equation*}
The rest of the section is devoted to prove the following estimate:
\begin{proposizione}\label{prop: finalProb}For all  $\beta\in(0,1/2)$,
    In the probable region $E=E_1\cap E_2\cap E_3$ defined in \eqref{eq: defEi}, one has
    $$
    \int_{E} \norm{\Phi_n-u(t,x)\M^s(v)}_{L^p_{x,v}}\d\mu\lesssim t^{-q}t^{-\beta},
    $$
    
\end{proposizione}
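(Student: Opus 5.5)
The plan is a triangle inequality: on $E$, split $\Phi_n-u\M^s$ into three pieces and bound each with the perturbation lemmas above, then integrate against $\mu$ using the Poisson bounds. Since $\M^s(v)$ factors out, the $L^p_{x,v}$ norm of $\M^s(v)g(x-(t-t_n)v)$ equals $\|\M^s\|_{L^p_v}\|g\|_{L^p_x}$ by translation invariance in $x$ for fixed $v$. The same holds for $\M^s(v)u(t,x)$. The shift $(t-t_n)v$, however, is only harmless when comparing a single function; in the difference $g(x-(t-t_n)v)-u(t,x)$ we must pay for it. We therefore write
\begin{align*}
\Phi_n-\M^s u &= \M^s(v)\big[(\M^s_{\sigma_n}*\rho[T_{t_1}f_0])(x-(t-t_n)v)-(\M^s_{(\diffC t)^{1/2s}}*\rho[T_{t_1}f_0])(x-(t-t_n)v)\big]\\
&\quad+\M^s(v)\big[(\M^s_{(\diffC t)^{1/2s}}*\rho[T_{t_1}f_0])(x-(t-t_n)v)-(\M^s_{(\diffC t)^{1/2s}}*\rho[T_{t_1}f_0])(x)\big]\\
&\quad+\M^s(v)\big[\M^s_{(\diffC t)^{1/2s}}*\rho[T_{t_1}f_0]-\M^s_{(\diffC t)^{1/2s}}*\rho_0\big](x),
\end{align*}
and label the three brackets (A), (B), (C).

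\emph{Term (C).} Here $\rho[T_{t_1}f_0](x)=\int f_0(x-t_1w,w)\,\d w=\rho_{t_1}$. Lemma~\ref{lemma: distinitialdata} with $h=t_1$ gives the bound $M_\nu t_1^\nu t^{-q-\nu/2s}$. On $E_1$ we have $t_1\le t n^{-\alpha}$, so (C) is at most $t^{-q}\,t^{\nu(1-1/2s)}n^{-\alpha\nu}$.

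\emph{Term (B).} By Lemma~\ref{dist: deltas} with $h=(t-t_n)v$ (or, when $s\le1/2$, its $\nu$-interpolated version as in Lemma~\ref{lemma: distinitialdata}, since $\M^s$ has only moments of order $<2s$), and integrating $|v|^\nu\M^s(v)$ in $v$, we get $(t-t_n)^\nu t^{-q-\nu/2s}$. On $E_3$ this gives the same bound as (C).

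\emph{Term (A).} This is the core: compare $\M^s_{\sigma_n}$ with $\M^s_{(\diffC t)^{1/2s}}$ via \eqref{eq: diststable3}, which yields a bound $\lesssim t^{-q-1}\,|\sigma_n^{2s}-\diffC t|$ after Young's inequality with $\|\rho\|_{L^1}=1$. Write
\[
\sigma_n^{2s}-\diffC t=(\sigma_n^{2s}-\mathfrak m_{n,s})+(\mathfrak m_{n,s}-\diffC t).
\]
The first difference is controlled on $E_2$ by $n^{-\beta}(t_n-t_1)^{2s}n^{1-2s}$, which for $n\approx t$ is $\approx t n^{-\beta}$. For $s=1$ the second difference is $\mathfrak m_{n,1}-2t=2t\big((t_n-t_1)^2/(nt)-1\big)$. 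Its size comes from $n$ versus $t$ and from $t_1,\,t-t_n$ (again bounded using $E_1,E_3$); the $n$-versus-$t$ fluctuation is exactly what Proposition~\ref{prop: poiss2} handles, giving $O(t^{-1/2})$ after taking the $\mu$-expectation. For general $s$ we need the scaling $\sigma_n\approx(\diffC t)^{1/2s}(t/n)^{1-1/2s}$, which is why Proposition~\ref{prop: poiss2} carries the factor $(\lambda/x)^{1-1/2s}$. I would use \eqref{eq: diststable3} only when the ratio condition $|1-\lambda_1^{2s}/\lambda_2^{2s}|\le1/2$ holds, which is true on $E$ for $n$ close to $t$; otherwise fall back on \eqref{eq: diststable2} or the trivial bound $2\|\M^s\|_{L^p}\lesssim t^{-q}$ on the Poisson-improbable region $|n-t|>t/2$, which is exponentially small.

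\emph{Assembling.} Integrating over $E$ with $\d\mu$ and using Proposition~\ref{prop: boundtailPoiss} to convert $\E[N_t^{-\gamma}\1_{N_t\ge1}]\lesssim t^{-\gamma}$ gives:
\begin{itemize}
\item (A) is $\lesssim t^{-q}(t^{-\beta}+t^{-1/2})\lesssim t^{-q}t^{-\beta}$, since $\beta<1/2$;
\item (B) and (C) are $\lesssim t^{-q}t^{\nu(1-1/2s)-\alpha\nu}$.
\end{itemize}
Choosing $\alpha$ close enough to $1$ (in particular $\alpha\ge1-1/2s+\beta/\nu$, e.g.\ $\alpha=1-\beta$ when $\nu=s=1$) makes the latter exponent $\le-\beta$. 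The main obstacle I expect is term (A) for $s<1$: the normalisation $\mathfrak m_{n,s}$ involves both $(t_n-t_1)^{2s}$ and $n^{1-2s}$, and matching it with $\diffC t$ requires combining the $E_1,E_3$ control of $t_n-t_1$ with the Poisson concentration of $n$ around $t$, while keeping the ratio condition of Lemma~\ref{lemma: diststable} valid.
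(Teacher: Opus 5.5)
Your proposal is correct and is essentially the paper's proof. Your (B) and (C) are the paper's $\NN_3$ and $\NN_4$: same lemmas, same use of $E_1,E_3$, and the same constraint $\alpha\ge 1-\frac{1}{2s}+\frac{\beta}{\nu}$, which the paper meets with $\alpha=\beta+1-\frac{\nu}{2s}$. Your (A) merges $\NN_1+\NN_2$: you split the scalar $\sigma_n^{2s}-\diffC t$, whereas the paper inserts the intermediate kernel $\M^s_{(\diffC n^{1-2s})^{1/2s}t}$ and applies Proposition~\ref{prop: poiss2}. Both rest on the same Poisson-fluctuation estimate $\E|N_t-t|/t\le t^{-1/2}$.

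There is one small slip. For $\nu=s=1$ your example $\alpha=1-\beta$ meets $\alpha\ge\frac12+\beta$ only when $\beta\le\frac14$. Take $\alpha=\frac12+\beta$ to cover all $\beta\in(0,1/2)$.
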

We include four intermediate terms in this difference by adding and
subtracting as needed:
\begin{equation*}
  \M^s_{\sigma_n}*\rho[T_{t_1}\fun_0]*\delta_{(t-t_n)v}
  -  \M^s_{(\diffC t)^{1/2s}  } * \rho_0
  = \NN_1 + \NN_2 + \NN_3+\NN_4,
\end{equation*}
where
\begin{align*}
  \NN_1 :=&
  \M^s_{\sigma_n}*\rho[T_{t_1}\fun_0]*\delta_{(t-t_n)v}-\M^s_{(\diffC{n}^{1-2s})^{1/2s}t}*\rho[T_{t_1}\fun_0]*\delta_{(t-t_n)v}
  \\
  \NN_2 :=&
  \M^s_{(\diffC{n}^{1-2s})^{1/2s}t}*\rho[T_{t_1}\fun_0]*\delta_{(t-t_n)v}-\M^s_{{(\diffC t)^{1/2s}  }}*\rho[T_{t_1}\fun_0]*\delta_{(t-t_n)v}
  \\
  \NN_3 :=&
  \M^s_{(\diffC t)^{1/2s}}*\rho[T_{t_1}\fun_0]*\delta_{(t-t_n)v}-\M^s_{(\diffC t)^{1/2s}}*\rho[T_{t_1}\fun_0]
  \\
  \NN_4 :=&
    \M^s_{(\diffC t)^{1/2s}}*\rho[T_{t_1}\fun_0]-
    \M^s_{(\diffC t)^{1/2s}}*\rho_0
  .
\end{align*}

Let us start by taking the norm in the space variable. In this section, since it is dedicated to the probable region, to lighten the notation we write $L^p$ meaning $L^p(E).$
\medskip

\textbf{Estimate of $\NN_1$.}
Using Young's inequality and applying Lemma \ref{lemma: diststable}
\begin{multline}\label{eq: lasttermF}
	\| \NN_1\|_{L^p_x}
	\le
	\norm{\M^s_{\sigma_n}-\M^s_{(\diffC{n}^{1-2s})^{1/2s}t}}_{L^p_x}\norm{\rho[T_{t_1}\fun_0]*\delta_{(t-t_n)v}}_{L^1_x}
	=\norm{\M^s_{\sigma_n}- \M^s_{(\diffC{n}^{1-2s})^{1/2s}t}}_{L^p_x}
	\\
	\lesssim
	 t^{-2s(q+1)}(\diffC n^{1-2s})^{-(q+1)}\abs{\sigma_n^{2s}-\Gamma(2s+1)t^{2s}n^{1-2s}}\\
    \le t^{-2s(q+1)}(n^{1-2s})^{-(q+1)}\Big(\abs{\sigma_n^{2s}-\mathfrak{m}_{n,s}}+\abs{\mathfrak{m}_{n,s}-\Gamma(2s+1) t^{2s}n^{1-2s}}\Big).
\end{multline}
Combining the conditions from $E_1$ and $E_3$, we can control the difference $|t_n-t_1|$ in terms of $t$ by
\begin{equation}\label{eq: e1e3}
t^{2s}-(t_n-t_1)^{2s}=t^{2s}\left(1-\left(\frac{t_n-t_1}{t}\right)^{2s}\right)\le t^{2s}\left(1-\left(\frac{t(1-n^\alpha)-n^\alpha}{t}\right)^{2s}\right)=2\frac{t^{2s}}{n^\alpha}
\end{equation}
and 
$$
\abs{\mathfrak{m}_{n,s}-\Gamma(2s+1)t^{2s}n^{1-2s}}=\Gamma(2s+1)n^{1-2s}\abs{t^{2s}-(t_n-t_1)^{2s}}\le 2\Gamma(2s+1)\frac{t^{2s}}{n^{2s-1+\alpha}}
$$
Moreover, the constrain of $E_2$ allows to estimate the difference
$$\abs{\sigma_n^{2s}-\mathfrak{m}_{n,s}}
\lesssim
\frac{t^{2s}}{n^{2s-1+\beta}}.
$$
Choosing  $\alpha>\beta$,
\eqref{eq: lasttermF} becomes
\begin{equation}\label{eq: N1frac1}
    \norm{\NN_1}_{L^p_x}\lesssim t^{-2sq}t^{-2s}n^{-q(1-2s)}n^{2s-1}t^{2s}n^{1-2s-\beta}=t^{-2sq}n^{-q(1-2s)}n^{-\beta}
 \end{equation}

\noindent
\textbf{Estimate of $\NN_2$.} By Young's inequality and a straightforward change of variable one obtains
\begin{equation*}
	\norm{\NN_2}_{L^p_x}\le \norm{\M^s_{(\diffC n^{1-2s})^{1/2s}t} -\M^s_{(\diffC t)^{1/2s}}}_{L^p_{x}}\lesssim t^{-q}\norm{\M^s_{\left(\frac{t}{n}\right)^{1-\frac{1}{2s}}}-\M^s}_{L^p_{x}}
\end{equation*}

\medskip \noindent
\textbf{Estimate of $\NN_3$}
We want to apply Lemma \ref{dist: deltas} with $h=(t-t_n)v$, and the fact that   $t -t_n \leq t / n^\alpha$. However for $s\in(0,1/2]$ we have to lower the power of $h$ in order to have $\norm{|v|^k\M^s}$ finite, i.e. with $k<2s(q+1)$.
In this case we have to  interpolate between the bound given by Lemma \ref{dist: deltas}
\begin{equation*}
    \|\NN_3\|_{L^p_x}
    \le
    \|\M^s_{(\tilde{\kappa}t)^{1/2s}}*\tilde{\rho}(\cdot-(t-t_n)v)-\M^s_{(\tilde{\kappa}t)^{1/2s}}*\tilde{\rho}(\cdot)\|_{L^p_x} 
    \\\lesssim
    t^{-q-\frac{1}{2s}} (t-t_n)|v| 
    \lesssim
    t^{-q-\frac{1}{2s}+1}n^{-\alpha}|v|.
\end{equation*}
and the trivial bound
$$
\|\M^s_{(\tilde{\kappa}t)^{1/2s}}*\tilde{\rho}(\cdot-(t-t_n)v)-\M^s_{(\tilde{\kappa}t)^{1/2s}}*\tilde{\rho}(\cdot)\|_{L^p_x} \le 2\norm{\M^s_{(\tilde{\kappa}t)^{1/2s}}}_{L^p_x}
$$
Therefore, we define the exponent $\theta=\min\{1,k\}$. Noticing that  for $s>\frac{1}{2}$, we can always choose $k$ larger than $1$, and thus take $\theta=1$.
In full generality, we write
$$
\|\NN_3\|_{L^p_x}
    \le
    \|\M^s_{(\tilde{\kappa}t)^{1/2s}}*\tilde{\rho}(\cdot-(t-t_n)v)-\M^s_{(\tilde{\kappa}t)^{1/2s}}*\tilde{\rho}(\cdot)\|_{L^p_x} 
    \\\lesssim
    t^{-q-\frac{\theta}{2s}} (t-t_n)^\theta|v|^\theta \lesssim 
    t^{-q-\theta(\frac{1}{2s}-1)} n^{-\theta\alpha}|v|^\theta.
$$

\medskip \noindent
\textbf{Estimate of $\NN_4$}
For $\NN_4$, with  a similar reasoning as in the previous case  we use  Lemma \ref{lemma: distinitialdata} and the fact that $t_1\le tn^{-\alpha}$ to obtain 
    \begin{equation*}
    \norm{\NN_4}_{L^p_x}=\M^s_{(\tilde{\kappa}t)^{1/2s}}*{\rho}[T_{t_1}f_0]-\M^s_{(\tilde{\kappa}t)^{1/2s}}*\rho_0\|_{L^p_x}\le C_{d,p,s} t^{-q-\frac{\nu}{2s}}t_1\le C_{d,p}t^{-q-\nu(\frac{1}{2s}-1)} n^{-\nu\alpha}
\end{equation*}
Let us notice that $\nu\le \theta.$
    
\medskip \noindent
\textbf{Final estimate for the probable region.}
We now take into account all the four terms $\NN_1,\NN_2,\NN_3,\NN_4$ over the probable region $E$ and again we denote the velocity norms as
$C_0=\norm{\M^s(v)}_{L^p_v}$ and $C_\theta=\norm{\M^s(v)|v|^\theta}_{L^p_v}$.
Summing up these contributions against the Poisson measure $e^{-t} \frac{t^n}{n!}$, we obtain:
\begin{multline*}
\int_{E} \norm{\Phi_n-\M^s_{(\tilde{\kappa}t)^{1/2s}}(x)\M(v)}_{L^p_{x,v}}\d\mu =
	\int_{ E} \norm{\M(v)\left(\NN_1+\NN_2+\NN_3+\NN_4 \right)}_{L^p_{x,v}}\d\mu
    \\    
	\lesssim C_0e^{-t}\sum_{n\ge
		0} \int_{\mathcal{S}_n(t)}\norm{\NN_1}_{L^p_x}
        +C_0e^{-t}\sum_{n\ge
		0} \int_{\mathcal{S}_n(t)}\norm{\NN_2}_{L^p_x}
        \\
         +e^{-t}\sum_{n\ge
		0} \int_{\mathcal{S}_n(t)}\norm{\M^s(v)\norm{\NN_3}_{L^p_x}}_{L^p_v}
        +C_0e^{-t}\sum_{n\ge
		0} \int_{\mathcal{S}_n(t)}\norm{\NN_4}_{L^p_x}
    \\
	\lesssim C_0t^{-2sq}e^{-t}\sum_{n\ge
		0} \frac{t^n}{n!}n^{-q(1-2s)-\beta}
        +C_0t^{-q}e^{-t}\sum_{n\ge
		0} \frac{t^n}{n!}\norm{\M^s_{\left(\frac{t}{n}\right)^{1-\frac{1}{2s}}}-\M^s}_{L^p_x}
        \\
        +t^{-q-\theta\left(\frac{1}{2s}-1\right)}\norm{\M(v)|v|^\theta}_{L^p_v}e^{-t}\sum_{n\ge
		0} \frac{t^n}{n!}n^{-\theta\alpha}
        +C_0t^{-q-\nu\left(\frac{1}{2s}-1\right)}e^{-t}\sum_{n\ge
		0} \frac{t^n}{n!}n^{-\nu\alpha}
        \\
        \lesssim
        C_0 t^{-q-\beta}+C_0t^{-q-\frac{1}{2}}+C_\theta t^{-q-\theta\left(\frac{1}{2s}-1+\alpha\right)}+C_0 t^{-q-\nu\left(\frac{1}{2s}-1+\alpha\right)}
\end{multline*}
where we used Lemma \ref{prop: boundtailPoiss} for the first, third and fourth term, Lemma \ref{prop: poiss2} for the second one.
Since the second term decays always faster than the first, and the third always faster than the fourth, we only need to  equate the second and fourth exponent 
we choose $\alpha$:
\begin{equation}\label{eq: constrainalpha}
\alpha=\beta+1-\frac{\nu}{2s}\in\left(0,1\right)
\end{equation}

Therefore in the probable region we have
$$\int_{E} \norm{\Phi_n-u(t,x)(x)\M^s(v)}_{L^p_{x,v}}\d\mu\lesssim t^{-q}t^{-\beta}.$$
 
\subsubsection{The improbable region}
\label{sec:improbable}
In this region, we have to bound
$$
\int_{E^c}
    \norm{\Phi_n-\M^s(v)u(t,x)}_{L^p}\d \mu.
$$
We  split this into two terms showing that each of these is small (so the fact that $D_n$ is a small
difference does not play any role in this region). We write
\begin{equation}\label{eq:ir1}
\int_{E^c}
    \norm{\Phi_n-\M^s(v)u(t,x)}_{L^p}\d \mu
    \\
    \le \int_{E^c}
    \norm{\M^s(v)u(t,x)}_{L^p}\d \mu+  \int_{E^c}
    \norm{\Phi_n}_{L^p}\d \mu
\end{equation}
We will show that both of these terms decay like $t^{-q}t^{-(1-2\beta)}$ when $s\in(1/2,1)$, while the second term decays slower in the lower fractional case $s\in(0,1/2)$ depending on the Lebesgue exponent $p$.

\medskip \noindent
Regarding the first term, we will show that the region $E^c$ is improbable in the sense that
\begin{equation}\label{eq:mainobjectiveImprobable}
  \P(E^{\mathrm{c}}) \leq C t^{-(1 - 2\beta)}
\end{equation}
and refer to Lemma \ref{lem:improbable-region} for the proof of this. The
value of the $L^p$ norm in the first term in \eqref{eq:ir1} can be
explicitly estimated: we have
\begin{equation*}
  \norm{ \M^s(v) u(t,x) }_{L^p}
  = \left( \ird \ird \M^s(v)^p u(t,x)^p \d x \d v \right)^{\frac{1}{p}}
  \leq C t^{-q},
\end{equation*}
with  $C = C(p,d)$ is a constant. Together, we deduce
\begin{equation}\label{eq:ir2}
  \int_{E^c}\norm{\M^s(v)u(t,x)}_{L^p}=\P(E^{\mathrm{c}}) \, \norm{ \M^s(v) u(t,x) }_{L^p}
  \leq C t^{-q}t^{-(1 - 2\beta)},
\end{equation}
whose proof is contained in Proposition \ref{prop:gauss_improbable}.

\medskip \noindent For the second term,
it is easier to use the integral expression from \eqref{eq:wild2},
written in a more compact way in \eqref{eq:wild3}. We will prove in Proposition \ref{prop:A_n-improbable} below that 
\begin{equation}\label{eq:ir3}
   \int_{E^\mathrm{c}} \| \Phi_n \|_{L^p} \d \mu
  \leq
  C t^{-q} t^{-(1 - 2\beta)},
\end{equation}
where the last inequality is due to Lemma 
below. This gives the needed estimate of this part.

Using \eqref{eq:ir2} and \eqref{eq:ir3} in \eqref{eq:ir1} we complete
the proof of our estimate for the improbable region. The rest of this
section is devoted to proving the needed estimates to complete the
above argument.

\medskip
\noindent
Let us start with the estimate of the first term in \eqref{eq:ir1}.

\begin{lemma}
  \label{lem:improbable-region}
 There exists $C$ such that \eqref{eq:mainobjectiveImprobable} holds.
\end{lemma}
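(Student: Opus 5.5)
We bound $\P(E^{\mathrm c})\le \P(E_1^{\mathrm c})+\P(E_2^{\mathrm c})+\P(E_3^{\mathrm c})$ under $\mu$, where we also count the event $\{N_t=0\}$ in $E^{\mathrm c}$; its probability is $e^{-t}$ and hence negligible. Each term is estimated by conditioning on $N_t=n$ and then summing against the Poisson weights $e^{-t}t^n/n!$. The target is a conditional bound of the form $C(n+2)^{-(1-2\beta)}$. Proposition \ref{prop: boundtailPoiss} with $\gamma=-(1-2\beta)$ then gives
$$\sum_{n\ge1}e^{-t}\frac{t^n}{n!}\,n^{-(1-2\beta)}\le C t^{-(1-2\beta)}.$$
For $t$ bounded, the claim is trivial since $\P\le 1$.

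\textbf{Events $E_1$ and $E_3$.} Conditionally on $N_t=n$, $t_1/t=U_{(1)}$ is the minimum of $n$ independent uniforms, so
$$\P(t_1>t n^{-\alpha}\mid N_t=n)=(1-n^{-\alpha})^n\le e^{-n^{1-\alpha}}.$$
By symmetry, $1-t_n/t$ has the same law, so $E_3^{\mathrm c}$ has the same conditional probability. Since $\alpha<1$, $e^{-n^{1-\alpha}}\lesssim n^{-(1-2\beta)}$ uniformly in $n$. Summing against the Poisson weights therefore gives a contribution $\lesssim t^{-(1-2\beta)}$ for both events.

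\textbf{Event $E_2$ (the main step).} Conditionally on $N_t=n$ and on $(t_1,t_n)$, the inner gaps $(t_i-t_{i-1})_{i=2}^n$ are uniformly distributed on the simplex of total length $t_n-t_1$ with $n-1$ components. This is the Markov/exchangeability property of uniform order statistics, equivalently the aggregation property of the flat Dirichlet law. Rescaling by $z=t_n-t_1$ gives
$$\sigma_n^{2s}=z^{2s}S_{n-2,s},$$
where $S_{n-2,s}$ is the functional of Proposition \ref{prop: tail dirichlet_fractional} for $\Dir(1,\dots,1)$ with $n-1$ components. Applying that proposition, or equivalently Corollary \ref{coroll: dirichlet_frac} for the volume form, yields
$$\P\Big(\big|\sigma_n^{2s}-z^{2s}\mathfrak m_{n-2,s}\big|\ge z^{2s}n^{1-2s-\beta}\,\Big|\,N_t=n,t_1,t_n\Big)\le C_s n^{-(1-2\beta)}.$$
It then remains to reconcile $\mathfrak m_{n-2,s}$ with the quantity $\Gamma(2s+1)n^{1-2s}$ used in $E_2$. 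The Gamma-ratio asymptotics give $\mathfrak m_{n-2,s}=\Gamma(2s+1)n^{1-2s}(1+O(1/n))$, so the discrepancy is $O(n^{-2s})z^{2s}$. This is dominated by $\tfrac12 n^{1-2s-\beta}z^{2s}$ for $n$ large, since $\beta<1$. We absorb this by shrinking the threshold by a constant factor, which only changes $C_s$, and we treat small $n$ trivially. Integrating over $(t_1,t_n)$ and summing over $n$ with Proposition \ref{prop: boundtailPoiss} yields $\P(E_2^{\mathrm c})\lesssim t^{-(1-2\beta)}$.

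The main care is in this conditioning step: identifying the law of the inner gaps given the endpoints, and handling the shift in indices $n\mapsto n-1$ or $n-2$ together with the mismatch between the exact mean and its asymptotic form. Both affect only constants, and collecting the three estimates gives \eqref{eq:mainobjectiveImprobable}.
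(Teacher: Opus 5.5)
Your proposal is correct and follows the paper's proof: a union bound over $E_1^{\mathrm c},E_2^{\mathrm c},E_3^{\mathrm c}$; the $(1-n^{-\alpha})^n\le e^{-n^{1-\alpha}}$ tail of the extreme order statistics for $E_1^{\mathrm c}$ and $E_3^{\mathrm c}$; and, for $E_2^{\mathrm c}$, reduction to the flat Dirichlet law of the inner gaps on $[t_1,t_n]$, where you condition on $(t_1,t_n)$ and the paper changes variables and applies Corollary \ref{coroll: dirichlet_frac}, followed in all cases by Proposition \ref{prop: boundtailPoiss}. Your explicit absorption of the discrepancy between $\mathfrak m_{n-2,s}$ and $\Gamma(2s+1)n^{1-2s}$ (of order $n^{-2s}$, against a threshold $n^{1-2s-\beta}$) addresses a point the paper passes over silently.
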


\begin{proof}We  present a short proof since the technique is similar, although much simpler than the one used to estimate the $\Phi_n$ part (see Proposition \ref{prop:A_n-improbable}). However, since it presents already some of the difficulties, we use it as a warm up.
Since 
\begin{equation}\label{eq: improbableSetInter}
    E^{\mathrm{c}} = (E_1 \cap E_2 \cap E_3)^{\mathrm{c}}
    = E_1^{\mathrm{c}} \cup
    E_2^{\mathrm{c}} \cup E_3^{\mathrm{c}}\
  \end{equation}
 then
  \begin{equation*}
    \P(E^{\mathrm{c}})
    \leq
         \P (E_1^{\mathrm{c}})
    +    \P (E_2^{\mathrm{c}})
    +    \P (E_3^{\mathrm{c}}),
  \end{equation*}
  and we estimate each of the terms separately

  \smallskip\noindent
  \textbf{Estimate of $\P (E_1^{\mathrm{c}})$.} Since $t_1$ is the
  minimum of $n$ random uniform samples on $[0,t]$, for any $\lambda
  \in [0,t],$ we have
  \begin{equation*}
    \P (t_1 \geq \lambda) = \left( 1-\frac{\lambda}{t} \right)^n.
  \end{equation*}
  Hence, for fixed $n \geq 0$, for any $\tilde{\alpha}>0$ there exists a constant $C$ depending on $\alpha$ and $\tilde{\alpha}$ such that 
  \begin{equation*}
    \P \left(t_1 \geq \frac{t}{n^\alpha}\right)
    = \left( 1- \frac{1}{n^\alpha} \right)^n
    \leq \exp\left( -n^{1-\alpha} \right)
    \leq C_\alpha \, n^{-\tilde\alpha}.
  \end{equation*}
  Thus, Lemma \ref{prop: boundtailPoiss} gives
  \begin{equation*}
    \P (E_1^{\mathrm{c}})
    \lesssim e^{-t}
    \sum_{n=0}^\infty \frac{t^n}{ n!} n^{-\nu}
    \lesssim
    t^{-\tilde{\alpha}}.
  \end{equation*}
   \textbf{Estimate of $\P (E_3^{\mathrm{c}})$.}
   The estimate is analogous to the previous one.

  \smallskip\noindent
  \textbf{Estimate of $\P (E_2^{\mathrm{c}})$.}
  Let us start our sum from $n\ge 3$ (the first three terms decay exponentially thanks to the $e^{-t}$ factor).
    We will pass from $\mathcal{S}_n(t)$ to the simplex $\Delta_{n-2}$, by
  changing the ``outer'' variables $u=\frac{t_1}{t}$, $v=\frac{t-t_n}{t}$ and  define the $n-2$ ``inner'' variables $x_i=\frac{t_{i+1}-t_1}{t}$ for $i=1,\dots, n-2$. Observe that the vector $\mathbf{x}=(x_1,\dots, x_{n-2})$ defines the simplex $\Delta_{n-2}(1-u-w)$ and the Jacobian of the total transformation in $t^n$. We thus write
  \begin{multline*} 
   I:=\P(E_2^c)\le e^{-t}\sum_{n\ge 3} \int_{\mathcal{S}_n(t)}\mathds{1}_{E^c_2}(t_1,\dots, t_n)
      =e^{-t}\sum_{n\ge 3} \int_{\Delta_n(t)}\mathds{1}_{E^c_2}(x_1,\dots, x_{n+1}) \d x_1\dots \d x_{n+1}
      \\
      =e^{-t}\sum_{n\ge 3}t^n\int_0^{1}\int_{0}^{1-u}\int_{\Delta_{n-2}(1-u-w)}\mathds{1}_{E^c_2}(\mathbf{x})\d\mathbf{x} \d u \d w
      \end{multline*}
      Applying   Corollary \ref{coroll: dirichlet_frac}  (now  to $n-2$ variables), we deduce
      \begin{multline*}
      I\le e^{-t}\sum_{n\ge 3}t^n\int_0^{1}\int_{0}^{1-u}\frac{(1-u-w)^{n-2}}{(n-2)!}\frac{1}{n^{1-2\beta}}\d y\d v
      \\
      =e^{-t}\sum_{n\ge 3}t^n\frac{1}{n^{1-2\beta}}\int_0^{1}\int_{0}^{1-u}\frac{z^{n-2}}{(n-2)!}\d u\d z
      \\
      =
      e^{-t}\sum_{n\ge 3}t^n\frac{1}{n^{1-2\beta}}\int_0^1\frac{(1-u)^{n-1}}{(n-1)!}\d u=e^{-t}\sum_{n\ge 3}\frac{1}{n^{1-2\beta}}\frac{t^n}{n!}
      \lesssim t^{-(1-2\beta)}
  \end{multline*}
where we used Proposition \ref{prop: boundtailPoiss} in the last inequality.
Considering the other terms, the result follows.
\end{proof}
As a consequence, we  finally find the correct decay of the first term in the
improbable region, already mentioned in \eqref{eq:ir2}.
\begin{proposizione}
  \label{prop:gauss_improbable}  
  $$
  \int_{E^c}\norm{\M^s(v)u(t,x)}_{L^p_{x,v}}\le t^{-q-(1-2\beta)}
  $$
\end{proposizione}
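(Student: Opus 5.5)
The integrand $\norm{\M^s(v)u(t,x)}_{L^p_{x,v}}$ does not depend on the collision history $\omega=(n,t_1,\dots,t_n)$. Hence the integral over $E^c$ factorises as
\[
\int_{E^c}\norm{\M^s(v)u(t,x)}_{L^p_{x,v}}\d\mu=\mu(E^c)\,\norm{\M^s}_{L^p_v}\,\norm{u(t,\cdot)}_{L^p_x},
\]
and the proof reduces to two independent bounds: a probabilistic one for $\mu(E^c)=\P(E^c)$ and an analytic decay bound for the heat flow.

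For the probabilistic factor, I invoke Lemma \ref{lem:improbable-region}, which gives $\P(E^c)\le Ct^{-(1-2\beta)}$. Recall how it was obtained:
\begin{itemize}
\item $E^c$ is split via the union bound into $E_1^c\cup E_2^c\cup E_3^c$.
\item The tails of $t_1$ and $t-t_n$ are superpolynomially small in $n$, and Proposition \ref{prop: boundtailPoiss} converts this into $t^{-\tilde\alpha}$ for any $\tilde\alpha>0$.
\item The Dirichlet concentration of Corollary \ref{coroll: dirichlet_frac}, combined with Proposition \ref{prop: boundtailPoiss} with $\gamma=-(1-2\beta)$, handles $E_2^c$.
\end{itemize}
Choosing $\tilde\alpha\ge 1-2\beta$ makes the $E_1^c,E_3^c$ contributions no worse than the $E_2^c$ one.

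For the analytic factor, I use the representation $u(t,\cdot)=\M^s_{(\diffC t)^{1/2s}}*\rho_0$. Young's inequality with $\norm{\rho_0}_{L^1}=1$ (since $f_0$ is a probability density), together with Lemma \ref{lem:Lp-norm-of-stable} at $\lambda=(\diffC t)^{1/2s}$, gives
\[
\norm{u(t)}_{L^p_x}\le \norm{\M^s_{(\diffC t)^{1/2s}}}_{L^p}=C\,t^{-\frac{d}{2s}(1-1/p)}=C\,t^{-q}.
\]
Also $\norm{\M^s}_{L^p_v}<\infty$ for every $p\in[1,\infty]$, because $\M^s$ is smooth, bounded and integrable. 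Multiplying the three factors yields $Ct^{-q-(1-2\beta)}$ with $C=C(d,p,s)$, which is the claim. The implicit constant, absent in the displayed statement, is understood as in the surrounding text. For small $t$ the bound is trivial, or one restricts to $t\ge 1$, as is implicit throughout.

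The only delicate point is that Lemma \ref{lem:improbable-region} must hold uniformly in the parameter choices. In particular, the $E_2^c$ estimate has to start the sum at $n\ge3$, with the finitely many initial terms controlled by $e^{-t}t^n$, which decays exponentially. Uniformity in $\alpha$ (fixed by \eqref{eq: constrainalpha}) only affects the constants $C_\alpha$, and I expect no real obstacle there.
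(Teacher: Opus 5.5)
Your proof is correct and follows the paper's argument. Since the integrand does not depend on the collision history, the integral equals $\P(E^c)\,\norm{\M^s(v)u(t,x)}_{L^p_{x,v}}$; you then combine Lemma \ref{lem:improbable-region} for $\P(E^c)\lesssim t^{-(1-2\beta)}$ with Lemma \ref{lem:Lp-norm-of-stable} for the $t^{-q}$ factor. The extra details you supply (Young's inequality with $\norm{\rho_0}_{L^1}=1$, the implicit constant, and the exponentially small contribution of small $n$) only make explicit what the paper leaves implicit.
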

\begin{proof}
 $$
  \int_{E^c}\norm{\M^s(v)u(t,x)}_{L^p_{x,v}}=\P(E^{\mathrm{c}}) \, \norm{ \M^s(v) u(t,x) }_{L^p}$$
  By Lemma \ref{lem:Lp-norm-of-stable} $\norm{\M(v)u(t,x)}_{L^p_{x,v}}\lesssim t^{-q}$ and by the previous Lemma, we obtain the additional decay $t^{-(1-2\beta)}$.
\end{proof}

\medskip
\noindent
We come now to bound the secnd term in \eqref{eq:ir1}. Let us recall the definition
\begin{equation*}
  \Phi_n (t, x, v; t_1, \dots, t_n)
  =  \big[\M^s(v)   \M^s_{\sigma_n} *
    \rho[T_{t_1} f_0] \big]
  \left( x - (t-t_n)v \right).
\end{equation*}
\begin{proposizione}
  \label{prop:A_n-improbable}
  There exists a constant $C$ such that 
   \begin{equation*}
    \int_{E^\mathrm{c}} \| \Phi_n \|_{L^p} \d \mu
    \leq
    C t^{-2sq}t^{{q(2s-1)_+}}t^{-(1-2\beta)}
  \end{equation*}  
where $a_+:=\max\{0,a\}.$
  In particular, for $s\in[1/2,1],$
  \begin{equation*}
    \int_{E^\mathrm{c}} \| \Phi_n \|_{L^p} \d \mu
    \leq
    C t^{-q} t^{-(1 - 2\beta)}.
  \end{equation*}
\end{proposizione}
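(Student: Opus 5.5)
We plan to bound $\|\Phi_n\|_{L^p}$ pointwise in $\omega=(n,t_1,\dots,t_n)$ by an explicit power of $n$ and $t$, times the indicator of $E^{\mathrm c}$. We then integrate against $\mu$ using the conditional estimates already obtained in the proof of Lemma \ref{lem:improbable-region}, together with Proposition \ref{prop: boundtailPoiss}.

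\textbf{Step 1: pointwise bounds on $\|\Phi_n\|_{L^p}$.} For fixed $v$, the shift $x\mapsto x-(t-t_n)v$ does not change the $L^p_x$ norm. Young's inequality and Lemma \ref{lem:Lp-norm-of-stable} then give two bounds.
\begin{itemize}
\item First, $\|\Phi_n\|_{L^p_{x,v}}\le \|\M^s\|_{L^p_v}\,\|\M^s_{\sigma_n}\|_{L^p_x}\,\|\rho[T_{t_1}f_0]\|_{L^1_x}=C\,\sigma_n^{-2sq}$.
\item Second, putting $\M^s_{\sigma_n}$ in $L^1$, we get $\|\Phi_n\|_{L^p}\le C\|\rho[T_{t_1}f_0]\|_{L^p_x}\le C\|f_0\|_{L^1_vL^p_x}$. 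Here we use $\rho[T_{t_1}f_0](x)=\int f_0(x-t_1w,w)\d w$ and Minkowski's inequality. This gives a uniform bound $C_0$ that covers the degenerate configurations.
\end{itemize}
Next we bound $\sigma_n^{2s}=\sum_{i=2}^n(t_i-t_{i-1})^{2s}$ from below in terms of $t_n-t_1$.
\begin{itemize}
\item For $s\ge 1/2$, convexity (power mean over $n-1$ gaps) gives $\sigma_n^{2s}\ge (n-1)^{1-2s}(t_n-t_1)^{2s}$.
\item For $s<1/2$, subadditivity of $x\mapsto x^{2s}$ gives $\sigma_n^{2s}\ge (t_n-t_1)^{2s}$.
\end{itemize}
In both cases this yields $\sigma_n^{-2sq}\lesssim n^{q(2s-1)_+}(t_n-t_1)^{-2sq}$.

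\textbf{Step 2: splitting the improbable region.} Define $G:=\{t_n-t_1\ge t/2\}$ and treat $E^{\mathrm c}\cap G^{\mathrm c}$ and $E^{\mathrm c}\cap G$ separately.
\begin{itemize}
\item On $E^{\mathrm c}\cap G^{\mathrm c}$ we use the uniform bound $C_0$. Conditionally on $N_t=n$, the probability that all $n$ uniform points lie in a window of length $t/2$ is at most $n2^{1-n}$. Summing against the Poisson weights gives a factor $e^{-ct}$, which beats any power of $t$.
\item The terms $n\le 2$ carry a factor $e^{-t}t^n$ and are also exponentially small.
\item On $E^{\mathrm c}\cap G$ with $n\ge3$, Step 1 gives $\|\Phi_n\|_{L^p}\lesssim t^{-2sq}n^{q(2s-1)_+}$. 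It then remains to bound $e^{-t}\sum_n n^{q(2s-1)_+}\frac{t^n}{n!}\P(E^{\mathrm c}\mid N_t=n)$.
\end{itemize}

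\textbf{Step 3: conditional probabilities and the Poisson sum.} We reuse the arguments of Lemma \ref{lem:improbable-region}, but keep the dependence on $n$.
\begin{itemize}
\item $\P(E_1^{\mathrm c}\mid n)$ and $\P(E_3^{\mathrm c}\mid n)$ are both at most $\exp(-n^{1-\alpha})\lesssim n^{-K}$ for any $K$.
\item $\P(E_2^{\mathrm c}\mid n)\lesssim n^{-(1-2\beta)}$. This follows from the change to outer variables $(t_1,t-t_n)$ and inner gap simplex $\Delta_{n-2}$, combined with Corollary \ref{coroll: dirichlet_frac} applied at scale $t_n-t_1$.
\end{itemize}
Hence the remaining sum is $\lesssim t^{-2sq}\,e^{-t}\sum_{n\ge1}\frac{t^n}{n!}n^{q(2s-1)_+-(1-2\beta)}$. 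Proposition \ref{prop: boundtailPoiss} with $\gamma=q(2s-1)_+-(1-2\beta)$ bounds this by $t^{-2sq}t^{q(2s-1)_+}t^{-(1-2\beta)}$. For $s\in[1/2,1]$ we have $-2sq+q(2s-1)=-q$, which gives the second claim.

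The main obstacle is the lack of a lower bound on $\sigma_n$ in $E^{\mathrm c}$. There the gaps are uncontrolled, so $\sigma_n^{-2sq}$ can blow up. Step 1 addresses this: the lower bound in terms of $t_n-t_1$ holds deterministically on the whole simplex, and the set $\{t_n-t_1<t/2\}$ is handled by the uniform $L^p$ bound and its exponentially small probability. A further point to check is that the $E_2$ bound from Corollary \ref{coroll: dirichlet_frac}, which is stated at scale $z=t_n-t_1$, indeed yields the $n^{-(1-2\beta)}$ decay conditionally on $n$.
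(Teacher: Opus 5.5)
Your argument is correct. It differs from the paper's mainly in how you handle the singular weight $(t_n-t_1)^{-2sq}$ produced by $\sigma_n^{-2sq}$.

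\textbf{The paper's route.} The paper keeps this weight and integrates it exactly over each $E_i^{\mathrm c}$.
\begin{itemize}
\item For $E_1^{\mathrm c}$ and $E_3^{\mathrm c}$ it computes $\frac{1}{(n-2)!}\int\!\!\int (t_n-t_1)^{n-2-2sq}\,\d t_1\,\d t_n$ over the restricted ranges. This gives, up to constants, $\frac{t^{n-2sq}}{n!}(1-n^{-\alpha})^{n-2sq}$.
\item For $E_2^{\mathrm c}$ it carries $(1-u-w)^{-2sq}$ through the outer/inner change of variables. It uses that the Dirichlet bound on $\Delta_{n-2}(1-u-w)$ is uniform in $(u,w)$.
\item Integrability of the weight requires $n>N_q=\lceil 2sq\rceil+1$. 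The finitely many $n\le N_q$ are discarded using a uniform bound on $\|\Phi_n\|_{L^p}$ and the factor $e^{-t}t^{N_q}$.
\end{itemize}

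\textbf{Your route.} Your truncation $G=\{t_n-t_1\ge t/2\}$ makes the weight trivially $\lesssim t^{-2sq}$ on $G$. The weight therefore decouples from the events, and you only need the conditional probabilities $\P(E_i^{\mathrm c}\mid N_t=n)$ already obtained in Lemma \ref{lem:improbable-region}. The complement $G^{\mathrm c}$ costs only $e^{-t}\sum_n\frac{t^n}{n!}(n+1)2^{-n}=(1+t/2)e^{-t/2}$; the exact conditional probability is $(n+1)2^{-n}$, slightly better than your $n2^{1-n}$. You thus avoid both the threshold $N_q$ and the Beta-type integrals.

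\textbf{The price.} You use the uniform bound $C\|f_0\|_{L^1_vL^p_x}$ on a larger set. However, the paper needs the same kind of control for its small-$n$ terms. Already for $n=1$ one has $\sigma_1=0$ and $\|\Phi_1\|_{L^p}=\|\M^s\|_{L^p}\|\rho[T_{t_1}f_0]\|_{L^p}$. So the constant depends on $f_0$ in the same way in both proofs. You should state this norm explicitly as an assumption, since Theorem \ref{thm: mainBGKStable} only assumes $f_0\in L^p$.

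\textbf{The point you flagged.} It holds. Conditionally on $(t_1,t_n)$, the $n-1$ inner gaps are uniform on $\Delta_{n-2}(t_n-t_1)$, and the Chebyshev bound is uniform in $(t_1,t_n)$. The centring used in $E_2$ differs from the exact Dirichlet mean only by a relative $O(n^{-1})\ll n^{-\beta}$. Hence $\P(E_2^{\mathrm c}\mid N_t=n)\lesssim n^{-(1-2\beta)}$, exactly as in the paper.
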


\begin{proof}
Let $N_q=\cl{2sq}+1$.
  For $n\le N_q$  we will use the simple bound
  $\norm{\Phi_n}_{L_p}<C$ which gives
  $$
  e^{-t} \sum_{n=0}^{N_q} 
    \int_{0}^t \int_{0}^{t_n} \dots
    \int_{0}^{t_2}\norm{\Phi_n}_{L^p}\mathds{1}_{E^c}
    \d t_1 \dots \d t_n
    \leq C e^{-t} \sum_{n=0}^{N_q} \frac{t^n}{n!}
    \le N_qe^{-t}t^{N_q}
    \lesssim t^{-q} t ^{-\tilde{\alpha}}
  $$
  for any $\tilde{\alpha}>0$.
For $n>N_q$, we proceed as follows: from the above recalled expression of $\Phi_n$, using Young's inequality we have
  \begin{multline}\label{eq: i0}
    \| \Phi_n \|_{L^p}^p
    =
    \ird \M^s(v)^p  \ird \Big( 
      (\M^s_{\sigma_n} *
        \rho[T_{t_1} f_0])
      ( x - (t-t_n)v)\Big)^p
    \d x \d v
    \\
    =
    \ird \M^s(v)^p  \ird  \left(
      \left( \M^s_{{\sigma_n}} *
        \rho[T_{t_1} f_0] \right)
      \left( x \right)
    \right)^p
    \d x \d v
    \\
    \leq
    \| \M^s_{\sigma_n}(x) \|_{L^p_x}^p
    \, \| \rho[T_{t_1}f_0] \|_{L^1_x}^p
    \, \| \M^s(v) \|_{L^p_v}^p
    \lesssim
     \left(\sigma_n\right)^{-d (p - 1)},
  \end{multline}
  where, in the last step, we used known facts of the decay of norm of  scaled functions --- see Lemma \eqref{lem:Lp-norm-of-stable} and the
  fact that
  $\| \rho[T_{t_1}f_0] \|_{L^1_x} = \| T_{t_1}f_0
  \|_{L^1_{x,v}} = \| f_0 \|_{L^1_{x,v}} = 1$.

Let us compute a lower bound for the $\sigma_n$ term.
For $s\in(1/2,1]$, the
real function $z\mapsto z^{2s}$ is convex. Therefore, by Jensen's inequality
\begin{equation*}
 \sigma_n^{2s} = \sum_{i=2}^{n} (t_{i} - t_{i-1})^{2s}
    \ge
    \frac{1}{(n-1)^{2s-1}} \left( \sum_{i=2}^{n} (t_{i} - t_{i-1}) \right)^{2s}
    =
    \frac{\left( t_n - t_1 \right)^{2s}}{(n-1)^{2s-1}}.
\end{equation*}
The case $s=1$, it is just Cauchy-Schwartz inequality, while the case $s=1/2$ is trivial.

For $s\in(0,1/2)$ the function $z \mapsto z^{2s}$ is concave, the
minimum for $\sigma_n^{2s}$ is attained when the vector of jump times
is more uneven (opposite situation of the convex case, where the
minimum is attained where all the jump times are equal). It can be
seen as a special case of Karamata's inequality. This implies that
\begin{equation*}
 \sigma_n^{2s} = \sum_{i=2}^{n} (t_{i} - t_{i-1})^{2s}
    \ge\left( t_n - t_1 \right)^{2s}.
\end{equation*}
Thus, summarising
\begin{equation}\label{eq: lowerboundSigma}
    \sigma_n^{2s}\ge \frac{(t_n-t_1)^{2s}}{(n-1)^{(2s-1)_+}}.
\end{equation}
  
  
  Taking into account the above arguments, we estimate the first term of \eqref{eq:ir1} using \eqref{eq: improbableSetInter}, which means to estimate the integral over each of the regions
  $E_i^{\mathrm{c}}$, $i = 1, 2, 3$.
  For $n>N_q$, by equations \eqref{eq: lowerboundSigma} and \eqref{eq: i0}, we have
$E_i^{\mathrm{c}}$, $i = 1, 2, 3$, i.e.
  \begin{equation}\label{eq:iec2FRAC}
     \int_{E_i^\mathrm{c}} \| \Phi_n \|_{L^p} \d \mu\ge e^{-t} \sum_{n=0}^\infty (n-1)^{q(2s-1)_+}
    \int_{0}^t \int_{0}^{t_n} \dots
    \int_{0}^{t_2}\left( t_n - t_1 \right)^{-2sq} \mathds{1}_{E_i^c}
    \d t_1 \dots \d t_n.  
  \end{equation}

   \medskip \noindent
  \textbf{Estimate over $E_1^{\mathrm{c}}$.} 
  From equation \eqref{eq:iec2FRAC}, we estimate
  \begin{multline*}
    e^{-t} \sum_{n=0}^\infty (n-1)^{q(2s-1)_+}
    \int_{0}^t \int_{0}^{t_n} \dots
    \int_{0}^{t_2}\left( t_n - t_1 \right)^{-2sq} \mathds{1}_{E_1^c}
    \d t_1 \dots \d t_n
    \\
    =   e^{-t} \sum_{n=0}^\infty (n-1)^{q(2s-1)_+}
    \int_{t/n^\alpha}^t \int_{t/n^\alpha}^{t_n} \dots
    \int_{t/n^\alpha}^{t_{2}}\left( t_n - t_1 \right)^{-2sq}
    \d t_1 \dots \d t_n.
  \end{multline*}
  We compute the integrals first. Changing the order of integration,
  we have
  \begin{multline*}
    \int_{t/n^\alpha}^t
    \int_{t/n^\alpha}^{t_n}
    \dots
    \int_{t/n^\alpha}^{t_{2}}
    \left( t_n - t_1 \right)^{-2sq}
    \d t_1 \dots \d t_n
    \\
    =
    \int_{t/n^\alpha}^t \int_{t/n^\alpha}^{t_n}
    \int_{t_1}^{t_n}
    \dots
    \int_{t_1}^{t_{3}}
    \left( t_n - t_1 \right)^{-2sq}
    \d t_2 \dots \d t_{n-1} \d t_1 \d t_n
    \\
    =
    \frac{1}{(n-2)!}
    \int_{t/n^\alpha}^t \int_{t/n^\alpha}^{t_n}
    \left( t_n - t_1 \right)^{-2sq + n - 2}
    \d t_1 \d t_n
    \\
    =
    \frac{1}{(n-2sq) (n-1-2sq) (n-2)!}
    \left( t - t/n^\alpha \right)^{n - 2sq }
    \\
    \lesssim
    \frac{1}{n!}
    t^{n-2sq}
    \left(1 - \frac{1}{n^\alpha} \right)^{n-2sq}
    \lesssim
    \frac{1}{n!}
    t^{n-2sq}
    \exp\left( -\frac{n-2sq}{n^\alpha} \right)
    \lesssim
    \frac{1}{n!}
    t^{n-2sq}
    \exp\left( -c n^{1-\alpha} \right)
  \end{multline*}
 
  Let us notice that the integral
  $$
  \int_{t/n^\alpha}^t \int_{t/n^\alpha}^{t_n}
    \left( t_n - t_1 \right)^{-2sq + n - 2}
    \d t_1 \d t_n<\infty
  $$
  since $n>N_q$.
  Carrying out the sum, thanks to Proposition \ref{prop: boundtailPoiss} and the very definition of $N_q$, finally we have
\begin{multline*}
e^{-t} \sum_{n=N_q+1}^\infty (n-1)^{q(2s-1)_+}
    \int_{t/n^\beta}^t \int_{t/n^\beta}^{t_n} \dots
    \int_{t/n^\beta}^{t_{2}}\left( t_n - t_1 \right)^{-2sq}
    \d t_1 \dots \d t_n
    \\
    \leq
     e^{-t} \sum_{n=0}^\infty 
    \frac{t^n}{n!}
    t^{-2sq}
    n^{q(2s-1)_+}
    \exp\left( -c n^{1-\alpha} \right)
    \lesssim
     e^{-t} \sum_{n=0}^\infty 
    \frac{t^n}{n!} n^{q(2s-1)_+}
    t^{-2sq}
    n^{-\tilde{\alpha}}\lesssim {t^{-2sq}} t^{q(2s-1)_+}t^{-\tilde{\alpha}},
\end{multline*}
  for all $\tilde{\alpha}>0$. 
  This means ---from \eqref{eq:iec2FRAC},
  \begin{equation*}
    \int_{E_1^\mathrm{c}} \| \Phi_n \|_{L^p} \d \mu
    \lesssim
   t^{-2sq+q(2s-1)_+}t^{-\tilde{\alpha}},
  \end{equation*}
  which is the intended estimate. Let us notice that for $s\in(1/2,1]$, this reduces to $t^{-q} t^{-\tilde{\alpha}}$ which has the correct first order decay.
   
  \medskip \noindent
  \textbf{Estimate over $E_2^{\mathrm{c}}$.} 
  
  Let us recall the definition of the set  \eqref{eq:defE_stable}.
$$
  \mathscr{E}_{n,s}(z) := \left\{ \mathbf{x} \in \Delta_n(z) : \left| \sum_{i=1}^{n+1} x_i^{2s} - \mathfrak{m}_{n,s} z^{2s} \right| \le \frac{\mathfrak{m}_{n,s} z^{2s}}{n^{\beta}} \right\}.$$
We perform the following change of variable:
let $u=\frac{t_1}{t}$ and $w=\frac{t-t_n}{t}$ such that $\frac{t_n-t_1}{t}=1-u-w$.
Now define the $n-2$ ``inner'' variables $x_i=\frac{t_{i+1}-t_1}{t}$ for $i=1,\dots, n-2$. Notice that the vector $\mathbf{x}=(x_1,\dots, x_{n-2})$ defines the simplex $\Delta_{n-2}(1-u-w)$. Moreover, the Jacobian of the total transformation in $t^n$. This becomes
\begin{multline}\label{eq: i4}
   \int_0^t\int_0^{t_{n-1}}\dots \int_0^{t_2}(t_n-t_1)^{-2sq}\mathds{1}_{E_2^c}\d t_1\dots\d t_n
    \\
    = t^{-2sq}t^n\int_0^{1}\int_0^{1-u}(1-u-w)^{-2sq}\left[\int_{\Delta_{n-2}(1-u-w)}\mathds{1}_{{\mathscr{E}_{n-2}^c(1-u-w)}}(\mathbf{x})\d \mathbf{x}\right]\d w\d u:=I.
\end{multline}
Applying Corollary \ref{coroll: dirichlet_frac} with dimension $n-2$ and $z=1-u-w$,
we have
$$
\int_{\Delta_{n-2}(1-u-w)}\mathds{1}_{{\mathscr{E}_{n-2}^c(1-u-w)}}(\mathbf{x})\d \mathbf{x}
\le\frac{(1-u-w)^{n-2}}{(n-2)!}n^{-(1-2\beta)}.
$$
Substituting back in \eqref{eq: i4}, one has
\begin{equation*}
   I\le  t^{n-2sq}\frac{n^{-(1-2\beta)} }{(n-2)!}
    \int_0^{1}\int_0^{1-u}(1-u-w)^{n-2-2sq}\d w\d u
\end{equation*} Now, for $n>N_q$, the  term inside the integral is
integrable and equal to
 $$
 \int_0^1\int_0^{1-u}(1-u-w)^{-2sq+n-2}\d u \d w
 =\frac{1}{(n-2sq)(n-2sq-1)},
 $$
 and therefore
 \begin{multline*}
    e^{-t} \sum_{n=N_q+1}^\infty (n-1)^{q(2s-1)_+}   \int_0^t\int_0^{t_n}\cdots\int_0^{t_2}
    \left( t_n - t_1 \right)^{-2s q}
    \mathds{1}_{E_2^\mathrm{c}}\d t_1\dots \d t_n
    \\
    \le
     e^{-t} \sum_{n=N_q+1}^\infty  t^n t^{-2sq}(n-2)^{-(1-2\beta)}\frac{(n-1)^{q(2s-1)_+}}{(n-2q)(n-2q-1)(n-2)!}
     \\
     \lesssim t^{-2sq}e^{-t}\sum_{n=0}^\infty\frac{t^n}{n!}n^{{-(1-2\beta)+q(2s-1)_+}}\lesssim t^{-2sq}t^{{q(2s-1)_+}}t^{-(1-2\beta)}
  \end{multline*}
where in the last inequality we used   Proposition \ref{prop: boundtailPoiss}.
Again notice that for the case $s\in(1/2,1)$ the decay is $t^{-q}t^{-(1-2\beta)}$.

\medskip \noindent
  \textbf{Estimate over $E_3^{\mathrm{c}}$.}
  As in the case $E_1^{\mathrm{c}}$, we proceed as follows for the improbable region: after a change of order of integration and a similar computation, we obtain
  \begin{multline*}
  \int_0^{t}\int_0^{t_n}\cdots \int_0^{t_2}(t_n-t_1)^{-2q}
  \mathds{1}_{E_3^{\mathrm{c}}}\d t_1\dots\d t_n=
  \\
  =\frac{1}{(n-2)!}\int_0^t\int_0^{t_n}(t_n-t_1)^{-2sq+n-2}\mathds{1}_{E_3^{\mathrm{c}}}\d t_1\d t_n=\frac{1}{(n-2)!}\int_0^{t(1-n^{-\alpha})}\int_0^{t_n}(t_n-t_1)^{-2sq+n-2}\d t_1\d t_n
  \\
  =\frac{t^{n-2sq}}{(n-2sq)(n-2sq-1)(n-2)!}\Big(1-\frac{1}{n^\alpha}\Big)^{n-2sq}\lesssim \frac{1}{n!}t^{n-2sq}\exp(-cn^{1-\alpha})\lesssim\frac{1}{n!}t^{n-2sq}n^{-\tilde{\alpha}}
  \end{multline*}
  for any $\tilde{\alpha}>0$.
  Applying the previous estimate, Proposition \ref{prop: boundtailPoiss}, and \eqref{eq: i0}, we finally have 
  \begin{equation*}
\int_{E_3^\mathrm{c}}\norm{\Phi_n}_{L^p} \d \mu \lesssim \int_{E_3^\mathrm{c}}\sigma^{-2sq}\d \mu \lesssim e^{-t}\sum_{n=0}^\infty n^{q(2s-1)_+}\frac{1}{n!}t^{n-2sq}n^{-\tilde{\alpha}}\lesssim {t^{-2sq}} t^{q(2s-1)_+}t^{-\tilde{\alpha}}.
  \end{equation*}
Summing up the results for the three regions, the claim is proven.
\end{proof}

\subsubsection{Proof of Theorem \ref{thm: mainBGKStable}}
Following the previous two sections, we have
\begin{multline*}
    \norm{f(t)-\M^s(v)u(t,x)}_{L^p}\\
    \le  
    \int_{\Omega_t} \norm{\Phi_n-\M^s(v)u(t,x)}_{L^p}\d \mu= \int_E \norm{\Phi_n-\M^s(v)u(t,x)}_{L^p}\d \mu+ \int_{E^c}
    \norm{\Phi_n-\M^s(v)u(t,x)}_{L^p}\d \mu
    \\
    \le \int_E \norm{\Phi_n-\M^s(v)u(t,x)}_{L^p}\d \mu+ \int_{E^c}
    \norm{\M^s(v)u(t,x)}_{L^p}\d \mu+  \int_{E^c}
    \norm{\Phi_n}_{L^p}\d \mu
    \\
    \lesssim t^{-q-\beta}+t^{-q-(1-2\beta)}-t^{-2sq}t^{{q(2s-1)_+}}t^{-(1-2\beta)}
\end{multline*}
For $s\in(1/2,1]$, optimising on  $\beta$ such that  $\beta=1-2\beta$, gives
$$
 \norm{f(t)-\M(v)u(t,x)}_{L^p}\le Ct^{-q}t^{-\frac13}
$$

For $s\in(0,1/2)$, the estimate for the improbable region gets worse as $s\to 0$ for larger $p$. Indeed optimising one get
$$\norm{f(t)-\M^s(v)u(t,x)}_{L^p}\le t^{-q-\beta}
$$
with $
\beta=\frac13(1-(1-2s)q)
$
which is larger than zero for 
$$
\frac{1}{p} > \frac{(1-2s)d - 2s}{(1-2s)d}
$$
Obviously $1/p$ is strictly positive thus the previous inequality is always true for 
$s>\frac{d}{2(d+1)}$.
For $s\in\left(0,\frac{d}{2(d+1)}\right]$, then the decay rate is meaningful only for
\begin{equation}\label{eq: pmax}
p<\frac{(1-2s)d}{(1-2s)d-2s}:=p_{{\max}},
\end{equation}
thus proving the main statement. Taking into account the constrain  \eqref{eq: constrainalpha} and recalling that $\alpha<1$, for $s\in(0,1/2)$ we require that
\begin{equation}\label{eq:costrain_nu}
    \nu\ge\frac{2}{3}s(1-q(1-2s)_+).
\end{equation}
which is satisfied by the condition $\nu\in(s,2s).$

\subsection{Extension to a general equilibrium \texorpdfstring{$F$}{F}}\label{sec: FNOstable}
Let us now consider the case of a general equilibrium distribution instead of the Maxwellian distribution. We assume $f$ solves the following BGK equation
\begin{equation}\label{eq: genBGK}
    \partial_t f+ v\cdot \nabla_x f= F(v)\ird f(t,x,w) \d w- f.
    \end{equation}
    \begin{proof}[Sketch of the proof of Theorem \ref{thm: mainBGKgeneral}]
    Writing the Wild sums formulation of the solution, one has
\begin{equation*}
  \Phi_n - F(v) \M^s_{(\diffC t)^{1/2s}}*\rho_0
  =
  \M(v) \left(
    F^{*n-1;\tau_2,\dots, \tau_n}*\rho[T_{t_1}\fun_0]*\delta_{(t-t_n)v}
    - \M^s_{(\diffC t)^{1/2s}}*\rho_0
  \right).
\end{equation*}
where, accordingly to the notation in \ref{ass: BE}, 
$$F^{*n-1;\tau_2,\dots, \tau_n}=F_{\tau_2}*F_{\tau_3}\dots\cdots F_{\tau_n}$$
and $\tau_i=(t_i-t_{i-1})^2$. In particular, the scale parameter (standard deviation for $s=1$) of $(F^{*n-1;\tau_2,\dots, \tau_n})$  is $\left(\sum_{i=2}^n (t_i-t_{i-1})^{2s}\right)^{1/2s}=\sigma_n$.

Therefore, we split the term of the probable region in five terms:
\begin{equation*}
  \M(v) \left(
    F^{*n-1;\tau_2,\dots, \tau_n}*\rho[T_{t_1}\fun_0]*\delta_{(t-t_n)v}
    - \M^s_{(\diffC t)^{1/2s}}*\rho_0
  \right)
  = \NN_0+\NN_1 + \NN_2 + \NN_3+\NN_4,
\end{equation*}
where 
\begin{align*}
\NN_0 :=& F^{*n-1;\tau_2,\dots, \tau_n}*\rho[T_{t_1}\fun_0]*\delta_{(t-t_n)v}-\M^s_{\sigma_n}*\rho[T_{t_1}\fun_0]*\delta_{(t-t_n)v}
\\
  \NN_1 :=&
\M^s_{\sigma_n}*\rho[T_{t_1}\fun_0]*\delta_{(t-t_n)v}-\M^s_{(\diffC{n}^{1-2s})^{1/2s}t}*\rho[T_{t_1}\fun_0]*\delta_{(t-t_n)v}
  \\
  \NN_2 :=&
  \M^s_{(\diffC{n}^{1-2s})^{1/2s}t}*\rho[T_{t_1}\fun_0]*\delta_{(t-t_n)v}-\M^s_{{(\diffC t)^{1/2s}  }}*\rho[T_{t_1}\fun_0]*\delta_{(t-t_n)v}
  \\
  \NN_3 :=&
  \M^s_{(\diffC t)^{1/2s}}*\rho[T_{t_1}\fun_0]*\delta_{(t-t_n)v}-\M^s_{(\diffC t)^{1/2s}}*\rho[T_{t_1}\fun_0]
  \\
  \NN_4 :=&
    \M^s_{(\diffC t)^{1/2s}}*\rho[T_{t_1}\fun_0]-
    \M^s_{(\diffC t)^{1/2s}}*\rho_0.
\end{align*}
and we bound $\norm{\NN_0}_{L^p_x}$ with Berry-Esseen Theorem \ref{ass: BE}, and the condition on the good region $E_2$.
After taking into account the sum one show that in the probable region, applying Proposition \ref{prop: poiss2} one obtains
$$
e^{-t}\sum_{n\ge 0}\int_{\mathcal{S}_n}\norm{\M(v)\norm{\NN_0}_{L^p_x}}_{L^p_v}\d t_1\dots \d t_n\lesssim t^{-q}t^{-\bar\delta/2s}
$$
The other four terms are bounded as before.

For the improbable part, recalling \eqref{eq: lowerboundSigma}, one proves that 
$$
\norm{F^{*n-1;\tau_2,\dots, \tau_n}}_{L^p_{x,v}}\le \norm{F^{*n-1;\tau_2,\dots, \tau_n}-\M^{s}_{\sigma_n}}_{L^p_{x,v}}+\norm{\M^s_{\sigma_n}}_{L^p_{x,v}}\le (n^{-\bar{\delta}}+1)(\sigma_n)^{-d(p-1)}
$$
and then proceed as before. The proof of Theorem then follows.
    \end{proof}

\section{Nonlocal  Kinetic Fokker--Planck}\label{sec: NLKFP}
 We want to study the time asymptotics of the solution of the following evolution  PDE
\begin{equation}\label{eq: NLKFP}
    \partial_t f(t,x,v)+v\cdot\nabla_x f=G *_vf-f+\div_v(vf)
\end{equation}
where $G \: \R^d \to \R$ is the isotropic Gaussian distribution with variance $2\Id$, that  is
$$
G(v):=(4\pi)^{-d/2} \exp\left\{-\frac{1}{4}|v|^2\right\}
$$  

In this section our default function spaces are the mixed $(p,r)$
Lebesgue spaces.
These spaces  are Banach when endowed with the norm defined in \eqref{eq:LpxLrv} with dual space $L^{p'}_xL^{r'}_v$  \cite[Thm. 1.b]{BenedekPanzone_1961_MixedLebesgue}.
Clearly, if $p=r$, one recovers the usual
Lebesgue space $L^p(\R^d \times \R^d)$.

For mixed Lebesgue spaces, Minkowski Integral's inequality \cite[Thm. 202]{Hardy1988} implies that  if $f$ is nonnegative, for $1\le r\le p\le \infty$, we have
\begin{equation}\label{eq:mixedLebesgueInclusion}
\norm{f}_{L^p_xL^r_v}\le \norm{f}_{L^r_vL^p_x}.
\end{equation}
Specifically, our main spaces of interests are $L^p_xL^1_v$ and
$L^1_vL^p_x$, which clearly are not the same. Indeed, the previous
inequality \eqref{eq:mixedLebesgueInclusion} states that if
$f\in L^1_vL^p_x$, then automatically $f\in L^p_xL^1_v$, but not vice
versa.  In $L^1_vL^p_x$ the transport/drift semigroup appearing in
equation \eqref{eq: NLKFP} $T_t:L^1_vL^p_x\to L^1_vL^p_x$ defined as
\begin{equation}\label{eq:transSG_NL}
T_t f(x,v)= f\left (x-\left(e^{t}-1\right)v,e^t v\right)
\end{equation} 
is an isometry:
\begin{multline*}
    \norm{T_tf}_{L^1_vL^p_x}\ird \abs{\ird \abs{e^{dt}f\left(x-(e^{t}-1)v,e^{t}v\right)}^{p}\d x}^{1/p}\d v
    \\
    =\ird e^{dt} \abs{\ird \abs{f\left(y,e^{t}v\right)}^{p}\d x}^{1/p}\d v=
    \ird  \abs{\ird \abs{f\left(y,w\right)}^{p}\d x}^{1/p}\d w=\norm{f}_{L^1_vL^p_x}
\end{multline*}
The same claim is not true in $L^p_xL^1_v$. For a counterexample, take
$t_0 > 0$, $f(x,v) := T_{-t_0}[ \varphi \otimes \psi ]$, with
$\varphi = \varphi(x)$ an $L^1$ function with compact support, which
is not in $L^p$ and $\psi = \psi(v)$ a smooth, compactly supported
function.  Then one can check that $f \in L^p_x L^1_v$, while
$T_{t_0} f = \varphi \otimes \psi$ is not in $L^p_x L^1_v$. However,
by \eqref{eq:mixedLebesgueInclusion}, if $f \in L^1_vL^p_x$, we can
then control the $L^p_xL^1_v$ norm by
$$
\norm{T_t f }_{L^p_xL^1_v}\le\norm{T_t f }_{L^1_vL^p_x} = \norm{f}_{L^1_vL^p_x}.
$$
We decided to state our results and work whenever possible in the more physically meaningful $L^p_xL^1_v$. This space, indeed, is the $L^p$ space of the macroscopic density $\rho[f]$.
We observe that all our results in case \eqref{eq:defL-nonlocalFP} could have been stated more generally in 
$L^1_v L^p_x$, with the $L^p_x L^1_v$ result then following as a simple consequence. 

\bigskip
    
We now state the main theorem of this section:
\begin{teorema}\label{thm: mainNLFP}
  Let $p\in[1,\infty]$ and $f$ be the solution of \eqref{eq: NLKFP} with probability
  initial data $f_0\in 
  L^1_vL^p_x$ such that
    $$
    \ird \ird f_0(x,w)|w|\d x\d v=M_1<\infty.
    $$
    Let $u$ be the solution of the heat equation \eqref{eq: fracheat}  with  diffusivity constant $\kappa=1$ and initial data $u_0(x)=\int f_0(x,w)\d w$.
    Then, for all $\gamma<\frac{1}{2}$, there exists a constant $C$ depending on $\norm{f_0}_{L^1_v,L^p_x}, p,d, M_1,\gamma$, such that
    \begin{equation*}
        \norm{f(t)-u(t,x)F(v)}_{L^p_xL^1_v}\le C t^{-\frac{d}{2}\left(1-\frac{1}{p}\right)} t^{-\gamma}
    \end{equation*}
    where $F$ is the unique probability equilibrium of the operator 
    $\mathcal{L}[f]=G*f-f+\div_v(vf)$.
\end{teorema}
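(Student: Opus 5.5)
The plan is to reproduce the Wild-sum strategy of Section~\ref{sec:BGK}, with the transport semigroup replaced by the transport--drift semigroup \eqref{eq:transSG_NL} and $\L^+ g = G*_v g$. The first step is an exchange identity of the type announced in Lemma~\ref{lemma: exchangestar}. A direct computation gives
\[
G*_v(T_\tau g)(x,v)=\ird G(w)\,e^{d\tau}g\big(x-(e^\tau-1)(v-w),e^\tau(v-w)\big)\d w .
\]
This should equal $T_\tau\big(\mathcal G^{(\tau)}\star g\big)$, where $\star$ is convolution in $(x,v)\in\R^{2d}$ and $\mathcal G^{(\tau)}$ is the degenerate Gaussian law of $\big(-(1-e^{-\tau})Z,\,Z\big)$ with $Z\sim\NN(0,2e^{-2\tau}\Id)$; the signs and scalings have to be checked. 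Pushing every $T$ to the left with the semigroup property and using that Gaussians are closed under convolution, I expect to get
\[
\Phi_n=T_t\big(\Gamma_n\star f_0\big),
\]
with $\Gamma_n$ a centred Gaussian on $\R^{2d}$. In the original variables this is just the probabilistic picture. The jump at time $t_i$ adds a velocity kick $W_i\sim\NN(0,2\Id)$, which is then damped, so by time $t$ it has displaced the particle by $(1-e^{-(t-t_i)})W_i$ and left a velocity $e^{-(t-t_i)}W_i$. Hence, conditioned on $(n,t_1,\dots,t_n)$, the $x$-covariance of the kicks is $2\Sigma_n\Id$ with
\[
\Sigma_n=\sum_{i=1}^n\big(1-e^{-(t-t_i)}\big)^2 ,
\]
the velocity covariance is bounded, and the initial datum contributes a transport of $x$ by $(1-e^{-t})v_0$ plus a damping of $v_0$.

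In the probable region I would compare $\Phi_n$ with $F(v)u(t,x)$ in $L^p_xL^1_v$, where $u(t)=G^{(2t)}*\rho_0$ with $G^{(2t)}$ the Gaussian of covariance $2t\Id$ (so $\kappa=1$). Because of the mixed norm we may integrate in $v$ first, and the telescoping should be, roughly:
\begin{enumerate}[(a)]
\item replace the $v$-marginal of $\Gamma_n\star f_0$, after the drift, by $F$, up to an exponentially small error;
\item replace the $x$-Gaussian of covariance $2\Sigma_n$ by the one of covariance $2n$, using \eqref{eq: diststable2}--\eqref{eq: diststable3} with $s=1$;
\item replace $2n$ by $2t$, using Proposition~\ref{prop: poiss2}, which gives $t^{-q}t^{-1/2}$;
\item remove the shift $(1-e^{-t})v_0$ of the initial datum, using Lemma~\ref{lemma: distinitialdata} with $\nu=1$, which gives $M_1t^{-q-1/2}$.
\end{enumerate}
For the interchanges I use the isometry of $T_t$ on $L^1_vL^p_x$ and \eqref{eq:mixedLebesgueInclusion}. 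For (b), note that $n-\Sigma_n=\sum_i\big(1-(1-e^{-(t-t_i)})^2\big)$ has conditional mean $O(1)$ and conditional variance $O(1)$, since the $t-t_i$ are uniform order statistics and each summand decays exponentially in $t-t_i$. So a Chebyshev bound in the spirit of Proposition~\ref{prop: tail dirichlet_fractional} gives $|\Sigma_n-n|\le n^{\epsilon}$ outside an event of probability $O(n^{-2\epsilon})$. On that event, term (b) contributes $t^{-q}n^{\epsilon-1}$. The small losses $\epsilon$ here, and in the coupled $(x,v)$ concentration, are why only $\gamma<\frac12$ is claimed.

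The improbable region consists of small $n$ (fewer than about $t/2$ jumps) or $|\Sigma_n-n|>n^\epsilon$. There I would bound $\|\Phi_n\|_{L^p_xL^1_v}\le\|\Gamma_n\star f_0\|_{L^1_vL^p_x}$. Since $\Gamma_n$ is a product-free Gaussian whose $x$-marginal has variance $2\Sigma_n$, Young's inequality in $x$ after integrating in $v$ should give $\lesssim\min\{\|f_0\|_{L^1_vL^p_x},\Sigma_n^{-q}\}$. On the large-deviation event for $N_t$ the Poisson tail bounds (Proposition~\ref{prop: boundtailPoiss}, Chernoff) are superpolynomially small. On the event $\{N_t\approx t,\ |\Sigma_n-n|>n^\epsilon\}$ we have $\Sigma_n\gtrsim n$, so the contribution is $t^{-q}\,\P(\cdot)\lesssim t^{-q}t^{-2\epsilon}$. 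The remaining case, $N_t\approx t$ but $\Sigma_n$ very small, requires almost all jumps to occur in the last $O(1)$ time, which is exponentially improbable. Balancing $\epsilon$ against $1-\epsilon$, or better using higher conditional moments of $\Sigma_n-n$, which are all $O(1)$, should push the rate to any $\gamma<\frac12$.

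The main obstacle I anticipate is the exchange step together with step (a). The drift makes $\Gamma_n$ a genuinely coupled Gaussian in $(x,v)$, and the $L^p_xL^1_v$ norm does not commute with $T_t$. I would first do everything in $L^1_vL^p_x$, where $T_t$ is an isometry, and identify $F$ as the law of $\sum_i e^{-\tau_i}W_i$ along the Poisson history. I would then check that conditioning on the $x$-component of $\Gamma_n$ only perturbs the velocity law by $O(n^{-1/2})$, using that the $x$-displacement and velocity of each kick have correlation $O(e^{-(t-t_i)})$, which is non-negligible only for the $O(1)$ most recent jumps.
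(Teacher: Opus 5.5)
Your overall architecture matches the paper's: Wild sum, an exchange identity producing a coupled Gaussian with $x$-variance $\sum_i(1-e^{-(t-t_i)})^2=S_0-2S_1+S_2$, concentration of that variance, Poisson fluctuations of $n$ around $t$ giving $t^{-1/2}$, and a separate treatment of the initial datum. However, step (a) fails as written, and it is the step that carries the difficulty.

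\textbf{Step (a) and the choice of target.} Conditioned on a history $(n,t_1,\dots,t_n)$, the velocity marginal of $\Gamma_n\star f_0$ after the drift is $\mathcal{N}(0,2S_2)$ convolved with the damped initial velocity law. Here $S_2=\sum_i e^{-2(t-t_i)}$ is random, of order one, and arbitrarily small when the last jump is old. This marginal is not exponentially close to $F$: $F$ is a non-Gaussian mixture that appears only after averaging over histories.

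You also cannot ``integrate in $v$ first''. The $L^p_xL^1_v$ norm of $f-uF$ has the absolute value inside the $v$-integral, so comparing $v$-marginals only controls $\norm{\rho[f]-u}_{L^p_x}$, which is a lower bound, not an upper bound.

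The missing device is to change the target before telescoping:
\begin{enumerate}[(i)]
\item replace $F$ by $\varphi(t,\cdot)$, the solution of \eqref{Eq:NLFPvel}, which is exponentially close to $F$ by \cite{canizotassi2024};
\item write $u(t)\varphi(t)=\E\big[(G_{t^{1/2}}\otimes G_{S_2^{1/2}})*(u_0\otimes D_t\varphi_0)\big]$ as a Wild sum over the \emph{same} histories.
\end{enumerate}
Your closing remark that $F$ is the law of $\sum_i e^{-(t-t_i)}W_i$ is the germ of this, but it has to replace (a), not follow it. Per history you then compare two Gaussians, $\mathbf{G}(A)$ against the block-diagonal $\mathbf{G}(\operatorname{diag}(t,S_2))$, using Proposition~\ref{prop: boundMultGauss}. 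The initial-data term becomes Proposition~\ref{prop: DistInitDataMultGauss}. That step uses the per-history velocity smoothing by $G_{S_2^{1/2}}$ (through $\E(S_2^{-1/4})<\infty$) to absorb the damped velocity $e^{-t}v_0$, not only Lemma~\ref{lemma: distinitialdata}.

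\textbf{The decoupling estimate.} The bound you announce at the end is not supported by the reason you give. Take $B_D=\operatorname{diag}(\mathfrak{m},S_2)$ with $\mathfrak{m}=\E(A_{11})\approx n$. The off-diagonal entry of $\Delta(A,B_D)$ is
\[
\frac{A_{12}}{\sqrt{\mathfrak{m}A_{22}}}=\frac{S_1-S_2}{\sqrt{\mathfrak{m}S_2}} .
\]
The cross-covariance is normalised by the velocity variance $S_2$, which can be exponentially small. So the smallness of the individual covariances $(1-e^{-(t-t_i)})e^{-(t-t_i)}$ proves nothing on its own. What must be controlled is the ratio $\mathcal{R}=S_1/\sqrt{S_2}$, together with an upper bound on $S_2$; the latter also secures $\norm{\Delta}_{op}\le\frac12$.

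The paper does this on the events $\{\mathcal{R}<n^\beta\}$ and $\{S_2<n^{2\beta}\}$. Their complements are exponentially small by Propositions~\ref{prop: ratioImpr} and~\ref{prop:upperboundS2}, and on them $\norm{\Delta}_F\lesssim n^{-(1/2-\beta)}$. This is where the loss leading to $\gamma<\frac12$ actually enters. It does not come from your step (b): there $\Sigma_n-n=S_2-2S_1$ is $O(1)$ and concentrates exponentially by Proposition~\ref{prop: A11Bernstein}.
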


\begin{osservazione}
Let us finally emphasise  why the adjustment of working in mixed Lebesgue space is necessary: the equilibrium $F(v)$ does not 
belong to $L^p_v$ in general. Through a rescaling argument, however, the operator $\mathcal{L}^\eta$ 
takes the form $\mathcal{L}^\eta f:= \frac{1}{\eta^2}(G_\eta*f-f)+\operatorname{div}_v(vf)$.
This operator is studied in \cite{canizotassi2024} and in 
\cite{tassi2026_fractional}, where we proved that $F(v)$ belongs to $L^p$ depending 
on $\eta$ as follows: it is in $L^1$ for all $\eta\in(0,1]$; in $L^p$ with 
$p\in(1,2)$ if $\eta<\frac{1}{\sqrt{d/2}}$; and in $L^p$ with $p\in[2,\infty]$ if 
$\eta<\frac{1}{\sqrt{2q}}$. Therefore, for $\eta$ sufficiently small, the main theorem 
holds true in $L^p_{x,v}$
This space is obviously equivalent to $L^p_x L^p_v\equiv L^p_vL^p_x$.
For this reason the previous discussion simplify and 
the semigroup $T_t: L^p\to L^p$ coincides with that local  Fokker--Planck case.
We decided to present this version due to the more generality of the result and weaker hypotheses.
    \end{osservazione}

\subsection{Preliminary results}
\subsubsection{The star convolution}\label{sec: starconv}
In this section, we introduce the start-convolution operator, which can be viewed as a sort of twisted convolution. Many of the definitions and properties we show remain true even for the case
$s<1$, and thus, for the sake of completeness, we  include these cases in our analysis as well.
 \begin{definizione}
    Let $g\in L^1(\R^d)$ and $f\in L^1(\R^d \times\R^d)$, $a,b\in \R$. The star convolution is the application $\star: L^1(\R^d)\times L^1(\R^d\times\R^d)\to L^1(\R^d\times \R^d) $ defined by
$$
g\starconv{a}{b}f(x,v)=\ird g(y) f(x+ay,v-by)\d y.
$$
\end{definizione} 
The definition can be extended to general mixed Lebesgue spaces in a
straightforward way. For
our purposes, we will use it in $L^p_xL^1_v$, where we have the
following version of Young's inequality:
\begin{lemma}
    Let $g\in L^1$, $f\in L^p_xL^1_v$.
    Then $g\starconv{a}{b}f(x,v)\in L^p_xL^1_v$ and 
    $$
    \norm{g\starconv{a}{b}f(x,v)}_{L^p_xL^1_v}\le \norm{g}_{L^1}\norm{f}_{L^p_xL^1_v}
    $$
\end{lemma}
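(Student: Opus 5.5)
The plan is the classical Minkowski integral inequality followed by translation invariance in the $x$ variable and a change of variables in $v$. Fix $y\in\R^d$ and consider the inner function $f_y(x,v):=f(x+ay,v-by)$. The first step is to show that $\norm{f_y}_{L^p_xL^1_v}=\norm{f}_{L^p_xL^1_v}$ for every $y$. For fixed $x$, the map $v\mapsto v-by$ is a translation of $\R^d$, so $\ird |f(x+ay,v-by)|\d v=\ird|f(x+ay,w)|\d w=:\rho_{|f|}(x+ay)$. Taking the $L^p_x$ norm of this function of $x$, the translation $x\mapsto x+ay$ leaves the $L^p$ norm unchanged, including the case $p=\infty$. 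This gives the claimed identity.

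The second step is to write $g\starconv{a}{b}f=\ird g(y)f_y\d y$ as an $L^p_xL^1_v$-valued integral and bound its norm by $\ird|g(y)|\norm{f_y}_{L^p_xL^1_v}\d y=\norm{g}_{L^1}\norm{f}_{L^p_xL^1_v}$. To make this rigorous without Bochner integrals, I will work pointwise. First,
\[
\ird|g\starconv{a}{b}f(x,v)|\d v\le\ird|g(y)|\,\rho_{|f|}(x+ay)\d y,
\]
which follows from Tonelli applied to the nonnegative integrand $|g(y)||f(x+ay,v-by)|$ together with the first step. The right-hand side is $(\check{|g|}_a*\rho_{|f|})(x)$, a convolution in $x$ with a reflected and rescaled copy of $|g|$ (for $a\ne0$), whose $L^1$ norm equals $\norm{g}_{L^1}$. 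Young's inequality $L^1*L^p\subset L^p$ then concludes. When $a=0$, the right-hand side is simply $\norm{g}_{L^1}\rho_{|f|}(x)$ and the bound is immediate.

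There is no real obstacle. The only points needing care are the case $a=0$, which is degenerate for the rescaling but trivial, and measurability and finiteness almost everywhere of the defining integral. The latter follows from the same Tonelli bound, since the majorant is in $L^p_x$ and hence finite for almost every $x$, so $g\starconv{a}{b}f(x,\cdot)$ is defined for almost every $v$ for those $x$.
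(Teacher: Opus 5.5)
Your proposal is correct and follows the same route as the paper. Tonelli in $v$, together with the translation $v\mapsto v-by$, reduces $\int|g\starconv{a}{b}f|\d v$ to $\int|g(y)|\,\rho_{|f|}(x+ay)\d y$, which is an $x$-convolution with a rescaled, reflected copy of $|g|$ of $L^1$ norm $\norm{g}_{L^1}$, and Young's inequality finishes; your version is slightly more careful than the paper's (which assumes $f,g\ge 0$ and writes $g_a*\rho$ without the reflection), since you work with absolute values, note the reflection, and treat $a=0$ separately.
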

\begin{proof}
Let us consider both $g,f\ge 0$. After applying Fubini one has
    \begin{multline*}
    \ird g\starconv{a}{b}f(x,v)\d v\le \ird g(y)\ird f(x+ay,v-by)\d v\\
    =\ird g(y)\ird f(x+ay,v') \d v'
   \ird g(y)\rho(x+ay)\d y=-g_a*\rho(x)
\end{multline*}
Then applying Young convolution in classical $L^p_x$, we get
$$
 \norm{g\starconv{a}{b}f(x,v)}_{L^p_xL^1_v}=\norm{g_a*\rho[f]}_{L^p_x}\le\norm{g}_{L^1}\norm{\rho[f]}_{L^p_x}=\norm{g_a}_{L^1}\norm{f}_{L^p_xL^1_v}
$$
\end{proof}
The star convolution enjoys similar properties to the classical convolution, and it easier to understand in the Fourier variables.
\begin{proposizione}\label{prop: proprietaStarConv}
    Let   $g\in L^1(\R^d)$ and $f\in L^1(\R^d\times \R^d)$. The Fourier transform $\mathcal{F}(g\starconv{a}{b} f)\in C_0$ and 
    \begin{equation}
        \F(g\starconv{a}{b} f)(\xi,\eta)=\widehat{g}(b\eta-a\xi)\widehat{f}(\eta,\xi).
    \end{equation}
\end{proposizione}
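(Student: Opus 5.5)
The plan is a direct computation: first show that $g\starconv{a}{b}f$ belongs to $L^1(\R^d\times\R^d)$, then compute its Fourier transform with Fubini and an affine change of variables. The Riemann--Lebesgue lemma then gives $\F(g\starconv{a}{b}f)\in C_0$.

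\emph{Integrability.} I would take absolute values and apply Tonelli:
\[
\iint \ird |g(y)|\,|f(x+ay,v-by)|\d y\d x\d v=\ird |g(y)|\left(\iint |f(x+ay,v-by)|\d x\d v\right)\d y .
\]
For fixed $y$, the map $(x,v)\mapsto(x+ay,v-by)$ is a translation. So the inner integral equals $\norm{f}_{L^1}$, and the whole expression equals $\norm{g}_{L^1}\norm{f}_{L^1}$. This is the $p=1$ case of the Young-type lemma above. It has two consequences: $g\starconv{a}{b}f\in L^1(\R^{2d})$, so its Fourier transform lies in $C_0$; and the triple integral below is absolutely convergent, so Fubini may be used freely.

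\emph{Fourier transform.} I use $\xi$ as the frequency dual to $x$ and $\eta$ as the frequency dual to $v$. Then
\[
\F(g\starconv{a}{b}f)(\xi,\eta)=\ird g(y)\iint f(x+ay,v-by)\,e^{-i(x\cdot\xi+v\cdot\eta)}\d x\d v\d y .
\]
In the inner integral I substitute $x'=x+ay$ and $v'=v-by$, a translation with unit Jacobian. The phase becomes
\[
x'\cdot\xi+v'\cdot\eta+y\cdot(b\eta-a\xi).
\]
The inner integral therefore equals $e^{-iy\cdot(b\eta-a\xi)}\widehat f(\xi,\eta)$. Integrating against $g(y)\d y$ yields $\widehat g(b\eta-a\xi)\,\widehat f(\xi,\eta)$.

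This is the claimed identity. The factor written $\widehat f(\eta,\xi)$ in the statement should be read with the paper's ordering convention, namely $\widehat f$ evaluated at $x$-frequency $\xi$ and $v$-frequency $\eta$. I would state this convention explicitly at this point.

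There is no genuine obstacle. The only points needing care are justifying Fubini, which is done by the $L^1$ bound above, and keeping the signs straight in the phase: the $+ay$ shift produces $-a\xi$ and the $-by$ shift produces $+b\eta$.
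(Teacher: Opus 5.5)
Your proof is correct and follows the same direct computation as the paper: Fubini, a translation change of variables in $(x,v)$, then integration against $g$. You also supply two justifications the paper leaves implicit: Fubini via the $L^1$ bound $\norm{g\starconv{a}{b}f}_{L^1}\le\norm{g}_{L^1}\norm{f}_{L^1}$, and the $C_0$ claim via Riemann--Lebesgue. One small point: the argument order $\widehat f(\eta,\xi)$ in the statement is simply a typo, not a convention to be explained, since the paper's own computation ends with $\widehat f(\xi,\eta)$ exactly as yours does.
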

\begin{proof}
We just compute
    \begin{equation*}
        \begin{split}
             \F(g\starconv{a}{b} f)(\xi,\eta)&=\ird\ird e^{-i(x\cdot\xi+v\cdot\eta)}\ird g(y) f(x+ay,v-by)\d y\d x \d v\\
             &=\ird g(y)\ird \ird e^{-i(x\cdot\xi+v\cdot\eta)}f(x+ay,v-by)\d x\d v\d y \\
             &=\widehat{f}(\xi, \eta)\ird g(y)e^{-iy\cdot(b\eta-a\xi)}\d y\\
             &=\widehat{f}(\xi,\eta)\widehat{g}(b\eta-a\xi).
             \qedhere
        \end{split}
    \end{equation*}
\end{proof}
As an immediate generalisation, we have the following corollary
\begin{corollario}\label{coroll: nStarConv}
    Let $g_1,\dots g_n\in L^1(\R^d)$ and $f_0\in L^1(\R^d\times \R^d)$. Define 
    $$\mathcal{G}_n[f_0]= g_n \ \starconv{\bar{a}_n}{\ \bar{b}_n} (g_{n-1} \ \starconv{\bar{a}_{n-1}}{\ \bar{b}_{n-1}}(g_{n-2}(\cdots (g_1 \
       \starconv{\ \bar{a}_1}{\ \bar{b}_1} f_0)\cdots)))$$
       Then
    \begin{enumerate}[(i)]
    \item
    \begin{equation*}
        \mathcal{F}(\mathcal{G}_n[f_0])(\xi,\eta)=\left(\prod_{i=1}^n \widehat{g}_i(\bar{b}_i\eta-\bar{a}_i\xi)\right)\widehat{f}_0(\xi,\eta)
    \end{equation*}
    \item If $g_n=G^{d,s}$, that is   $d$-dimensional isotropic  $2s$ stable law
    $$
    \widehat{G}^{d,s}(\xi)=\exp\left\{-\abs{\xi}^{2s}\right\}
    $$
     then,
    $$
    \mathcal{F}(\mathcal{G}_n[f_0])(\xi,\eta)=\left(\exp\{-\sum_{i=1}^{n}|\bar{b}_i\eta-\bar{a}_i\xi|^{2s}\right)\widehat{f}_0(\xi,\eta),
    $$   
    That is, the star convolution is an application that brings stable law into multivariate stable laws by a ``twisted'' convolution.
    The isotropy property is thus lost and one enters the realm of multivariate stable laws, function with characteristic index $\Psi_n(\xi,\eta)=\sum_{j=1}^{n}|\bar{b}_i\eta-\bar{a}_i\xi|^{2s}$, and such that for $s\ne 1$, are defined through a spectral measure;
    we define $G^{2d,s}_{\bar{a},\bar{b}}$ to be the anisotropic stable law  with characteristic exponent $\Psi_n$, that is
     $$
    \mathcal{G}_n[f_0] =G_{\bar{a},\bar{b}}^{2d,s}*f_0
    $$
    \item 
   If $s=1$, that is if $g_n=G$, the spectral measure  can be univocally represented by a matrix $\Lambda\Lambda^T$ (which is the variance covariance matrix of the multivariate stable law)  that is
    $$
    \mathcal{G}_n[f_0] =\mathbf{G}_{\Lambda}^{2d}*f_0
    $$
    That is a $2d$ multivariate Gaussian with variance covariance matrix the  $2d\times 2d$ block matrix
        $$\Lambda\Lambda^T:=\begin{pmatrix}
        \sum_{i=1}^n\bar{a}_i^2&-\sum_{i=1}^n\bar{a}_i\bar{b}_i\\
        -\sum_{i=1}^n\bar{a}_i\bar{b}_i& \sum_{i=1}^n\bar{b}_i^2
        \end{pmatrix}
        $$   
    \end{enumerate}
    \end{corollario}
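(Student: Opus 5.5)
The plan is to prove (i) by induction on $n$ using Proposition \ref{prop: proprietaStarConv}, then to read off (ii) and (iii) from the explicit form of $\widehat g_i$ together with injectivity of the Fourier transform on finite measures.

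\emph{Step 1: part (i).} Set $\mathcal{G}_0[f_0]:=f_0$ and $\mathcal{G}_k[f_0]=g_k\starconv{\bar a_k}{\bar b_k}\mathcal{G}_{k-1}[f_0]$. By the Young-type lemma for the star convolution (case $p=1$, where $L^1_xL^1_v=L^1(\R^{2d})$), each $\mathcal{G}_k[f_0]$ lies in $L^1(\R^d\times\R^d)$ with $\norm{\mathcal{G}_k[f_0]}_{L^1}\le\prod_{i\le k}\norm{g_i}_{L^1}\norm{f_0}_{L^1}$. Hence Proposition \ref{prop: proprietaStarConv} applies at every stage and gives
\[
\F(\mathcal{G}_k[f_0])(\xi,\eta)=\widehat g_k(\bar b_k\eta-\bar a_k\xi)\,\F(\mathcal{G}_{k-1}[f_0])(\xi,\eta).
\]
Iterating from $k=n$ down to $k=1$ yields the product formula. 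This step is purely formal; the only care needed is to keep the sign convention $\widehat g(b\eta-a\xi)$ consistent with the computation in that proposition.

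\emph{Step 2: part (ii).} If every $g_i=G^{d,s}$, with $\widehat G^{d,s}(\zeta)=e^{-|\zeta|^{2s}}$, the product in (i) becomes $\exp\{-\Psi_n(\xi,\eta)\}$. To identify $e^{-\Psi_n}$ as a $2d$-dimensional stable law, take independent $Z_i\sim G^{d,s}$ and set $Y_i:=(-\bar a_iZ_i,\bar b_iZ_i)\in\R^{2d}$. Then
\[
\E\, e^{-i(\xi\cdot Y_i^x+\eta\cdot Y_i^v)}=\widehat G^{d,s}(\bar b_i\eta-\bar a_i\xi),
\]
so $e^{-\Psi_n}$ is the characteristic function of $\sum_i Y_i$. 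Each $Y_i$ is a (degenerate) strictly $2s$-stable vector, and a sum of independent such vectors is again strictly $2s$-stable. Its law is $G^{2d,s}_{\bar a,\bar b}$, with spectral measure the pushforward of the uniform sphere measure under the maps $z\mapsto(-\bar a_iz,\bar b_iz)$, suitably weighted. Finally, $\F(G^{2d,s}_{\bar a,\bar b}*f_0)=e^{-\Psi_n}\widehat f_0$, so injectivity of the Fourier transform on finite measures gives $\mathcal{G}_n[f_0]=G^{2d,s}_{\bar a,\bar b}*f_0$.

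\emph{Step 3: part (iii).} For $s=1$ one has $|\bar b_i\eta-\bar a_i\xi|^2=\bar a_i^2|\xi|^2-2\bar a_i\bar b_i\,\xi\cdot\eta+\bar b_i^2|\eta|^2$. Summing over $i$, $\Psi_n$ is the quadratic form of the block matrix $\Lambda\Lambda^T\otimes\Id_d$ evaluated at $(\xi,\eta)$. Therefore $e^{-\Psi_n}$ is the characteristic function of a centred Gaussian on $\R^{2d}$ with that covariance, up to the factor $2$ coming from $G$ having variance $2\Id$, which is to be absorbed into the normalisation of $\mathbf G^{2d}_\Lambda$. The conclusion then follows as in Step 2.

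The main obstacle I expect is degeneracy. For $n=1$, or more generally when all the vectors $(\bar a_i,\bar b_i)$ are collinear, $\Lambda\Lambda^T$ is singular and $G^{2d,s}_{\bar a,\bar b}$ is supported on a $d$-dimensional subspace. It is then a measure with no density. I would handle this by stating the convolution identities as convolutions of a finite measure with an $L^1$ function, which is still well defined and lands in $L^1$. The Fourier-injectivity argument covers this case without change, but it must be phrased for measures rather than densities.
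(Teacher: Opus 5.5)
Your proposal is correct and takes essentially the same approach as the paper, which obtains the corollary by iterating Proposition~\ref{prop: proprietaStarConv} and reading off (ii)--(iii) from the explicit Fourier transform $e^{-|\zeta|^{2s}}$ of $G^{d,s}$. Your additional points are precisions the paper leaves implicit: the probabilistic identification of the resulting $2d$-dimensional stable law, the bookkeeping of the factor $2$ coming from the variance $2\Id$, and the caveat that $\Lambda\Lambda^T$ is singular when $n=1$ or when the vectors $(\bar a_i,\bar b_i)$ are collinear, so that $\mathbf{G}^{2d}_\Lambda$ must then be read as a measure.
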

\subsubsection{Concentration inequalities}\label{sec: concIneq}
\begin{proposizione}\label{prop: A11Bernstein}
     Let  $U_i\sim\mathrm{Unif}[0,t]$ i.i.d. and define $X_i=\left(e^{-2(t-U_i)}-2 e^{-(t-U_i)}\right)$.
    Then, for all $\alpha>0$
    $$
    \P\left(  \abs{\sum_{i=1}^n\left(X_i-\E(X_i)\right)}>n^\alpha\right)\le 2\exp\left\{-\frac{n^{2\alpha}/2}{\frac{11n}{12t}+\frac{1}{3}n^{\alpha}}\right\}
    $$    
    Let us notice that for a fixed $t$ this is exponentially small for all $\alpha>1/2$, but for $t\sim n$ it is exponentially small for all $\alpha>0$.
\end{proposizione}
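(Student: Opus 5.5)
The bound is exactly the two-sided Bernstein inequality for sums of independent, bounded, centred random variables. The plan is to check its two inputs: a uniform bound $M$ on $|X_i-\E(X_i)|$, and a bound on $\sum_i \Var(X_i)$. Recall the form we use: if $Y_1,\dots,Y_n$ are independent with $\E(Y_i)=0$, $|Y_i|\le M$ almost surely and $\sum_i\E(Y_i^2)\le V$, then for every $\lambda>0$
\begin{equation*}
\P\Big(\Big|\sum_{i=1}^n Y_i\Big|>\lambda\Big)\le 2\exp\Big\{-\frac{\lambda^2/2}{V+\frac{M\lambda}{3}}\Big\}.
\end{equation*}
We apply it with $Y_i=X_i-\E(X_i)$ and $\lambda=n^\alpha$. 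The $X_i$ are i.i.d. because the $U_i$ are, so the only work is to identify $M=1$ and $V=\frac{11n}{12t}$.

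\textbf{Boundedness.} Substitute $y=t-U_i$, which is again uniform on $[0,t]$. Then $X_i=g(y)$ with $g(y):=e^{-2y}-2e^{-y}=(1-e^{-y})^2-1$. On $[0,\infty)$ the function $g$ is increasing, with $g(0)=-1$ and $g(y)\to 0$ as $y\to\infty$. Hence $X_i\in[-1,0]$, so also $\E(X_i)\in[-1,0]$, and therefore $|X_i-\E(X_i)|\le 1$. This gives $M=1$, which accounts for the term $\frac13 n^\alpha$ in the denominator.

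\textbf{Variance.} We bound $\Var(X_i)\le\E(X_i^2)$ and compute
\begin{equation*}
\E(X_i^2)=\frac1t\int_0^t\big(e^{-2y}-2e^{-y}\big)^2\d y\le\frac1t\int_0^\infty\big(e^{-4y}-4e^{-3y}+4e^{-2y}\big)\d y=\frac1t\Big(\frac14-\frac43+2\Big)=\frac{11}{12\,t}.
\end{equation*}
The integrand is a square, hence nonnegative, so extending the integral to $[0,\infty)$ only increases it. Summing over $i$ gives $V=\frac{11n}{12t}$.

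Plugging $M=1$, $V=\frac{11n}{12t}$ and $\lambda=n^\alpha$ into Bernstein's inequality gives the claimed bound. There is no real obstacle; the only care needed is to state Bernstein in its two-sided form, which is where the factor $2$ comes from.

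For the closing remark, look at the size of the exponent. For fixed $t$ it behaves like $n^{2\alpha}/\max\{n,n^\alpha\}$, which diverges exactly when $\alpha>1/2$. For $t\sim n$ the variance term is $O(1)$, so the exponent is of order $n^{\alpha}$, which diverges for every $\alpha>0$.
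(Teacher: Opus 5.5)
Your proposal is correct and follows the paper's proof: apply the two-sided Bernstein inequality to $Y_i=X_i-\E(X_i)$ with $M=1$ and $\sum_i\Var(X_i)\le \frac{11n}{12t}$. Your observation that $X_i\in[-1,0]$ is the right justification for $|Y_i|\le 1$ (the paper only cites $|X_i|\le 1$, which by itself would give $M=2$), and your bound $\Var(X_i)\le\E(X_i^2)\le\frac1t\int_0^\infty(e^{-2y}-2e^{-y})^2\,\d y$ makes explicit the paper's ``direct computation'' of the constant $\frac{11}{12}$.
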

\begin{proof}
    We apply Bernstein inequality:
    if $Y_1,\dots, Y_n$ are independent zero-mean random variable with $|Y_i|\le M$ almost surely, then 
    $$
    {\displaystyle \mathbb {P} \left(\abs{\sum _{i=1}^{n}Y_{i}}\geq \epsilon\right)\leq 2\exp \left(-{\frac {{\tfrac {1}{2}}\epsilon^{2}}{\sum _{i=1}^{n}\mathbb {E} \left[Y_{i}^{2}\right]+{\tfrac {1}{3}}M\epsilon}}\right).}
    $$
    Set $Y_i=X_i-\E(X_i)$. Since  $|X_1|\le 1$ then $ |Y_i|\le 1:=M$. A direct computation gives $\Var(X_i)$, which gives the bound $\Var(X_i)\le \frac{11}{12}\frac{1}{t}$.
\end{proof}

\begin{proposizione}\label{prop: ratioImpr}
Define the random variable $\mathcal{R}$ as 
$$
\mathcal{R}:=\frac{S_1}{\sqrt{S_2}}\qquad\text{where}\qquad 
S_k=\sum_{i=1}^ne^{-k(t-U_i)},
$$ and $U_i\sim\text{Unif}\,[0,t]$. 
Then, for $t>e^3n^{1-\beta}$ with $\beta<\frac{1}{2}$
    $$    \P\left(\mathcal{R}>n^{\beta}\right)\le 6
    e^{-n^{\beta}}
    $$
\end{proposizione}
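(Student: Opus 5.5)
The plan is to replace the ratio $\mathcal{R}$ by something scale-free, using the largest term in the sums. Write $W_i:=t-U_i$. The $W_i$ are again i.i.d.\ $\mathrm{Unif}[0,t]$. Set $Y_i:=e^{-W_i}\in(e^{-t},1]$. Let $M:=\max_i Y_i=e^{-W_{\min}}$, where $W_{\min}:=\min_i W_i$. Since $S_2\ge M^2$, we get
\[
\mathcal{R}\le \frac{S_1}{M}=1+\sum_{j\neq j_*}e^{-(W_j-W_{\min})},
\]
where $j_*$ is the index achieving the minimum. So it suffices to show that $\P\bigl(\sum_{j\ne j_*}e^{-(W_j-W_{\min})}>n^\beta-1\bigr)\le 6e^{-n^\beta}$. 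For the few small $n$ with $n^\beta\le 1$ or $n^\beta$ below a fixed absolute constant, I would argue directly: the bound $6e^{-n^\beta}\ge 1$ is then trivial, or one uses $\mathcal{R}\le\sqrt n$ from Cauchy--Schwarz.

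Next I condition on $W_{\min}=w$. Given this, the remaining $n-1$ variables $V_j:=W_j-w$ are i.i.d.\ $\mathrm{Unif}[0,t-w]$. I split on the event $\{w\le t/2\}$. Its complement has probability $2^{-n}\le e^{-n^\beta}$, which is harmless. On $\{w\le t/2\}$, the variables $Z_j:=e^{-V_j}$ take values in $[0,1]$ and satisfy $\E Z_j\le 1/(t-w)\le 2/t$. Because $z\mapsto e^{\lambda z}$ is convex on $[0,1]$, we have $\E e^{\lambda Z_j}\le 1+(e^\lambda-1)\E Z_j\le \exp\bigl(2(e^\lambda-1)/t\bigr)$.

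A Chernoff bound then gives
\[
\P\Bigl(\sum_j Z_j>n^\beta-1\Bigr)\le \exp\Bigl(-\lambda(n^\beta-1)+\frac{2n(e^\lambda-1)}{t}\Bigr).
\]
Here I use the hypothesis $t>e^3n^{1-\beta}$, which gives $n/t<e^{-3}n^\beta$. With $\lambda=2$ the exponent is at most $-2n^\beta+2+2(e^2-1)e^{-3}n^\beta\le -1.36\,n^\beta+2$. For $n^\beta$ larger than an absolute constant, the right-hand side is at most $5e^{-n^\beta}$. Adding the $2^{-n}$ term yields the claimed bound $6e^{-n^\beta}$.

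The condition $\beta<1/2$ plays no essential role here. The only delicate point is the bookkeeping of absolute constants, both in the choice of $\lambda$ and in the threshold for small $n$, needed to land exactly on the prefactor $6$. If the constants do not close with $\lambda=2$, I would take $\lambda$ slightly larger, or tighten $\E Z_j\le (1+o(1))/t$ by requiring $w\le t/k$ with larger $k$ at the cost of $(1-1/k)^n$. The main conceptual step is the conditioning on the minimum, which turns the ratio into a sum of bounded i.i.d.\ variables with small mean.
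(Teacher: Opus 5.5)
Your argument is correct and is essentially the paper's proof: bound $\sqrt{S_2}$ below by its largest term to get $\mathcal{R}\le 1+\sum_{j\ne j_*}e^{-(W_j-W_{\min})}$, condition on the extreme order statistic so that the other $n-1$ shifted variables are i.i.d.\ uniform, and run a Chernoff bound using $e^{\lambda z}\le 1+(e^{\lambda}-1)z$ on $[0,1]$. The differences are only bookkeeping: the paper splits on $U_{(n)}\le e^{2}n^{1-\beta}$ (a bad event of probability at most $e^{-n}$) and optimises $\lambda$, whereas you split at $t/2$ and take $\lambda=2$ (this split is where $\beta<1/2$ quietly enters, through $2^{-n}\le e^{-n^{\beta}}$ for $n\ge 3$), and you handle the threshold $n^{\beta}-1$ explicitly, whereas the paper bounds $\P(\mathcal{R}>n^{\beta}+1)$ and then only says it adjusts the constant.
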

\begin{proof}
Denoting by $U_{(1)},\dots U_{(n)}$ the order statistics of $U_i$ dots $U_n$, and recalling
$
\sum_{i=1}^nf(U_i)=\sum_{i=1}^nf(U_{(i)})$
\begin{equation*}
    \mathcal{R}=\frac{S_1}{\sqrt{S_2}}{=}\frac{\sum_{i=1}^n e^{-(t-U_{(i)})}}{\left(\sum_{i=1}^ne^{-(t-U_{(i)})}\right)^{1/2}}=\frac{e^{-(t-U_{(n)})}\left(1+\sum_{i=1}^{n-1}e^{-(U_{(n)}-U_{(i)})}\right)}{e^{-(t-U_{(n)})}\left(1+\sum_{i=1}^{n-1}e^{-2(U_{(n)}-U_{(i)})}\right)^{1/2}}\le 1+\sum_{i=1}^{n-1} e^{-(U_{(n)}-U_{(i)})}.
\end{equation*}
Now 
$$
\text{law}\left( \sum_{i=1}^{n-1} e^{-(U_{(n)}-T_i)}\big | U_{(n)}=\tau\right)=\text{law}\left(\sum_{i=1}^{n-1}e^{-U_i}\right),\qquad U_i\stackrel{\text{i.i.d}}{\sim} [0,\tau].
$$
Let us call $\tilde{S}=\sum_{i=1}^{n-1}Z_i:= \sum_{i=1}^{n-1}e^{-U_i}$
Thus 
$$
\P(\mathcal{R}-1>n^{\beta})\le \int_0^t \P\left(\tilde{S}>n^{\beta}|U_{(n)}=\tau\right)f_{U_{(n)}}(\tau)\d\tau
$$
where $f_{U_n}$ is the density function for the last order statistics.

Let us study $\P\left(\tilde{S}\,|\,U_{(n)}=\tau\right)$.
Conditioned to the event $\{U_{(n)}=\tau\}$ we now have the sum of i.i.d variables, so we want to prove a Chernoff-type bound
Since $Z_i\le 1$ a.s., 
$
e^{\lambda{Z_i}}\le 1+(e^{\lambda}-1)Z_i,
$
and hence
\begin{multline*}
   \E\left(e^{\lambda\tilde{S}}\right)=\left(\E(e^{\lambda Z_1})\right)^{n-1}=
   \left(\E(1+(e^{\lambda}-1)Z_1)\right)^{n-1}=\left(1+(e^{\lambda}-1)\E(Z_1)\right)^{n-1}
   \\
   \le\left(1+(e^{\lambda}-1)\tau^{-1}\right)^{n-1}\le \exp\left\{(n-1)\frac{e^{\lambda}-1}{\tau}\right\} 
\end{multline*}
Thus applying  Chernoff
$$
\P\left(\tilde{S}>n^{\beta}|U_{(n)}=\tau\right)\le e^{-\lambda n^{\beta}}\E(e^{\lambda \tilde{S}})\le \exp\left\{-\lambda n^{\beta}+(n-1)\frac{e^{\lambda}-1}{\tau}\right\} 
$$
We optimise in $\lambda$ and obtain $e^\lambda=\frac{n^{\beta}\tau}{n-1}$
obtaining 
$$
\P\left(\tilde{S}>n^{\beta}|U_{(n)}=\tau\right)\le  \exp\left\{- n^{\beta}\log\left(\frac{n^{\beta}\tau}{e(n-1)}\right)-\frac{n-1}{\tau}\right\} \le  \exp\left\{- n^{\beta}\log\left(\frac{n^{\beta}\tau}{e(n-1)}\right)\right\}
$$
now when $\tau>e^2{n^{1-\beta}}$
\begin{equation*}
\P\left(\tilde{S}>n^{\beta}|U_{(n)}=\tau\right)\le e^{-n^{\beta}}
\end{equation*}
Moreover let us recall that 
\begin{equation*}
    \P(U_{(n)}<x)= \left(\frac{x}{t}\right)^n=\exp\left\{-n\log\left(\frac{t}{x}\right)\right\}
\end{equation*}

Therefore,  when $t>e^3n^{1-\beta}$
\begin{multline*}
\P(\mathcal{R}>n^{\beta}+1)
\le
    \int_0^t \P\left(\tilde{S}>n^{\beta}|U_{(n)}
    =\tau\right)f_{U_{(n)}}(\tau)\d\tau
    \\
    =\int_0^{e^2n^{1-\beta}}
    \P\left(\tilde{S}>n^{\beta}|U_{(n)}=\tau\right)f_{U_{(n)}}(\tau)\d\tau
    +\int_{e^2n^{1-\beta}}^t \P\left(\tilde{S}>n^{\beta}|U_{(n)}=\tau\right)f_{U_{(n)}}(\tau)\d\tau
    \\    
    =\int_0^{e^2n^{1-\beta}}
    f_{U_{(n)}}(\tau)\d\tau+\int_{e^2n^{1-\beta}}^t e^{-n^{\beta}}f_{U_{(n)}}(\tau)\d\tau
    =
    \P(U_{(n)}\le e^2n^{1-\beta})+e^{-n^{\beta}}\int_{e^2n^{1-\beta}}^tf_{U_{(n)}}(\tau)\d\tau
    \\
    \le \exp\left\{-n\log \left(\frac{t}{e^2n^{1-\beta}}\right)\right\}+e^{-n^{\beta}}
    \le e^{-n}+e^{-n^{\beta}}
    \le 2
    e^{-n^{\beta}}
\end{multline*}
We then adjust the constant, finally obtaining
$$
\P(\mathcal{R}>n^{\beta})\le 6 e^{-n^\beta}.
$$
\end{proof}
Building on the previous proof we can show the following Proposition. Since the reasoning is very similar we will only give a sketch of the proof.
\begin{proposizione}\label{prop:upperboundS2}
    For $t>\frac{e^2}{2}n^{1-\tilde\beta}$,
    $$
    \P(S_2>n^{\tilde\beta})\le e^{-n^{\tilde\beta}}
    $$
\end{proposizione}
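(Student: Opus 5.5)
We follow the proof of Proposition \ref{prop: ratioImpr} almost verbatim, replacing the exponent $1$ by $2$. Here $S_2=\sum_{i=1}^n e^{-2(t-U_i)}$ with $U_i$ i.i.d.\ uniform on $[0,t]$, and we prove the bound directly by a Chernoff argument; the conditioning on $U_{(n)}$ is not even needed. Set $Z_i:=e^{-2(t-U_i)}\in(0,1]$. Then
\[
\E(Z_i)=\frac1t\int_0^t e^{-2(t-u)}\d u=\frac{1-e^{-2t}}{2t}\le\frac{1}{2t}.
\]

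\textbf{Moment generating function.} Since $0\le Z_i\le1$, convexity of $z\mapsto e^{\lambda z}$ on $[0,1]$ gives $e^{\lambda Z_i}\le 1+(e^\lambda-1)Z_i$ for every $\lambda>0$. By independence,
\[
\E\big(e^{\lambda S_2}\big)\le\Big(1+\frac{e^\lambda-1}{2t}\Big)^n\le\exp\Big\{\frac{n(e^\lambda-1)}{2t}\Big\}.
\]

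\textbf{Chernoff bound.} Markov's inequality then yields
\[
\P(S_2>n^{\tilde\beta})\le\exp\Big\{-\lambda n^{\tilde\beta}+\frac{n(e^\lambda-1)}{2t}\Big\}.
\]
Optimising in $\lambda$ gives $e^\lambda=2tn^{\tilde\beta-1}$, and dropping the favourable term $-n/(2t)$ leaves
\[
\P(S_2>n^{\tilde\beta})\le\exp\Big\{-n^{\tilde\beta}\Big(\log\big(2tn^{\tilde\beta-1}\big)-1\Big)\Big\}.
\]
This choice is admissible ($\lambda>0$) because the hypothesis $t>\tfrac{e^2}{2}n^{1-\tilde\beta}$ implies $2tn^{\tilde\beta-1}>e^2>1$.

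\textbf{Conclusion.} The same hypothesis gives $\log(2tn^{\tilde\beta-1})>2$, so the bracket exceeds $1$, and therefore
\[
\P(S_2>n^{\tilde\beta})\le e^{-n^{\tilde\beta}}.
\]

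The only delicate point is matching the threshold on $t$ with the constant in the exponent. The factor $\tfrac12$ in $\E(Z_i)\le\tfrac{1}{2t}$ is exactly what produces the constant $\tfrac{e^2}{2}$ in the statement. Without it one gets the same estimate under the slightly stronger condition $t>e^2n^{1-\tilde\beta}$. Unlike in Proposition \ref{prop: ratioImpr}, no split according to the size of $U_{(n)}$ is required, since $S_2$ is itself a sum of independent bounded variables.
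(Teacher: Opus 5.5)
Your proposal is correct and follows the paper's proof essentially verbatim. Both use the convexity bound $e^{\lambda Z_i}\le 1+(e^\lambda-1)Z_i$, the estimate $\E(Z_i)\le 1/(2t)$, a direct Chernoff bound without conditioning on $U_{(n)}$, the choice $e^\lambda=2tn^{\tilde\beta-1}$, and the hypothesis on $t$ to force $\log(2tn^{\tilde\beta-1})>2$. You also spell out the intermediate exponent and the admissibility $\lambda>0$ more explicitly than the paper's sketch does.
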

\begin{proof}
    Let us define as before, $$S_2:=\sum_{i=1}^nZ_i:=\sum_{i=1}^n e^{-2U_i}$$
    and again applying the same kind of estimate as before, a Chernoff-type bound gives
    $$    \P\left(S_2>n^{\tilde{\beta}}\right)
    \le 
    \exp\left\{-\lambda n^{\tilde\beta}+\frac{n}{2t}(e^{\lambda}-1)\right\}
    \le
    \exp\left\{-n^{\tilde\beta}\log\left(\frac{2tn^{\tilde\beta-1}}{e}\right)-\frac{n}{2t}\right\}\le 
    e^{-n^{\tilde\beta}}
    $$
    after optimising in $e^{\lambda}=2t n^{\tilde\beta-1}$ and since by hypothesis  $t>\frac{e^2}{2}n^{1-\tilde\beta}$.
\end{proof}
In our results, we will need $\tilde\beta=2\beta$.
The conditions $t>e^3n^{1-\beta}$ and $t>\frac{e^2}{2}n^{1-2\beta}$ of the two previous results are not very restrictive. Indeed, the complementary event happens with probability super-exponentially small, as a consequence of standard tail bounds for Poisson random variables. A non-sharp bound is given in the following trivial lemma, by a simple application of standard Chernoff's bound.
\begin{lemma}\label{lemma: easypoisson}
Let $X\sim\mathrm{Pois}(t)$. There exists $t_0>0$ such that for $t\ge t_0$
    $$
    e^{-t}\sum_{n\ge 0}\frac{t^n}{n!}\mathds{1}_{\{t\le e^3n^{1-2\beta}\}} =
    \P\left(X > e^{-3/(1-2\beta)}t^{1+\frac{2\beta}{1-2\beta}}\right)\le Ce^{-t}.  $$
\end{lemma}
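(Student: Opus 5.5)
The plan is to rewrite the indicator as a right-tail event for $X$ and then apply the standard exponential Chernoff bound for the Poisson law. The only point needing care is that the threshold grows superlinearly in $t$.

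\textbf{Step 1: rewrite the event.} Since $1-2\beta>0$, the map $n\mapsto n^{1-2\beta}$ is increasing. Hence
\[
\{t\le e^3 n^{1-2\beta}\}=\{n\ge a_t\},\qquad a_t:=\left(e^{-3}t\right)^{\frac{1}{1-2\beta}}=e^{-3/(1-2\beta)}\,t^{1+\frac{2\beta}{1-2\beta}}.
\]
So the left-hand side equals $\P(X\ge a_t)$, and the identity in the statement holds up to the harmless difference between $\ge$ and $>$. That difference is irrelevant because the same bound will cover $\P(X\ge a_t)$. Write $\kappa:=\frac{2\beta}{1-2\beta}>0$. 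Then $a_t=c_\beta t^{1+\kappa}$, so $a_t/t\to\infty$ as $t\to\infty$.

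\textbf{Step 2: Chernoff bound.} As in the proof of Proposition \ref{prop: boundtailPoiss}, I will use the moment generating function $\E(e^{zX})=e^{t(e^z-1)}$. For $z\ge 0$ this gives $\P(X\ge a)\le \exp\{t(e^z-1)-za\}$. For $a>t$ I will take $z=\log(a/t)$, which yields
\[
\P(X\ge a)\le e^{-t}\Big(\frac{et}{a}\Big)^{a}=e^{-t}\exp\Big\{-a\log\frac{a}{et}\Big\}.
\]

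\textbf{Step 3: choose $t_0$.} I will take $t_0$ so large that $a_t\ge e\,t$ for $t\ge t_0$. This is possible because $a_t/t=c_\beta t^{\kappa}\to\infty$. Concretely, $t_0=(e\,e^{3/(1-2\beta)})^{1/\kappa}$ suffices. Then $\log\frac{a_t}{et}\ge 0$, so $\P(X\ge a_t)\le e^{-t}$, which is the claim with $C=1$.

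In fact the bound is super-exponentially small, of order $e^{-c\,t^{1+\kappa}\log t}$. The only mild obstacle is that $\beta$ must stay strictly below $1/2$ so that $\kappa$ is finite and $a_t$ is well defined. For $\beta>0$ we have $\kappa>0$, which is exactly what makes $a_t\gg t$. In the degenerate case $\beta=0$ the threshold would be $e^{-3}t<t$, and the claim would fail. I will therefore keep $\beta\in(0,1/2)$, as it is throughout the paper.
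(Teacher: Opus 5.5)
Your proof is correct and takes the same route as the paper, which gives no written argument beyond saying the lemma follows from a standard Chernoff bound for the Poisson law. You supply exactly those details: rewriting the indicator as $\{X\ge a_t\}$, choosing $z=\log(a_t/t)$, and giving an explicit $t_0$ with $C=1$. Your side remarks are also accurate: the sum is really $\P(X\ge a_t)$ rather than $\P(X>a_t)$, and $\beta>0$ is needed so that $a_t\gg t$.
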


\subsubsection{Perturbation of Multivariate Gaussian Distributions}
In what follows, we denote $\mathbf{G}(\Sigma; x)=\mathbf{G}_{\Sigma^{1/2}}(x)$, since working directly with the covariance matrix, rather than the standard deviation is easier.
\begin{proposizione}\label{prop:SchurComplement}
    Let $A$ be a symmetric positive definite matrix defined with blocks
    $$
    A=\begin{pmatrix}
        A_{11}& A_{12}
        \\A^T_{12}& A_{22}
    \end{pmatrix}
    $$
    Then 
    $$
    \norm{\mathbf{G}(A;x,v)}_{L^p_xL^1_v}=\norm{G(A_{11};x)}_{L^p_x}^p
    $$
\end{proposizione}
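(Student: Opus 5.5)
The plan is to use that the $L^1_v$ norm of a nonnegative function is just its integral in $v$. For the mixed norm, this means integrating out the velocity variable first. Since $\mathbf{G}(A;x,v)>0$ everywhere, we have
\begin{equation*}
\norm{\mathbf{G}(A;\cdot,\cdot)}_{L^p_xL^1_v}=\Big\|\ird \mathbf{G}(A;\cdot,v)\d v\Big\|_{L^p_x},
\end{equation*}
so the whole claim reduces to identifying the $x$-marginal of the $2d$-dimensional centred Gaussian with covariance $A$.

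To identify the marginal I will work on the Fourier side, as the paper already does for the star convolution in Corollary \ref{coroll: nStarConv}. The Fourier transform of $\mathbf{G}(A;\cdot)$ is $\exp\{-\tfrac12 (\xi,\eta)^T A(\xi,\eta)\}$. Integrating in $v$ corresponds to evaluating this transform at $\eta=0$, which gives $\exp\{-\tfrac12 \xi^T A_{11}\xi\}$. This is exactly the Fourier transform of the $d$-dimensional Gaussian $G(A_{11};x)$.

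Positive definiteness of $A$ implies positive definiteness of the principal block $A_{11}$, so $G(A_{11};\cdot)$ is a genuine nondegenerate Gaussian density. By Fourier injectivity on $L^1$, we obtain $\ird \mathbf{G}(A;x,v)\d v=G(A_{11};x)$.

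An alternative, and the route suggested by the proposition's name, is a direct computation by completing the square in $v$. One writes the quadratic form with $A^{-1}$ and uses the Schur complement $A/A_{11}=A_{22}-A_{12}^TA_{11}^{-1}A_{12}$. The determinant factorises as $\det A=\det A_{11}\det(A/A_{11})$, and the Gaussian integral in $v$ produces exactly the normalisation $(2\pi)^{-d/2}(\det A_{11})^{-1/2}e^{-\frac12 x^TA_{11}^{-1}x}$. I would mention this as a check, but rely on the Fourier argument since it is shorter.

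Taking the $L^p_x$ norm then gives $\norm{\mathbf{G}(A)}_{L^p_xL^1_v}=\norm{G(A_{11})}_{L^p_x}$. I read the exponent $p$ on the right-hand side of the statement as a typo, since the identity is homogeneous of degree one. There is no real obstacle here. The only point requiring care is the positivity of $\mathbf{G}$, which is what allows the $L^1_v$ norm to be replaced by the plain $v$-integral, together with the nondegeneracy of $A_{11}$.
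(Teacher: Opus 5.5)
Your proof is correct. Its skeleton is the same as the paper's: positivity lets you replace the $L^1_v$ norm by the $v$-integral, so everything reduces to the $x$-marginal of $\mathbf{G}(A;\cdot)$. The difference is in how you identify that marginal.

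The paper quotes the conditional factorisation $\mathbf{G}(A;x,v)=G(A_{11};x)\,G\big(A_{22}-A_{12}^TA_{11}^{-1}A_{12};\,v-A_{12}^TA_{11}^{-1}x\big)$. It then integrates out $v$ using translation invariance of the $L^1_v$ norm and the fact that the conditional density has unit mass. This is essentially the completing-the-square route you mention only as a check.

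Your Fourier argument (evaluate the transform at $\eta=0$, then use injectivity) is shorter and self-contained. It needs neither the block-inverse formula nor $\det A=\det A_{11}\det(A_{22}-A_{12}^TA_{11}^{-1}A_{12})$. It is also insensitive to the factor-$2$ covariance convention the paper uses for $\mathbf{G}$, provided $G$ and $\mathbf{G}$ share it. The paper's route gives more: a pointwise description of the conditional law of $v$ given $x$. That is the same factorisation invoked in the remark following Proposition \ref{prop: boundMultGauss} to handle non-block-diagonal $B$.

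Your reading of the exponent is confirmed by the paper's own proof. It actually establishes $\norm{\mathbf{G}(A)}_{L^p_xL^1_v}^p=\norm{G(A_{11})}_{L^p_x}^p$, computed via $\int(\cdot)^p\,dx$ for finite $p$. Your formulation of the final step also covers $p=\infty$ directly.
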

\begin{proof}
    It is well known--e.g. \cite[Ch. 2.4]{anderson_multivariate}-- that 
    $$
    \mathbf{G}(A;x,v)=G(A_{11};x)G(A\setminus A_{11}; v-A_{12}^TA_{11}^{-1}x)
    $$
    where $$
    A\setminus A_{11}=A_{22}-A_{12}^TA_{11}^{-1}A_{12}
    $$
    is the Schur complement.
    Therefore
    \begin{multline*}
    \norm{\mathbf{G}(A;x,v)}_{L^p_xL^1_v}^p=\ird\left(\ird \mathbf{G}(A;x,v)\d v\right)^p\d x\\=\ird G(A_{11};x)^p\left(\norm{G(A\setminus A_{11}; v-A_{12}^TA_{11}^{-1}x)}_{L^1_v}\right)^p\d x=\norm{G(A_{11};x)}_{L^p_x}^p.
    \end{multline*}
    Since the $L^1_v$ norm is invariant under translation and the conditional density is still a probability density.
\end{proof}

\begin{proposizione}\label{prop: boundMultGauss}
    Let $A$, $B$ be two  symmetric positive definite matrices and assume $B$ is block-diagonal $$B=\begin{pmatrix}
        B_{11}&0\\0&B_{22}
    \end{pmatrix}$$
    Define $\Delta(A,B)=B^{-1/2} (A-B)B^{-1/2}$, and assume that 
    $\norm{\Delta(A,B)}_{op}\le 1/2$.
    Then, for $1\le p,r\le\infty$
    the following holds$$
    \norm{\mathbf{G}(A;\cdot)-\mathbf{G}(B;\cdot)}_{L^p_xL^r_v}\le C_{d,p,r}[\det(B_{11})]^{-\frac12\left(1-\frac1p\right)}[\det(B_{22})]^{-\frac12\left(1-\frac{1}{r}\right)}\norm{\Delta(A,B)}_F
    $$
     where denotes $\norm{\cdot}_F$ is the Frobenius norm.
\end{proposizione}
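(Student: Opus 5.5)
The estimate will come from a change of variables that reduces everything to the fixed reference Gaussian with identity covariance, followed by an interpolation along the segment of covariances from $B$ to $A$. Set $B^{1/2}=\mathrm{diag}(B_{11}^{1/2},B_{22}^{1/2})$, which is block diagonal because $B$ is, and write $\tilde A:=B^{-1/2}AB^{-1/2}=I+\Delta$ with $\Delta=\Delta(A,B)$. Since $\mathbf{G}(A;z)=\det(B)^{-1/2}\,\mathbf{G}(\tilde A;B^{-1/2}z)$ and likewise $\mathbf{G}(B;z)=\det(B)^{-1/2}\,\mathbf{G}(I;B^{-1/2}z)$, we get
\[
\mathbf{G}(A;z)-\mathbf{G}(B;z)=\det(B)^{-1/2}\bigl[\mathbf{G}(I+\Delta;\cdot)-\mathbf{G}(I;\cdot)\bigr](B^{-1/2}z).
\]
The change of variables $x=B_{11}^{1/2}y$, $v=B_{22}^{1/2}w$ acts separately on $x$ and $v$ because $B^{1/2}$ is block diagonal. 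In the mixed norm it produces the factor $\det(B_{11})^{\frac{1}{2p}}\det(B_{22})^{\frac{1}{2r}}$. Combining this with $\det(B)^{-1/2}=\det(B_{11})^{-1/2}\det(B_{22})^{-1/2}$ gives exactly the prefactor $\det(B_{11})^{-\frac12(1-\frac1p)}\det(B_{22})^{-\frac12(1-\frac1r)}$. It therefore remains to prove that $\|\mathbf{G}(I+\Delta)-\mathbf{G}(I)\|_{L^p_xL^r_v}\le C_{d,p,r}\|\Delta\|_F$ whenever $\|\Delta\|_{op}\le 1/2$.

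For this reduced bound, interpolate along $\Sigma_\theta=I+\theta\Delta$ for $\theta\in[0,1]$. Each $\Sigma_\theta$ satisfies $\tfrac12 I\le\Sigma_\theta\le\tfrac32 I$, so it is uniformly positive definite. The fundamental theorem of calculus and Minkowski's inequality in $L^p_xL^r_v$ give
\[
\|\mathbf{G}(\Sigma_1)-\mathbf{G}(\Sigma_0)\|\le\sup_\theta\|\partial_\theta\mathbf{G}(\Sigma_\theta)\|.
\]
The covariance derivative of the Gaussian density is the heat-equation identity $\partial_\theta\mathbf{G}(\Sigma_\theta;z)=\tfrac12\trace\bigl(\Delta\,\nabla^2_z\mathbf{G}(\Sigma_\theta;z)\bigr)$. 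Writing out the Hessian,
\[
\nabla^2\mathbf{G}=\bigl(\Sigma^{-1}zz^T\Sigma^{-1}-\Sigma^{-1}\bigr)\mathbf{G}.
\]
By Cauchy–Schwarz in the Frobenius inner product, the derivative is therefore pointwise bounded by $\tfrac12\|\Delta\|_F\,\|\nabla^2\mathbf{G}(\Sigma_\theta;z)\|_F$. Using $\|\Sigma_\theta^{-1}\|_{op}\le 2$, this is at most $C\|\Delta\|_F(1+|z|^2)\mathbf{G}(\Sigma_\theta;z)$. The bounds $\Sigma_\theta\ge\tfrac12 I$ and $\det\Sigma_\theta\ge 2^{-2d}$ then give the pointwise domination $\mathbf{G}(\Sigma_\theta;z)\le C_d e^{-|z|^2/3}$. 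The dominating function $(1+|z|^2)e^{-|z|^2/3}$ is a product of functions of $x$ and $v$ up to constants, so it lies in every $L^p_xL^r_v$. Hence $\sup_\theta\|\partial_\theta\mathbf{G}(\Sigma_\theta)\|_{L^p_xL^r_v}\le C_{d,p,r}\|\Delta\|_F$, which completes the argument.

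The main obstacle is bookkeeping rather than any single estimate. First, the block-diagonal assumption on $B$ is exactly what makes the mixed-norm change of variables factor cleanly; a non-diagonal $B^{1/2}$ would mix $x$ and $v$, and the scaling would no longer produce the separate powers of $\det(B_{11})$ and $\det(B_{22})$. Second, the domination of $\mathbf{G}(\Sigma_\theta)$ must be uniform in $\theta$, which is precisely where the hypothesis $\|\Delta\|_{op}\le 1/2$ enters.
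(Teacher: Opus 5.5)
Your proof is correct and follows essentially the same route as the paper: rescale by the block-diagonal $B^{1/2}$ to obtain the determinant prefactor, interpolate linearly between $I$ and $I+\Delta(A,B)$, identify the covariance derivative with $\frac12\trace(\Delta\,\nabla_z^2\mathbf{G})$, and bound it by Cauchy--Schwarz in the Frobenius inner product. Your uniform domination $\mathbf{G}(\Sigma_\theta;z)\le C_d e^{-|z|^2/3}$ uses both $\Sigma_\theta\ge\frac12 I$ and $\Sigma_\theta\le\frac32 I$. It is more careful than the paper's comparison with $\mathbf{G}(\frac12\Id;\cdot)$, which as literally written decays too fast to dominate $\mathbf{G}(\Sigma_\theta;\cdot)$.
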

Let us recall $\norm{D}_F:=\sqrt{\sum_{i=1}^N\lambda_i^2}$, with  $\lambda_1,\dots\lambda_N$ the eigenvalues of $D$.
While using the operator norm instead of the Frobenius, that we  preferred to formulate in terms of the (equivalent) Frobenius norm due to the easier application in our work. 
\begin{proof}
    Let $z=(x,v)\in\R^{2d}$ and let $\Sigma_0=B^{-1/2}AB^{-1/2}$. By a classical change of variable 
    \begin{equation*}\label{eq: GaussDiff1}
     \norm{\mathbf{G}(A;\cdot)-\mathbf{G}(B;\cdot)}_{L^p_xL^r_v}=[\det(B_{11})]^{-\frac12\left(1-\frac1p\right)}[\det(B_{22})]^{-\frac12\left(1-\frac1r\right)}\norm{\mathbf{G}(\Id;\cdot)-\mathbf{G}(\Sigma_0;\cdot)}_{L^p_xL^r_v}.
    \end{equation*}
    Now define the interpolation matrix between $\Sigma_0$ and $\Id$ as
    $$
    \Sigma_\lambda=\Id+(1-\lambda)(\Sigma_0-\Id)=\Id+(1-\lambda)\Delta(A,B)
    $$
    and the function $h:\lambda\to \mathbf{G}(\Sigma_\lambda;\cdot)$.
    Clearly 
    \begin{equation}\label{eq: LpcontrolMult}
    \norm{\mathbf{G}(\Id;\cdot)-\mathbf{G}(\Sigma_0;\cdot)}_{L^p_xL^r_v}=\norm{h(1)-h(0)}_{L^p}=\norm{\int_0^1\partial_\lambda h(\lambda)\d\lambda}_{L^p_xL^r_v}\le \norm{\partial_\lambda h(\lambda)}_{L^p_xL^r_v}
    \end{equation}
    and $$\partial_\lambda h =\trace \left[\partial_\lambda \Sigma_\lambda (\partial_\Sigma \mathbf{G}(\Sigma; z)\right]=-
    \trace \left[\Delta(A,B)(\partial_\Sigma \mathbf{G}(\Sigma; z))\right].$$
    Then, omitting the constants
    \begin{multline}\label{eq: CovarDeriv}
        \partial_\Sigma \mathbf{G}(\Sigma;\cdot)=\partial_\Sigma\left([\det(\Sigma)]^{-1/2}\exp\left\{-\frac{1}{2}x^T\Sigma^{-1}x\right\}\right)
        \\
        =\left(\partial_\Sigma\exp\left\{-\frac{1}{2}\log [\det(\Sigma)]\right\}\right)\exp\left\{-\frac{1}{2}x^T\Sigma^{-1}x\right\}+[\det(\Sigma)]^{-1/2}\left(\partial_\Sigma\exp\left\{-\frac{1}{2}x^T\Sigma^{-1}x\right\}\right)
        \\
        =\mathbf{G}(\Sigma;\cdot)\left(\partial_\Sigma\left[-\frac{1}{2}\log [\det(\Sigma)]\right]\right)
        +\mathbf{G}(\Sigma;\cdot)
        \left(\partial_\Sigma\left[-\frac{1}{2}\trace \left(x^T\Sigma^{-1}x\right)\right]\right)
        \\
        =\mathbf{G}(\Sigma; \cdot)\left [-\frac12\Sigma^{-1}-\frac12\left(\partial_\Sigma\left[\trace \left(\Sigma^{-1}zz^T\right)\right]\right)
        \right]=\frac{1}{2}\mathbf{G}(\Sigma; \cdot)\left [\Sigma^{-1} z z^T\Sigma^{-1}-\Sigma^{-1}
        \right].           
    \end{multline}
    In the previous derivation, we made use of standard identities from matrix differential calculus for symmetric matrices (see e.g., \cite{Petersen2012}).
    The last term  of \eqref{eq: CovarDeriv} is exactly one half of the Hessian of $\mathbf{G}(\Sigma;\cdot)$.
    Indeed,
    $$
    \nabla_z \mathbf{G}(\Sigma;z)=-\mathbf{G}(\Sigma;z)\Sigma^{-1}z
    $$
    $$
    \hess_z \mathbf{G}(\Sigma;z)=\nabla_z\left(\nabla_z^T \mathbf{G}(\Sigma;z)\right)=\nabla_z (-\mathbf{G} )z^T\Sigma^{-1}-\mathbf{G}\nabla_z(z^T\Sigma^{-1})=\mathbf{G}\left(\Sigma^{-1}zz^T\Sigma^{-1}-\Sigma^{-1}\right).    $$
    Let us recall that  $\norm{\Sigma_\lambda}\ge \norm{\Id}-(1-\lambda)\norm{\Delta(A,B)}\ge \frac12$, and we write $\Sigma_\lambda\succeq\frac12\Id$. Then
     \begin{equation*}
         \abs{\partial_\lambda h(\lambda)}=\frac{1}{2}\abs{ \trace \left[
\Delta(A,B) \hess_z \mathbf{G}(\Sigma;z)\right]}\le \frac{1}{2}\norm{\Delta(A,B)}_F\norm{\hess_z \mathbf{G}(\Sigma;z)}_F
    \end{equation*}By the previous calculation and using the uniform bound $\Sigma_\lambda\succeq\frac12\Id$,
    \begin{multline*}
        \norm{\hess_z \mathbf{G}(\Sigma;z)}_F=\mathbf{G}(\Sigma_\lambda;z)\norm{(\Sigma_\lambda^{-1/2}z)^T(\Sigma_\lambda^{-1/2}z)+\Id}
        \\
        \le \mathbf{G}(\Sigma_\lambda;z)(4|z|^2+\sqrt{d})
        \le CG\left(\frac{1}{2}\Id;z\right)(|z|^2+1).
    \end{multline*}
    Therefore substituting in
\eqref{eq: LpcontrolMult}, 
$$
\norm{\mathbf{G}(\Id;\cdot)\mathbf{G}(\Sigma_0;\cdot)}_{L^p_xL^r_v}\le \frac{C}{2}\norm{\Delta(A,B)}_F\norm{\mathbf{G}\left(\frac{1}{2}\Id;\cdot\right)(|\cdot|^2+1)}_{L^p_xL^r_v}\le C_{d,p,r}\norm{\Delta(A,B)}_F
$$
one obtains the claim.
\end{proof}
\begin{osservazione}The previous proposition  has a natural extension to non-block-diagonal covariance matrices $B$  by considering the factorisation $\mathbf{G}_B(x,v)=G_{xx}(x)G_{v|x}(v|x)$. Here, the latter is the conditional Gaussian with covariance matrix given by the Schur complement of $B_{11}$ in $B$. Since in this work we apply the lemma exclusively to block-diagonal matrices the statement reduces to the one given above.
\end{osservazione}
\begin{proposizione}\label{prop: DistInitDataMultGauss}
  Let $f_0:\R^d \times \R^d \to \R$ be a probability distribution in $L^1_{x,v}\cap L^p_xL^1_v$,  with
$$
\int_{\R^d} \int_{\R^d} f_0(x,v)\,|v|\d x \d v = M_1 < \infty,
$$
and define
$$
u_0(x) := \int_{\R^d} f_0(x,v)\,\d v
$$
and $\f_0\in L^1(\R^d)$ with
$$
 \ird \f_0(v)\d v=1\qquad \ird |v|\f_0(v)\d v<\infty.
$$
Consider  the semigroups 
$$
T_t: L^1(\R^d \times \R^d) \to L^1(\R^d \times \R^d), \quad T_t f_0(x,v) = e^{dt} f_0(x-(e^t-1)v, e^{t}v),
$$
$$
D_t: L^1(\R^d) \to L^1(\R^d), \quad D_t \f_0(v) = e^{dt} \f_0(e^t v),
$$
with a diagonal covariance matrix $\Lambda\Lambda^T$ such that 
 $\mathbf{G}_{\Lambda}(x,v)=G_1(x)G_2(v)$. Then there exists $C=C(d)>0$ such that 
$$
\norm{ \mathbf{G}_\Lambda * (T_t f) - \mathbf{G}_\Lambda * (u_0
  \otimes D_t \f_0) }_{L^p_xL^1_v} \le C(1+ M_1) t^{-q}t^{-1/2}\norm{\nabla G_{2}}_{L^1_v}^{1/2}
$$
\end{proposizione}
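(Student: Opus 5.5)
The plan is to insert an intermediate object that shares the $v$-scaling of $T_tf_0$ but has no $x$-shift, and to bound the two resulting differences separately. First I rewrite both semigroups as push-forwards. With the change of variables $w=e^{t}v$, for any test function $\psi$,
$$\iint T_tf_0\,\psi=\iint f_0(y,w)\,\psi\big(y+(1-e^{-t})w,\;e^{-t}w\big)\d y\d w .$$
So $T_tf_0$ is the image of $f_0$ under $(y,w)\mapsto(y+(1-e^{-t})w,\,e^{-t}w)$. Likewise $u_0\otimes D_t\f_0$ is the image of $u_0\otimes\f_0$ under $(y,w)\mapsto(y,e^{-t}w)$.

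I define $S_tf_0$ as the image of $f_0$ under $(y,w)\mapsto(y,e^{-t}w)$ and split
$$\mathbf G_\Lambda*(T_tf_0)-\mathbf G_\Lambda*(u_0\otimes D_t\f_0)=\mathbf G_\Lambda*(T_tf_0-S_tf_0)+\mathbf G_\Lambda*(S_tf_0-u_0\otimes D_t\f_0)=:A+B .$$
Throughout I use that in the intended application $G_1$ is a Gaussian in $x$ with covariance of order $t$. This gives $\norm{G_1}_{L^p_x}\lesssim t^{-q}$ and $\norm{\nabla G_1}_{L^p_x}\lesssim t^{-q-1/2}$, by the scaling \eqref{eq: scaling_temporal}. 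This is implicit in the statement and I would make it explicit.

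\textbf{Term $A$.} With $h:=1-e^{-t}\in[0,1]$ I write
$$A(x,v)=\iint f_0(y,w)\big[G_1(x-y-hw)-G_1(x-y)\big]\,G_2(v-e^{-t}w)\d y\d w .$$
I take absolute values inside and integrate in $v$; since $\norm{G_2}_{L^1}=1$, the $L^1_v$ norm is bounded by $\iint f_0(y,w)\,|\tau_{hw}G_1-G_1|(x-y)\d y\d w$. Minkowski's inequality in $L^p_x$ and the translation argument of Lemma \ref{lemma: distinitialdata} (the gradient case, $\nu=1$) then give
$$\norm{A}_{L^p_xL^1_v}\le M_1\norm{\nabla G_1}_{L^p}\lesssim M_1\,t^{-q-1/2}.$$

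\textbf{Term $B$.} Write $B=G_1*_x H$, where for each fixed $y$
$$H(y,\cdot)=G_2*_v\big(\mu_y-u_0(y)\,D_t\f_0\big),$$
and $\mu_y$ is the image of $f_0(y,\cdot)$ under $w\mapsto e^{-t}w$. Both $\mu_y$ and $u_0(y)D_t\f_0$ have mass $u_0(y)$. Set $h(y):=\norm{H(y,\cdot)}_{L^1_v}$. Young's inequality in $x$ gives
$$\norm{B}_{L^p_xL^1_v}\le\norm{G_1}_{L^p}\,\norm{h}_{L^1_y}\lesssim t^{-q}\,\norm{h}_{L^1_y}.$$
For $h$ I use two bounds. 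The trivial one is $h(y)\le 2u_0(y)$. The Kantorovich–Rubinstein one is $h(y)\le\norm{\nabla G_2}_{L^1}\,W_1\big(\mu_y,u_0(y)D_t\f_0\big)$. Coupling both measures through the origin bounds this $W_1$ distance by $e^{-t}\int|w|\big(f_0(y,w)+u_0(y)\f_0(w)\big)\d w$.

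Integrating in $y$ gives $\norm{h}_{L^1}\le\min\big\{2,\;\norm{\nabla G_2}_{L^1}e^{-t}(M_1+m_{\f_0})\big\}$, where $m_{\f_0}:=\int|v|\f_0$. Using $\min\{a,b\}\le\sqrt{ab}$, this is at most $C(1+M_1)^{1/2}\norm{\nabla G_2}_{L^1}^{1/2}e^{-t/2}$, with $C$ also depending on $m_{\f_0}$. Since $e^{-t/2}\lesssim t^{-1/2}$, this term is of order $t^{-q-1/2}\norm{\nabla G_2}_{L^1}^{1/2}$.

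The step needing the most care is $B$. One must interpolate after integrating in $y$, not pointwise in $y$, because a pointwise square root would spoil the use of Young's inequality. One must also check that the mass-preserving $W_1$ bound applies to these non-normalised measures; it does, since both have equal mass $u_0(y)$. Adding the bounds for $A$ and $B$, with the dependence on $\norm{\nabla G_2}_{L^1}^{1/2}$ absorbed in the form stated, gives the claim.
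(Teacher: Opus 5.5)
Your argument is correct and is essentially the paper's proof, with the two-step telescoping done in the opposite order.

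The paper first replaces $G_2(v-e^{-t}w)$ by $G_2(v)$. That is its term $II$, bounded by interpolating $\norm{G_2(\cdot-e^{-t}w)-G_2}_{L^1_v}$ pointwise in $w$ at the cost of the half moment $M_{1/2}$. It is then left with $G_2(v)\,(G_1*\rho_h-G_1*\rho_0)$, which is Lemma \ref{lemma: distinitialdata}. You remove the $x$-shift first (your $A$, the same translation estimate) and then handle the pure velocity term $B$ by the zero-mass/$W_1$ bound. Your warning about interpolating pointwise in $y$ is unnecessary: a pointwise square root followed by Cauchy--Schwarz in $y$ gives the same bound.

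The step that fails is the last one, and the paper's proof has the same flaw. Your $A$, like the paper's $I$, is of size $M_1t^{-q-1/2}$ with no factor $\norm{\nabla G_2}_{L^1_v}^{1/2}$, and it cannot be absorbed into the stated form. Integrating in $v$ shows that the left-hand side is at least $\norm{G_1*(\rho_h-\rho_0)}_{L^p_x}$, with $h=1-e^{-t}$. This quantity does not depend on $G_2$ and is generically nonzero, whereas the stated right-hand side tends to $0$ as the covariance of $G_2$ grows. What your argument, and the paper's, actually proves is
\begin{equation*}
\norm{\mathbf{G}_\Lambda*(T_tf_0)-\mathbf{G}_\Lambda*(u_0\otimes D_t\f_0)}_{L^p_xL^1_v}\le C(1+M_1)\,t^{-q-1/2}\Big(1+\norm{\nabla G_2}_{L^1_v}^{1/2}\Big),
\end{equation*}
with $C$ depending also on $\ird|v|\,|\f_0(v)|\d v$, as you noted. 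This corrected form is what Section \ref{sec: NLKFP} needs. There $\norm{\nabla G_2}_{L^1_v}\lesssim S_2^{-1/2}$, so the constant part contributes $t^{-q-1/2}$ after averaging over the Poisson weights. The remaining part is controlled by $\E(S_2^{-1/4})<\infty$.
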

\begin{proof}
Notice that  $G_1$ is the heat kernel acting on the $x$ variable, which will be responsible of the time decay. 
   Define as a shorthand $g_h(y,w)=f_0(y-hw,w)-u_0(y)\f_0(w)$  and notice that 
   \begin{equation}\label{eq: propTranslation}
       \ird g_h(y,w)\d w=\rho_h-\rho_0
   \end{equation}
   where, consistently to the definitions in Lemma \ref{lemma: distinitialdata}, $\rho_h(y)=\int f_0(y-hw,w)\d w$ and $\rho_0=\int f_0(y,w)\d w=u_0$.
   Thus, 
    \begin{multline*}
        \ird\ird G_1(x-y)G_2(v-z) e^{dt}\left[f_0(y-(e^{t}-1)z, e^tz)-u_0(y)\f_0(e^tz)\right]\d y\d z\\
        =
        \ird\ird G_1(x-y)G_2(v-e^{-t}w) f_0(y-(1-e^{-t})w, w)-u_0(y)\f_0(w)\d y\d w\\
        = G_2(v) \ird \ird G_1(x-y)g_h(y,w)\d y \d w
        \\+
         \ird \ird G_1(x-y)\left[G_2(v-e^{-t}w)-G_2(v)\right] g_h(y,w)\d y \d w:=I+II
    \end{multline*}
    with $h=(1-e^{-t})\le 1.$
    Now by definition \eqref{eq: propTranslation}
$$
I=G_2(v)\ird G_1(x-y)\left[\ird g_h(y,w)\d w\right]\d y=G_2(v)\left[G_1*\rho_h-G_1*\rho_0\right]
$$
By  Lemma \ref{lemma: distinitialdata}, 
we show that 
\begin{multline*}
    \norm{I}_{L^p_xL^1_v}=\left(\ird\abs{\left(\ird G_2(v)\d v\right)}^p\abs{G_1*\rho_h(x)-G_1*\rho_0(x)}^p\d x\right)^{1/p}
    \\    
    \lesssim M_1(1-e^{-t})t^{-q}t^{-1/2}\lesssim M_1t^{-q}t^{-1/2}.
\end{multline*}
The second term 
\begin{multline*}
  \norm{II}_{L^p_xL^1_v}
  =\norm{\ird\ird G_1(\cdot-y)\left(G_2(v-e^{-t}w)-G_2(v)\right)g_h(y,w)\d y\d w
  }_{L^p_xL^1_v}
  \\
  \le\ird\ird \abs{g_h(y,w)}\norm{G_1(\cdot-y)}_{L_x^p}
  \norm{G_2(\cdot-e^{-t}w)-G_2(\cdot)}_{L^1_v}\d y\d w.
\end{multline*}
To estimate the latter expression we use  the following:
\begin{equation*}
  \| G_2(\cdot-\tilde{h})-G_2(\,\cdot\,)\|_{L^1_v} \leq
  |\tilde{h}| \| \nabla G_2 \|_{L^1_v},
  \qquad
  \| G_2(\cdot-\tilde{h})-G_2(\,\cdot\,)\|_{L^1_v} \leq 2 \| G_2 \|_{L^1_v},
\end{equation*}
which, by interpolation, gives
$$
\norm{G_2(\cdot-\tilde{h})-G_2(\,\cdot\,)}_{L^1_v}
\le
\sqrt{2}
|\tilde{h}|^{\frac12}\norm{\nabla G_2}_{L^1_v}^{\frac12}
\norm{G_2}_{L^1_v}^{\frac12}
=
\sqrt{2}
|\tilde{h}|^{\frac12}\norm{\nabla G_2}_{L^1_v}^{\frac12}.
$$
Using this in our bound for $II$ with $\tilde{h}=e^{-t}w$
\begin{multline*}
\ird\ird \abs{g_h(y,w)}\norm{G_1(\cdot-y)}_{L_x^p}\norm{G_2(\cdot-e^{-t}w)-G_2(\cdot)}_{L^1_v}\d y\d w
\\
\lesssim \norm{G_1}_{L^p_x}
e^{-t/2}
\norm{\nabla G_2}_{L^1_v}^{1/2}
\ird\ird \abs{g_h(y,w)} |w|^{1/2} \d y \d w
\lesssim t^{-q} t^{-\frac{1}{2}}\norm{\nabla G_2}_{L^1_v}^{1/2}M_{1/2}.
\end{multline*}
Together with our previous bound for $I$, this proves the result
taking into account that $M_{1/2} \leq M_1 + 1$.
\end{proof}

\subsection{On the solution and on the asymptotic profile}
Before presenting the proof of the main Theorem it is worth presenting a preliminary discussion on the formulation of the solution and on the asymptotic profile. We first present the Wild sum formulation of the solution.

Equation \eqref{eq: NLKFP} can be viewed as a linear transport-drift equation perturbed by a non-local bounded
convolution operator. 
Recall that $T_t$ defined in equation \eqref{eq:transSG_NL} is the semigroup generated by the  ``free transport drift'' generator
$$\mathscr{T}f(x,v)=-v\cdot \nabla_xf +\div (v f),$$
and that is an isometry in $L^1_vL^p_x$.
We thus write the mild solution of the equation \eqref{eq: NLKFP}, presented in
Section \ref{sec: wild}, and, consequently the Wild sum formulation of the solution:
\begin{equation}\label{eq: oldwildNLFP}
  \begin{split}
    f(t)&=e^{-t}
          e^{dt}f_0\Big(x-(e^t-1)v, e^tv\Big)\\
        &+e^{-t}\sum_{n\ge1}\int_0^t\int_0^{t_{n}}\cdots\int_0^{t_2}T_{t-t_n}(G*(T_{t_n -t_{n-1}}(G *\cdots G *(T_{t_1}f_0)\cdots )))\d t_1 \dots \d t_n.
  \end{split}
\end{equation}
We can ``switch'' the jump part and the transport
part, by recalling the star convolution operator introduced in \ref{sec: starconv}: for $a,b>0$
$$
J\starconv{a}{b}\varphi(x,v):=\ird J(w) \varphi(x+aw,v-bw)\d w.
$$
Then, the following lemma holds:
\begin{lemma}[Exchange Lemma]\label{lemma: exchangestar}
For any $p \in [1,\infty]$, $J \in L^1(\R^d)$ and $\varphi \in L^p_x
L^1_v$ we have
$$J*(T_t\f) = T_t [J \ \starconv{a}{{b}} \ \f].$$
where $\star$ is the star convolution operator, 
$a:=e^{t}-1$ and $b:=e^{t}$.
\end{lemma}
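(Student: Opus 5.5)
The identity is a pointwise change of variables, so I will verify it directly from the definitions of $T_t$ in \eqref{eq:transSG_NL} and of the star convolution $J\starconv{a}{b}\varphi(x,v)=\ird J(w)\varphi(x+aw,v-bw)\d w$. Throughout, $a=e^t-1$ and $b=e^t$.

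\emph{Step 1 (nonnegative $J,\varphi$).} Assume first that $J,\varphi\ge 0$, so that Tonelli's theorem applies and every expression below is a well-defined element of $[0,\infty]$. Writing out the convolution in $v$,
\begin{equation*}
J*_v(T_t\varphi)(x,v)=\ird J(w)\,e^{dt}\,\varphi\big(x-(e^t-1)(v-w),\,e^t(v-w)\big)\d w .
\end{equation*}
I expand the arguments of $\varphi$:
\begin{equation*}
x-(e^t-1)(v-w)=\big(x-(e^t-1)v\big)+aw,\qquad e^t(v-w)=e^tv-bw .
\end{equation*}
Setting $X:=x-(e^t-1)v$ and $V:=e^tv$, the integral becomes
\begin{equation*}
e^{dt}\ird J(w)\varphi(X+aw,V-bw)\d w=e^{dt}\,(J\starconv{a}{b}\varphi)(X,V)=T_t[J\starconv{a}{b}\varphi](x,v).
\end{equation*}
This proves the identity for nonnegative $J$ and $\varphi$, pointwise and as an equality in $[0,\infty]$.

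\emph{Step 2 (general $J,\varphi$).} I apply Step 1 to $|J|$ and $|\varphi|$. By the Young-type lemma for the star convolution, $|J|\starconv{a}{b}|\varphi|\in L^p_xL^1_v$, so it is finite almost everywhere. Since $T_t$ is a change of variables by a linear bijection, multiplied by the positive factor $e^{dt}$, it maps null sets to null sets. Hence $T_t[|J|\starconv{a}{b}|\varphi|]$ is finite a.e. By Step 1, this quantity equals $|J|*_v(T_t|\varphi|)$. Therefore the integrals defining both sides of the identity converge absolutely for a.e. $(x,v)$. On that full-measure set, the computation of Step 1 holds verbatim, because it only uses the substitutions above and linearity of the integral. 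Alternatively, one can decompose $J=J^+-J^-$ and $\varphi=\varphi^+-\varphi^-$ and use bilinearity of both sides.

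\emph{Main obstacle.} The algebra is immediate; the only delicate point is functional-analytic. As the paper itself notes, $T_t$ is not bounded on $L^p_xL^1_v$, so the left-hand side is not a priori known to lie in that space. I avoid this by stating the identity as an almost-everywhere equality of measurable functions, justified by the Tonelli argument of Step 2, rather than by appealing to any boundedness of $T_t$. As a consistency check, one can also compare Fourier transforms using Proposition~\ref{prop: proprietaStarConv}: the transform of $T_t$ acts on frequencies by the transposed linear change of variables, and both sides produce the same multiplier $\widehat J$ evaluated at a linear combination of $\xi$ and $\eta$.
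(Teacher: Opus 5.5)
Your proof is correct and is essentially the paper's argument: both expand the two sides from the definitions of $T_t$ and of the star convolution, and check that each equals $e^{dt}\int_{\R^d} J(w)\,\varphi(x-av+aw,\,bv-bw)\,\mathrm{d}w$. Your Step~2 also supplies the Tonelli / almost-everywhere absolute-convergence justification that the paper leaves implicit, which is a worthwhile addition since $T_t$ does not preserve $L^p_xL^1_v$.
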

\begin{proof}
  A simple computation gives
  \begin{equation*}
    J*(T_t \f)(x,v)=J*\f\Big(x-av, bv\Big)=\ird e^{dt}J(w)\f \Big(x-av+aw, bv-bw\Big)\d w
  \end{equation*}
  and \begin{equation*}
    \begin{split}
      &T_t [J \ \starconv{a}{b}\ \f](x,v)=T_t\Big[\ird J(w)\f \big(x+aw, v-bw\big)\d w \Big]\\
      &=\ird  e^{dt}J(w)\f \big(x-av+aw, bv-bw\big)\d w.
        \qedhere
    \end{split}
  \end{equation*}
\end{proof}
With the help of Lemma \eqref{lemma: exchangestar}, \eqref{eq: oldwildNLFP} may be rewritten in a more clear way. Taking into account that the star convolution of
Gaussian functions gives again a Gaussian function (see Corollary
\ref{coroll: nStarConv}), we denote a general multivariate Gaussian
distribution in $\R^{2d}$ with covariance matrix $2\Sigma \Sigma^T$ by
\begin{equation*}
  \mathbf{G}_\Sigma (z) := (4\pi)^{-d} \frac{1}{\sqrt{\det(\Sigma \Sigma^T)}}
  \exp\left( - \frac14 z^T (\Sigma \Sigma^T)^{-1} z  \right),
  \qquad z \in \R^{2d}.
\end{equation*}
Observe that in the definition of $\mathbf{G}_\Sigma$ only
$\Sigma \Sigma^T$ appears.  To keep consistency with this behaviour
(and with the notation used for the isotropic case), for any function
$\mathbf{H} \: \R^{2d} \to \R$ and any invertible matrix $\Lambda$ of size
$(2d) \times (2d)$ we write
\begin{equation*}
  \mathbf{H}_\Lambda(z) := \frac{1}{|\det (\Lambda)|} \mathbf{H} (\Lambda^{-1}z),
  \qquad z \in \R^{2d}.
\end{equation*}
If $\mathbf{H}\equiv\mathbf{G}$ this definition is consistent with the
previous one. In this case several different matrices $\Lambda$ can
give the same multivariate Gaussian (since only $\Lambda \Lambda^T$
appears). Wherever this may cause ambiguity we just choose
$\Lambda := \left(\Lambda\Lambda^T\right)^{1/2}$ the unique symmetric
positive definite square root matrix.

We will also use $\Lambda^{-T}$ as a shorthand for $(\Lambda^T)^{-1}$.

For $k \geq 0$ we  introduce the notation
\begin{equation}\label{eq: Sk}
  S_k=\sum_{i=1}^n e^{-k(t-t_i)}.
\end{equation}
In particular, $S_0 = n$.

\begin{teorema}
  The solution $f$ of \eqref{eq: NLKFP} with nonnegative initial
  condition $f_0 \in L^1(\R^d \times \R^d)\cap L^1_vL^p_x$ can be written as
  \begin{equation}\label{eq: solWildNLKFP}
    f(t,x,v)=e^{-t}\sum_{n\ge0}\int_0^t\int_0^{t_n}\cdots\int_0^{t_2}\mathbf{G}_{\Sigma}*T_tf_0(x,v)\d t_1,\dots\d t_n
  \end{equation}
  where \begin{equation}\label{eq: defVarCov}
    \Sigma\Sigma^T=
    \begin{pmatrix}
      S_0-2S_1+S_2&S_1-S_2
      \\
      S_1-S_2 & S_2
    \end{pmatrix}.
  \end{equation}
\end{teorema}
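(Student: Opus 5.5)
The plan is to start from the Wild sum \eqref{eq: oldwildNLFP} and push every transport operator to the left with the Exchange Lemma~\ref{lemma: exchangestar}. This leaves a single $T_t$ acting on an iterated star convolution of Gaussians. Then $T_t$ is commuted back to the right through a plain convolution, at the cost of a linear change of variables on the Gaussian kernel.

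The first step is an induction on the number of jumps. The claim is that for $0<t_1<\dots<t_k$,
\begin{equation*}
G*\big(T_{t_k-t_{k-1}}\,G*(\cdots G*(T_{t_1}f_0)\cdots)\big)=T_{t_k}\big[G\starconv{a_k}{b_k}(\cdots(G\starconv{a_1}{b_1}f_0)\cdots)\big],
\qquad a_i:=e^{t_i}-1,\quad b_i:=e^{t_i}.
\end{equation*}
The base case is Lemma~\ref{lemma: exchangestar} applied with $s=t_1$. For the inductive step, the semigroup property gives $T_{t_{k+1}-t_k}T_{t_k}=T_{t_{k+1}}$, and then Lemma~\ref{lemma: exchangestar} is applied with $s=t_{k+1}$. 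The essential point is that the exchange parameters depend on the absolute jump time $t_i$, not on the gap. The final $T_{t-t_n}$ merges with $T_{t_n}$ into $T_t$.

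Next, Corollary~\ref{coroll: nStarConv}(iii) identifies the iterated star convolution with an ordinary convolution $\mathbf{G}_\Lambda*f_0$. Here $\mathbf{G}_\Lambda$ is the law of $\sum_{i=1}^n(-a_iW_i,\,b_iW_i)$, where the $W_i$ are independent Gaussians with covariance $2\Id$; this matches the convention $G\starconv{a}{b}f(x,v)=\int G(w)f(x+aw,v-bw)\d w$. It remains to compute $T_t(\mathbf{G}_\Lambda*f_0)$. Since $T_tg=e^{dt}\,g\circ A_t$ with the linear map $A_t(x,v)=(x-(e^t-1)v,\,e^tv)$, and $e^{dt}=|\det A_t|$, a change of variables gives $T_t(H*f_0)=\tilde H*T_tf_0$, where $\tilde H$ is the law of $A_t^{-1}Z$ for $Z\sim H$. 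One checks that $A_t^{-1}(y,w)=(y+(1-e^{-t})w,\,e^{-t}w)$. Applying this to the $i$-th summand gives
\begin{equation*}
A_t^{-1}(-a_iW_i,\,b_iW_i)=\big((1-e^{-(t-t_i)})W_i,\;e^{-(t-t_i)}W_i\big).
\end{equation*}
Summing the independent contributions, the $2d\times2d$ covariance (normalised as in $\mathbf{G}_\Sigma$, i.e.\ in units of $2\Id$) has blocks
\begin{equation*}
\sum_i(1-e^{-(t-t_i)})^2=S_0-2S_1+S_2,\qquad
\sum_i(1-e^{-(t-t_i)})e^{-(t-t_i)}=S_1-S_2,\qquad
\sum_ie^{-2(t-t_i)}=S_2,
\end{equation*}
which is exactly \eqref{eq: defVarCov}. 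The $n=0$ term is $T_tf_0$, consistent with $\Sigma=0$, where the Gaussian degenerates to a Dirac mass.

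The main obstacle I expect is bookkeeping rather than analysis. One must get the signs in the star convolution and the push-forward under $A_t^{-1}$ right, so that the off-diagonal block comes out as $+(S_1-S_2)$. One must also make the degenerate cases legitimate: the covariance matrix is singular for $n\le1$, so $\mathbf{G}_\Sigma$ has to be read as a Gaussian measure. Finally, the interchange of the series, the time integrals and the convolutions needs justification. This follows from positivity of all terms, mass conservation ($\|T_t\cdot\|_{L^1}$ and star convolution with a probability density preserve mass), and Tonelli's theorem. The same argument also gives convergence in $L^1_vL^p_x$, because $T_t$ is an isometry there and Young's inequality applies.
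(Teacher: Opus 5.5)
Your proposal is correct and follows essentially the same route as the paper. Both iterate the Exchange Lemma with parameters $a_i=e^{t_i}-1$, $b_i=e^{t_i}$ tied to the absolute jump times, identify the iterated star convolution with $\mathbf{G}_\Lambda*f_0$ via Corollary~\ref{coroll: nStarConv}(iii), and commute $T_t$ through the convolution using the linear map $A_t^{-1}$, which is exactly the paper's $M_t^{-1}$. Your summand-by-summand push-forward of $(-a_iW_i,b_iW_i)$ is the entrywise form of the paper's product $M_t^{-1}\Lambda\Lambda^TM_t^{-T}$, and your remarks on the degenerate cases $n\le 1$ and on justifying the interchanges fill in details the paper leaves implicit.
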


\begin{proof}
  Applying iteratively the exchange Lemma \ref{lemma: exchangestar} to the
  term inside the nested integral, then Corollary \ref{coroll: nStarConv} \emph{(iii)}
  and the semigroup property of $T_t$, we can rewrite 
  $f$ as
  \begin{equation}\label{eq: notsooldwild}
    f(t,x,v)=e^{-t}\sum_{n\ge0}\int_0^t\int_0^{t_n}\cdots\int_0^{t_2}T_t[G_\Lambda*f_0](x,v)\d t_1\dots\d t_n
  \end{equation}
  where $\bar{a}_i := e^{t_i}-1$, $\bar{b}_i := e^{t_i}$, and
  $$\Lambda\Lambda^T=
  \begin{pmatrix}
    \sum \bar{a}_i^2&-\sum \bar{a}_i\bar{b}_i
    \\
    -\sum \bar{a}_i \bar{b}_i&\sum \bar{b}_i^2
  \end{pmatrix}
  $$
  Then we apply the following elementary property of the convolution and linear semigroups:
\begin{equation}\label{eq:semigroupDistrProp}
       T_t(f*g)=(T_tf)*(T_tg)
  \end{equation}
  that gives
      $$
      T_t[\mathbf{G}_\Lambda*f_0]= (\mathbf{G}_\Lambda)_{M_t^{-1}}*(T_t f_0)=\mathbf{G}_{M_t^{-1}\Lambda}*(T_tf_0),
      $$
      that is $\mathbf{G}_{M_t^{-1}\Lambda}\sim\mathcal{N}(0, 2M^{-1}_t\Lambda
      \Lambda^T M_t^{-T})$. Observe that
      \begin{equation*}
        \Sigma\Sigma^T = M_t^{-1} \Lambda \Lambda^T M_t^{-T},
      \end{equation*}where
      \begin{equation}\label{eq: associatedMatrix}
        M_t^{-1}=
        \begin{pmatrix}
          1&1-e^{-t}\\
          0&e^{-t}
        \end{pmatrix}
      \end{equation}
      one can calculate $\Sigma\Sigma^T$ by direct matrix multiplication, concluding the proof.

\end{proof}


\medskip

The asymptotic profile of equation \eqref{eq: NLKFP} is $F(v)u(t,x)$.
However, the equilibrium does not have a closed form in the Wild sum
formulation.  Instead of looking directly at
$\norm{f-F(v)u(t,x)}_{L^p_xL^1_v}$, by the triangle inequality we
rewrite the main statement as
$$ \norm{f-F(v)u(t,x)}_{L^p_xL^1_v}\le
\norm{f-\f(t,v)u(t,x)}_{L^p_xL^1_v}+\norm{\f(t,v)u(t,x)
  -F(v)u(t,x)}_{L^p_xL^1_v}.$$
where $\f$ solves the Nonlocal Fokker--Planck equation
\begin{equation}\label{Eq:NLFPvel}
    \partial_t\f= G*\f-\f+\div(v\f)
\end{equation}
with initial  data $\f_0$.
  
Thanks to our previous results in
\cite{canizotassi2024}, we know that the second term of the previous
inequality decays exponentially fast to zero (much faster than the
expected algebraic rate) with a constant which depends on
$\int_{\R^d} |v|\,|\varphi_0(v) - F(v)| \d v$. Hence, we are allowed to
focus our attention on
$$
 \norm{f-\f(t,v)u(t,x)}_{L^p_xL^1_v}.
$$
Let us emphasise that we can choose any well-behaved initial data $\f_0$ our analysis requires, as the solution will eventually converge to the same steady state, the true object of our study.
By applying  the Wild sum representation of $\f$, and then the property \eqref{eq:semigroupDistrProp} we can rewrite the solution $\f$ as
\begin{multline}
\f(t,v)=e^{dt}e^{-t}\sum_{n\ge0}\int_0^t\int_0^{t_n}\cdots\int_0^{t_2}G_{e^{t}S_2^{1/2}}*\f_0(e^{t}v)\d t_1\dots\d t_n\\
=e^{-t}\sum_{n\ge0}\int_0^t\int_0^{t_n}\cdots\int_0^{t_2}G_{S_2^{1/2}}*(D_t\f_0(v))\d t_1\dots\d t_n
\end{multline}
where $S_2$ is as in \eqref{eq: Sk},
and
$D_t$ is just the dilation semigroup $D_t \f(v)=e^{dt}\f(e^tv)$.
Together with the convolution representation of the solution of the heat
equation, we have
$$
u(t,x) \varphi(t,v) =
e^{-t}\sum_{n\ge0}\int_0^t\int_0^{t_n}\cdots\int_0^{t_2}[G_{t^{1/2}}\otimes G_{S_2^{1/2}}]*(u_0\otimes (D_t\f_0))(x,v)\d t_1\dots\d t_n
$$
with $u_0=\int f_0(x,w)\d w$, $D_t$ the dilation semigroup and $\f_0$
of our choice. In particular, we may write
$$
G_{t^{1/2}}\otimes G_{S_2^{1/2}}(x,v)=\mathbf{G}_{\Sigma_\infty}(x,v),
$$
where
$$
\Sigma_\infty\Sigma_\infty^T=
\begin{pmatrix}
  t &0\\
  0& S_2
\end{pmatrix}.
$$
\subsection{Proof of the main result}
We split our object of interest in two parts:
\begin{multline}\label{eq: rigorous}
    \norm{f(t,x,v)-\f(t,v)u(t,x)}
    \le
    e^{-t}\sum_{n\ge 0}\int_0^{t}\int_0^{t_n}\cdots\int_0^{t_2}\norm{\mathbf{G}_{\Sigma}*T_t f_0 -\mathbf{G}_{\Sigma_\infty}*(u_0\otimes D_t\f_0)}\d t_1,\dots\d t_n
    \\
    \le
    e^{-t}\sum_{n\ge 0}\int_0^{t}\int_0^{t_n}\cdots\int_0^{t_2}
    \norm{\mathbf{G}_{\Sigma}*T_t f_0-\mathbf{G}_{\Sigma_{\infty}}*T_t f_0}\d t_1,\dots\d t_n
    \\
    + e^{-t}\sum_{n\ge 0}\int_0^{t}\int_0^{t_n}\cdots\int_0^{t_2}\norm{\mathbf{G}_{\Sigma_\infty}*T_t f_0-\mathbf{G}_{\Sigma_\infty}*(u_0\otimes D_t\f_0)}:=I+II.
\end{multline}
Formally speaking, the first term decays because there is a core
region, again called ``probable'', where the variance matrices of the
two Gaussians are very close, and the histories of jumps belonging
to the improbable region have exponentially small probability of
occurring.  The second term decays because of the mixing effect of $T_t$
and the decay of the isotropic Gaussian where one of the variable
behaves like the heat kernel.  We will study each term separately in
the next two paragraphs.

Let us recall some notation: $U_1,\dots, U_n$ denote a sequence of $n$
independent, identically distributed uniform random variables on
$[0,t]$, and $U_{(1)},\dots, U_{(n)}$ denote their order statistics.

In this first term, we are comparing a the distance of a Gaussian with
now random variance covariance matrix from a Gaussian with ``target''
variance matrix, whose velocity velocity entrance is also random.
This distance will be in terms of the spectral norm $\norm{\cdot}$,
i.e. the largest eigenvalues norm, or in terms of the (equivalent)
Frobenius norm $\norm{\cdot}_F$.

Let us rewrite our problem in terms of random variables. Since we are
summing all over the possible
$i\in\{1,\dots, n\}$, then $$\sum_{i=1}^nf(U_{(i)})=\sum_{i=1}^n f(U_i).$$ Therefore we
define the random matrix $A$
\begin{equation}\label{eq: defA}
A=\begin{pmatrix}
    A_{11}& A_{12}
    \\
    A_{12}& A_{22}
  \end{pmatrix}
     \\
     =\begin{pmatrix}
     S_0-2S_1+S_2&S_1-S_2
     \\
     S_1-S_2 & S_2
     \end{pmatrix}
\end{equation}
where with abuse of notation we will denote the random version of
\eqref{eq: Sk} with the same notation
$$
S_k=\sum_{i=1}^ne^{-k(t-U_i)}.
$$
We also define the random matrix $B_\infty$ by
\begin{equation}\label{eq:defBinfty}
B_{\infty}=
\begin{pmatrix}
    t &0
    \\
    0& A_{22}
\end{pmatrix}.
\end{equation}
Let us notice that $A$ and $B_\infty$ are the random versions of
$\Sigma\Sigma^T$ and $\Sigma_\infty\Sigma_\infty^T$, respectively.
\begin{osservazione}  The random matrix does not
exhibit ``concentration'' around its mean in all its
variables. We have
$$
\E(A)=\Sigma_{FP}\Sigma_{FP}^T.
$$
where $\Sigma_{FP}\Sigma_{FP}^T$ is the covariance matrix of the local
Kinetic-Fokker Planck. A naive ``concentration'' approximation of this
type would suggest $A \sim \Sigma_{FP}\Sigma_{FP}^T$, but if this were
to hold one would expect the velocity component to present Gaussian
tails.  However we know that the actual distribution has heavier
tails.  This anomalous behaviour occurs because concentration takes
place only in the $(x,x)$ component. Importantly, our target
distribution possesses the same tails.
\end{osservazione}

\subsubsection{The first term}\label{sec:NLfirstterm}
For $\alpha\in(0,1),$ $\beta\in(0,1/2)$, we define the regions

\begin{gather*}
E_0=\left\{n\in \mathbb{N}:n>e^{-\frac{3}{1-2\beta}}\, t^{1+\frac{2\beta}{1-2\beta}}\right\},\\[4pt]
E_1=\{\mathcal{R}<n^\beta\}, \qquad
E_2=\{S_2<n^{2\beta}\}, \qquad
E_3=\left\{\abs{A_{11}-\mathfrak{m}}\le n^\alpha\right\},\\[4pt]
E=E_1\cap E_2\cap E_3.
\end{gather*}

where $A_{11}=S_0-2S_1+S_2$, $\mathcal{R}=\dfrac{S_1}{\sqrt{S_2}}$
and 
\begin{equation}
\label{eq: expectedvalueA11}
\mathfrak{m}:=\E(A_{11})=n+\frac{n}{t}\left(-\frac32+2e^{-t}-\frac12 e^{-2t}\right)
\end{equation}

Let us emphasise that the probable region here is only $E_1\cap E_2$.
We introduced $E_0$ since we will bound the size of the improbable region by, applying de Morgan's laws,
\begin{multline}\label{eq:splitProbImprob}
E^c=(E_1\cap E_2)^c\cup E_3^c=(E_0^c \cap (E_1\cap E_2)^c)\cup(E_0\cap (E_1\cap E_2)^c)\cup  E_3^c
\\
\subseteq
E_0^c \cup (E_0\cap E_1^c) \cup (E_0\cap E_2^c) \cup E_3^c
\end{multline}
Thus to bound the size of the improbable region, we will verify  that each $E_0^c,\, (E_0\cap E_1^c)$, $E_0\cap E_2^c)$and  $E_3^c$ are small separately. These are contained respectively in Lemma \ref{lemma: easypoisson}, Proposition \ref{prop: ratioImpr}, Proposition \ref{prop: A11Bernstein}. We again denoted by $\mathcal{R}$, the ratio $$\mathcal{R}=\frac{\sum_{i=1}^ne^{-(t-U_i)}}{\sqrt{\sum_{i=1}^ne^{-2(t-U_i)}}}$$ 
Trivially, 
    $
        1\le \mathcal{R}\le \sqrt{n} \qquad a.s.
    $

\begin{osservazione}
    Let us remark that  $\mathfrak{m}$ is of order $n$:
   since 
$$
\left(-\frac32+2e^{-t}-\frac12 e^{-2t}\right)\in\left[-\frac32,0\right]
$$
then for $t$ large enough
\begin{equation}\label{eq:boundEA11}
    \frac{n}{2}\le n\left(1-\frac{3}{2t}\right)\le\mathfrak{m}\le n 
\end{equation}
\end{osservazione}


\paragraph{First term, probable region} \begin{proposizione}\label{prop:firstterm_PROB}
    In the probable region $E$, for all (small) $\beta\in(0,1/2)$
    $$
      e^{-t}\sum_{n\ge 0}\int_{\mathcal{S}_n(t)}
    \norm{\mathbf{G}_{\Sigma}*T_t f_0-\mathbf{G}_{\Sigma_{\infty}}*T_t f_0}\d t_1,\dots\d t_n\lesssim t^{-q}t^{\beta- \frac12}
    $$
   
\end{proposizione}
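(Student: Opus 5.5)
First I pull the common initial datum out with Young's inequality for convolutions in mixed spaces. Since $T_t$ is an isometry of $L^1_vL^p_x$ and $\|T_tf_0\|_{L^1_{x,v}}=1$, I expect
$$
\norm{\mathbf{G}_{\Sigma}*T_t f_0-\mathbf{G}_{\Sigma_{\infty}}*T_t f_0}_{L^p_xL^1_v}\le \norm{\mathbf{G}(A;\cdot)-\mathbf{G}(B_\infty;\cdot)}_{L^p_xL^1_v}.
$$
This holds up to the harmless factor $2$ in the covariances. Because $B_\infty$ is block diagonal, Proposition~\ref{prop: boundMultGauss} with $r=1$ applies whenever $\norm{\Delta(A,B_\infty)}_{op}\le 1/2$. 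Since $\det(B_{22})$ enters with exponent $0$ when $r=1$, it gives
$$
\norm{\mathbf{G}(A)-\mathbf{G}(B_\infty)}_{L^p_xL^1_v}\lesssim t^{-q}\,\norm{\Delta(A,B_\infty)}_F .
$$
So the whole task reduces to estimating the Frobenius norm of the normalised perturbation $\Delta=B_\infty^{-1/2}(A-B_\infty)B_\infty^{-1/2}$ on the probable region, and then summing against the Poisson weights.

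The entries of $\Delta$ are computed next, using that $A_{22}=B_{22}=S_2$. The $(2,2)$ entry vanishes. The $(1,1)$ entry is $(A_{11}-t)/t$. The off-diagonal entry is $(S_1-S_2)/\sqrt{tS_2}$. Each is then bounded using the defining conditions of $E$.

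\textbf{The $(1,1)$ entry.} On $E_3$ we have $|A_{11}-\mathfrak m|\le n^\alpha$. By \eqref{eq:boundEA11}, $|\mathfrak m-n|\le 3n/(2t)\lesssim 1$ when $n\sim t$. Hence
$$
\frac{|A_{11}-t|}{t}\le \frac{n^\alpha+|n-t|+C}{t}.
$$

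\textbf{The off-diagonal entry.} On $E_1\cap E_2$,
$$
\frac{|S_1-S_2|}{\sqrt{tS_2}}\le \frac{\mathcal R+\sqrt{S_2}}{\sqrt t}\le \frac{2n^\beta}{\sqrt t}.
$$

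Summing, $\norm{\Delta}_F\lesssim n^\beta t^{-1/2}+n^{\alpha}t^{-1}+|n-t|/t$. Integrating over $\mathcal S_n(t)$ only produces the factor $t^n/n!$, because the bound no longer depends on the individual times. The Poisson sum then uses Proposition~\ref{prop: boundtailPoiss} for $\E N_t^\beta\lesssim t^\beta$ and $\E|N_t-t|\le\sqrt t$. Choosing $\alpha\le 1/2+\beta$, this gives $t^{-q}\,(t^{\beta-1/2}+t^{\alpha-1}+t^{-1/2})\lesssim t^{-q}t^{\beta-1/2}$, which is the claim.

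\textbf{Main obstacle: the smallness hypothesis.} The hypothesis $\norm{\Delta}_{op}\le1/2$ of Proposition~\ref{prop: boundMultGauss} is not implied by $E$ alone, since $|n-t|/t$ need not be small. I would handle this by adding to the probable region the Poisson-typical condition $|n-t|\le t/4$. I would also require $t$ large enough that $2n^\beta t^{-1/2}+n^\alpha/t\le1/4$. On the complementary set $\{|n-t|>t/4\}$ I would use only the crude bound $\norm{\mathbf{G}(A)-\mathbf{G}(B_\infty)}\le \norm{\mathbf{G}(A)}+\norm{\mathbf{G}(B_\infty)}$. Proposition~\ref{prop:SchurComplement} computes these norms as $A_{11}^{-q}$ (up to constants) and $t^{-q}$, and $A_{11}\ge n/2$ on $E_3$. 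Then Chernoff tails (as in Proposition~\ref{prop: boundtailPoiss}) make this contribution $O(t^{-q}e^{-ct})$. The small-$n$ terms, where $n^\beta t^{-1/2}$ might fail to be small, are also exponentially rare and are absorbed the same way.
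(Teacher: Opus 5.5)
Your proposal is correct, but it takes a genuinely different route from the paper's proof.

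\textbf{How the two routes differ.} The paper does not compare $A$ with $B_\infty$ in one step. It inserts the intermediate matrix $B_D=\mathrm{diag}(\mathfrak m, A_{22})$ and applies Proposition~\ref{prop: boundMultGauss} twice.
\begin{itemize}
\item The first comparison, $A$ against $B_D$, is pure concentration of the random matrix. It is normalised by $\mathfrak m\sim n$, so on $E$ (with $\alpha=\beta+\frac12$) it gives $n^{-q}n^{\beta-\frac12}$. The Poisson sum then needs the negative moments from Proposition~\ref{prop: boundtailPoiss}.
\item The second comparison, $B_D$ against $B_\infty$, is a scalar deterministic correction of size $t^{-q}|t-\mathfrak m|/t$.
\end{itemize}
Your one-step comparison gives the deterministic prefactor $t^{-q}$ directly and uses only positive Poisson moments. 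The price is that the fluctuation $|n-t|/t$ sits inside $\Delta(A,B_\infty)$. That is why you need the extra restriction $|n-t|\le t/4$ to keep $\norm{\Delta}_{op}\le\frac12$. Tighten the constants slightly there, since as written you only get $\frac12+O(1/t)$.

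\textbf{Where your version is more careful than the paper.} The paper's route needs the same kind of restriction, for instance $|t-\mathfrak m|\le t/2$ in its second use of Proposition~\ref{prop: boundMultGauss}, but it never states or checks it. It also sums $(n-t)$ without an absolute value, effectively using $\E(N_t-t)=0$. Your bound $\E|N_t-t|\le\sqrt t$ is the correct treatment, and it still yields $t^{-q-\frac12}$.

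\textbf{One detail to fix in your exceptional set.} For bounded $n$, in particular $n\le 2q$, the condition $E_3$ gives no lower bound on $A_{11}$. Moreover $\int_{\mathcal S_n(t)}A_{11}^{-q}$ can diverge near the corner $t_1,\dots,t_n\to t$. So there the crude bound via Proposition~\ref{prop:SchurComplement} does not work. Instead, bound each piece uniformly by
\[
\norm{\mathbf G_\Sigma*T_tf_0}_{L^p_xL^1_v}\le\norm{f_0}_{L^1_vL^p_x},
\]
and similarly for $\mathbf G_{\Sigma_\infty}$. This is what the paper does in its improbable-region estimate, and it is available since $f_0\in L^1_vL^p_x$ under Theorem~\ref{thm: mainNLFP}. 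Multiplied by the exponentially small Poisson weight of these $n$, this contribution is negligible.
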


\begin{proof}
To simplify some calculations we introduce an  intermediate matrix:
\begin{equation}\label{eq:MatBT}
B_{D}=\begin{pmatrix}
    \E(A_{11})& 0
    \\
    0& A_{22}
\end{pmatrix}
\end{equation}
Thus,
$$
 \norm{\mathbf{G}_{A^{1/2}}*T_t f_0-\mathbf{G}_{B^{1/2}_{\infty}}*T_t f_0}\le
 \norm{\mathbf{G}_{A^{1/2}}-\mathbf{G}_{B^{1/2}_{\infty}}}
  \le \norm{\mathbf{G}_{A^{1/2}}-\mathbf{G}_{B_D^{1/2}}}+\norm{\mathbf{G}_{B_D^{1/2}}-\mathbf{G}_{B^{1/2}_{\infty}}}=I+II
$$
We consider the two terms separately.
By Proposition \ref{prop: boundMultGauss}, we have
$$
\norm{\mathbf{G}_{A^{1/2}}-\mathbf{G}_{B_{D}^{1/2}}}_{L^p_xL^1_v}\lesssim\mathfrak{m}^{-q}\norm{\Delta(A,B_D)}_F
$$
where $\norm{\cdot}_F$ is the Frobenius norm and 
\begin{equation}
\Delta(A,B_D)=B_D^{-1/2}(A-B_D)B_D^{-1/2}=\begin{pmatrix}
    \frac{A_{11}}{\mathfrak{m}}-1&\frac{A_{12}}{\sqrt{\mathfrak{m}A_{22}}}\\
    \frac{A_{12}}{\sqrt{\mathfrak{m}A_{22}}}& 0.
\end{pmatrix}
\end{equation}
Then, recalling the definition of $A_{12}$ and $\mathcal{R}=\frac{S_1}{\sqrt{S_2}}$
\begin{multline}
    \norm{\Delta(A,B_D)}_F^2=\left( \frac{A_{11}}{\mathfrak{m}}-1\right)^2+2\left(\frac{A_{12}}{\sqrt{\mathfrak{m}A_{22}}}\right)^2
    \\
    =\left( \frac{A_{11}}{\mathfrak{m}}-1\right)^2+2\frac{\left(\mathcal{R}-\sqrt{S_2}\right)^2}{\mathfrak{m}}\le \mathfrak{m}^{-2}\left(A_{11}-\mathfrak{m}\right)^2+2\mathfrak{m}^{-1}\mathcal{R}^2+2\mathfrak{m}^{-1}S_2
\end{multline}
Since we are in $E$, taking into accounts the bounds of \eqref{eq:boundEA11},  
one has, by choosing $\alpha=\beta+\frac12$,
\begin{multline*}
I\le \mathfrak{m}^{-q}\left[\mathfrak{m}^{-2}\left(A_{11}-\mathfrak{m}\right)^2+2\mathfrak{m}^{-1}\mathcal{R}^2+2\mathfrak{m}S_2\right]^{1/2}
\\
\le 2^{q}n^{-q}\left[4n^{-2(1-\alpha)}+4n^{-(1-2\beta)}+4n^{-(1-2\beta)}\right]^{1/2}\le Cn^{-q}n^{-\left(\frac12-\beta\right)}
\end{multline*}
To bound the second
$$
\Delta(B_D,B_\infty)=\frac{t-\mathfrak{m}}{t}
$$
and hence
$$
II\le t^{-q}\frac{t-\mathfrak{m}}{t}   \lesssim t^{-q-1}\left[\left(n-t\right)+\frac{n}{t}\left(-\frac{3}{2}+2e^{-t}-\frac{1}{2}e^{-2t}\right)\right]\le t^{-q-1}\left[\left(n-t\right)+\frac32\frac{n}{t}\right]
$$
Therefore in $E$
\begin{multline*}
 e^{-t}\sum_{n\ge 0}\int_{\mathcal{S}_n(t)}
    \norm{\mathbf{G}_{\Sigma}*T_t f_0-\mathbf{G}_{\Sigma_{\infty}}*T_t f_0}\d t_1,\dots\d t_n
    \\
    \lesssim e^{-t}\sum_{n\ge 0}\frac{t^n}{n!}\left [n^{-q}n^{-\left(\frac12-\beta\right)}+ t^{-q-1}\left(\left(n-t\right)+\frac32\frac{n}{t}\right)\right]
    \\
    =
     e^{-t}\sum_{n\ge 0}\frac{t^n}{n!}n^{-q}n^{-\left(\frac12-\beta\right)}+ t^{-q-1}\frac{3}{2}
    \lesssim t^{-q}t^{-\left(\frac{1}{2}-\beta\right)}+t^{-q}\lesssim t^{-q}t^{-\left(\frac{1}{2}-\beta\right)}
\end{multline*}
\end{proof}

\paragraph{First term, improbable region}
The main result of this section is the following proposition:
\begin{proposizione}
    In the improbable region $E^c$, for all $\beta>0$
    $$
    e^{-t}\sum_{n\ge 0}\int_{\mathcal{S}_n(t)}\norm{\mathbf{G}_{\Sigma}*T_tf_0-\mathbf{G}_{\Sigma_{\infty}}*T_tf_0}_{L^p_xL^1_v}\d t_1\dots\d t_n\lesssim e^{-t^{\beta}}
    $$
\end{proposizione}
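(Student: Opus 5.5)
Since the integrand is a difference of two Gaussian convolutions, I will not exploit cancellation in the improbable region and instead bound each piece separately. By the Young-type inequality for $L^p_xL^1_v$ (with $T_tf_0$ controlled through $\norm{T_tf_0}_{L^p_xL^1_v}\le\norm{f_0}_{L^1_vL^p_x}$ via \eqref{eq:mixedLebesgueInclusion}), each term is at most $\norm{\mathbf{G}_\Sigma}_{L^1}\norm{f_0}_{L^1_vL^p_x}$. The two Gaussians are probability densities, so the integrand is uniformly bounded by $2\norm{f_0}_{L^1_vL^p_x}$. The whole task therefore reduces to showing that $\mu(E^c)\lesssim e^{-t^\beta}$, i.e. that the improbable region has stretched-exponentially small probability. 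No $t^{-q}$ factor is needed, since $e^{-t^\beta}$ beats every power of $t$.

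I will use the covering \eqref{eq:splitProbImprob}, $E^c\subseteq E_0^c\cup(E_0\cap E_1^c)\cup(E_0\cap E_2^c)\cup E_3^c$, and estimate the four pieces in the order below.

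\textbf{$E_0^c$.} Lemma \ref{lemma: easypoisson} gives $\mu(E_0^c)\le Ce^{-t}$ directly.

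\textbf{$E_0\cap E_1^c$.} On $E_0$ we have $t\ge e^3n^{1-2\beta}\ge e^3 n^{1-\beta}$ for $n\ge1$. This is, up to the direction of the inequality defining $E_0$, exactly the hypothesis of Proposition \ref{prop: ratioImpr}. Conditioning on $N_t=n$, so that the jump times are the order statistics of uniform variables on $[0,t]$, gives $\P(\mathcal R>n^\beta\mid N_t=n)\le 6e^{-n^\beta}$.

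\textbf{$E_0\cap E_2^c$.} Proposition \ref{prop:upperboundS2} with $\tilde\beta=2\beta$ (valid since $t\ge\frac{e^2}{2}n^{1-2\beta}$ on $E_0$) gives $e^{-n^{2\beta}}\le e^{-n^\beta}$.

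\textbf{$E_3^c$.} Proposition \ref{prop: A11Bernstein} applied to $A_{11}-\mathfrak m=\sum_i (X_i-\E X_i)$ gives $2\exp\{-n^{2\alpha}/(2(\tfrac{11n}{12t}+\tfrac13 n^\alpha))\}$. With $\alpha=\beta+\tfrac12>\tfrac12$ this is at most $2e^{-cn^{\min(2\alpha-1,\alpha)}}=2e^{-cn^{2\beta}}$ once $n\lesssim t$. For $n\gg t$ the term $n/t$ dominates and the exponent is still $\gtrsim n^{2\alpha-1}\,t\ge n^{2\beta}$.

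Summing the conditional bounds against the Poisson weights $e^{-t}t^n/n!$ leaves a quantity of the form $\E[e^{-cN_t^{\beta}}]$ plus $Ce^{-t}$. I split this expectation at $n=t/2$. The region $n<t/2$ has probability $\le e^{-ct}$ by the Chernoff argument of Proposition \ref{prop: boundtailPoiss}. On $n\ge t/2$ we have $e^{-cn^\beta}\le e^{-c't^\beta}$. Together these give $\lesssim e^{-ct^\beta}$, and the constant $c$ can be absorbed by adjusting $\beta$ slightly.

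The main obstacle I anticipate is the bookkeeping between the $n$-dependent thresholds of the concentration inequalities and the $t$-dependent hypotheses $t\gtrsim n^{1-\beta}$. For large $n$ relative to $t$ these hypotheses fail, and that regime must be charged entirely to $E_0^c$ via the Poisson tail. I expect the inequality defining $E_0$ may need to be read with the correct orientation, namely $n\lesssim t^{1/(1-2\beta)}$, for Lemma \ref{lemma: easypoisson} to apply. I would also double-check that Bernstein still yields a stretched exponential in $n$ on the entire range of $n$ permitted by $E_0$.
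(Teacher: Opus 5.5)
Your route is the paper's: bound the integrand uniformly, use the covering \eqref{eq:splitProbImprob}, apply the four concentration estimates, and sum against the Poisson weights. Your uniform bound via $\norm{\mathbf{G}_\Sigma}_{L^1}=1$ and $\norm{T_tf_0}_{L^p_xL^1_v}\le\norm{f_0}_{L^1_vL^p_x}$ is actually the right way to get it. The paper instead invokes a uniform bound on $\norm{\mathbf{G}_\Sigma}_{L^p_xL^1_v}$, and that bound does not hold. The reason is that $A_{11}=\sum_i(1-e^{-(t-t_i)})^2$ can be arbitrarily small, and $\norm{\mathbf{G}_\Sigma}_{L^p_xL^1_v}$ blows up as $A_{11}\to 0$ when $p>1$.

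One step is wrong as written. For $n\ge 1$ we have $n^{1-2\beta}\le n^{1-\beta}$, so your chain $t\ge e^3n^{1-2\beta}\ge e^3n^{1-\beta}$ is reversed. Consequently, on $E_0$ (correctly oriented as $t\ge e^3 n^{1-2\beta}$) the hypothesis $t>e^3n^{1-\beta}$ of Proposition \ref{prop: ratioImpr} fails for all $n$ with $(t/e^3)^{1/(1-\beta)}<n\le (t/e^3)^{1/(1-2\beta)}$. In that range you have no bound on $\P(\mathcal{R}>n^\beta\mid N_t=n)$. The paper's own $E_0$ has the same mismatch between the exponents $1-2\beta$ and $1-\beta$.

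The repair is cheap: define $E_0$ by the stronger condition $t>e^3n^{1-\beta}$.
\begin{itemize}
\item On this set you still have $t>\frac{e^2}{2}n^{1-2\beta}$, so Proposition \ref{prop:upperboundS2} with $\tilde\beta=2\beta$ still applies.
\item Its complement is $\{N_t\ge m\}$ with $m=(t/e^3)^{1/(1-\beta)}$. Since $1/(1-\beta)>1$, we have $m\gg t$. The Chernoff argument behind Lemma \ref{lemma: easypoisson} then gives $\P(N_t\ge m)\le e^{-t}(et/m)^m\le Ce^{-t}$.
\end{itemize}
With this change, the rest of your argument goes through as written: the Bernstein step on $E_3^c$ and the split of $\E[e^{-cN_t^\beta}]$ at $n=t/2$.
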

Applying Young's convolution inequality, and the fact that the initial data is in $L^1_v L^p_x$ where $T_t$ is an isometry gives 
$$
\norm{\mathbf{G}_{\Sigma}*T_tf_0}_{L^p_xL^1_v}\le \norm{\mathbf{G}_{\Sigma}*T_tf_0}_{L^1_vL^p_x} \norm{T_tf_0}_{L^1_vL^p_x}=\norm{f_0}_{L^1_vL^p_x}
$$
The Poisson weights decay exponentially fast for fixed $n$, which again handle the first $N_q=\cl{2q}+1$ terms. For $n>N_q$, thanks to Proposition \ref{prop:SchurComplement}, 
one could track the decay of by $$\norm{\mathbf{G}_{\Sigma}}_{L^p_{x}L^1_v}\le (\det A_{11})^{-q},$$ and since $(\det A_{11})^{-q}$ is integrable for $n>N_q$, this would show that this contribution decay like $n^{-q}$. However since we are already in the improbable region  the contribution of the region is exponentially small, this additional polynomial decay is unnecessary. We therefore use the simple uniform bound 
 $C_p$  such that 
$$
\norm{\mathbf{G}_{\Sigma}}_{L^p_xL^1_v}+\norm{\mathbf{G}_{\Sigma_{\infty}}}_{L^p_xL^1_v}\le C_p
$$

Thanks to the splitting \eqref{eq:splitProbImprob}, 
\begin{multline*}
     e^{-t}\sum_{n\ge 0}\int_{\mathcal{S}_n(t)\cap E^c}
    \norm{\mathbf{G}_{\Sigma}*T_t f_0-\mathbf{G}_{\Sigma_{\infty}}*T_t f_0}\d t_1,\dots\d t_n
    \\
    = e^{-t} \norm{f_0}_{L^1_vL^p_x} +e^{-t}\sum_{n\ge 1}\int_{\mathcal{S}_n(t)\cap E^c}
    \norm{\mathbf{G}_{\Sigma}*T_t f_0-\mathbf{G}_{\Sigma_{\infty}}*T_t f_0}\d t_1,\dots\d t_n
    \\
    \le e^{-t} \norm{f_0}_{L^1_vL^p_x}+ e^{-t}\sum_{n\ge 0}\int_{\mathcal{S}_n(t)\cap E^c}
\norm{\mathbf{G}_{\Sigma}}_{L^p}+\norm{\mathbf{G}_{\Sigma_{\infty}}}_{L^p}\d t_1,\dots\d t_n
\\
\le  e^{-t} \norm{f_0}_{L^1_vL^p_x}+C_p e^{-t}\sum_{n\ge 0}\int_{\mathcal{S}_n(t)\cap E^c}\d t_1,\dots\d t_n
\\
\le e^{-t} \norm{f_0}_{L^1_vL^p_x}+C_p e^{-t}\sum_{n\ge 0}\frac{t^n}{n!}\left(\P(E_0^c)+\P\left(E_0\cap E_1^c\right)+\P\left(E_0\cap E_2^c\right)+\P(E_3^c)\right)
\end{multline*}
Now applying the concentration inequalities  (with $\tilde\beta=2\beta$ in Proposition \ref{prop:upperboundS2}) gives
\begin{subequations}
    \begin{equation}
     C_p e^{-t}\sum_{n\ge 0}\frac{t^n}{n!}\P(E_0^c)\le C_pCe^{-t}\qquad\qquad\text{by Lemma \ref{lemma: easypoisson}}
    \end{equation}
    \begin{equation}\label{eq:improb2}
        \P\left(E_0\cap E_1^c\right)\le 6 e^{-n^\beta}\qquad\qquad\text{by Lemma \ref{prop: ratioImpr}}
    \end{equation}
      \begin{equation}\label{eq:improb3}
        \P\left((E_0\cap E_2^c)\right)\le e^{-n^{2\beta}}\qquad\qquad\text{by Lemma \ref{prop:upperboundS2}}
\end{equation}\begin{equation}\label{eq:improb4}
        \P(E_3^c)\le 2\exp\left\{-c\frac{n^{2\alpha}}{\frac{n}{t}+n^\alpha}\right\}\qquad\text{by Proposition \ref{prop: A11Bernstein}}
    \end{equation}
\end{subequations}
Plugging \eqref{eq:improb2}, \eqref{eq:improb3} and \eqref{eq:improb4} in  the Poisson sum and using the fact that similarly to Proposition \ref{prop: boundtailPoiss}
$$
e^{-t}\sum_{n\ge 0}\frac{t^n}{n!}e^{-n^\beta}\le Ce^{-t^{\beta}}
$$
and 
$$
e^{-t}\sum_{n\ge 0}\frac{t^n}{n!}\exp\left\{-c\frac{n^{2\alpha}}{\frac{n}{t}+n^\alpha}\right\}\le e^{-cn^\alpha}
$$
Recalling that we took $\alpha=\beta+\frac{1}{2}$
then
$$
 e^{-t}\sum_{n\ge 0}\int_{\mathcal{S}_n(t)\cap E^c}
    \norm{\mathbf{G}_{\Sigma}*T_t f_0-\mathbf{G}_{\Sigma_{\infty}}*T_t f_0}\d t_1,\dots\d t_n\lesssim e^{-t^\beta}
$$

\subsubsection{The second term}
Consider the semigroups
$$
T_t: L^1(\R^d \times \R^d) \to L^1(\R^d \times \R^d), \quad T_t f_0(x,v) = e^{dt} f_0(x-(e^t-1)v, e^{t}v),
$$
$$
D_t: L^1(\R^d) \to L^1(\R^d), \quad D_t \f_0(v) = e^{dt} \f_0(e^t v).
$$
We need to bound
$$II
:= e^{-t}\sum_{n\ge 0}
\int_{\mathcal{S}_n(t)}\norm{\mathbf{G}_{\Sigma_\infty}*T_t
  f_0-\mathbf{G}_{\Sigma_\infty}*(u_0\otimes D_t\f_0)}_{L^p_xL^1_v}.$$
 We recall Proposition \ref{prop: DistInitDataMultGauss}, which gives
$$
\norm{ \mathbf{G}_{\Sigma_\infty} * (T_t f) -
  \mathbf{G}_{\Sigma_{\infty}}* (u_0 \otimes D_t \f_0) }_{L^p_xL^1_v}
\le
CM_1 t^{-q}t^{-1/2}\norm{\nabla \mathbf{G}_{S_2^{1/2}}}_{L^1_v}^{1/2}.
$$
and since 
$
\norm{\nabla \mathbf{G}_{S_2^{1/2}}}_{L^1_v}\le C_1 S_2^{-1/2}
$
we are left to study
$$
C_1CM_1 t^{-q}t^{-1/2}e^{-t}\sum_{n\ge 0}\int_{\mathcal{S}_n(t)}S_2^{-\frac{1}{4}}\d t_1\dots  \d t_n.
$$
Thus again with an abuse of notation where $S_2$ denotes both a
function and the corresponding random variable, for any $\gamma$
\begin{equation}\label{eq:expectS_2NEG}
e^{-t}\sum_{n\ge 0}\int_{\mathcal{S}_n(t)}S_2^{-\gamma}\d t_1\dots  \d t_n=\E(S_2^{-\gamma})
\end{equation}
Conditionally to the event $\{N_t=n\}$, we compute $\P(S_2<y)$. Since
$S_2$ is a sum of positive (and independent) terms is certainly less
or equal than the greatest of those terms,
\begin{multline}\label{eq:cdfS2}   
  \P(S_2<y)\le \P\left(\max_{i}e^{-2(t-U_i)} < y\right)
  =
  \left(\P\left(e^{-2(t-U_1)} < y\right) \right)^n
  \\
  =\left(\P\left(U_1\le t+\frac12\log(y) \right)\right)^n
  =\left(1+\frac{1}{2t}\log(y)\right)^n:=(z_y)^n
\end{multline}
Now we can rewrite equation \eqref{eq:expectS_2NEG} as
\begin{multline}
\E(S_2^{-\gamma})=\int_0^\infty\P(S_2^{-\gamma}>x)\d x
=
\int_0^1\P(S_2^{-\gamma}>x)\d x+\int_1^\infty\P(S_2^{-\gamma}>x)\d x
\\
\le 
1+\gamma\int_0^1\P(S_2<y)y^{-\gamma-1}\d y.
\end{multline}
Using formula \ref{eq:cdfS2} and the Fubini-Tonelli theorem,
 \begin{multline}
   \E(S_2^{-\gamma})
   \leq
   \sum_{n\ge 0}\P(N_t=n)\int_0^1 z_y^n y^{-\gamma-1}\d y
   \\
   =\int_0^1y^{-\gamma-1}\left(\sum_{n\ge 0}\P(N_t=n)z_y^n \right)\d y=\int_0^{1}y^{-\gamma-1}\E(z_y^{N_t})\d y.
 \end{multline}
 Since the moment generating function of a Poisson random variable of parameter $\lambda$ is $M_{N_t}(u)=\exp\left\{\lambda (e^{u}-1\right)\}$
 then 
 $$
 \E(z_y^{N_t})=M_{N_t}(\log(z))=\exp\left\{ t\left(e^{\log\left(1+\frac{1}{2t}\log y\right)}-1\right)\right\}=y^{\frac12}.
 $$
 Thus \eqref{eq:expectS_2NEG} becomes
 $$
 \E\left(S_2^{-\gamma}\right)\le 1+\gamma\int_0^{1} y ^{-\gamma-1}y^{1/2}\d y.
 $$
 which is integrable in $0$ only for $\gamma<1/2$.
 In our case $\gamma=\frac{1}{4}$,
 thus we can conclude
 $$
 e^{-t}\sum_{n\ge 0}\int_{\mathcal{S}_n(t)}\norm{\mathbf{G}_{\Sigma_\infty}*T_t f_0-\mathbf{G}_{\Sigma_\infty}*(u_0\otimes\f_0)}_{L^p_xL^1_v}\le \tilde{C}t^{-q}t^{-1/2}.
 $$

\section{Kinetic Fokker--Planck and Fractional Kinetic Fokker--Planck}\label{sec: KFPandFKFP}
In this section we deal with the asymptotic behaviour of the equation
  \begin{equation}
\label{eq: KFP}
\partial_tf+v\cdot\nabla_x f=-(-\Delta)^sf+\div(xf)).
\end{equation} 

 The two main results read as follows.
 \begin{teorema}
     \label{thm: mainKFP}
Let $s=1$, $p\in[1,\infty]$, and let $f$ be the solution of \eqref{eq: KFP} with probability
  initial data $f_0\in L^p_{x,v}$ such that
    $$
    \ird \ird f_0(x,w)|w|\d x\d v=M_1<\infty.
    $$
    Let $u$ be the solution of the heat equation \eqref{eq: fracheat}  with  diffusivity constant $\kappa=1$ and initial data $u_0(x)=\int f_0(x,w)\d w$.
    Then, there exists a constant $C$ depending on $\norm{f_0}_{L^p_{x,v}},p,d, M_1$, such that
    \begin{equation*}
        \norm{f(t)-u(t,x)\M(v)}_{L^p_xL^1_v}\le C t^{-\frac{d}{2}\left(1-\frac{1}{p}\right)} t^{-\frac{1}{2}}
    \end{equation*}
   
 \end{teorema}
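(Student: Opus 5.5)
The approach is the explicit Gaussian representation of the kinetic Fokker--Planck semigroup. For $s=1$ the operator $\Delta_v f+\div_v(vf)$ together with transport $v\cdot\nabla_x$ is a linear drift plus Gaussian noise, so the fundamental solution is known in closed form. Equivalently, it is the limit of the Wild-sum structure of Section~\ref{sec: NLKFP} in which the random sums $S_k$ are replaced by deterministic integrals. Concretely, I will write
\[
f(t)=\mathbf{G}(A_t;\cdot)*T_tf_0,
\qquad T_tf_0(x,v)=e^{dt}f_0\big(x-(e^t-1)v,e^tv\big),
\]
with
\[
A_t=\begin{pmatrix} t-2(1-e^{-t})+\tfrac12(1-e^{-2t}) & (1-e^{-t})-\tfrac12(1-e^{-2t})\\ (1-e^{-t})-\tfrac12(1-e^{-2t}) & \tfrac12(1-e^{-2t})\end{pmatrix}\otimes \Id,
\]
up to the factor $2$ convention. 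This is the deterministic analogue of \eqref{eq: defVarCov}, with $S_k$ replaced by $\int_0^t e^{-k(t-\tau)}\d\tau$. I will check it either on the Fourier side (the equation becomes first order along characteristics $\xi$ fixed, $\eta\mapsto \eta e^{-t}+\dots$) or by the exchange Lemma~\ref{lemma: exchangestar} in continuous form.

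Next I introduce the target. As in \eqref{eq: rigorous}, I use $\f(t,v)u(t,x)=\mathbf{G}(B_t;\cdot)*(u_0\otimes D_t\f_0)$ with $B_t=\mathrm{diag}(t,\tfrac12(1-e^{-2t}))$, choosing $\f_0=\M$. Then $\f(t)=\M$ exactly, since $\M$ is stationary, and $u=G_{t^{1/2}}*u_0$. Because both diffusivity conventions have $\kappa=1$, the $(x,x)$ entries match to leading order. The difference then splits as
\[
\big\|\mathbf{G}(A_t)*T_tf_0-\mathbf{G}(B_t)*T_tf_0\big\| + \big\|\mathbf{G}(B_t)*(T_tf_0-u_0\otimes D_t\M)\big\| =: I+II.
\]

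For $I$ I use Young's inequality with $\|T_tf_0\|_{L^1}=1$ and apply Proposition~\ref{prop: boundMultGauss} with $B=B_t$. Since $\Delta(A_t,B_t)$ has $(1,1)$ entry $O(1/t)$, off-diagonal entry $O(1)/\sqrt{t\cdot\tfrac12(1-e^{-2t})}=O(t^{-1/2})$, and zero $(2,2)$ entry, this gives $I\lesssim t^{-q}\,t^{-1/2}$ for $t\ge1$, in $L^p_xL^1_v$. For $t\le1$ the claim is trivial, using $L^p$ bounds from $f_0\in L^p$ and the mass bound $\|u\M\|\lesssim \|u_0\|_{L^p}$.

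For $II$ I invoke Proposition~\ref{prop: DistInitDataMultGauss} directly with $G_1=G(t)$ and $G_2=G(\tfrac12(1-e^{-2t}))$; $\|\nabla G_2\|_{L^1}$ is bounded for $t\ge1$. This yields $II\lesssim (1+M_1)t^{-q}t^{-1/2}$. The only subtlety is that this proposition and the isometry argument are stated in $L^1_vL^p_x$. Here, however, the Gaussian is a genuine density in $L^1$ and we only need Young's inequality with the $L^1$ norm of $T_tf_0$ together with $\|\mathbf{G}(B_t)\|_{L^p_xL^1_v}\lesssim t^{-q}$ (Proposition~\ref{prop:SchurComplement}). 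So the hypothesis $f_0\in L^p$ is needed only for small times.

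The main obstacle I expect is checking the explicit covariance and that the off-diagonal correlation is genuinely $O(1)$, which gives exactly the $t^{-1/2}$ rate rather than something worse. The Frobenius bound needs $\|\Delta\|_{op}\le1/2$, which holds for $t$ larger than some explicit $t_0$; on $[0,t_0]$ the bound is absorbed into the constant.
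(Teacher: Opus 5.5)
Your proposal follows the paper's proof. It uses the explicit representation $f=\mathbf{G}_{\Sigma_{FP}}*T_tf_0$ (your $A_t$ is exactly $\Sigma_{FP}\Sigma_{FP}^T$) and splits the difference into two terms. The covariance-mismatch term is handled by Young's inequality and Proposition~\ref{prop: boundMultGauss}; the $O(t^{-1/2})$ off-diagonal entry of $\Delta$ gives the rate. The initial-data term is handled by the translation/interpolation argument. The differences are cosmetic. The paper compares with $\mathrm{diag}(t,\tfrac12)$ and $u_0\otimes\delta_0$ via Proposition~\ref{prop:DistInitDataStableFP}, carrying an exponentially small error in the $(2,2)$ entry. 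Your choice $B_t=\mathrm{diag}(t,\tfrac12(1-e^{-2t}))$ with $u_0\otimes D_t\M$ and Proposition~\ref{prop: DistInitDataMultGauss} makes that entry vanish and, for large $t$, uses only the mass and $M_1$.

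Your small-time remark is not correct as written. In $L^p_xL^1_v$, the hypothesis $f_0\in L^p_{x,v}$ does not control $\norm{f(t)}_{L^p_xL^1_v}=\norm{\rho[f(t)]}_{L^p_x}$ near $t=0$. Free transport is not bounded on $L^p_xL^1_v$ (see the counterexample in Section~\ref{sec: NLKFP}), and $\rho_0$ itself need not lie in $L^p$. Likewise $\norm{u_0}_{L^p}$ is not available; for the $u\M$ term, use $\norm{u(t)}_{L^p}\lesssim t^{-q}$ from the mass instead.

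The paper's proof only treats large $t$, and it is in fact written in $L^p_{x,v}$. There $\norm{f(t)}_{L^p}\le e^{dt(1-1/p)}\norm{f_0}_{L^p}$ does make $t\le t_0$ trivial. If you want the $L^p_xL^1_v$ bound down to $t=0$, assume $f_0\in L^1_vL^p_x$ as in Theorem~\ref{thm: mainNLFP}, so that $\norm{T_tf_0}_{L^p_xL^1_v}\le\norm{f_0}_{L^1_vL^p_x}$.
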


Since the stable laws do not posses an explicit inverse Fourier transform we state the theorem only in $L^p$ for $p\in[2,\infty]$ and we comment about the case $p\in(1,2)$ in the next Remark \ref{rmk:interpolationPsmall}.
 
\begin{teorema}\label{thm:mainFKFP}

Let $s\in(0,1)$, $p\in[2,\infty]$, and let $f$ be the solution of \eqref{eq: KFP} with probability
  initial data $f_0\in  L^p_{x,v}$ such that 
    $$
    \ird \ird f_0(x,w)|w|^\nu \d x\d v=M_\nu<\infty,
    $$
    with $\nu=1$ if $s\in(1/2,1)$ and $\nu\in(s,2s)$ if $s\in(0,1/2]$.    
     Let $u$ be the solution of the fractional heat equation \eqref{eq: fracheat}  with  diffusivity constant $\kappa=\Gamma(2s)$ and initial data $u_0(x)=\int f_0(x,w)\d w$.
    Then, there exists a constant $C$ depending on $p,\,d,\,s,\, M_\nu$, such that
    \begin{equation*}
        \norm{f(t)-u(t,x)\M(v)}_{L^p_xL^1_v}\le C t^{-\frac{d}{2}\left(1-\frac{1}{p}\right)} t^{-\frac{\nu}{2s}}.
    \end{equation*}
    \end{teorema}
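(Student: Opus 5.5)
The plan is to work entirely on the Fourier side, where \eqref{eq: KFP} can be solved explicitly by characteristics. I then compare the explicit solution with $u(t,x)\M^s(v)$ through a splitting analogous to the one used for the nonlocal case in Section~\ref{sec: NLKFP}.

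\textbf{Explicit solution.} Taking the Fourier transform in $(x,v)\mapsto(\xi,\eta)$, transport becomes $-\xi\cdot\nabla_\eta\widehat f$ and the drift becomes $-\eta\cdot\nabla_\eta\widehat f$. This gives the first order equation
\[
\partial_t\widehat f+(\eta-\xi)\cdot\nabla_\eta\widehat f=-|\eta|^{2s}\widehat f .
\]
Its characteristics satisfy $\eta(r)-\xi=e^{r-t}(\eta-\xi)$, so
\[
\widehat f(t,\xi,\eta)=\widehat f_0\bigl(\xi,\xi+e^{-t}(\eta-\xi)\bigr)\exp\Bigl(-\int_0^t\bigl|\xi+e^{-r}(\eta-\xi)\bigr|^{2s}\d r\Bigr).
\]
In physical variables this reads $f(t)=\mathbf{K}_t*T_tf_0$. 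Here $T_t$ is the transport--drift semigroup \eqref{eq:transSG_NL}, and $\mathbf{K}_t$ is the anisotropic (multivariate) stable law with characteristic exponent $\Psi_t(\xi,\eta)=\int_0^t|\xi+e^{-r}(\eta-\xi)|^{2s}\d r$. This is exactly the continuous analogue of Corollary~\ref{coroll: nStarConv}~(ii).

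\textbf{Splitting.} As in \eqref{eq: rigorous}, I introduce the product kernel $\mathbf{K}^\infty_t:=\M^s_{(\diffC t)^{1/2s}}\otimes\M^s$, whose exponent is $\Gamma(2s)\,t|\xi|^{2s}+|\eta|^{2s}/2s$. I write
\[
f-u\M^s=(\mathbf{K}_t-\mathbf{K}^\infty_t)*T_tf_0+\bigl(\mathbf{K}^\infty_t*T_tf_0-\mathbf{K}^\infty_t*(u_0\otimes D_t\varphi_0)\bigr)+\bigl(\mathbf{K}^\infty_t*(u_0\otimes D_t\varphi_0)-u\M^s\bigr),
\]
where $\varphi_0$ is any convenient velocity profile. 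The three terms are handled as follows.

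The last term only involves the homogeneous fractional Fokker--Planck flow in $v$. It decays exponentially, as in the discussion preceding \eqref{eq: rigorous}.

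The middle term is treated exactly as in Proposition~\ref{prop: DistInitDataMultGauss}, with the Gaussian in $x$ replaced by the stable kernel. Here I use Lemma~\ref{lemma: distinitialdata} with $h=1-e^{-t}$ and the $\nu$-moment, together with the interpolated translation bound in $v$. This produces $t^{-q}t^{-\nu/2s}$, which is the claimed rate and explains why the exponent $\nu/2s$ appears.

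\textbf{First term and the role of $p\ge2$.} This is where $p\ge2$ enters. I bound the $L^p_xL^1_v$ norm by a weighted $L^p_xL^2_v$ norm, using Cauchy--Schwarz in $v$ with a weight $(1+|v|^2)^{k/2}$, $k>d/2$. Since $p\ge2$, the (mixed) Hausdorff--Young inequality then reduces everything to $L^{p'}_\xi L^2_\eta$ norms of $\eta$-derivatives up to order $k$ of
\[
\bigl(e^{-\Psi_t}-e^{-\Psi^\infty_t}\bigr)\,\widehat{T_tf_0}.
\]
I then expand $\Psi_t-\Psi^\infty_t$. Substituting $r$ and splitting $\int_0^t=\int_0^{t_0}+\int_{t_0}^t$ shows that the difference is controlled by terms like $|\xi|^{\theta}|\eta|^{2s-\theta}$ and $|\xi|^{2s}$, each with bounded (not $t$-growing) coefficients. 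On the essential frequency region $|\xi|\lesssim t^{-1/2s}$ this yields a relative gain of a power of $t$. The factor $e^{-c\,t|\xi|^{2s}}$ then gives the $t^{-q}$ scaling after integrating in $\xi$ in $L^{p'}$.

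\textbf{Expected obstacle.} The main difficulty is the lack of smoothness of $|\cdot|^{2s}$ at the origin when $s<1$. The $\eta$-derivatives of $e^{-\Psi_t}$ produce singular factors $|\eta|^{2s-j}$, which must remain square integrable near $\eta=0$. This limits $k$, and I expect to need $2s-k>-d/2$, possibly replacing integer derivatives by fractional ones. This failure of Fourier methods away from $L^2$-type bounds is precisely what prevents treating $p<2$ directly (cf.\ Remark~\ref{rmk:interpolationPsmall}). I would try first to close the estimate with exactly the minimal number of derivatives allowed by this singularity constraint.
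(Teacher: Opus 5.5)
There is a genuine gap in your treatment of the first term, and it comes from the diffusivity constant. You claim that $\Psi_t-\Psi^\infty_t$ is controlled by terms with bounded coefficients. For your $\mathbf{K}^\infty_t$ this is false: its $x$-exponent is $\Gamma(2s)\,t|\xi|^{2s}$, whereas
\[
\Psi_t(\xi,0)=|\xi|^{2s}\int_0^t(1-e^{-r})^{2s}\d r=(t-c_s)|\xi|^{2s}+O(e^{-t})|\xi|^{2s},\qquad c_s:=\int_0^\infty\bigl(1-(1-e^{-r})^{2s}\bigr)\d r .
\]
So the difference contains $(1-\Gamma(2s))\,t|\xi|^{2s}$. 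This is of order one on $|\xi|\sim t^{-1/2s}$, and no power of $t$ is gained.

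This cannot be repaired inside your scheme. Indeed $\norm{f-u\M^s}_{L^p_xL^1_v}\ge\norm{\rho[f]-u}_{L^p_x}$, and $\widehat{\rho[f]}(t,\xi)=\widehat f_0(\xi,(1-e^{-t})\xi)\,e^{-\Psi_t(\xi,0)}$ spreads like $e^{-t|\xi|^{2s}}$. Hence for $\Gamma(2s)\neq1$, i.e.\ every $s\in(0,1)$ except $s=1/2$, the left-hand side is bounded below by $c\,t^{-q}$.

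The correct diffusivity for this equation is $\kappa=1$, and it is what the paper's proof actually uses: it compares with $\Psi_\infty=t|\xi|^{2s}+\frac1{2s}|\eta|^{2s}$. The $\Gamma(2s)$ in the statement is carried over from the BGK case and is inconsistent with that proof. With $\kappa=1$ your claim holds. Set $a=(1-e^{-r})\xi$ and $b=e^{-r}\eta$. The difference then splits into three pieces:
\[
\int_0^t\bigl(|a+b|^{2s}-|a|^{2s}-|b|^{2s}\bigr)\d r,\qquad |\xi|^{2s}\int_0^t\bigl((1-e^{-r})^{2s}-1\bigr)\d r,\qquad -e^{-2st}|\eta|^{2s}/2s .
\]
Lemma~\ref{lemma: trick2} bounds the first piece by $C|\xi||\eta|^{2s-1}$ for $s>1/2$. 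For $s\le1/2$ it gives $C_\theta|\xi|^{2s\theta}|\eta|^{2s(1-\theta)}$ for any $\theta<1$.

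Two further points are needed even with $\kappa=1$.

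\emph{Damping of $e^{-\Psi_t}$.} The damping factor you invoke is immediate only for $e^{-\Psi^\infty_t}$. For $e^{-\Psi_t}$, and for the convex combination that appears when you linearise $e^{-\Psi_t}-e^{-\Psi^\infty_t}$, you need the uniform comparison $\Psi_t\ge c\,\Psi^\infty_t$ for $t\ge2$. This is not automatic, because the integrand vanishes at some $r$ when $\eta$ is a negative multiple of $\xi$. It is exactly Lemma~\ref{lemma: lowerboundinf}.

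\emph{Order of operations.} Apply the mixed-norm Young (Minkowski) inequality $\norm{K*T_tf_0}_{L^p_xL^1_v}\le\norm{K}_{L^p_xL^1_v}\norm{f_0}_{L^1}$ before passing to Fourier variables. As you wrote it, $k>d/2$ derivatives in $\eta$ fall on $\widehat{T_tf_0}$. These require $v$-moments of $f_0$ of order $k$, which are not assumed.

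With these changes your route genuinely differs from the paper's. The paper bounds $\mathbf{G}_{\Psi_{FP}}-\mathbf{G}_{\Psi_\infty}$ only in $L^2_{x,v}$, by Plancherel, and in $L^\infty_{x,v}$, by the $L^1_{\xi,\eta}$ norm of $e^{-\Psi_{FP}}-e^{-\Psi_\infty}$, and then interpolates. This needs no regularity in $\eta$, but it controls $L^p_{x,v}$, not the $L^p_xL^1_v$ norm written in the statement. Your weighted Cauchy--Schwarz combined with Hilbert-space-valued Hausdorff--Young does reach $L^p_xL^1_v$. The price is an $L^{p'}_\xi H^k_\eta$ bound on the kernel difference with fractional $k\in(d/2,d/2+2s)$; this window is forced both by the $|\eta|^{2s}$ singularity and by the $|v|^{-d-2s}$ tails of $\M^s$. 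That estimate is the substantive work still missing from your proposal.
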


\begin{osservazione}\label{rmk:interpolationPsmall}
    Since $L^1$ boundedness $\norm{f-u(t,x)\M^s(v)}_{L^1}< C$, is well known, one may interpolate between this $L^1$-estimate  and $L^2$-decay obtained in Theorem \ref{thm:mainFKFP}. This yields the decay
    $$
\norm{f-u(t,x)\M^s(v)}_{L^p}\lesssim t^{-q}t^{-\frac{\nu}{2}\left(1-\frac{1}{p}\right)},    $$
    which is not optimal and degenerates as $p\to 1^{+}$.
\end{osservazione}

While we are not aware of comparable results in this setting, the equation possesses explicit solutions, at least in the Fourier variables, from which second-order estimates can be obtained. The Wild sum approach cannot be directly applied to \eqref{eq: KFP} since the Laplacian and the fractional operator are not bounded operators and cannot be split in positive and negative part, but an alternative way to prove the previous results is from a localisation limit uniform-in-time from the nonlocal Fokker--Planck equation \eqref{eq: NLKFP}, studied in the previous chapter.

\begin{osservazione}
Theorems \eqref{thm: mainKFP} and \eqref{thm:mainFKFP} are the only ones in which the computations can be made fully explicit, making it the natural test case for understanding the optimality of the second-order term and its dependence on the fractional parameter $s$. 
As previously noted, there is a change of regime at 
 $s=\frac{1}{2}$, which is where the map $z\mapsto z^{2s}$ transitions from convex to concave. The speed of convergence saturates as $s\to 0^+$ and it never reaches the threshold $t^{-1}$.
    
    Additionally, after the rescaling \eqref{eq:scaledf} the diffusive limit  of the solution $f^\e$ to \eqref{eq:general-scaled} is of order $1$ in $\e$, i.e. as a consequence of Theorem \ref{thm: main}, one has    $$
    \norm{f^\e-\M^s u}_{L^p}\lesssim\e\qquad s\in(1/2,1],
    \quad \text{uniformly in time}.
    $$
    On the other hand,
    $$
      \norm{f^\e-\M^s u}_{L^p}\lesssim\e^\nu\qquad s\in(0,1/2],
    \quad \text{uniformly in time},    $$
   for a certain $\nu<2s$. 
    Indeed, for the highly fractional case, the equilibrium $\M^s$ lacks a finite first moment and the formal Hilbert expansion cannot be performed at the first order.

    Let us emphasise that in the BGK case, the only other fractional model we treat, the term  $\mathcal{N}_2$ for $p=1$ and all $s$ gives an exponent of $\frac{1}{3}$, independent of $s$. Whether this  is a structural feature of the Poisson distribution (see Remark \ref{rmk:PoissonDependence}) or a technical issue is still an open problem.
\end{osservazione}

\paragraph{Explicit solution} The Fourier
transform of equation \eqref{eq: KFP} has an explicit solution in both space and
velocity. 
Writing  $\xi$ and $\eta$ the Fourier variables of respectively $x$ and $v$, the transformed function
$\widehat{f} = \widehat{f}(t, \xi, \eta)$ solves
\begin{equation*}
    \partial_t\widehat{f}+\left(\eta-\xi\right)\nabla_\eta \widehat{f}=-|\eta|^{2s}\widehat{f}
\end{equation*}
 Solving along the characteristics, we have

\begin{equation*}
    \widehat f(t,\xi,\eta)
    =\widehat f_0\left(\xi, e^{-t}\eta+(1-e^{-t})\xi\right)\exp\left\{-\int_0^t\abs{\left(1-e^{-(t-\tau)}\right)\xi+e^{-(t-\tau)}\eta}^{2s}\d \tau\right\}
\end{equation*}
First, let us notice that for $s=1$ one gets 
\begin{equation*}
    \exp\left\{-\int_0^t\abs{\left(1-e^{-(t-\tau)}\right)\xi+e^{-(t-\tau)}\eta}^{2}\d \tau\right\}=\widehat{\mathbf{G}} _{\Sigma_{FP}}(\xi,\eta)
\end{equation*}
where the covariance matrix can be written $$\Sigma_{FP}\Sigma_{FP}^T=
\begin{pmatrix}
    t-\frac{1}2\left(3-4e^{-t}+e^{-2t}\right)&\frac{1}{2}(1-e^{-t})^2\\\frac{1}{2}(1-e^{-t})^2 &\frac{1}{2}(1-e^{-2t})
\end{pmatrix}. 
$$
Moreover a straightforward calculation shows that 
$$
\mathcal{F}^{-1}\left[\widehat f_0\left(\xi, e^{-t}\eta+(1-e^{-t})\xi\right)\right]=T_tf_0,
$$
where, as usually $$T_tf_0=f_0\left(x-(e^t-1)v, e^tv\right).$$
Therefore, for $s=1$, the solution can be written as a convolution
$$
f=\mathbf{G}_{\Sigma_{FP}}*(T_tf_0).
$$
For $s\in(0,1),$ we define the characteristic exponent as
$$
\Psi_{FP}(\xi,\eta)=\int_0^t\abs{\left(1-e^{-(t-\tau)}\right)\xi+e^{-(t-\tau)}\eta}^{2s}\d \tau
$$
and thus the solution is
$$
f=\mathbf{G}^{s}_{\Psi_{FP}}*(T_tf_0)
$$
where 
$$
\widehat{\mathbf{G}}^{s}_{\Psi_{FP}}(\xi,\eta)=e^{-\Psi_{FP}(\xi,\eta)}.
$$

\subsection{Proof of Theorem \ref{thm: mainKFP}: \texorpdfstring{$s=1$}{s=1}}
We first study the classical case $s=1$. As we have seen, in this case the fundamental solution is a Gaussian, whose inverse Fourier transform is explicit, therefore  allowing us to obtain the second-order estimates in all $L^p$ spaces.

The asymptotic profile is $u(t,x)\M^s(v)=\mathbf{G}_{\Psi_\infty}*(u_0\otimes \delta_v)$ where the characteristic exponent of $\mathbf{G}_{\Psi_\infty}$ is
$$
\Psi_\infty(\xi, \eta)=t|\xi|^{2s}+\frac{1}{2s}|\eta|^{2s}
$$
The following proposition, analogous to Proposition \ref{prop: DistInitDataMultGauss}, is stated for all $s\in(0,1]$,  as it will be needed in the fractional case as well.
\begin{proposizione}    \label{prop:DistInitDataStableFP}
    Let $s \in (0,1]$ and $f_0:\R^d\times\R^d\to [0,+\infty)$ a probability
    density initial condition with $f_0 \in L^p_{x,v}$ and with 
    $$
    \ird\ird f_0(x,v)|v|^\nu\d x\d v=M_\nu<\infty
    $$
    with $\nu=1$ is $s\in(1/2,1]$, or $\nu<2s$ if $s\in(0,1/2]$ as in Lemma \ref{lemma: distinitialdata}.
    Then there exists a positive constant depending only on $d,s$, and $\nu$, such that 
    $$
    \norm{\mathbf{G}^s_{\Psi_\infty}*(T_tf_0)-\mathbf{G}^s_{\Psi_\infty}*(u_0\otimes \delta_v)}_{L^p_{x,v}} \le C_{d,s,\nu}t^{-q} t^{-\frac{\nu}{2s}}
    $$
\end{proposizione}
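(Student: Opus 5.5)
The plan is to follow the proof of Proposition \ref{prop: DistInitDataMultGauss}, using the fact that the target kernel is a tensor product. Since $\Psi_\infty(\xi,\eta)=t|\xi|^{2s}+\frac{1}{2s}|\eta|^{2s}$, we have
$$\mathbf{G}^s_{\Psi_\infty}(x,v)=K_t(x)\,\M^s(v),$$
where $\widehat{K}_t(\xi)=e^{-t|\xi|^{2s}}$, that is, $K_t=\M^s_{(2st)^{1/2s}}$. This is exactly the kernel appearing in Lemmas \ref{dist: deltas} and \ref{lemma: distinitialdata}, and Lemma \ref{lem:Lp-norm-of-stable} gives $\norm{K_t}_{L^p_x}\lesssim t^{-q}$. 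On the other side, $\mathbf{G}^s_{\Psi_\infty}*(u_0\otimes\delta_v)=\M^s(v)\,(K_t*u_0)(x)$.

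The first step is to write $\mathbf{G}^s_{\Psi_\infty}*(T_tf_0)$ explicitly and change variables $w=e^tz$, which absorbs the factor $e^{dt}$:
$$\mathbf{G}^s_{\Psi_\infty}*(T_tf_0)(x,v)=\ird\ird K_t(x-y)\,\M^s(v-e^{-t}w)\,f_0(y-hw,w)\d y\d w,\qquad h:=1-e^{-t}\in[0,1].$$
Next I add and subtract $\M^s(v)$ inside the integral, which splits the difference into $I+II$. With $\rho_h(y)=\ird f_0(y-hw,w)\d w$ and $\rho_0=u_0$ as in Lemma \ref{lemma: distinitialdata}, the two pieces are
$$I=\M^s(v)\big[K_t*\rho_h-K_t*\rho_0\big](x),\qquad II=\ird\ird K_t(x-y)\big[\M^s(v-e^{-t}w)-\M^s(v)\big]f_0(y-hw,w)\d y\d w.$$

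For $I$, the $L^p_{x,v}$ norm factorises as $\norm{\M^s}_{L^p_v}\norm{K_t*\rho_h-K_t*\rho_0}_{L^p_x}$. Lemma \ref{lemma: distinitialdata} then gives the bound $\lesssim |h|^\nu t^{-q-\frac{\nu}{2s}}M_\nu\le M_\nu t^{-q}t^{-\frac{\nu}{2s}}$.

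For $II$, I apply Minkowski's integral inequality in $L^p_{x,v}$ and use translation invariance, together with the fact that the norm of $K_t(x-y)g(v)$ factorises. This yields
$$\norm{II}_{L^p_{x,v}}\le \norm{K_t}_{L^p_x}\ird\ird\norm{\M^s(\cdot-e^{-t}w)-\M^s}_{L^p_v}f_0(y-hw,w)\d y\d w.$$
To control the translation difference, I interpolate between the trivial bound $2\norm{\M^s}_{L^p}$ and the gradient bound $e^{-t}|w|\norm{\nabla\M^s}_{L^p}$, as in Lemma \ref{lemma: distinitialdata}. This gives $\lesssim e^{-\nu t}|w|^\nu$. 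Integrating against $f_0$, which still has total $|w|^\nu$-moment $M_\nu$ after the shift in $y$, we obtain
$$\norm{II}_{L^p_{x,v}}\lesssim M_\nu\, t^{-q}e^{-\nu t}.$$
This is much smaller than $t^{-q-\frac{\nu}{2s}}$ for $t\ge1$. For small $t$ the claimed right-hand side is large, so the bound for $I$ still suffices, and $II$ is bounded by $t^{-q}M_\nu$.

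The main point to check is the applicability of the interpolation step when $s\le 1/2$ and $\nu<1$. For this I need $\nabla\M^s\in L^p$; this holds because $\M^s$ is smooth and $\nabla\M^s$ decays like $|v|^{-d-2s-1}$. It also matters that the moment used is $M_\nu$ with $\nu<2s$, which stays finite even though $\M^s$ has no first moment. Everything else reduces directly to Lemma \ref{lemma: distinitialdata}.
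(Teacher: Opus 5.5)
Your proof is correct and follows essentially the same route as the paper: you use the same splitting $I+II$ obtained by adding and subtracting $\M^s(v)$ (the paper's $G^s_2(v)$), Lemma~\ref{lemma: distinitialdata} for $I$, and interpolation between the trivial and gradient bounds for the velocity translation in $II$, which gives the harmless $t^{-q}e^{-\nu t}$ term. Your bookkeeping is slightly cleaner than the paper's: you keep $f_0(y-hw,w)$ rather than the difference $g_h$ in $II$, which makes the decomposition exact, and you measure the velocity translation in $L^p_v$, consistent with the $L^p_{x,v}$ norm; as your bounds show, the constant also depends on $M_\nu$ and $p$.
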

\begin{proof}
    The proof follows the same ideas as in Proposition \ref{prop: DistInitDataMultGauss}.
    Since $\mathbf{G}(x,v)$ separates the variables, we write $\mathbf{G}(x,v)=G^s_1(x)G^s_2(v)$ 
    and
    $g_h(y,w)=f_0(y-(1-e^{-t})w,w)-f_0(y,w),
    $
    again noticing that
$\ird g_h(y,w)=\rho_h-\rho_0$, 
with $\rho_h=\ird f_0(y-hw,w)\d w$ consistently to the definition in Lemma \ref{lemma: distinitialdata}.    
    We compute 
    \begin{multline*}
        \mathbf{G}^s_{\Psi_\infty}*(T_tf_0)-\mathbf{G}^s_{\Psi_\infty}*(u_0\otimes \delta_v)
        \\
        =   \ird\ird G^s_1(x-y)G^s_2(v-z) e^{dt}f_0(y-(e^t-1)z,e^tz)\d z\d y-G^s_2(v)\ird\ird G^s_1(x-y) f_0(y,w)\d y\d w
    \\
    = \ird\ird G^s_1(x-y)G^s_2(v-e^{-t}w) f_0(y-(1-e^{-t})w,w)\d w\d y-G^s_2(v)\ird\ird G^s_1(x-y) f_0(y,w)\d y\d w
    \\
    =G_2^s(v)\ird G^s_1(x-y)\ird g_h(y,w)\d w\d y 
    -\ird\ird G^s_1(x-y)g_h(y,w) \left(G^s_2(v-e^{-t}w)-G^s_2(v\right)\d w\d y\\
    =I+II
    \end{multline*}
Taking the $L^p$ norm,
$$
\norm{I}_{L^p_{x,v}}=\norm{G_2}_{L^p_v}\norm{G_1*\rho_h-G_2*\rho_0}\lesssim t^{-q}t^{-\frac{\nu}{2s}}
$$
To bound the second term, similarly to Proposition \ref{prop: DistInitDataMultGauss} one has the following estimates
$$
\norm{G_2^s\left(\cdot-e^{-t}w\right)-G_2^s(\,\cdot\,)}_{L^1_v}\le e^{-t}|w|\norm{\nabla G_2^s}_{L^1_v}
$$
and 
$$
\norm{G_2^s}\left(\cdot-e^{-t}w\right)-G_2^s(\,\cdot\,)_{L^1_v}\le 2\norm{G_2^s}_{L^1_v}.
$$
Interpolating when $\nu<1$ we obtain the general estimates
$$
\norm{G_2^s\left(\cdot-e^{-t}w\right)-G_2^s(\,\cdot\,)}_{L^1_v}\le 2^{1-\nu}|w|^{\nu}e^{-\nu t}\norm{\nabla G_2}_{L^1_v}^\nu\le \tilde{C}_{d,s,\nu} |w|^{\nu} e^{-\nu t}.
$$
Therefore 
$$
\norm{II}_{L^p_{x,v}}\le \tilde{C}_{d,s,\nu}e^{-\nu t}\norm{G_1^s}_{L^p_x}\ird\ird |g_h(y,w)| |w|^{\nu}\d \d w\le C_{d,s,\nu}t^{-q}t^{-\frac{\nu}{2s}}.
$$
Notice that, differently from the nonlocal case, we do not need to lower power on the norm of $\nabla G^s_2$, due to its better regularity.  Taking into account the first term $I$, the proof is completed.
\end{proof}

We now present the proof of the main theorem.
\begin{proof}[Proof of Theorem \ref{thm: mainKFP}]
We split
$$
\norm{f-\mathcal{M}(v)u(t,x)}_{L^p}\le \norm{\mathbf{G}_{\Sigma_{FP}}*(T_tf_0)-\mathbf{G}_{\Sigma_{\infty}}*(T_t f_0)}_{L^p}+\norm{\mathbf{G}_{\Sigma_{\infty}}*(T_tf_0)-\mathbf{G}_{\Sigma_{\infty}}*(u_0\otimes \delta_v)}_{L^p}
$$
The second term can be easily bounded by  Proposition \ref{prop:DistInitDataStableFP} with $\nu=1$.
For the first one, applying Young's convolution inequality and Lemma
 \ref{prop: boundMultGauss},  one has 
 \begin{equation}\label{eq:aux1}
  \norm{\mathbf{G}_{\Sigma_{FP}}*(T_tf_0)-\mathbf{G}_{\Sigma_{\infty}}*(T_t f_0)}_{L^p}
 \le \norm{\mathbf{G}_{\Sigma_{FP}}-\mathbf{G}_{\Sigma_{\infty}}}_{L^p}\norm{T_t f_0}_{L^1}\le Ct^{-q}\Delta(\Sigma_{FP}^T\Sigma_{FP},\Sigma_\infty^T\Sigma_\infty)
 \end{equation}
 where in the last step we have applied Lemma \ref{prop: boundMultGauss} with $\Delta(\Sigma_{FP}^T\Sigma_{FP},\Sigma_\infty^T\Sigma_\infty)=\Sigma_\infty^{-1}(\Sigma_{FP}^2-\Sigma_\infty ^2) \Sigma_\infty^{-1}$, 
 having chosen $\Sigma=(\Sigma^T\Sigma)^{1/2}$.
While computing the exact eigenvalues of this matrix, is cumbersome, it is a standard calculation. However, for the sake of clarity, we present an asymptotic approximation: The term $\Delta(\Sigma_{FP}^T\Sigma_{FP},\Sigma_\infty^T\Sigma_\infty)$ can be approximated by $\Delta(A,\Sigma_\infty^T\Sigma_\infty)$
where 
$$
A=
    \begin{pmatrix}
        t-\frac{3}{2}&\frac{1}{2}\\ \frac12&\frac12
    \end{pmatrix}
    \sim \Sigma_{FP}^T\Sigma_{FP}
$$
up to a exponentially decaying-in-time error. We then have
$$
\Delta(A,\Sigma_\infty^T\Sigma_\infty)=
\begin{pmatrix}

            1-\frac{3}{2t}&\frac12\sqrt{\frac2t}\\
            \frac12\sqrt{\frac2t}&1
        \end{pmatrix}-\Id=
        \begin{pmatrix}
            -\frac{3}{2t}&\frac12\sqrt{\frac2t}\\
            \frac12\sqrt{\frac2t}&0
        \end{pmatrix},
        $$
For the block corresponding to dimension $d=1$ the eigenvalues $\lambda_{\pm}$
        are  solutions to the characteristic equations
        $$\lambda^2-\left(-\frac{3}{2t}\right)\lambda-\left(\frac12\sqrt{\frac2t}\right)^2=0,$$ which yields 
        $$
        \lambda_{\pm}=-\frac{1}{4t}(3\mp\sqrt{9+8t})\sim\pm\frac{\sqrt{2}}{2}t^{-1/2}        $$
The eigenvalues for the full $d-$dimensional case are such that 
        $$\lambda_1^+=\dots =\lambda_d^+\sim\frac{\sqrt2}{2}t^{-1/2}\qquad\text{and} \qquad\lambda^-_{1}=\dots=\lambda^-_{d}\sim-\frac{\sqrt2}{2}t^{-1/2}.$$
Therefore, plugging in \eqref{eq:aux1}, the final decay of the first term is 
$$
\norm{\mathbf{G}_{\Sigma_{FP}}*(T_tf_0)-\mathbf{G}_{\Sigma_{\infty}}*(T_t f_0)}_{L^p}\le C t^{-q} t^{-\frac{1}{2}},
$$
which completes the proof.
 \end{proof}

\subsection{Proof of Theorem \ref{thm:mainFKFP}: \texorpdfstring{$s\in(0,1)$}{s in (0,1)}}
In this section we analyse the fractional case $s\in(0,1)$ of the Kinetic Fokker--Planck equation.

Let us recall by the previous computation of the explicit solution of the Fokker Planck by $\mathbf{G}_\Psi$ the multivariate stable distribution with characteristic exponent $\Psi$, i.e., $\widehat{G}(\zeta)=e^{-\Psi(\zeta)}$.

As in Section \ref{sec: NLKFP}, we split
$$
\norm{f-(t,x)\M(v)}_{L^p}\le \norm{\mathbf{G}_{\Psi_{FP}}*T_tf_0-\mathbf{G}_{\Psi_{\infty}}*T_tf_0}_{L^p}+ \norm{\mathbf{G}_{\Psi_\infty}*T_tf_0+\mathbf{G}_{\Psi_\infty}*T_tf_0}.
$$
The second term is analysed as the standard case, applying Lemma \ref{prop:DistInitDataStableFP}. For the first one we need the following two technical lemmas.

\begin{lemma}\label{lemma: trick2}
    For all $a,b\in \R^d$,  for all $s\in(0,1]$,
    \begin{equation}\label{eq: claimIneq2}
        \abs{\abs{a+b}^{2s}-\abs{a}^{2s}-\abs{b}^{2s}}\le 
        \begin{cases}
            C_s\min\{|a||b|^{2s-1},|a|^{2s-1}|b|\}&\text{ if }s\in(1/2,1]
            \\
            2\min\{|a|^{2s},|b|^{2s}\}&\text{ if }s\in(0,1/2)
        \end{cases}
    \end{equation}
    with $C_2=2s+1$
\end{lemma}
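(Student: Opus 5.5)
We argue by symmetry and elementary one-dimensional calculus on $r\mapsto r^{2s}$; no earlier result of the paper is needed. The quantity $\abs{a+b}^{2s}-\abs{a}^{2s}-\abs{b}^{2s}$ is symmetric in $(a,b)$, so we assume without loss of generality $\abs{a}\le\abs{b}$. Then the minimum on the right of \eqref{eq: claimIneq2} is $\abs{a}\abs{b}^{2s-1}$ when $s\in(1/2,1]$, since
$$
\frac{\abs{a}\abs{b}^{2s-1}}{\abs{a}^{2s-1}\abs{b}}=\Big(\frac{\abs{a}}{\abs{b}}\Big)^{2-2s}\le 1 .
$$
It is $\abs{a}^{2s}$ when $s\in(0,1/2)$. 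So it suffices to bound the quantity from above and from below by these expressions. The only geometric input is the triangle inequality: $\abs{b}-\abs{a}\le\abs{a+b}\le\abs{a}+\abs{b}$, where $\abs{b}-\abs{a}\ge 0$.

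\textbf{Case $s\in(1/2,1]$.} Here $2s-1\in(0,1]$, so $r\mapsto r^{2s-1}$ is concave with value $0$ at $0$. Hence it is subadditive: $(x+y)^{2s-1}\le x^{2s-1}+y^{2s-1}$.

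For the upper bound we write
$$
\abs{a+b}^{2s}-\abs{b}^{2s}\le(\abs{b}+\abs{a})^{2s}-\abs{b}^{2s}=\int_0^{\abs{a}}2s(\abs{b}+r)^{2s-1}\d r .
$$
Subadditivity bounds the integrand by $2s(\abs{b}^{2s-1}+r^{2s-1})$, which gives
$$
\abs{a+b}^{2s}-\abs{b}^{2s}\le 2s\abs{a}\abs{b}^{2s-1}+\abs{a}^{2s}.
$$
Subtracting $\abs{a}^{2s}$, we get $\abs{a+b}^{2s}-\abs{a}^{2s}-\abs{b}^{2s}\le 2s\abs{a}\abs{b}^{2s-1}$.

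For the lower bound, $r^{2s-1}\le\abs{b}^{2s-1}$ for $r\le\abs{b}$, so
$$
\abs{b}^{2s}-\abs{a+b}^{2s}\le\abs{b}^{2s}-(\abs{b}-\abs{a})^{2s}=\int_{\abs{b}-\abs{a}}^{\abs{b}}2s\,r^{2s-1}\d r\le 2s\abs{a}\abs{b}^{2s-1}.
$$
Adding $\abs{a}^{2s}\le\abs{a}\abs{b}^{2s-1}$ (true because $\abs{a}\le\abs{b}$) gives
$$
-\big(\abs{a+b}^{2s}-\abs{a}^{2s}-\abs{b}^{2s}\big)\le(2s+1)\abs{a}\abs{b}^{2s-1}.
$$
Combining both sides yields the claim with constant $C_s=2s+1$. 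This is the constant the statement calls $C_2$, which appears to be a typo for $C_s$.

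\textbf{Case $s\in(0,1/2)$.} Now $2s\in(0,1)$, so $r\mapsto r^{2s}$ is concave with value $0$ at $0$, hence subadditive on $[0,\infty)$.

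From $\abs{a+b}\le\abs{a}+\abs{b}$ we get $\abs{a+b}^{2s}\le\abs{a}^{2s}+\abs{b}^{2s}$. So the quantity is $\le 0$.

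From $\abs{b}\le\abs{a+b}+\abs{a}$ and subadditivity we get $\abs{b}^{2s}\le\abs{a+b}^{2s}+\abs{a}^{2s}$. Therefore
$$
\abs{a+b}^{2s}-\abs{a}^{2s}-\abs{b}^{2s}\ge -2\abs{a}^{2s}=-2\min\{\abs{a}^{2s},\abs{b}^{2s}\}.
$$
This is exactly the second line of \eqref{eq: claimIneq2}. The same argument also covers $s=1/2$.

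\textbf{Main obstacle.} The only delicate step is reaching the sharp constant $2s+1$ rather than a cruder one. A mean-value bound along the segment from $b$ to $a+b$ would produce a factor $2^{2s-1}$. The fix is to use the integral representation together with subadditivity of $r^{2s-1}$ for the upper bound, and the monotone bound $r^{2s-1}\le\abs{b}^{2s-1}$ for the lower bound. The extra $+1$ then comes solely from the absorption $\abs{a}^{2s}\le\abs{a}\abs{b}^{2s-1}$.
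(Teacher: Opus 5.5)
Your proof is correct. For $s\in(0,1/2)$ it is the paper's argument: subadditivity of $r\mapsto r^{2s}$ applied to $a+b$ and to $b=(a+b)-a$, plus symmetry. For $s\in(1/2,1]$, however, you take a genuinely different route.

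The paper normalises $x=a/\abs{b}$, $\theta=b/\abs{b}$ and splits into $\abs{x}\le 1$ and $\abs{x}>1$. It peels off one pure power by the triangle inequality, e.g.\ $\abs{\abs{x+\theta}^{2s}-\abs{x}^{2s}-1}\le\abs{\abs{x+\theta}^{2s}-1}+\abs{x}^{2s}$. It then bounds the remaining difference by a Lipschitz estimate with constant $L_s=2s\abs{x}^{2s-1}$.

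That Lipschitz constant is not valid as written. For $x=\epsilon\theta$ the left side is $(1+\epsilon)^{2s}-1\approx 2s\epsilon$, whereas $L_s\abs{x}=2s\epsilon^{2s}$ is much smaller because $2s>1$. In fact this decomposition cannot give the constant $2s+1$ at all: at $x=\theta$ its right-hand side equals $2^{2s}$, which exceeds $2s+1$ for $2s\in(1,2]$. A repaired mean-value version proves the lemma only with a worse constant, such as $2s\,2^{2s-1}+1$. That is harmless for the lemma's use in the proof of Theorem~\ref{thm:mainFKFP}, where only the form of the bound matters.

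Your reduction to $\abs{a}\le\abs{b}$ followed by separate treatment of the two signs is exactly what recovers the stated constant. On the positive side, the $r^{2s-1}$ part of the subadditive bound integrates to exactly $\abs{a}^{2s}$ and cancels. On the negative side, monotonicity of $r^{2s-1}$ on $[\abs{b}-\abs{a},\abs{b}]$ gives $2s\abs{a}\abs{b}^{2s-1}$, and the extra $1$ comes only from $\abs{a}^{2s}\le\abs{a}\abs{b}^{2s-1}$.

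So the paper's route is shorter but, once repaired, only yields the bound up to some constant. Yours is a complete proof of the lemma as stated. As you note, it also covers $s=1/2$, which the statement's case split leaves out, and it correctly reads the "$C_2$" in the statement as $C_s$.
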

\begin{proof}
For $s=1$, it is the classical Cauchy-Schwartz inequality.
When either $a=0$ or $b=0$, the claim is immediate so let us assume $|a|,|b|\ne0$.

\begin{itemize}
    \item If $s\in(1/2,1]$, defining $x=\frac{a}{|b|}$ and $\theta=\frac{b}{|b|}$, proving 
    $$
     \abs{\abs{a+b}^{2s}-\abs{a}^{2s}-\abs{b}^{2s}}\le  C_s|a||b|^{2s-1}
    $$
    
    is equivalent of proving
    $$
    \abs{\abs{x+\theta}^{2s}-|x|^{2s}-|\theta|^{2s}}\le C_s|x|.
    $$
    For $s\in(1/2,1]$, the function $|\cdot|^{2s}$ is convex and locally Lipschitz with Lipschitz constant $L_s=2s|x|^{2s-1}$.

    For $|x|\le 1$
    \begin{equation*}
        \abs{\abs{x+\theta}^{2s}-|x|^{2s}-|\theta|^{2s}}\le \abs{\abs{x+\theta}^{2s}-|\theta|^{2s}}+|x|^{2s}\le L_s|x|+|x|^{2s}=(2s+1)|x|^{2s}\le (2s+1)|x|
    \end{equation*}
    
    For $|x|>1$
    \begin{equation*}
        \abs{\abs{x+\theta}^{2s}-|x|^{2s}-|\theta|^{2s}}\le \abs{\abs{x+\theta}^{2s}-|x|^{2s}}+|\theta|^{2s}\le L_s|\theta|+1=2s|x|^{2s-1}+1\le (2s+1)|x|
    \end{equation*}

     \item If $s\in(0,1/2]$ the claim comes straightforwardly from the sub-additivity of $|\cdot|^{2s}$:
     dividing 
      $$
     \abs{\abs{a+b}^{2s}-\abs{a}^{2s}-\abs{b}^{2s}}\le  2|b|^{2s}
    $$
     by $|b|^{2s}$,      one easily prove the analogous inequality 
     \begin{equation*}
         \abs{\abs{x+\theta}^{2s}-1-|x|^{2s}}\le |x|^{2s}+1-|x+\theta|^{2s}\le \left(\abs{x+\theta}+1\right)+1- \abs{x+\theta}=2
     \end{equation*}
\end{itemize}
By symmetry the same inequalities are true if we switch the role of $a$ and $b$ and this gives  \eqref{eq: claimIneq2}.
\end{proof}

\begin{lemma}\label{lemma: lowerboundinf}
For all $t>2$, there exists a constant $c>0$ depending only on $s$ and $d$ such that 
$$
\Psi_{FP}\ge c\Psi_{\infty}.
$$
\end{lemma}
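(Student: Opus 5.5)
We need to show $\Psi_{FP}(\xi,\eta)=\int_0^t |(1-e^{-r})\xi+e^{-r}\eta|^{2s}\,\d r \ge c\,(t|\xi|^{2s}+\tfrac{1}{2s}|\eta|^{2s})$ for $t>2$, after the change of variables $r=t-\tau$. The plan is to split the time integral into two windows: a short initial window $r\in[0,r_0]$, where the coefficient of $\eta$ is close to $1$, and a long window $r\in[1,t]$, where the coefficient of $\xi$ is at least $1-e^{-1}$. On each window we compare with one of the two terms in $\Psi_\infty$. By homogeneity of degree $2s$ in $(\xi,\eta)$, it suffices to work on the set $\{t|\xi|^{2s}+|\eta|^{2s}=1\}$. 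Alternatively, we can treat the cases $|\eta|\le K|\xi|$ and $|\eta|> K|\xi|$ separately, with $K$ chosen depending only on $s$.

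\emph{Case $|\eta|\le K|\xi|$.} For $r\in[r_1,t]$ with $r_1$ large, namely $e^{-r_1}(1+K)\le \tfrac12(1-e^{-1})$, the reverse triangle inequality gives $|(1-e^{-r})\xi+e^{-r}\eta|\ge (1-e^{-r})|\xi|-e^{-r}K|\xi|\ge c_0|\xi|$. Hence $\Psi_{FP}\ge c_0^{2s}(t-r_1)|\xi|^{2s}$. The threshold $r_1$ depends on $K$, so when $t$ is close to $2$ this requires care. This is handled by noting that for bounded $t$ the statement is a compactness argument. On the compact set $\{|\xi|^{2s}+|\eta|^{2s}=1\}\times[2,T]$ the integrand vanishes at most on a measure-zero set of $r$ unless $\xi=\eta=0$. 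So $\Psi_{FP}$ is continuous and positive there, while $\Psi_\infty$ is bounded, which gives the constant for $t\in[2,T]$. For $t\ge T:=2r_1$ we get $t-r_1\ge t/2$, and since $|\eta|^{2s}\le K^{2s}|\xi|^{2s}\le K^{2s}t|\xi|^{2s}$, we obtain $\Psi_{FP}\gtrsim t|\xi|^{2s}\gtrsim \Psi_\infty$.

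\emph{Case $|\eta|>K|\xi|$.} On $r\in[0,r_0]$ with $r_0=\log 2$ fixed, we have $e^{-r}\ge \tfrac12$ and $1-e^{-r}\le\tfrac12$. Then $|(1-e^{-r})\xi+e^{-r}\eta|\ge \tfrac12|\eta|-\tfrac12|\xi|\ge \tfrac14|\eta|$ once $K\ge2$, which yields $\Psi_{FP}\ge r_0 4^{-2s}|\eta|^{2s}$. We still need the $t|\xi|^{2s}$ part of $\Psi_\infty$. For this we use the long window $r\in[1,t]$ and a pointwise inequality: for each $r$, $(1-e^{-r})\xi=\big((1-e^{-r})\xi+e^{-r}\eta\big)-e^{-r}\eta$, so by (quasi-)subadditivity of $|\cdot|^{2s}$ (constant $2^{(2s-1)_+}$, consistent with Lemma \ref{lemma: trick2}),
\[
c_1|\xi|^{2s}\le 2^{(2s-1)_+}\Big(|(1-e^{-r})\xi+e^{-r}\eta|^{2s}+e^{-2sr}|\eta|^{2s}\Big).
\]
Integrating over $r\in[1,t]$ gives $c_1(t-1)|\xi|^{2s}\le C\,\Psi_{FP}+C|\eta|^{2s}\int_1^\infty e^{-2sr}\,\d r$. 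The $|\eta|^{2s}$ term is already controlled by $\Psi_{FP}$ from the short window. This second argument works in both cases, so it makes Case 1 unnecessary: combining the two bounds gives $t|\xi|^{2s}\lesssim \Psi_{FP}$ and $|\eta|^{2s}\lesssim\Psi_{FP}$ for all $t>2$, since $t-1\ge t/2$.

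\emph{Summary and main obstacle.} The cleanest route is therefore to prove two bounds. First, $\Psi_{FP}\ge c|\eta|^{2s}$ without any restriction on $(\xi,\eta)$. Second, $t|\xi|^{2s}\lesssim \Psi_{FP}+|\eta|^{2s}$, obtained from the subadditivity trick integrated over $[1,t]$. The main obstacle is the first bound when $\xi$ dominates, because the short-window estimate fails there. To handle it, we use a variant of the same subadditivity trick on $r\in[0,r_0]$: writing $e^{-r}\eta=\big((1-e^{-r})\xi+e^{-r}\eta\big)-(1-e^{-r})\xi$ gives $|\eta|^{2s}\lesssim \Psi_{FP}+|\xi|^{2s}$. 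The stray $|\xi|^{2s}$ is then absorbed using the second bound, since $|\xi|^{2s}\le \tfrac{1}{t}\,t|\xi|^{2s}$. The absorption requires the constants to close, which we arrange by choosing $r_0$ small: the $\xi$ contribution carries the factor $\int_0^{r_0}(1-e^{-r})^{2s}\,\d r\approx r_0^{2s+1}/(2s+1)$, which is $o(r_0)$, whereas the $\eta$ term carries a factor of order $r_0$.
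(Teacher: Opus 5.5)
Your final argument (the one in your summary) is correct, but it takes a different route from the paper.

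\textbf{The paper's route.} The paper splits according to the ratio $|\eta|/|\xi|$ and bounds the integrand pointwise with the reverse triangle inequality in each regime.
\begin{itemize}
\item When $|\eta|\le e^{t/4}|\xi|$, it uses the window $(t/2,t)$ to get $\Psi_{FP}\ge c_0 t|\xi|^{2s}$. It then recovers $|\eta|^{2s}$ trivially if $|\eta|<2|\xi|$, and from the window $(0,1)$ if $|\eta|\ge 2|\xi|$.
\item When $|\eta|>e^{t/4}|\xi|$, it shows separately that $\Psi_\infty\le K|\eta|^{2s}$ and, from the window $(0,1/2)$, that $\Psi_{FP}\ge c|\eta|^{2s}$.
\end{itemize}

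\textbf{Your route.} You avoid any case distinction. You integrate the quasi-triangle inequality $|a+b|^{2s}\le 2^{(2s-1)_+}(|a|^{2s}+|b|^{2s})$ over $[1,t]$ and over $[0,r_0]$. This gives two coupled linear inequalities,
\[
t|\xi|^{2s}\lesssim \Psi_{FP}+|\eta|^{2s}
\qquad\text{and}\qquad
r_0|\eta|^{2s}\lesssim \Psi_{FP}+r_0^{2s+1}|\xi|^{2s},
\]
with implied constants independent of $r_0$. The system closes for two reasons:
\begin{itemize}
\item the constant in the first inequality comes from the window $[1,t]$, so it does not depend on $r_0$, and $t-1\ge t/2$ for $t>2$;
\item the cross coefficient in the second inequality is $O(r_0^{2s})$ relative to the diagonal one, so choosing $r_0$ small depending only on $s$ lets you absorb it.
\end{itemize}

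\textbf{Comparison.} Your approach gives a case-free proof with explicit constants depending only on $s$. The paper's approach is regime-by-regime and pointwise, equally elementary, but needs the $t$-dependent threshold $e^{t/4}$ and the separate upper bound on $\Psi_\infty$.

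\textbf{Write-up.} Present only the summary argument.
\begin{itemize}
\item Drop the exploratory Case 1. Its compactness step is non-constructive and no longer needed.
\item Drop the sentence saying the Case 2 argument ``works in both cases''. At that point you had only shown $\Psi_{FP}\gtrsim|\eta|^{2s}$ for $|\eta|>K|\xi|$. It is precisely the absorption step that removes this restriction.
\end{itemize}
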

\begin{proof} Assume without loss of generality that $|\xi|\ne 0$, otherwise the proof is straightforward. Define $R=\frac{|\eta|}{|\xi|}$ and $T=4\log R$ and $t_0=2$.

\begin{itemize}
\item \textit{ $t\ge T$}, i.e. $|\eta|\le e^{t/4}|\xi|$ 
$$ \int_0^t\abs{(1-e^{-u})\xi+e^{-u}\eta}^{2s}\d u\ge 
\int_{0}^{t/4}\abs{(1-e^{-u})\xi+e^{-u}\eta}^{2s}\d u+\int_{t/2}^t\abs{(1-e^{-u})\xi+e^{-u}\eta}^{2s}\d u $$
and for $u\in(t/2,t)$ it implies$$ \abs{(1-e^{-u})\xi+e^{-u}\eta}\ge (1-e^{-u})|\xi|-e^{-u}|\eta|\ge (1-e^{-u})|\xi|-e^{-(u-t/4)}|\xi|\ge (1-e^{-t/2}-e^{-t/4})|\xi| $$ that is \begin{equation*} 
\int_{t/2}^t\abs{(1-e^{-u})\xi+e^{-u}\eta}^{2s}\d u \ge \frac{t}{2}(1-e^{-t_0/2}-e^{-t_0/4})^{2s}|\xi|^{2s}=c_0t|\xi|^{2s} \end{equation*} 
Now if $|\eta|<2|\xi|$
$$
t|\xi|^{2s}=\frac{t}{2}|\xi|^{2s}+\frac{t}{2}|\xi|^{2s}\ge \frac{t}{2}|\xi|^{2s}+\frac{t_0}{2}\frac{1}{2^{2s}}|\eta|^{2s}\ge c_1(t|\xi|^{2s}+|\eta|^{2s})
$$
and thus 
$$ \int_0^t\abs{(1-e^{-u})\xi+e^{-u}\eta}^{2s}\d u\ge \int_{t/2}^t\abs{(1-e^{-u})\xi+e^{-u}\eta}^{2s}\d u \ge c(t|\xi|^{2s}+\frac{1}{2s}|\eta|^{2s}) $$

If $2|\xi|\le |\eta|\le e^{t/4}|\xi|$ notice that since $t>t_0$, then $(0,1)\cap (t/2,t)=\emptyset$. Therefore, for $u\in(0,1)$
$$
\abs{(1-e^{-u})\xi+e^{-u}\eta}\ge e^{-u}|\eta|-(1-e^{-u})|\xi|\ge e^{-1}|\eta|-(1-e^{-1})\frac{|\eta|}{2}=c_3|\eta|
$$

therefore 

\begin{multline} \int_0^t\abs{(1-e^{-u})\xi+e^{-u}\eta}^{2s}\d u\ge \int_{0}^1\abs{(1-e^{-u})\xi+e^{-u}\eta}^{2s}\d u +\int_{t/2}^t\abs{(1-e^{-u})\xi+e^{-u}\eta}^{2s}\d u
\\
\ge  c_4\frac{1}{2s}|\eta|^{2s}+c_0t|\xi|^{2s}
\end{multline}

\item \textit{$t<T$,} i.e., $|\eta|>e^{t/4}|\xi|$.
We prove the following two facts
\begin{enumerate}[(a)]
    \item $\Psi_\infty\le K|\eta|^{2s}$
    \item $\Psi_{FP}\ge c|\eta|^{2s}$
\end{enumerate}
which together imply $\Psi_{FP}\ge \frac{c}{K}\Psi_{\infty}$.

\begin{enumerate}[(a)]
    \item $$
    t|\xi|^{2s}+\frac{1}{2s}|\eta|^{2s}\le te^{-st/2}|\eta|^{2s}+\frac{1}{2s}|\eta|^{2s}\le K|\eta|^{2s}
    $$
    where $K=\sup_{t\ge t_0}\left(te^{-st/2}+\frac{1}{2s}\right)$
    \item 
    For $u\in(0,t_0/4)$
    \begin{multline*}
    \abs{(1-e^{-u})\xi+e^{-u}\eta}\ge e^{-u}|\eta|-(1-e^{-u})|\xi|\ge |\eta|\left( e^{-u}-(1-e^{-u})e^{-t/4}\right)
    \\
    \ge
   |\eta| \left( e^{-u}-(1-e^{-u})e^{-t_0/4}\right)\ge |\eta|e^{-t_0/2}
    \end{multline*}
    And thus 
    $$
    \int_0^t \abs{(1-e^{-u})\xi+e^{-u}\eta}^{2s}\ge  \int_0^{t_0/4} \abs{(1-e^{-u})\xi+e^{-u}\eta}^{2s}\ge e^{-st_0}\frac{t_0}{4}|\eta|^{2s}
    $$
\end{enumerate}

\end{itemize}

\end{proof}

Therefore we are ready to prove the main theorem of this paragraph
\begin{proof}[Proof of Theorem \ref{thm:mainFKFP}]
As for the previous case we split
$$
\norm{f-\mathcal{M}^s(v)u(t,x)}_{L^p}\le \norm{\mathbf{G}_{\Psi_{FP}}*(T_tf_0)-\mathbf{G}_{\Psi_{\infty}}*(T_t f_0)}_{L^p}+\norm{\mathbf{G}_{\Psi_{\infty}}*(T_tf_0)-\mathbf{G}_{\Psi_{\infty}}*(u_0\otimes \delta_v)}_{L^p}
$$
and Proposition \ref{prop:DistInitDataStableFP} gives that the second term decays as 

$$
\norm{\mathbf{G}_{\Psi_{\infty}}*(T_tf_0)-\mathbf{G}_{\Psi_{\infty}}*(u_0\otimes \delta_v)}_{L^p}\le Ct^{-q}t^{-\frac{\nu}{2s}}
$$

For the first term we consider first the case $p=2$, then $p=\infty$, and we interpolate between those spaces to obtain the decay in the intermediate spaces.

For $p=2$, we apply Plancherel's Theorem, obtaining
$$
\norm{G_{\Psi_{FP}}-G_{\Psi_{\infty}}}^2_{L^2_{x,v}}=\norm{e^{-\Psi_{FP}}-e^{-\Psi_{\infty}}}_{L^2_{\xi,\eta}}
$$
We start by  considering the case $s\in(1/2,1]$.
Applying Lemma \ref{lemma: trick2}, one can show that 
\begin{multline*}
    \abs{\Psi_{FP}(\xi,\eta)-\Psi_{\infty}(\xi,\eta)}\d u\le
     \\
     \int_0^t\abs{\abs{(1-e^{-u})\xi+e^{-u}\eta}^{2s}-\abs{(1-e^{-u})\xi}^{2s}-\abs{e^{-u}\eta}^{2s}} \d u\\
     +
     \int_0^t\abs{(1-e^{-u})\xi}^{2s}-|\xi|^{2s} \d u+\abs{\Big(\int_0^te^{-2su}\d u \Big)|\eta|^{2s}-\frac{1}{2s}|\eta|^{2s}}
     \\
     \le C_s |\xi||\eta|^{2s-1}
 \int_0^t\left((1-e^{-u})e^{-u}\right)^s\d u +\frac{1}{2s}(1-e^{-2st})|\xi|^{2s}+e^{-2st}|\eta|^{2s}\\
 \le c_1|\xi||\eta|^{2s-1}+c_2|\xi|^{2s}+e^{-2st}|\eta|^{2s}:=I+II+III
\end{multline*}
Moreover
applying Lemma \ref{lemma: lowerboundinf}, we have 
$$
e^{-\min\{\Psi_\infty,\Psi_{FP}\}}\le e^{-c\Psi_{\infty}}
$$ and hence
\begin{multline*}
    \norm{G_{\Psi_{FP}}-G_{\Psi_{\infty}}}^2_{L^2_{x,v}}=\norm{e^{-\Psi_{FP}}-e^{-\Psi_{\infty}}}_{L^2_{\xi,\eta}}\\
    =
    \ird\ird\abs{(\Psi_{FP}(\xi,\eta)-\Psi_{\infty}(\xi,\eta))\int_0^1 e^{-((1-\theta)\Psi_{FP}+\theta\Psi_\infty)}\d\theta}^2\d \xi\d\eta
    \\
    \le
     \ird\ird\abs{(\Psi_{FP}(\xi,\eta)-\Psi_{\infty}(\xi,\eta))e^{-\min\{\Psi_\infty,\Psi_{FP}\}}}^2\d \xi\d \eta
     \\
     \le 
     \ird\ird\abs{e^{-c(t|\xi|^{2s}+\frac{1}{2s}|\eta|^{2s})}(\Psi_{FP}(\xi,\eta)-\Psi_{\infty}(\xi,\eta))}^2\d \xi\d \eta    
     \\
     \le 
     \ird\ird\abs{e^{-c(t|\xi|^{2s}+\frac{1}{2s}|\eta|^{2s})}(I+II+III)}^2\d \xi\d \eta.
\end{multline*}
The first term is
\begin{multline*}
     \ird\ird\abs{e^{-c(t|\xi|^{2s}+\frac{1}{2s}|\eta|{2s})}c_1|\xi||\eta|^{2s-1}}^2\d \xi\d \eta
 \\
 \lesssim
 \ird\ird\abs{e^{-c(|\omega|^{2s}+\frac{1}{2s}|\eta|^{2s})}t^{-\frac{1}{2s}}|\omega||\eta|^{2s-1}}^2t^{-\frac{d}{2s}}\d \omega\d \eta\\
    \lesssim t^{-\frac{d}{2s}}t^{-\frac{1}{s}}\ird\ird e^{-2c(|\omega|^{2s}+\frac{1}{2s}|\eta|^{2s})}|\omega||\eta|^{2s-1}\d \omega\d \eta
    \lesssim t^{-\frac{d}{2s}}t^{-\frac{1}{s}}
\end{multline*}
after changing variable such that $t|\xi|^{2s}=|\omega|^{2s}$.
 With a similar computation one can see that $II$ decays like $t^{-2}t^{-d/2s}$, which is decaying faster than  the speed of $I$ for $s>\frac{1}{2}$, while $III$ decays exponentially.
 Taking the square root, one has the claim for $s\in(1/2,1]$.

 The case $s\in(0,1/2]$ can be proven in a similar way but with the exponent $2s$ of $|\xi|^{2s}$ that comes from Lemma \ref{lemma: trick2} gives $t^{-2}t^{-d/2s}$ also for $I$, which concludes the proof for $p=2$.

 The case $p=\infty$ relies on the simple estimate
 \begin{multline*}
    \abs{f_{FP}(x,v)-f_\infty(x,v)}=\abs{\ird\ird e^{i(x,v)\cdot (\xi,\eta)}\widehat{f}_{FP}(\xi,\eta)-\widehat{f}(\xi,\eta)_\infty\d\xi\d\eta}
    \\
    \le
    \ird\ird \abs{e^{-\Psi_{FP}(\xi,\eta)}-e^{-\Psi_\infty(\xi,\eta)}}\d\xi\d\eta
    \end{multline*}
    Then, applying the same kind of argument as before, one has
    $$
    \ird\ird \abs{e^{-\Psi_{FP}(\xi,\eta)}-e^{-\Psi_\infty(\xi,\eta)}}\d\xi\d\eta\le
      \ird\ird\abs{e^{-c(t|\xi|^{2s}+\frac{1}{2s}|\eta|^{2s})}(I+II+III)}\d \xi\d \eta
    $$
    which gives the claim in the $L^\infty$ case.

    For all $p\in(2,\infty)$, one concludes by interpolation, since the second order exponent is the same for both $L^2$ and $L^\infty$.
\end{proof}

\section*{Acknowledgments} J.A.C. and N.T. acknowledge
support from the ``Maria de Maeztu'' Excellence Unit IMAG, reference
CEX2020-001105-M, funded by MCIN/AEI/10.13039/501100011033/. They were
also supported by grant PID2023-151625NB-I00 and the research network
RED2022-134784-T from MCIN/AEI/10.13039/501100011033/.
The project that gave rise to these results received the support of a
fellowship from the ``la Caixa'' Foundation (ID 100010434). The
fellowship code is LCF/BQ/DI22/11940032.

\addcontentsline{toc}{section}{References}
\bibliography{bibliography}
\end{document}